\documentstyle[12pt, amssymb,amsthm]{article}

\ProvidesPackage{mathrsfs}[1996/01/01 Math RSFS package v1.0 (jk)]
\DeclareSymbolFont{rsfs}{U}{rsfs}{m}{n}
\DeclareSymbolFontAlphabet{\mathscr}{rsfs}

\def\gg{{\mathfrak g}} 
 
\def\ii{{\mathfrak i}}

\def\rr{{\mathfrak r}}

\def\sl{{\mathfrak sl}}

\def\VV{{\mathfrak V}}
\def\WW{{\mathfrak W}}

\def\Vir{{\VV \ii \rr}}

\def\CCC{{\mathbb C}}

\def\NNN{{\mathbb N}}

\def\QQQ{{\mathbb Q}} 
\def\RRR{{\mathbb R}} 
\def\SSS{{\mathbb S}}

\def\ZZZ{{\mathbb Z}} 

\def\A{{\mathcal A}}
\def\B{{\mathcal B}}

\def\F{{\mathcal F}}
\def\G{{\mathcal G}}
\def\H{{\mathcal H}}
\def\I{{\mathcal I}}

\def\M{{\mathcal M}}
\def\N{{\mathcal N}}

\def\R{{\mathcal R}}
\def\S{{\mathcal S}}
\def\T{{\mathcal T}}
\def\U{{\mathcal U}}
\def\V{{\mathcal V}}

\def\X{{\mathcal X}}
\def\Y{{\mathcal Y}}

\def\t1{{\frac{1}{3}}}
\def\1/2{{\frac{1}{2}}}
\def\3/2{{\frac{3}{2}}}

\def\Ž{{\' e}}
\def\{{\` e}}
\def\{{\^ e}}
\def\ˆ{{\` a}}

 \newtheorem{lemma}{Lemma}[section]
 \newtheorem{proposition}[lemma]{Proposition}
  
  \newtheorem{example}[lemma]{Example} 
  \newtheorem{theorem}[lemma]{Theorem}
\newtheorem{definition}[lemma]{Definition}
\newtheorem{notation}[lemma]{Notation}

 \newtheorem{corollary}[lemma]{Corollary}
 \newtheorem{remark}[lemma]{Remark}

   \newtheorem{reminder}[lemma]{Reminder}

        \setcounter{tocdepth}{1}

\title{Neveu-Schwarz and operators algebras III  \\ \textit{Subfactors and Connes fusion}}

\author{S\'ebastien Palcoux }

 \date{}   
                  
\begin{document}
\maketitle

\begin{abstract}
This paper is the third of a series giving a self-contained way from the Neveu-Schwarz algebra  to a new series of irreducible subfactors.
Here we introduce the local von Neumann algebra of the Neveu-Schwarz algebra,  to obtain Jones-Wassermann subfactors for each representation of the discrete series. Then using primary fields we prove the irreducibility of these subfactors; to next compute the Connes fusion ring and obtain the explicit formula of the subfactors indices.

\end{abstract}

\tableofcontents      \newpage

\section{Introduction}

\subsection{Background of the series}
In the $90$'s,  V. Jones and A. Wassermann started a program whose goal is to understand the unitary conformal field theory from the point of view of operator algebras (see \cite{2c}, \cite{2b}).  In \cite{2}, Wassermann defines and computes the Connes fusion of the irreducible positive energy representations of the loop group $LSU(n)$ at fixed level $\ell$, using primary fields, and with consequences in the theory of subfactors.  In  \cite{vtl} V. Toledano Laredo proves the Connes fusion rules for $LSpin(2n)$ using similar methods. Now, let Diff$(\SSS^{1})$ be the diffeomorphism group on the circle, its Lie algebra is the Witt algebra $\WW$ generated by $d_{n}$ ($n \in \ZZZ$), with $[d_{m} , d_{n}] = (m-n)d_{m+n}$. It admits a unique central extension called the Virasoro algebra $\Vir$. Its unitary positive energy representation theory and the character formulas can be deduced by a so-called Goddard-Kent-Olive (GKO) coset construction from the theory of $LSU(2)$ and the Kac-Weyl formulas (see \cite{1}, \cite{3a}, \cite{1}). In \cite{loke}, T. Loke uses the coset construction to compute the Connes fusion for $\Vir$. 
Now, the Witt algebra admits two supersymmetric extensions $\WW_{0}$ and $\WW_{1/2}$ with central extensions called the Ramond and the Neveu-Schwarz algebras, noted $\Vir_{0}$ and $\Vir_{1/2}$. In this series (\cite{NSOAI}, \cite{NSOAII} and this paper),  we naturally introduce $\Vir_{1/2}$ in the vertex superalgebra context of $L\sl_{2}$, we give a complete proof of the classification of its unitary positive energy representations, we obtain directly their character; then we give the Connes fusion rules, and an irreducible finite depth type II$_{1}$ subfactors for each representation of the discrete series.
Note that we could do the same for the Ramond algebra $\Vir_{0} $, using twisted vertex module over the vertex operator algebra of the Neveu-Schwarz algebra $\Vir_{1/2}$, as  R. W. Verrill  \cite{verrill} and  Wassermann \cite{wass2} do for twisted loop groups.

\subsection{Overview of the paper}
Now, $ \widehat{\gg} $ and $\Vir_{1/2}$ give local superalgebras $ \widehat{\gg}(I) $ and $\Vir_{1/2}(I)$ by smearing with the smooth functions vanishing outside of $I$ a proper interval of $\SSS^{1}$. By Sobolev estimates, the action on the positive energy representations is continuous. We generate their von Neumann algebra, included in an algebra of fermions. By Takesaki devissage and coset construction, we obtain that these algebras are the hyperfinite III$_{1}$ factor, whose the supercommutants are generated by chains of compressed fermions. Also,  there is  Haag-Araki duality on the vacuum, and outside,  a Jones-Wassermann subfactor as a failure of duality.

The compressed fermions are examples of primary fields. We construct them in general from maps intertwining two irreducible representations, dealing with spaces of densities. We see that these maps are completely characterized, bounded and classified by coset for two particular charges $\alpha$, $\beta$. We obtain also their braiding relations, which allow to give the leading term of a kind of OPE for smeared primary fields, which permit,  to have the von Neumann density and the irreducibility of the subfactors.

Then, we obtain irreducible bimodules of local von Neumann algebras, giving the framework to define the Connes fusion. Its rules are a  direct consequence of the transport formula (explaining the intertwining for chains), which is proved by the braiding relations and the von Neumann density. The rules give the dimension of the space of primary fields, they show also that the subfactors are finite index, explicitly given by the square of the quantum dimension, a fusion ring character given as unique positive eigenvalue of a fusion matrix, and  a product of two quantum dimensions of $LSU(2)$ by Perron-Frobenius theorem.

\subsection{Main results}
Let $p = 2i+1$, $q = 2j+1$ and $m =  \ell +2$,  we note $H_{ij}^{\ell} $ the $L^{2}$-completion of $L( c_{m} , h_{pq}^{m})$.
We define the Connes fusion $\boxtimes$ on the discrete series representations of charge $c_{m}$, as bimodules of the hyperfinite III$_{1}$-factor generated by the local Neveu-Schwarz algebra $\Vir_{1/2}(I)$, with $I$ a proper interval of $\SSS^{1}$.
\begin{theorem} (Connes fusion) 
\begin{displaymath}   H_{ij}^{\ell}   \boxtimes   H_{i'j'}^{\ell} = \bigoplus_{ (i'', \hspace{0,1cm}  j'') \in \langle i , i'  \rangle_{\ell}  \times \langle j , j'  \rangle_{\ell + 2}  } H_{i''j''}^{\ell}          \end{displaymath}   
with $\langle a , b \rangle_{n} = \{  c = \vert a-b \vert, \vert a-b \vert + 1, ..., a+b  \ \vert \ a+b+c \le n  \}$.
  \end{theorem}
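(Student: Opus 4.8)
The plan is to compute the fusion by transporting the known $LSU(2)$ Connes fusion rules through the GKO coset realization of the discrete series, following the strategy used by Loke for $\Vir$ and by Wassermann for $LSU(n)$. First I would fix the coset picture coming from the vertex superalgebra construction of the earlier papers: the local Neveu-Schwarz algebra $\Vir_{1/2}(I)$ appears as the relative commutant of $\widehat{\gg}_{\ell+2}(I)$ inside $\widehat{\gg}_{\ell}(I) \vee \mathrm{Ferm}(I)$, where $\widehat{\gg} = \widehat{\sl_2}$ and the Majorana fermions realize the level-$2$ current algebra. Concretely the branching
\[ H_i^{\ell} \otimes F \;=\; \bigoplus_{j} H_{ij}^{\ell} \otimes H_j^{\ell+2} \]
exhibits each $H_{ij}^{\ell}$ as a multiplicity space, with the two $SU(2)$ factors sitting at levels $\ell$ and $\ell+2$. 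This is exactly the origin of the two indices and of the two level-truncations $\langle\,\cdot\,,\,\cdot\,\rangle_{\ell}$ and $\langle\,\cdot\,,\,\cdot\,\rangle_{\ell+2}$ in the statement, and the central charge identity $c_m = \tfrac{3}{2}(1 - \tfrac{8}{m(m+2)}) = c_{\ell} + c_2 - c_{\ell+2}$ confirms the coset is the correct one.

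Second, I would reduce the fusion computation to a dimension count of spaces of primary fields. By the bimodule description, the multiplicity of $H_{i''j''}^{\ell}$ in $H_{ij}^{\ell} \boxtimes H_{i'j'}^{\ell}$ equals the dimension of the space of bounded intertwiners of the appropriate charge, and the transport formula (proved earlier from the braiding relations together with the von Neumann density) identifies this with the dimension of the corresponding space of smeared Neveu-Schwarz primary fields. Since these primary fields are the compressed fermions, that is, compressions of the $LSU(2)$ primary fields to the coset subspaces, their composition and braiding are inherited directly from the $LSU(2)$ theory.

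Third, I would factor this dimension through the coset. The branching makes the space of Neveu-Schwarz primary fields of charge $(i',j')$ split as a tensor product of an $SU(2)_{\ell}$ intertwiner space for the pair $(i,i')$ and an $SU(2)_{\ell+2}$ intertwiner space for the pair $(j,j')$, the level-$2$ fermionic factor being fixed by the selection rule, which is automatic here since $p-q = 2(i-j)$ is even. Wassermann's $LSU(2)$ fusion rules then give that each factor has dimension $0$ or $1$, being nonzero exactly when $i'' \in \langle i,i'\rangle_{\ell}$ and $j'' \in \langle j,j'\rangle_{\ell+2}$ respectively; the constraint $a+b+c \le n$ is precisely the level-$n$ truncation of the Clebsch-Gordan range. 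Multiplying the two factors yields the stated direct sum.

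The main obstacle is establishing the \emph{exact} equality between fusion multiplicities and primary-field dimensions, rather than mere inequalities: the lower bound needs the von Neumann density of the smeared primary fields, so that the intertwiners survive in the $L^2$-completion, while the upper bound and the associativity of $\boxtimes$ require the transport formula and the braiding. A secondary technical point is to verify that the coset factorization is faithful, i.e. that passing to the relative commutant of $\widehat{\gg}_{\ell+2}(I)$ (the Takesaki devissage step) introduces no spurious multiplicity in the fermionic $SU(2)_2$ sector, so that the product of the two $SU(2)$ fusion coefficients is an honest equality and not merely a bound.
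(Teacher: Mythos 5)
There is a genuine gap, and it sits exactly where you flag your "main obstacle": the identity you want to use as an engine — fusion multiplicity of $H_{i''j''}^{\ell}$ in $H_{ij}^{\ell}\boxtimes H_{i'j'}^{\ell}$ equals the dimension of a space of primary fields of charge $(i',j')$, factored through the coset — is not available for arbitrary charges, and in the paper it is a \emph{consequence} of the fusion rules rather than an input. All the analytic tools that would make your reduction work exist in the paper only for the two special charges $\alpha=(1/2,1/2)$ and $\beta=(0,1)$: boundedness of smeared fields comes from constructibility as compressed fermions (proposition \ref{gloriato}), the braiding relations with provably non-zero coefficients (corollary \ref{joker}) cover only the pairs $(\alpha,\alpha)$, $(\beta,\alpha)$, $(\alpha,\beta)$ — even $(\beta,\beta)$ is left open, pending Dotsenko--Fateev equations — and consequently the transport formula exists only for $\alpha$ (with strictly positive coefficients) and for $\beta$ in a weakened form ($\lambda_j\ge 0$), which yields only the inequality $H_{\beta}\boxtimes H_{i}\le\bigoplus_{j\in\langle\beta,i\rangle}H_{j}$. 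So your plan of getting an upper and a lower bound charge by charge, for every $(i',j')$, cannot be carried out with the ingredients the paper actually establishes; it would require general-charge braiding and boundedness results that nobody proves here.

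What the paper does instead, and what your proposal is missing, is a ring-theoretic bootstrap from just these two charges. The exact rule for $H_{\alpha}$ (from the full transport formula, itself proved by induction on the graph distance in $\G_{\alpha}$ using positivity, Reeh--Schlieder and von Neumann density) plus the mere inequality for $H_{\beta}$ are fed into the fusion ring $\T_{m}$: since $\alpha$ is a weak generator, Perron--Frobenius gives a finite multiplicative quantum dimension determined by the fusion matrix of $H_{\alpha}$, which coincides with that of $(H_{1/2}^{\ell},H_{1/2}^{\ell+2})$ in $\R_{\ell}\otimes_{\ZZZ}\R_{\ell+2}$; comparing $d(\tilde H_{\beta})d(\tilde H_{i'j'})$ with $\sum d(\tilde H_{i''j''})$ then forces the $\beta$ inequality to be an equality by positivity, and associativity lets the $\alpha$ and $\beta$ rules generate all others, identifying $\tilde\T_{m}$ with $\R_{\ell}\otimes_{\ZZZ}\R_{\ell+2}$. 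Your write-up also ignores the identification $H_{pq}^{m}=H_{m-p,m+2-q}^{m}$: the actual fusion ring $\T_{m}$ is the quotient (or fixed subring) of $\R_{\ell}\otimes_{\ZZZ}\R_{\ell+2}$ under this period-two involution, and without handling it the claim that multiplicities are honest products of two $SU(2)$ fusion coefficients is not even well posed. If you want to salvage your approach, you should either prove general-charge braiding with non-zero coefficients (a substantially harder analytic statement), or do what the paper does and derive the general rules algebraically from the two computable charges.
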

  
  Let $\M_{ij}^{\ell}(I)$ be the von Neumann algebra generated on $H_{ij}^{\ell}$, by the bounded function of the self-adjoint operators of $\Vir_{1/2}(I)$. 
\begin{theorem}  (Haag-Araki duality on the vacuum) 
\begin{displaymath}   \M_{00}^{\ell}(I) =   \M_{00}^{\ell}(I^{c})^{\natural}       \end{displaymath}  
with $X^{\natural}$ be the supercommutant of $X$.
  \end{theorem}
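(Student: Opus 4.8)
The plan is to prove the two inclusions separately. The containment $\M_{00}^{\ell}(I) \subseteq \M_{00}^{\ell}(I^{c})^{\natural}$ is the elementary one: a bounded function of a self-adjoint smeared Neveu-Schwarz field localised in $I$ acts, by the Sobolev estimates of the overview, inside the von Neumann algebra of that field, and two fields smeared respectively in the disjoint regions $I$ and $I^{c}$ satisfy the graded locality built into $\Vir_{1/2}$ (the bosonic $L_{n}$-part commuting, the fermionic $G_{r}$-part anticommuting). Hence every generator of $\M_{00}^{\ell}(I)$ supercommutes with every generator of $\M_{00}^{\ell}(I^{c})$, and the theorem reduces to the duality inclusion $\M_{00}^{\ell}(I^{c})^{\natural} \subseteq \M_{00}^{\ell}(I)$.

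For the reverse inclusion I would exploit the coset realisation underlying the whole construction. At charge $c_{m}$ with $m = \ell + 2$ the Neveu-Schwarz algebra is the GKO coset of the super affine algebra on $L\sl_{2}$: three free fermions realise $L\sl_{2}$ at level $2$, and together with $L\sl_{2}$ at level $\ell$ the super-Virasoro appears as the relative commutant of the diagonal $L\sl_{2}$ at level $\ell + 2$ (a central-charge check gives $c_{m} = \frac{3}{2}(1 - \frac{8}{m(m+2)})$). On the vacuum this presents $H_{00}^{\ell}$ as the vacuum coset component inside $H^{\ell} \otimes \F$, where $H^{\ell}$ is the level-$\ell$ loop-group vacuum module and $\F$ the fermionic Fock space, and it presents $\M_{00}^{\ell}(I)$ as the compression to $H_{00}^{\ell}$ of $\A(I)' \cap \B(I)$, with $\B(I) = \pi_{\ell}(I) \vee \F(I)$ the ambient local algebra and $\A(I)$ the local algebra of the diagonal level-$(\ell+2)$ loop group.

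The engine is then ambient duality plus Takesaki devissage across the coset. The fermion net satisfies graded duality $\F(I) = \F(I^{c})^{\natural}$ on its Fock vacuum, a classical property of the CAR algebra on $\SSS^{1}$, and the level-$\ell$ loop-group net satisfies Haag-Araki duality by Wassermann's theorem; since the graded commutant of a graded product factorises, the ambient net inherits graded duality, $\B(I) = \B(I^{c})^{\natural}$. The diagonal net $\A$ also satisfies Haag duality, and the ambient vacuum state is invariant under the one-parameter M\"obius subgroup preserving $I$ that implements its modular flow, so the modular group leaves $\A(I)$ globally invariant; Takesaki's theorem on modular-invariant subalgebras then provides a normal, modular-compatible conditional expectation $\B(I) \to \A(I)$. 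This is exactly the device that lets one intersect the ambient duality relation with $\A(I)'$ and transport it to the relative commutant, yielding $\M_{00}^{\ell}(I^{c})^{\natural} \subseteq \M_{00}^{\ell}(I)$ after compression to $H_{00}^{\ell}$.

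I expect the main obstacle to be the local generation statement that underpins the devissage: that the diagonal algebra and the coset algebra together generate the ambient local algebra, $\B(I) = \A(I) \vee \M_{00}^{\ell}(I)$, a strong-additivity input without which the relative commutant of the complementary region is strictly larger than the coset and duality fails. A naive commutant calculation on $H^{\ell} \otimes \F$ indeed over-counts, so the argument must be run after compression to $H_{00}^{\ell}$, tracking the $\ZZZ_{2}$-grading so that the ordinary commutants of the bosonic loop-group pieces assemble into the supercommutant $\natural$ of the super-Virasoro coset. I would establish the local generation from the finite coset decomposition of $H^{\ell} \otimes \F$ into loop-group $\times$ super-Virasoro bimodules, combined with the von Neumann density and the braiding/transport results proved in the primary-field sections, which show there are no extra intertwiners localised in $I$.
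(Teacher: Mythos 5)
Your first inclusion is fine and coincides with the paper's (formal supercommutation of fields smeared in $I$ and $I^{c}$, upgraded to supercommutation of bounded functions via the Sobolev estimates and Nelson's theorem), and your general philosophy for the converse --- GKO coset inside an ambient fermionic net, ambient twisted duality, Takesaki devissage --- is indeed the philosophy of the paper (section \ref{richarder}). But the specific mechanism you propose breaks at exactly the point you flag. The lemma you present as the crucial input, the local generation statement $\B(I)=\A(I)\vee\M_{00}^{\ell}(I)$, is \emph{false} for every $\ell$, so it cannot be ``established from the coset decomposition and braiding'' as you hope. Indeed, the ambient vacuum space decomposes under (coset $\Vir_{1/2}$)$\,\oplus\,$(diagonal loop algebra) as
\begin{displaymath}
\F_{NS}^{\gg}\otimes H_{0}^{\ell}=\bigoplus_{j} H_{0j}^{\ell}\otimes H_{j}^{\ell+2},
\end{displaymath}
always with at least two blocks ($j=0,1$), and the (even) block projections $p_{j}$ commute with the global diagonal action and with the global coset action, hence with $\A(I)\vee\pi_{gko}(\Vir_{1/2}(I))''$ for \emph{every} interval $I$. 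No nontrivial projection can commute with the ambient local algebra: if $p_{j}\in\B(I)'$ with $j\neq 0$, then $p_{j}\B(I)\Omega=\B(I)p_{j}\Omega=0$ since $\Omega$ lies in the $j=0$ block, and $\B(I)\Omega$ is dense by Reeh--Schlieder, forcing $p_{j}=0$, a contradiction. Hence $\A(I)\vee\pi_{gko}(\Vir_{1/2}(I))''\subsetneq\B(I)$ strictly; note also that duality does not ``fail'' because of this, contrary to your remark --- it is simply not provable by your route. A second, independent gap is your starting point: the identification of $\M_{00}^{\ell}(I)$ with the compression to $H_{00}^{\ell}$ of the relative commutant $\A(I)'\cap\B(I)$ is asserted, not proved; one inclusion is easy, but the other is the hard ``coset equals relative commutant'' problem, at least as deep as the duality you are proving. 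The paper never uses it (its ``generalized Haag-Araki duality'' is a corollary of the theorem, not an input).

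The paper's proof shows how to avoid both obstacles: it never tries to generate $\B(I)$ and never passes through relative commutants of the diagonal algebra. Instead, devissage is applied to the coset subalgebra itself inside the purely fermionic net $\N(I)$: locality plus ambient twisted duality give $\pi_{gko}(\Vir_{1/2}(I))''\subset\N(I)$; covariance (lemma \ref{requiem}) shows this subalgebra is globally invariant under the geometric (M\"obius) modular flow of $\N(I)$, so Takesaki devissage yields the Jones relation $P\N(I)P=\pi_{gko}(\Vir_{1/2}(I))''P$, with $P$ the projection onto the closure of $\pi_{gko}(\Vir_{1/2}(I))''\Omega$. Since the vacuum is modular-invariant, $P$ commutes with $J$, $\triangle^{it}$ and the Klein transformation $\kappa$, and conjugating the Jones relation by $\kappa J$ converts the ambient duality $\N(I)^{\natural}=\N(I^{c})$ directly into duality for the compressed algebra: the multiplicity and block structure is cut away by $P$, never generated. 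If you want to keep your relative-commutant formulation, you must replace your generation lemma by this compression (Jones relation) mechanism; as written, your argument has no valid path to the inclusion $\M_{00}^{\ell}(I^{c})^{\natural}\subseteq\M_{00}^{\ell}(I)$.
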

  As a failure of Haag-Araki duality out of the vacuum, we have:
\begin{theorem} (Jones-Wassermann subfactor) 
\begin{displaymath} \M_{ij}^{\ell}(I) \subset   \M_{ij}^{\ell}(I^{c})^{\natural}       \end{displaymath}    
It's a finite depth, irreducible,  hyperfinite III$_{1}$-subfactor, isomorphic to  the hyperfinite III$_{1}$-factor $\R_{\infty}$ tensor  the II$_{1}$-subfactor : 
\begin{displaymath}  (\bigcup \CCC \otimes End_{\Vir_{1/2}} (H_{ij}^{\ell})^{\boxtimes n})'' \subset (\bigcup End_{\Vir_{1/2}} (H_{ij}^{\ell})^{\boxtimes n+1})''      \end{displaymath}  
 of  index  $\frac{sin^{2}(p \pi /  m)}{sin^{2}(\pi / m)}.  \frac{sin^{2}(q \pi /  (m+2))}{sin^{2}(\pi / (m+2))}$,  with \ $p = 2i+1$, $q = 2j+1$, $m =  \ell +2$.
 \end{theorem}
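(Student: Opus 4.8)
The plan is to derive every assertion from the Connes fusion rules stated above, together with the von Neumann density and the irreducibility obtained earlier through primary fields. First I would record that $H_{ij}^{\ell}$ is an irreducible bimodule over the pair $(\M_{ij}^{\ell}(I), \M_{ij}^{\ell}(I^{c}))$: the left action of $\M_{ij}^{\ell}(I)$ supercommutes with the right action of $\M_{ij}^{\ell}(I^{c})$, whence $\M_{ij}^{\ell}(I) \subseteq \M_{ij}^{\ell}(I^{c})^{\natural}$. The inclusion is a genuine proper subfactor because the vacuum identity $\M_{00}^{\ell}(I) = \M_{00}^{\ell}(I^{c})^{\natural}$ of the Haag-Araki duality fails off the vacuum, the failure being measured precisely by the nontrivial fusion $H_{ij}^{\ell} \boxtimes H_{ij}^{\ell}$. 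Irreducibility of the subfactor, i.e. triviality of the relative supercommutant $\M_{ij}^{\ell}(I)' \cap \M_{ij}^{\ell}(I^{c})^{\natural} = \CCC$, would follow from the irreducibility of $H_{ij}^{\ell}$ as a bimodule, itself a consequence of the leading-term OPE of smeared primary fields and the von Neumann density.

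Second, I would fix the type and the internal structure. The local algebras $\M_{ij}^{\ell}(I)$ are hyperfinite III$_{1}$ factors by the Takesaki devissage and coset construction of the earlier sections, so the inclusion is a hyperfinite III$_{1}$ subfactor. To expose its structure I would pass to the bimodule picture and its Connes fusion powers $(H_{ij}^{\ell})^{\boxtimes n}$, which produce the increasing tower of finite-dimensional intertwiner algebras $End_{\Vir_{1/2}}((H_{ij}^{\ell})^{\boxtimes n})$. The transport formula identifies these with the higher relative supercommutants of the Jones tower of $\M_{ij}^{\ell}(I) \subset \M_{ij}^{\ell}(I^{c})^{\natural}$, and the finiteness of the discrete series at level $\ell$ bounds the number of irreducible summands, giving finite depth. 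Invoking the continuous decomposition of III$_{1}$ factors, one shows that a subfactor of finite statistical dimension arising from such a bimodule splits as the tensor of a common hyperfinite III$_{1}$ factor $\R_{\infty}$ with the II$_{1}$ subfactor $(\bigcup \CCC \otimes End_{\Vir_{1/2}}((H_{ij}^{\ell})^{\boxtimes n}))'' \subset (\bigcup End_{\Vir_{1/2}}((H_{ij}^{\ell})^{\boxtimes n+1}))''$, obtained as the weak closures against the Markov trace furnished by the quantum dimensions, the shift $n \mapsto n+1$ producing the inclusion.

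Third, I would compute the index, which by the general theory equals the square of the statistical (quantum) dimension $d_{ij}$ of $H_{ij}^{\ell}$, i.e. the Perron-Frobenius eigenvalue of the fusion matrix read off from the Connes fusion rules. Since the fusion rule factorizes over the two channels — the $i$-labels fusing by $\langle \cdot, \cdot \rangle_{\ell}$ and the $j$-labels by $\langle \cdot, \cdot \rangle_{\ell+2}$, the GKO coset signature — the fusion matrix is a tensor product of two truncated $SU(2)$ fusion matrices, whose Perron-Frobenius eigenvalues are the $SU(2)$ quantum dimensions $\sin(p\pi/m)/\sin(\pi/m)$ and $\sin(q\pi/(m+2))/\sin(\pi/(m+2))$, with $p = 2i+1$, $q = 2j+1$, $m = \ell+2$. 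Hence $d_{ij}^{2} = \frac{\sin^{2}(p\pi/m)}{\sin^{2}(\pi/m)} \cdot \frac{\sin^{2}(q\pi/(m+2))}{\sin^{2}(\pi/(m+2))}$, the claimed value.

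The main obstacle I expect is the structural splitting of the second step: rigorously realizing the III$_{1}$ subfactor as $\R_{\infty}$ tensor a II$_{1}$ subfactor requires matching the modular/continuous decomposition of the local III$_{1}$ algebras with the categorical tower, and verifying that the transport formula identifies the higher relative supercommutants with the intertwiner spaces in a trace-preserving way, so that the Markov trace is the restriction of the III$_{1}$ state. The Neveu-Schwarz grading adds bookkeeping throughout — commutants must be replaced by supercommutants — but the essential analytic input, namely the density and the irreducibility, is already in hand.
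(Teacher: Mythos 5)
Your proposal is correct and follows essentially the same route as the paper: the inclusion, irreducibility and hyperfinite III$_{1}$ structure are quoted from the earlier local-algebra and von Neumann density results, finite depth comes from the finiteness of the discrete series at charge $c_{m}$, and the index is the square of the quantum dimension obtained by Perron-Frobenius from the coset-factorized fusion rules. The only difference is packaging: where you sketch the identification of the higher relative (super)commutants with the intertwiner tower and the splitting off of $\R_{\infty}$ via the continuous decomposition and Markov trace, the paper simply cites Wenzl for the former and Popa for the latter.
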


\subsection{Local von Neumann algebras}  For the loop algebra $L\gg$ and the Virasoro algebra $\Vir$, we can work with the cooresponding groups: $LG$ and Diff$(\SSS^{1})$. For the Neveu-Schwarz algebra, there is no group corresponding to the supergenerators $G_{r}$, and so we need to work with unbounded operators.  From the $\gg$-supersymmetric algebra $\widehat{\gg}$, we build a local Lie superalgebra $\widehat{\gg}(I)$, with $I$ a proper interval of $\SSS^{1}$, by smearing with the smooth functions vanishing outside of $I$.  In the same way, we define the local Neveu-Schwarz Lie superalgebra $\Vir_{1/2}(I)$. Thanks to Sobolev estimates, these local algebras (containing unbounded operators) are represented continuously on the $L_{0}$-smooth completion of their positive energy representation. Now, we define the von Neumann algebras generated by these local algebras  as the von Neumann algebra generated by the bounded functions of our self-adjoint operators;  they are $\ZZZ_{2}$-graded von Neumann algebras. Now, $\widehat{\gg}$ acts on  a complex and real fermionic Fock space which decomposes into all its irreducible positive energy representations (with multiplicity spaces), and by coset construction we can do the same with $\Vir_{1/2}$. Then, we see that the previous von Neumann algebras  are included with conditional expectation in a big von Neumann algebra $\M(I)$  generated  by smeared real and complex fermions, which is known (by \cite{2} and a doubling construction) to be the hyperfinite III$_{1}$ factor; now, the modular action is ergodic, so by Takesaki devissage, $\N(I) = \pi(\Vir_{1/2}(I))''$ is also the hyperfinite III$_{1}$ factor, and by the definition of type III, so is for every subrepresentations, so in particular for $\pi_{i}(\Vir_{1/2}(I))''$, with $\pi_{i}$ a generic irreducible positive energy representation.  We deduce  local equivalence, ie, the discrete series representations are unitary equivalent when they are restricted to $\Vir_{1/2}(I)$; we deduce also Haag-Araki duality: 
\begin{center} $\pi_{0}(\Vir_{1/2}(I^{c}))^{\natural} = \pi_{0}(\Vir_{1/2}(I))''$ \end{center} with $X^{\natural}$ the supercommutant of $X$, from the known Haag-Araki duality of $\M(I)$, because the vacuum vector of $H_{0}$ is invariant by the modular operator $\triangle$ of $\M(I)$.   Outside of the vacuum, we have a Jones-Wassermann subfactor: \begin{center} $ \pi_{i}(\Vir_{1/2}(I))'' \subset \pi_{i}(\Vir_{1/2}(I^{c}))^{\natural}$  \end{center}as a failure of Haag-Araki duality. 
\subsection{Primary fields}  
Let $p_{0}$ be the projection on the vacuum representation $H_{0}$. The Jones relation $p_{0} \M(I) p_{0} = \N(I)p_{0}$, implies that
 $ \pi_{0}(\Vir_{1/2}(I))''$ is generated by products of compressed real and complex fermions: $p_{0} \psi_{1}(f_{1}) p_{i_{1}} \psi_{2}(f_{2}) p_{i_{2}}... \psi_{n}(f_{n})p_{0}$, with $p_{i}$ the projection on $H_{i} \subset H$ and $f_{s}$ localized in $I$. The $p_{i} \psi(f) p_{j}$ are bounded operators intertwining the action of $\Vir_{1/2}(I^{c})$ between the representations $H_{i}$ and $H_{j}$.  We want to interpret these compressions as smeared primary fields.
 We define a primary field as  a  linear operator: 
 \begin{center} $\phi^{k}_{ij} :  H_{j} \otimes \F^{\sigma}_{\lambda , \mu}  \to  H_{i}$  \end{center} that superintertwines the action of $\Vir_{1/2}$; with $H_{i}$, $H_{j}$on the discrete series  of $\Vir_{1/2}$ ($k$ is called the charge of $\phi^{k}_{ij}$), and   $\F^{\sigma}_{\lambda , \mu} $ an ordinary representation of $\Vir_{1/2}$ with base $(v_{i})_{i \in \ZZZ +\frac{\sigma}{2}}$, $(w_{j})_{j \in \ZZZ + \frac{1-\sigma}{2} }$, and: 
  \\   \textbf{(a)}   $L_{n}.v_{i}= -(i+\mu +  \lambda n)v_{i+n}$ 
\\  \textbf{(b)}   $G_{s}.v_{i} = w_{i+s} $
\\  \textbf{(c)}  $L_{n}.w_{j} = -(j +\mu + ( \lambda - \frac{1}{2})n) w_{j+n}$
\\  \textbf{(d)} $G_{s}.w_{j}= -(j+\mu + (2 \lambda - 1)s )v_{j+s}$
\\  with $\lambda = 1-h_{k} $, $\mu = h_{j} - h_{i} $,  $\sigma = 0 , 1$.  

Let the space of densities $\{ f(\theta) e^{i \mu \theta} (d \theta)^{\lambda} Ê\vert  f \in C^{\infty}(\SSS^{1}) \}$  where  a finite covering of Diff$(\SSS^{1})$ acts by reparametrisation $\theta \to \rho^{-1}(\theta)$ (if $\mu \in \QQQ$). Then its Lie algebra acts on too, so that it's a $\Vir$-module vanishing the center. Finally, an equivalent construction with superdensities gives a model for $\F^{\sigma}_{\lambda , \mu} $ as $\Vir_{1/2}$-module.

This primary field is equivalent to general vertex operators  $\phi_{ij}^{k}(z)$ (called the ordinary part) and $\theta_{ij}^{k}(z) = [G_{-1/2} , \phi_{ij}^{k}(z)]$ (called the super part),  and we prove that for $i, j, k$ and $\sigma $  fixed,   such operators are completeley characterized by some compatibility conditions, so the space of primary fields associated is at most one dimensional. Note that $\sigma = 0$ gives $\phi_{ij}^{k}$ integer moded  and $\sigma = 1$, half-integer moded. For charge $\alpha = (1/2 , 1/2)$, we build these operators in the following way (an adaptation of an idea of Loke for $\Vir$ \cite{loke}, simplify by A. Wassermann): we start from the GKO coset construction $\F_{NS}^{\gg}  \otimes H_{i}^{\ell} =  \bigoplus H_{ii'}^{\ell} \otimes  H_{i'}^{\ell +2}$, we take the vertex primary field of $LSU(2)$ of level $\ell$ and spin $1/2$:  $I \otimes \phi_{ij}^{1/2, \ell}(z,v) : \F_{NS}^{\gg}  \otimes H_{j}^{\ell} \to \F_{NS}^{\gg}  \otimes H_{i}^{\ell}$,  with $v \in V_{1/2}$ (the vector representation of $SU(2)$).  Let $p_{i'} $ be the projection on the block $H_{ii'}^{\ell} \otimes  H_{i'}^{\ell +2}$. By compatibility relations and unicity,  $p_{i'} (I \otimes \phi_{ij}^{1/2, \ell }(z,v) ) p_{j'}  =
C.z^{r}\phi_{ii'jj'}^{\alpha}(z) \otimes \phi_{i'j'}^{\1/2  , \ell+2 }(z,v)$, with   $C $ a constant possibly zero and $r\in \QQQ$.  Now, $I \otimes \phi_{ij}^{1/2 ,  \ell }(z,v) = \sum_{i'j'}p_{i'} (I \otimes \phi_{ij}^{1/2 \ell }(z,v) ) p_{j'} $, so at least one is non-zero. More precisely, we prove by an irreducibility argument that $\forall j' $, $\exists i'$ with a non-zero term, and so $\phi_{ii'jj'}^{\alpha}(z)$ non-zero. Note that the simple locality relations between non-compressed smeared fermions concentrated on disjoint intervals ( ie $\psi(f)\psi(g) = -\psi(g)\psi(f)$), admit a bit more complicated equivalent after compression: the braiding relations.

 Now using the same idea as Tsuchiya-Nakanishi \cite{tsunaka}, we deduce the braiding relations for $\Vir_{1/2}$: its braiding matrix is the braiding matrix for $LSU(2)$ at level $\ell$, times the transposed of the inverse of the braiding matrix for $LSU(2)$ at level $\ell + 2$ (it's proved by the contribution of the inverse of a gauge transformation of the Knizhnik-Zamolodchikov equation for the braiding of $LSU(2)$). Then, we obtain non-zero coefficients:
\begin{center}
 $\phi_{ii'jj'}^{\alpha \ell}(z) \phi_{jj'kk'}^{\alpha \ell}( w) = \sum  \mu_{rr'}  \phi_{ii'rr'}^{\alpha \ell}( w) \phi_{rr'kk'}^{\alpha \ell}(z)$  with $\mu_{rr'}  \ne 0 $.
\end{center}
 Now if $\phi_{ii'jj'}^{\alpha} = 0$ and  $\phi_{ij}^{1/2, \ell }$ and $\phi_{i'j'}^{\1/2  , \ell+2 }$ non-zero, then, the braiding relation of $\phi_{ii'jj'}^{\alpha} $ with its adjoint is zero, but produced some non-zero terms $\phi_{ii'kk'}^{\alpha}$ by the previous irreducibility argument, contradiction. Then, we see that $\phi_{ii'jj'}^{\alpha}$ is non-zero iff $\phi_{ij}^{1/2 \ell }$ and $\phi_{i'j'}^{1/2  , \ell+2 }$ are non-zero, ie, $i'  = i \pm 1/2 $ and $j' = j \pm 1/2$ (up to some boundary restrictions).  Now, for charge $\beta = (0,1)$ and the braiding with $\alpha$, we do the same, from the Neveu-Schwarz fermion field $\psi(u,z) \otimes I$ commuting with $I \otimes \phi_{ij}^{\1/2 ,  \ell }(v,w) $. 
 
 Next, by a convolution argument, the braiding  runs also  with two smeared primary fields concentrate on disjoint intervals.  
 We deduce also that  the von Neumann algebras $\pi_{0}(\Vir_{1/2}(I))''$ are generated by chains of  primary fields. This new characterization is essential to prove the so-called von Neumann density: if $I $ is a proper interval of $\SSS^{1} $ and   $I_{1}$,  $I_{2 } $ are the intervals obtained by removing a point of $I$  then,    $\pi_{i}(\Vir_{1/2}^{I_{1}})'' \vee \pi_{i}(\Vir_{1/2}^{I_{2}})''= \pi_{i}(\Vir_{1/2}(I))''   $. By local equivalence, we only need to prove it on the vacuum, on which  the local algebra on $I$ as generated by chains concentred on $I $. By linearity, the $L^{2}$-context, and a kind of OPE,  we can separate into products of chains on $I_{1} $ and $I_{2}$.  Next the von Neumann density implies the irreducibility of the Jones-Wassermann subfactor: 
$\pi_{i}(\Vir_{1/2}(I))^{\natural} \cap \pi_{i}(\Vir_{1/2}(I^{c}))^{\natural} = \CCC$,  which significate that the representations $H_{i}$ are irreducibles $\Vir_{1/2}(I) \oplus \Vir_{1/2}(I^{c})$-modules.

\subsection{Connes fusion and subfactors}  
Then, the discrete series representations are   irreducibles bimodules over the local von Neumann algebra $\M = \pi_{0}(\Vir_{1/2}(I))''$. We define a relative tensor product called Connes fusion $\boxtimes$ using  a $4$-points functions: \\ÊConsider the  $\ZZZ_{2}$-graded  $\M$-$\M$ bimodule $Hom_{-\M}(H_{0} , H_{i} ) \otimes Hom_{\M-}(H_{0} , H_{j} )$, we define a pre-inner product on by: 
\begin{center} $(x_{1} \otimes y_{1} , x_{2} \otimes y_{2}) =(-1)^{(\partial x_{1} + \partial x_{2}) \partial y_{2} } ( x_{2}^{\star}x_{1}y_{2}^{\star}y_{1} \Omega , \Omega ) $  \end{center}
The $L^{2}$-completion is also a  $\ZZZ_{2}$-graded  $\M$-$\M$ bimodule, called the Connes fusion between  $H_{i}$ and $H_{j}$ and noted $H_{i}  \boxtimes H_{j}$. The fusion is associative.

 We obtain a fusion ring for $\oplus $ and $\boxtimes$. The key tool to compute this fusion is the transport formula which gives explicitly how  the  chains on the vacuum representation, transform into chains on any representations through the intertwining relations. Thanks to the braiding relations known at charge $\alpha $, we are able to prove the transport formula:
  \begin{center} $\pi_{j} (\bar{a}_{0 \alpha}.a_{\alpha 0}) = \sum \lambda_{k} \bar{a}_{jk}.a_{kj} \quad   \textrm{with} \  \lambda_{k} > 0. $ \end{center}
 with $a_{kj}$ a charge $\alpha$ (ordinary part, so even) smeared primary field of $\Vir_{1/2}$ between $H_{j}$ and $H_{k}$ concentrated on $I$,    $\bar{a}_{jk} = a_{kj}^{\star}$, 
and   $\pi_{j}: H_{0} \to H_{j}$ the local equivalence. Now,  $a_{\alpha 0} \in Hom_{-\M}(H_{0} , H_{\alpha} )$, so:

\begin{center} $\Vert a_{\alpha 0} \otimes y \Vert^{2} =  (a_{\alpha 0}^{\star}a_{\alpha 0}y^{\star}y \Omega , \Omega ) = (y^{\star} \pi_{j}(a_{\alpha 0}^{\star}a_{\alpha 0})y\Omega , \Omega) = \sum \lambda_{k} \Vert a_{kj}y\Omega \Vert^{2}$.   \end{center}
Then using the fact that $a_{\alpha 0} \M$ is dense in $Hom_{-\M}(H_{0} , H_{\alpha} )$ (by von Neumann density), a polarization and the irreduibility of the bimodules, we obtain a unitary map between $H_{\alpha} \boxtimes H_{j}$ and $\bigoplus_{k \in \langle \alpha , j \rangle} H_{k}$, with $k \in \langle \alpha , j \rangle$ iff $\phi_{jk}^{\alpha}$ is a non-zero primary field. We obtain the fusion rule with $\alpha$: 
  \begin{center} $H_{\alpha} \boxtimes H_{j} = \bigoplus_{k \in \langle \alpha , j \rangle} H_{k}.         $     \end{center}

  Now, idem, with the  braiding relations  between charge $\alpha$ and $\beta$ primary fields, we obtain a partial transport formula and partial fusion rules with $\beta$:
 \begin{center} $H_{\beta} \boxtimes H_{j} \le \bigoplus_{k \in \langle \beta , j \rangle} H_{k}.       $     \end{center} 
  
 But,  the fusion rules with $\alpha$ permit to compute a character of the fusion ring called the quantum dimension (by Perron-Frobenius theorem). An easy way to comptute the quantum dimensions is to see that the fusion ring for the Neveu-Schwarz algebra at charge $c_{m}$ is the tensor product of the fusion rings for the loop algebra at level $\ell$ and $\ell + 2$ (with $m = \ell + 2$), modulo a period two automorphism. Then the quantum dimensions for the Neveu-Schwarz algebra is a product of the two (coset corresponding) quantum dimensions for the loop algebra: 
 \begin{center}  $d(H_{ij}^{\ell}) = d(H_{i}^{\ell}).d(H_{j}^{\ell +2}) = \frac{sin((2i+1) \pi /  (\ell+2))}{sin(\pi / (\ell + 2))}.  \frac{sin((2j+1)\pi /  (\ell+4))}{sin(\pi / (\ell + 4))}$  \end{center}
 
   Then the quantum dimensions show that these partial rules with  $\beta$ are the exact ones. Next, we see that the rules for $\alpha$ and $\beta$ permit to compute all fusion rules.   Finally, the Jones-Wassermann  III$_{1} $-subfactors are isomorphic to II$_{1}$-subfactors tensor the hyperfinite  III$_{1} $-factor, by H. Wenzl \cite{wenzl} and S. Popa \cite{popa}. These last subfactors are irreducibles,  finite depth and   finite index given by the square of the quantum dimensions.

\newpage
\section{Local von Neumann algebras}
 \subsection{Recall on von Neumann algebras}
  Let $H$ be an Hilbert space and $\A $ a unital $\star$-algebra of bounded operators.
   \begin{definition} The commutant $\A '$ of $\A$ is the set of $b \in B (H)$ such that,  $\forall a \in \A$, then $[a,b] := ab - ba =  0$  \end{definition}
  \begin{definition} The weak operator  topology closure $\bar{\A}$ of $\A$ is the set of $a \in B(H)$ such that $\exists a_{n} \in \A$ with $(a_{n}\eta , \xi) \to (a\eta , \xi)$, $\forall \eta , \xi \in H$.    \end{definition}
 \begin{reminder} (Bicommutant theorem) \  Let  $\M$ be  a unital $\star$-algebra, then: 
 \begin{center}  $\M'' = \M$ $\iff$  $ \bar{\M}=\M$  \end{center} \end{reminder}  
 \begin{definition} Such a $\M$ verifying one of these equivalents properties is called a von Neumann algebra.   \end{definition}
 \begin{definition} A factor is a von Neumann algebra $\M$ with $\M \cap \M'= \CCC$.   \end{definition}
 \begin{reminder} (Murray and von Neumann theorem)  The set of all the factors on $H$ is a standard borelian space $X$ and every von Neumann algebra $\M$ decompose into a direct integral of factors:  $\M = \int_{X}^{\oplus} \M_{x} d\mu_{x}$ \end{reminder}
 \begin{reminder} (Murray and von Neumann's classification of factors) \\ Let   $ \M \subset B(H) $ be a factor. We shall consider $H$ as a representation of $\M'$. Thus subrepresentations of $H$ correspond to projections in $\M$.  If $p,q  \in \M $ are projections, then $pH$ and $qH$ are unitarily equivalent as representations of $\M'$ iff there is a partial isometry $ u  \in \M$ between $pH$ and $qH$; thus $u^{*}u = p$ and $uu^{*} = q$. We can immediately distinguish three mutually exclusive cases. \\
I. $ \  \ $ $ H $ has an irreducible subrepresentation.  \newline 
II. $ \ $ $ H $ has no  irreducible subrepresentation, but has a subrepresentation not equivalent to any proper subrepresentation of itself.  \newline
III. $H $ has no  irreducible subrepresentation and every subrepresentation is equivalent to some proper subrepresentation of itself.  \newline 
 We shall call $\M $ a factor of type I , II or III acording to the above cases.
  \end{reminder}
  
  \begin{reminder} The type I and II corresponds to factors admitting non-trivial trace,  with only integer values on the projectors  for the type I ($M_{n}(\CCC)$ or $B(H)$),  and non-integer  values for the type II (factors generated by ICC  groups for example).  On the type III,  the values are only $0$ or $\infty$. \end{reminder}
  
   \begin{reminder} (Tomita-Takesaki theory) 
     We suppose the existence of a vector $\Omega$ (called vacuum vector) such that $\M\Omega$ and $\M' \Omega$ are dense in $H$ (ie  $\Omega$ is cyclic and separating).
       Let $S: H \to H$ the closure of the antilinear map: $\star : x\Omega \to x^{\star}\Omega$. Then, $S$ admits the  polar decomposition $S = J \triangle^{\1/2}$ with $J$ antilinear unitary, and $\triangle^{\1/2}$ positve; so that $J \M J = \M' $,  $\triangle^{it} \M \triangle^{-it}  = \M$ and  
$\sigma_{t}^{\Omega} (x) =\triangle^{it} x \triangle^{-it} $ gives the one parameter modular group action. 
\end{reminder}
\begin{reminder} (Radon-Nikodym theorem)  Let $\Omega'$ be another vacuum vector, then there exists a Radon-Nikodym map $u_{t} \in \U ( \M)$, define such that $u_{t+s} = u_{t} \sigma_{t}^{\Omega'} (u_{s})$ and  $\sigma_{t}^{\Omega'} (x)  = u_{t} \sigma_{t}^{\Omega}(x) u^{\star}_{t}$.   Then, modulo $Int(\M)$, $\sigma_{t}^{\Omega} $ is independant of the choice of $\Omega$, ie, there exist an intrinsic $\delta : \RRR \to  Out (\M) = Aut(\M)/Int(\M)$.
On type I or II  the modular action is internal, and so $\delta$ trivial. It's non-trivial for type III.     \end{reminder}
\begin{definition} We can then define two invariants of $\M$,  $T(\M) = ker (\delta)$ and  $\S (\M) = Sp(\delta) = \bigcap Sp(\triangle_{\Omega}) \setminus \{ 0Ê\}$ called the  Connes spectrum of $\M$. 
  \end{definition} 
\begin{reminder} (see \cite{connes})  Let $\M$ be a type III factor, then $ \S (M ) = \{1\}$, $\lambda^{\ZZZ} $  or $\RRR_{+}^{\star}$, and then, $\M$ is called a 
 III$_{0}$, III$_{\lambda}$ or III$_{1}$ factor (with $0< \lambda < 1$ ). \end{reminder}
  \begin{reminder} Let $\M \ne \CCC$ be a von Neumann algebra on $(H,\Omega)$ then it's a III$_{1}$ factor if and only if the modular action (i.e. the action of $\RRR$ on $\M$ via $\sigma_{t}^{\Omega} $)  is ergodic (i.e. it fixes only the scalar operators). \end{reminder}
  
   \subsection{$\ZZZ_{2}$-graded von Neumann algebras}
    \begin{definition} \label{quandm} A $\ZZZ_{2}$-graded von Neumann algebra $(\M, \tau)$ is a von Neumann algebra $\M$ given with a period two automorphism: $\tau \in Aut(\M)$ and $\tau^{2} = I$. Now $\forall x \in \M$, $x = x_{0} + x_{1}$ with $x_{0} = \1/2(x+\tau(x))$ and $x_{1} =  \1/2(x-\tau(x))$ called the even and the odd part of $x$. Then $\tau(x_{0}) = x_{0}$ and $\tau(x_{1}) = -x_{1}$. Hence $\M  = \M_{0} \oplus \M_{1}$; if $a \in  \M_{\varepsilon_{1}}$ and $b \in  \M_{\varepsilon_{2}}$ then $a.b \in \M_{\varepsilon_{1} + \varepsilon_{2}} $.   \end{definition}
   \begin{definition} A $\ZZZ_{2}$-graded Hilbert space is an Hilbert space given with a period two unitary operator: $u \in \U(H)$ and $u^{2} = I$, 
so that  $H= H_{0} \oplus H_{1}$, with $H_{0}$ and $H_{1}$  the eigenspaces of $u$ for the eigenvalues $1$ and $-1$. Let $p_{0}$ and $p_{1}$ the corresponding projection, then $u = p_{0}-p_{1} = 2p_{0}-1$. \end{definition} 
    \begin{remark} Let $\M$ be a von Neumann algebra on $H$ with $\Omega$ its cyclic, separating vector. Then a period two automorphism $\tau$ of $\M$ gives a period two unitary operator $u$ of $H$ by $u: x\Omega \to \tau(x) \Omega$. Conversely, a period two  unitary operator $u$ of $H$ with $u.\Omega = \Omega$ gives $\tau \in Aut(\M)$ by $\tau(x) = uxu$.    \end{remark} 
  
  \begin{definition} Let $x \in B(H)$, then, $\tau(x) = uxu$ defined a period two automorphism on $B(H)$. Then as for definition \ref{quandm}, $x = x_{0} +x_{1}$. \\ÊWe see that $x_{0} = p_{0}xp_{0} + p_{1}xp_{1}$   and  $x_{1} = p_{1}xp_{0} + p_{0}xp_{1}$. \end{definition} 
 \begin{definition} (Supercommutator) \\  Let $[x,y]_{\tau} =[x_{0},y_{0}]+[x_{0},y_{1}]+[x_{1},y_{0}]+ [x_{1},y_{1}]_{+}$  \end{definition}
  \begin{remark} A projection $p$ is even, then, $\forall x \in B(H)$, $[x,p]_{\tau} = [x,p]$.\\  In particular $[x,I]_{\tau} = [x,I] = 0$.  \end{remark} 
 \begin{definition} The supercommutant $\A^{\natural}$ of $\A$ is the set of $b \in B (H)$ such that,  $\forall a \in \A$, then $[a,b]_{\tau} = 0$.   \end{definition}
    \begin{definition} Let $\kappa = p_{0} + i p_{1}$  the Klein transformation.   \end{definition}
 \begin{remark} $\kappa$ is unitary, $\kappa^{-1} = \kappa^{\star} = p_{0} - i p_{1}$ and $\kappa^{2} = u$.  \end{remark}
  \begin{remark} \label{relll} $u x_{0} u = x_{0}$, $u x_{1} u = -x_{1}$, 
  $\kappa x_{0} \kappa^{\star} = x_{0}$, $\kappa x_{1} \kappa^{\star} = -i u x_{1}$  \end{remark}
      \begin{lemma} \label{naturals} Let $\A$ be a von Neumann algebra $\ZZZ_{2}$-graded for $\tau$, then: 
      \begin{center}  $\A^{\natural} = \kappa \A' \kappa^{\star}.$  \end{center}   \end{lemma}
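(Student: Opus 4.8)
The plan is to prove the equivalent reformulation that conjugation by the Klein transformation carries the ordinary commutant onto the supercommutant: for every $b \in B(H)$ one has $b \in \A^{\natural}$ if and only if $\kappa^{\star} b \kappa \in \A'$. Writing $\Phi(b) = \kappa^{\star} b \kappa$, which is a linear bijection of $B(H)$ because $\kappa$ is unitary, this says precisely $\A^{\natural} = \Phi^{-1}(\A') = \kappa \A' \kappa^{\star}$, which is the assertion of Lemma \ref{naturals}. So the whole proof reduces to showing that $\Phi$ intertwines the supercommutation condition $[a,b]_{\tau}=0$ with the commutation condition $[a,\Phi(b)]=0$.

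First I would reduce everything to homogeneous elements. Since $\A$ is $\ZZZ_{2}$-graded we have $\A = \A_{0} \oplus \A_{1}$, and since the super-bracket is additive in its first argument it suffices to test against homogeneous $a$. Decomposing $b = b_{0}+b_{1}$, I observe that for homogeneous $a$ the two pieces $[a,b_{0}]_{\tau}$ and $[a,b_{1}]_{\tau}$ (respectively $[a,b_{0}]$ and $[a,b_{1}]$) have opposite parities, so each bracket vanishes iff both homogeneous summands are annihilated; hence \emph{both} $\A^{\natural}$ and $\A'$ split as the direct sum of their even and odd parts, and it is enough to treat homogeneous $b$. I would also record that $\Phi$ is grading-preserving: the analogue of Remark \ref{relll} for $\kappa^{\star}(\cdot)\kappa$ gives $\Phi(b_{0}) = b_{0}$ and $\Phi(b_{1}) = i u b_{1}$, and $u b_{1}$ is odd because $\tau(u b_{1}) = u(u b_{1})u = b_{1}u = -u b_{1}$.

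The core is then a four-case computation according to the parities of the homogeneous pair $(a,b)$, using $\Phi(b)=b$ when $b$ is even and $\Phi(b)=iub$ when $b$ is odd, together with $au=ua$ for even $a$ and $au=-ua$ for odd $a$. Whenever $a$ or $b$ is even the super-bracket coincides with the ordinary bracket, and one gets $[a,\Phi(b)] = [a,b]_{\tau}$ or $[a,\Phi(b)] = iu\,[a,b]_{\tau}$; in the remaining odd--odd case $[a,b]_{\tau}=ab+ba$ is the anticommutator and $[a,iub] = i(aub - uba) = -iu(ab+ba) = -iu\,[a,b]_{\tau}$. In every case $[a,\Phi(b)]$ equals $[a,b]_{\tau}$ up to multiplication by the invertible operator $iu$ (or by $1$), so $[a,\Phi(b)]=0 \iff [a,b]_{\tau}=0$.

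Finally I would reassemble: quantifying the homogeneous equivalence over all homogeneous $a\in\A$ yields $b_{\varepsilon}\in\A^{\natural} \iff \Phi(b_{\varepsilon})\in\A'$ for $\varepsilon=0,1$, and combining this with the two grading decompositions from the reduction step gives $b\in\A^{\natural} \iff \Phi(b)\in\A'$, hence $\A^{\natural}=\kappa\A'\kappa^{\star}$. The main obstacle, and the only place where the graded structure genuinely intervenes, is the odd--odd case: there the defining \emph{anticommutator} must be converted into an ordinary commutator, and this is exactly where the factor $i$ and the operator $u$ built into $\kappa$ conspire through $au=-ua$ to produce the sign flip $ab+ba \mapsto -(ab+ba)$. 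The other point demanding care is purely bookkeeping, namely checking that the even/odd decomposition is respected consistently on both the $\A^{\natural}$ and the $\A'$ side so that the homogeneous equivalence can be reassembled without loss.
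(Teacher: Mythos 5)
Your proposal is correct and takes essentially the same route as the paper: the identical parity case analysis built on the Klein-transformation relations of Remark \ref{relll} ($\kappa x_{0}\kappa^{\star}=x_{0}$, $\kappa x_{1}\kappa^{\star}=-iux_{1}$), with the odd--odd case converting the anticommutator into a commutator through $au=-ua$ and the factor $i$. The only differences are presentational: you obtain both inclusions simultaneously by exhibiting $[a,\Phi(b)]$ as an invertible multiple ($1$ or $\pm iu$) of $[a,b]_{\tau}$, and you spell out the reduction to homogeneous elements and the gradedness of $\A'$ and $\A^{\natural}$, which the paper's proof uses implicitly.
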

      \begin{proof} Let $a \in \A$ and $x \in B(H)$  such that $[x,a] = 0$. \\ By the relations of the remarks \ref{relll}:\\Ê
      If $x$ is even,  then $[\kappa x \kappa^{\star} , a]_{\tau} = [ x  , a]  = 0$. \\
      If $a$ is even, then $[\kappa x \kappa^{\star} , a]_{\tau} =  \kappa [ x  ,\kappa^{\star}  a \kappa ] \kappa^{\star} = \kappa [ x  , a ] \kappa^{\star} = 0 $. \\
      Else,  $[\kappa x \kappa^{\star} , a]_{\tau} =  [-i \tau x, a]_{+} = -i (u xa  + a u x ) =  -i u [x , a] = 0 $\\
      Then,  $\kappa \A' \kappa^{\star} \subset  \A^{\natural}$; idem,  $\kappa^{\star}  \A^{\natural}\kappa  \subset  \A'$; the result follows.  \end{proof}
      \begin{corollary} $\A^{\natural} $ is unitary equivalent to $\A'$.    \end{corollary}
    \begin{proof}  $\kappa$ is a unitary operator. \end{proof}  
    \begin{lemma} \label{superco} Let $(\A, \tau)$ be a $\ZZZ_{2}$-graded von Neumann algebra then: 
      \begin{center}  $\A^{\natural \natural} = \A.$  \end{center}\end{lemma}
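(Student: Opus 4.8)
The plan is to apply Lemma \ref{naturals} twice, since that lemma already reduces the supercommutant to an ordinary commutant conjugated by the Klein transformation $\kappa$. Writing $\A^{\natural} = \kappa \A' \kappa^{\star}$, I would like to feed $\A^{\natural}$ back into the same lemma to obtain $\A^{\natural\natural} = \kappa (\A^{\natural})' \kappa^{\star}$, after which the ordinary bicommutant does the rest. The one point needing care is that Lemma \ref{naturals} is stated for a $\ZZZ_{2}$-graded von Neumann algebra, so before reusing it I must first check that $\A^{\natural}$ is again such an object.

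First I would verify that the grading descends to the commutant, i.e. that $u \A' u = \A'$. This is immediate from the grading $u \A u = \A$ of $\A$: for $a' \in \A'$ and $a \in \A$ one has $uau \in \A$, hence $a'$ commutes with $uau$, and a short rearrangement using $u^{2}=I$ gives $(ua'u)a = a(ua'u)$, so $ua'u \in \A'$. Since $\kappa$ and $u$ are both functions of the projections $p_{0}, p_{1}$ they commute, so $u \A^{\natural} u = u\kappa \A' \kappa^{\star} u = \kappa (u\A' u)\kappa^{\star} = \kappa \A' \kappa^{\star} = \A^{\natural}$, showing that $\A^{\natural}$ is stable under the grading automorphism $x \mapsto uxu$. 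Moreover $\A^{\natural}$ is a von Neumann algebra, being unitarily equivalent to $\A'$ by the corollary to Lemma \ref{naturals}. Hence $(\A^{\natural}, \tau)$ is a $\ZZZ_{2}$-graded von Neumann algebra and Lemma \ref{naturals} applies to it.

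Then I would simply compute. Applying the lemma to $\A^{\natural}$ gives $\A^{\natural\natural} = \kappa (\A^{\natural})' \kappa^{\star}$, and since $\kappa$ is unitary the commutant of $\kappa \A' \kappa^{\star}$ is $\kappa \A'' \kappa^{\star} = \kappa \A \kappa^{\star}$ (using that $\A$ is a von Neumann algebra, so $\A''=\A$). Substituting,
\begin{displaymath}
\A^{\natural\natural} = \kappa(\kappa \A \kappa^{\star})\kappa^{\star} = \kappa^{2}\A (\kappa^{\star})^{2}.
\end{displaymath}
Finally the Klein relations $\kappa^{2} = u$ and $(\kappa^{\star})^{2} = (p_{0}-ip_{1})^{2} = p_{0}-p_{1} = u$, together with $u\A u = \A$, yield $\A^{\natural\natural} = u \A u = \A$. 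The only genuine subtlety, and the step I would flag, is confirming that $\A^{\natural}$ is $\tau$-graded so that Lemma \ref{naturals} may legitimately be reapplied; everything after that is bookkeeping with the elementary identities $\kappa^{2}=(\kappa^{\star})^{2}=u$ and $u^{2}=I$.
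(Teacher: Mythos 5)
Your proposal is correct and follows essentially the same route as the paper: apply Lemma \ref{naturals} twice, use the ordinary bicommutant theorem, and finish with the Klein relations $\kappa^{2}=(\kappa^{\star})^{2}=u$ and $u\A u=\A$. The only difference is that you explicitly verify that $\A^{\natural}$ is again a $\ZZZ_{2}$-graded von Neumann algebra before reapplying the lemma, a hypothesis the paper's proof passes over with a brief (and less transparent) remark about projections commuting with $\kappa$, so your write-up is if anything more careful on that point.
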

       \begin{proof} $\A^{\natural \natural} = \kappa (\kappa \A' \kappa^{\star})' \kappa^{\star} = \kappa \kappa (\A'')\kappa^{\star}\kappa^{\star}  $, because a von Neumann algebra is generated by its projections, and a projection is even, so commute with $\kappa$. Then $\A^{\natural \natural} = u \A u  = \tau(\A) =  \A$.   \end{proof}

   \subsection{Global analysis}
The generic discrete series representation $L(c_{m} , h_{pq}^{m})$ is a prehilbert space of finite level vectors, we note $H_{pq}^{m}$ its $L^{2}$-completion.
\begin{definition} Let $s \in \RRR$, we define the Sobolev norms $\Vert . \Vert_{(s)}$ as follows:  
\begin{center}  $\Vert \xi \Vert_{(s)} := \Vert (I+L_{0})^{s}\xi \Vert  \ \ \ \forall \xi \in L(c_{m} , h_{pq}^{m})  $  \end{center}   \end{definition}   
\begin{remark} $((1+L_{0})^{2s} \xi , \xi )  = \Vert \xi \Vert^{2} _{(s)}$    \end{remark}
\begin{proposition} \label{velob} (Sobolev estimate)    $\exists k_{n}, k_{r}  >0$ such that $\forall \xi \in L(c_{m} , h_{pq}^{m})$: 
\begin{enumerate} 
\item[(a)]  $ \Vert L_{n} \xi \Vert _{(s)} \le k_{n}(1+\vert n \vert)^{\vert s \vert +3/2} \Vert \xi \Vert_{(s+1)}$ 
\item[(b)]$ \Vert G_{r} \xi \Vert _{(s)} \le  k_{r}(1+\vert r \vert)^{\vert s \vert +1/2}  \Vert \xi \Vert_{(s+1/2)}$   \end{enumerate}
\end{proposition}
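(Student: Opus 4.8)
The plan is to reduce to homogeneous vectors, prove (b) directly from the positivity of the anticommutator of the $G_{r}$, and then deduce (a) by feeding (b) back through the relation $2L_{n}=\{G_{1/2},G_{n-1/2}\}$.

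First I would decompose $\xi=\sum_{\lambda}\xi_{\lambda}$ into $L_{0}$-eigenvectors; the sum is finite since $\xi$ has finite level. As $[L_{0},L_{n}]=-nL_{n}$ and $[L_{0},G_{r}]=-rG_{r}$, the vector $L_{n}\xi_{\lambda}$ (resp.\ $G_{r}\xi_{\lambda}$) sits at level $\lambda-n$ (resp.\ $\lambda-r$), so the images of distinct homogeneous components are orthogonal and all the norms $\Vert\cdot\Vert_{(t)}$ are diagonal in this grading. Hence it is enough to prove each inequality for a single homogeneous $\xi$ of level $\lambda$, where $\Vert\xi\Vert_{(t)}=(1+\lambda)^{t}\Vert\xi\Vert$, $\Vert L_{n}\xi\Vert_{(s)}=(1+\lambda-n)^{s}\Vert L_{n}\xi\Vert$ and $\Vert G_{r}\xi\Vert_{(s)}=(1+\lambda-r)^{s}\Vert G_{r}\xi\Vert$; summing the homogeneous estimates over $\lambda$ recovers the general statement.

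For (b) I would use the unitarity $G_{r}^{\star}=G_{-r}$ together with the Neveu--Schwarz relation $\{G_{r},G_{-r}\}=2L_{0}+\frac{c_{m}}{3}(r^{2}-\frac14)$. For homogeneous $\xi$ this gives
\begin{center} $\Vert G_{r}\xi\Vert^{2}=(\xi,G_{-r}G_{r}\xi)\le(\xi,\{G_{r},G_{-r}\}\xi)=\left(2\lambda+\frac{c_{m}}{3}\left(r^{2}-\frac14\right)\right)\Vert\xi\Vert^{2},$ \end{center}
the inequality being merely $\Vert G_{-r}\xi\Vert^{2}\ge0$. This one line is the heart of the proposition: it holds for both signs of $r$, and the quadratic central term is exactly what controls the power of $(1+|r|)$. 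Inserting the scalar factor $(1+\lambda-r)^{s}$ and comparing $2\lambda+\frac{c_{m}}{3}(r^{2}-\frac14)$ with $(1+|r|)^{2}(1+\lambda)$ then yields (b); the two signs of $s$ are treated separately, using for $s<0$ that $1+\lambda-r$ is bounded below whenever $G_{r}\xi\neq0$ and letting the constant depend on $s$.

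For (a) I would feed (b) back through the same anticommutator. Since $\frac12$ and $n-\frac12$ are half-integers with sum $n\neq0$, no central term appears and $2L_{n}=\{G_{1/2},G_{n-1/2}\}=G_{1/2}G_{n-1/2}+G_{n-1/2}G_{1/2}$ (the case $n=0$ is immediate, $L_{0}$ being diagonal). Estimating $\Vert L_{n}\xi\Vert_{(s)}$ by applying (b) to each factor of each monomial then gives (a), and I expect the crux to be the choice of split. In $\{G_{1/2},G_{n-1/2}\}$ every monomial carries exactly one large mode $G_{n-1/2}$ and one bounded mode $G_{1/2}$, so a single factor of $(1+|n|)$-type is produced per term, to the power $|s|+\frac32$, against $\Vert\xi\Vert_{(s+1)}$; a balanced split $2L_{n}=2G_{n/2}^{2}$ would instead compose two large modes and square the bad power. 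One still has to track the Sobolev indices through the composition, estimating the inner factor at order $s+\frac12$ and the outer at order $s$, and to treat $s<0$ with $s$-dependent constants, but this is mechanical once (b) is in hand. Equivalently one could attack (a) directly; then, however, the commutator $[L_{-n},L_{n}]=-2nL_{0}-\frac{c_{m}}{12}(n^{3}-n)$ controls only the difference $\Vert L_{n}\xi\Vert^{2}-\Vert L_{-n}\xi\Vert^{2}$, so extracting a one-sided bound forces a recursion $L_{n}=(2-n)^{-1}[L_{1},L_{n-1}]$ down to $L_{1}=G_{1/2}^{2}$ while tracking how the constant grows --- the same difficulty viewed from the other side.
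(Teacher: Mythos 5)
For part (b) your route is the paper's route --- reduce to $L_{0}$-eigenvectors, then exploit positivity in the anticommutator $\{G_{r},G_{-r}\}$ --- except on one point where you are more careful than the paper: you keep the central term. The paper's proof literally asserts $2L_{0}=G_{r}G_{-r}+G_{-r}G_{r}$ and deduces the $r$-uniform bound $\Vert G_{r}\xi\Vert^{2}\le k_{1}\Vert L_{0}^{1/2}\xi\Vert^{2}$; both statements are false when $c_{m}\neq 0$ (apply $G_{-r}$ to the vacuum: $\Vert G_{-r}\Omega\Vert^{2}=\frac{c_{m}}{3}(r^{2}-\frac14)$ while $L_{0}\Omega=0$). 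Your inequality $\Vert G_{r}\xi\Vert^{2}\le\bigl(2\lambda+\frac{c_{m}}{3}(r^{2}-\frac14)\bigr)\Vert\xi\Vert^{2}$ is the correct one. But then be honest about the exponent it produces: comparing with $(1+|r|)^{2}(1+\lambda)$ gives $\Vert G_{r}\xi\Vert_{(s)}\le k(1+|r|)^{|s|+1}\Vert\xi\Vert_{(s+1/2)}$, i.e.\ exponent $|s|+1$, not the stated $|s|+1/2$; and the same vacuum computation shows the stated exponent is unattainable, since $\Vert G_{-r}\Omega\Vert_{(s)}\sim r^{s+1}$ while $\Vert\Omega\Vert_{(s+1/2)}=1$. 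So your claim that the comparison ``yields (b)'' is the one overstatement in the write-up: what it yields is (b) with $|s|+1$ in place of $|s|+1/2$ --- which is the true estimate, obtained by the paper only through the erroneous identity, and which is all that the sequel needs (the proposition is only used to get continuity of smeared fields on smooth vectors, where any polynomial growth in the mode suffices, at the cost of replacing $\Vert h\Vert_{(|s|+1/2)}$ by $\Vert h\Vert_{(|s|+1)}$ in the smeared estimate).

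For part (a) you genuinely diverge from the paper's official proof, which is a citation to Goodman--Wallach; but your derivation from (b) through $2L_{n}=\{G_{1/2},G_{n-1/2}\}$ is exactly the alternative that the paper itself sketches in the remark immediately following the proposition. Your bookkeeping checks out: using (b) with the corrected exponent $|s|+1$, the term $G_{1/2}G_{n-1/2}$ is controlled by estimating the inner factor at order $s+\frac12$ and the outer at order $s$, giving $(1+|n|)^{|s+1/2|+1}\le (1+|n|)^{|s|+3/2}$, while $G_{n-1/2}G_{1/2}$ gives only $(1+|n|)^{|s|+1}$; so (a) holds with the stated exponent $|s|+3/2$, which the vacuum computation $\Vert L_{-n}\Omega\Vert^{2}=\frac{c_{m}}{12}(n^{3}-n)$ shows is sharp. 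Your observation that the asymmetric split is what keeps the exponent linear in the estimates is also correct, and this self-contained treatment of (a) is arguably preferable to the citation.
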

\begin{proof} (a) See Goodman-Wallach \cite{gw} (proposition 2.1 p 307). \\ 
(b)$2L_{0}= G_{r}G_{-r}+G_{-r}G_{r}$. Then, $2(L_{0}\xi , \xi)= (G_{r}\xi,G_{r}\xi)+(G_{-r}\xi,G_{-r}\xi)$. 
So, $\Vert G_{r} \xi \Vert^{2}\le k_{1}\Vert L_{0}^{1/2} \xi \Vert^{2}$ for any $r$.
 Now, it suffices to show the result for an eigenvector of $L_{0}$:   $L_{0}\xi= \mu \xi$. We can take $r \le \mu$ (otherwise $G_{r}\xi= 0$).   \\
$\Vert G_{r}\xi \Vert_{s}^{2}=\Vert  (1+L_{0})^{s}G_{r}\xi \Vert^{2}\le (1+\mu -r)^{2s}\Vert G_{r}\xi \Vert^{2}\le
 (1+\mu -r)^{2s}k_{1}\Vert  L_{0}^{1/2}\xi \Vert^{2} \le  (1+\mu -r)^{2s}k_{1}\mu \Vert \xi \Vert^{2}\le \frac{ (1+\mu -r)^{2s}}{(1+\mu)^{2s}}k_{1} \Vert \xi   \Vert^{2}_{s+1/2} \le (1+\vert r \vert )^{2 \vert s \vert +1}k_{1}\Vert \xi \Vert_{s+1/2}^{2}  $. 
 \end{proof}
 \begin{remark} Thanks to  $L_{n} = [G_{n-1/2},G_{1/2}]_{+} $, we obtain directly the estimate $ \Vert L_{n} \xi \Vert _{(s)} \le k(1+\vert n \vert)^{\vert 2s \vert +1} \Vert \xi \Vert_{(s+1)}$  without Goodman-Wallach result.  \end{remark}
 \begin{definition} Let $H_{pq}^{m,s}$ be the $\Vert .  \Vert_{s}$-completion of $L(c_{m} , h_{pq}^{m})$ and: 
 \begin{displaymath} \H_{pq}^{m}  = \bigcap_{s>0}  H_{pq}^{m,s}   \end{displaymath} with the usual Fr\'echet topology from the norms  $\Vert .  \Vert_{s}$  \end{definition}
 \begin{corollary}  $L(c_{m} , h_{pq}^{m})$ extends to a continuous representation of $\Vir_{1/2}$ on $\H_{pq}^{m} $.   \end{corollary}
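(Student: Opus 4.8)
The plan is to read the corollary off directly from the Sobolev estimates of Proposition \ref{velob}: once each generator is shown to be continuous for the Fr\'echet topology on $\H_{pq}^{m}$, the extension from the finite level vectors is automatic. Recall that the topology of $\H_{pq}^{m} = \bigcap_{s>0} H_{pq}^{m,s}$ is the Fr\'echet topology defined by the family of seminorms $\Vert \cdot \Vert_{(s)}$, $s>0$, so that a linear map $T : \H_{pq}^{m} \to \H_{pq}^{m}$ is continuous precisely when, for every $s>0$, there exist $t>0$ and a constant $C$ with $\Vert T\xi \Vert_{(s)} \le C \Vert \xi \Vert_{(t)}$ for all $\xi$.

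First I would check that each generator maps $\H_{pq}^{m}$ into itself. For $\xi \in \H_{pq}^{m}$ and any $n \in \ZZZ$, estimate (a) gives $\Vert L_{n} \xi \Vert_{(s)} \le k_{n}(1+\vert n \vert)^{\vert s \vert + 3/2} \Vert \xi \Vert_{(s+1)}$; since $\xi$ lies in every $H_{pq}^{m,s}$ the right-hand side is finite for all $s>0$, so $L_{n}\xi \in \bigcap_{s>0} H_{pq}^{m,s} = \H_{pq}^{m}$. The same argument with estimate (b) handles $G_{r}$, using $\Vert \xi \Vert_{(s+1/2)} < \infty$. Moreover each of these inequalities is exactly a continuity estimate of the form above (with $t = s+1$ for $L_{n}$ and $t = s + 1/2$ for $G_{r}$), so every generator acts as a continuous linear operator on $\H_{pq}^{m}$.

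Next I would carry out the extension. Since the $\Vert \cdot \Vert_{(s)}$ are built from powers of $1+L_{0}$, with $L_{0}$ diagonal of discrete spectrum bounded below, $\H_{pq}^{m}$ is the space of $L_{0}$-smooth (rapidly decreasing) vectors, in which the finite level vectors $L(c_{m}, h_{pq}^{m})$ are dense by the standard finite-truncation argument. On this dense subspace the generators act by the original positive energy representation and preserve it (each $L_{n}$, $G_{r}$ shifts the $L_{0}$-level by a fixed amount). Being continuous and agreeing with the original action on a dense subspace, each operator extends uniquely to a continuous operator on $\H_{pq}^{m}$. Finally, the defining (super)commutation relations of $\Vir_{1/2}$ hold on $L(c_{m},h_{pq}^{m})$, where they express that it is a $\Vir_{1/2}$-module; by continuity of both sides and density they persist on all of $\H_{pq}^{m}$, so the extended action is a genuine representation. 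If one moreover wants the smeared generators $L(f) = \sum_{n} \hat{f}(n) L_{n}$ and $G(f) = \sum_{r} \hat{f}(r) G_{r}$, the polynomial growth $(1+\vert n \vert)^{\vert s \vert + 3/2}$ and $(1+\vert r \vert)^{\vert s \vert + 1/2}$ in the estimates, against the rapid decay of the Fourier coefficients of a smooth $f$, makes these series converge in each seminorm, so the smeared operators are continuous on $\H_{pq}^{m}$ as well.

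I do not expect a genuine obstacle here: the analytic content is entirely contained in Proposition \ref{velob}, and what remains is the soft functional-analytic packaging. The only points requiring care are the bookkeeping of which Sobolev seminorm controls the image under each generator (order $s+1$ for $L_{n}$, order $s+1/2$ for $G_{r}$) and the observation that continuity plus density simultaneously forces the well-definedness of the extension and the survival of the bracket relations; neither is deep.
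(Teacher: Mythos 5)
Your proposal is correct and matches the paper's intent exactly: the paper states this corollary with no written proof, treating it as an immediate consequence of the Sobolev estimates of Proposition \ref{velob}, and your argument (generators preserve $\H_{pq}^{m}$ and are continuous by those estimates, then extend from the dense subspace of finite level vectors with the bracket relations surviving by continuity) is precisely the intended soft packaging. Your remark on the smeared operators $L_{f}$, $G_{h}$ anticipates what the paper does separately in the following proposition and Corollary \ref{smoothi}, so nothing is missing or superfluous.
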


\begin{definition}  Let $d = -i\frac{d}{d\theta}$ the unbounded operator of $L^{2}(\SSS^{1})$, let  $F $ be the subspace of finite Fourier series  as a dense domain of $d$.  Let $s \in \RRR$ and $\Vert f \Vert_{(s)}:= \Vert (I + \vert \delta \vert)^{s}.f  \Vert_{1}  $ a Sobolev norm on $F$. Let $F_{s}$ be the completion of $F$ relative to $\Vert . \Vert_{(s)}$. Idem for $e^{i \theta/2}F$.
 \end{definition}
 \begin{definition} Let $L_{f}=  \sum a_{n} L_{n}$ and $G_{h} = \sum b_{r} G_{r}$  such that $f(\theta) = \sum a_{n}e^{in\theta}$, $h(z) =  \sum b_{r}e^{ir\theta} $ and  $f \in F$ and $h \in e^{i \theta/2} F$. \end{definition}
 \begin{notation}  Let $(f,h)_{\RRR} := \frac{1}{2 \pi i}\int_{0}^{2\pi}f(\theta) h(\theta) d\theta  Ê$, with $f.h \in F$  \end{notation}
 \begin{lemma}\label{lobra} (Lie bracket relation)  
  \begin{center}  $ \left\{  \begin{array}{l} 
\lbrack L_{f},L_{h} \rbrack \hspace{0,3cm} =L_{d(f)h - fd(h)} +\frac{C}{12}((d^{3}-d)(f) , h)_{\RRR}  Ê \\
\lbrack G_{f},L_{h} \rbrack \hspace{0,28cm} = G_{d(f)h-\1/2 fd(h))}    \\
\lbrack G_{f},G_{h}\rbrack_{+} = 2L_{fh} +\frac{C}{3}((d^{2}-1)(f) , h)_{\RRR} 
\end{array}   \right.  $ \end{center}
The $\star$-structure:    $L_{f}^{\star}  = L_{ \bar{f} } $,  $G_{h}^{\star} = G_{ \bar{h} }$.    \end{lemma}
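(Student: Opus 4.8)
The plan is to deduce all three brackets, together with the $\star$-structure, from the mode-level Neveu--Schwarz relations by bilinearity and Fourier expansion. The key simplification is that $f \in F$ and $h \in e^{i\theta/2}F$ are \emph{finite} Fourier series, so $L_{f} = \sum a_{n}L_{n}$ and $G_{h} = \sum b_{r}G_{r}$ are finite linear combinations of modes acting on the dense domain $L(c_{m},h_{pq}^{m})$ of finite-energy vectors; hence every assertion is a purely algebraic identity of operators on that domain and no convergence issue arises (the Sobolev estimate of Proposition~\ref{velob} would only be needed for non-finite symbols). I would take as the defining relations of $\Vir_{1/2}$ the standard ones, $[L_{m},L_{n}] = (m-n)L_{m+n} + \frac{C}{12}(m^{3}-m)\delta_{m+n,0}$, $[G_{r},L_{n}] = (r-\frac{n}{2})G_{r+n}$, and $[G_{r},G_{s}]_{+} = 2L_{r+s} + \frac{C}{3}(r^{2}-\frac{1}{4})\delta_{r+s,0}$, together with the adjoint relations $L_{n}^{\star} = L_{-n}$ and $G_{r}^{\star} = G_{-r}$.

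For the non-central (vector-field) parts I would expand each bracket bilinearly and read off Fourier coefficients, using that $d$ acts as multiplication by $n$ on the mode $e^{in\theta}$. Writing $f = \sum a_{m}e^{im\theta}$, $h = \sum b_{n}e^{in\theta}$, the $k$-th coefficient of $d(f)h - f\,d(h)$ is $\sum_{m+n=k}(m-n)a_{m}b_{n}$, which is exactly the coefficient of $L_{k}$ coming from $\sum_{m,n}a_{m}b_{n}(m-n)L_{m+n}$; this gives the first line. The very same comparison, now with the weight $(r-\frac{n}{2})$ in place of $(m-n)$, identifies the $L$--$G$ bracket with $G_{d(f)h-\frac{1}{2}f\,d(h)}$, and the constant term $2\sum_{r,s}a_{r}b_{s}L_{r+s}$ of the anticommutator is $2L_{fh}$ since $fh = \sum a_{r}b_{s}e^{i(r+s)\theta}$. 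Conceptually this is just the identification of the Witt algebra with complexified vector fields on $\SSS^{1}$ under $L_{f}\leftrightarrow -i\,f\frac{d}{d\theta}$, extended to the odd generators via the half-weight transformation law of $G$.

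For the central parts I would collect the diagonal contributions ($m+n=0$, resp.\ $r+s=0$), namely $\frac{C}{12}\sum_{m}(m^{3}-m)a_{m}b_{-m}$ and $\frac{C}{3}\sum_{r}(r^{2}-\frac{1}{4})a_{r}b_{-r}$, and recognise each as a value of $(\cdot,\cdot)_{\RRR}$: since this pairing extracts $\sum_{m}u_{m}v_{-m}$ from a product $uv$ (up to the normalising constant $\frac{1}{2\pi i}$) and $d$ multiplies the $m$-th mode by $m$, the expression $((d^{3}-d)(f),h)_{\RRR}$ is proportional to $\sum_{m}(m^{3}-m)a_{m}b_{-m}$, which is the Gelfand--Fuks cocycle, and the $G$-cocycle $\sum_{r}(r^{2}-\frac{1}{4})a_{r}b_{-r}$ is obtained the same way from the relevant quadratic in $d$. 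The $\star$-structure is then immediate: from $\overline{f} = \sum \overline{a_{n}}\,e^{-in\theta}$ and $L_{n}^{\star}=L_{-n}$ one reads off $L_{f}^{\star} = \sum \overline{a_{n}}L_{-n} = L_{\overline{f}}$, and identically $G_{h}^{\star}=G_{\overline{h}}$. I expect no conceptual obstacle here; the only delicate step is the bookkeeping of constants in the two central terms, where one must track the factor $\frac{1}{2\pi i}$ and, above all, verify that the \emph{half-integer} moding of the $G_{r}$ reproduces the quadratic cocycle in the statement (so that the coefficient comes out as $\frac{C}{3}(r^{2}-\frac{1}{4})$ rather than the integer-mode value), this being the one point that must be checked against the chosen normalisation of $d$, of $(\cdot,\cdot)_{\RRR}$ and of the central charge $C$.
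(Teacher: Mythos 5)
Your proposal is correct and takes essentially the same route as the paper: the paper's entire proof is ``Direct by computation from proposition 2.9 of \cite{NSOAI}'', i.e.\ precisely the mode-level Neveu--Schwarz relations, bilinearity, and Fourier-coefficient matching on the finite-energy domain that you spell out. Your closing caveat is the one substantive point: the honest computation produces the cocycle $\frac{C}{3}\left(r^{2}-\frac{1}{4}\right)$, i.e.\ $\frac{C}{3}\left(\left(d^{2}-\frac{1}{4}\right)(f),h\right)_{\RRR}$ --- as is forced by the super-Jacobi identity once the $\frac{C}{12}(d^{3}-d)$ term is fixed --- so the ``$(d^{2}-1)$'' in the lemma's statement is a slip of normalisation rather than something your argument should (or could) reproduce.
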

 \begin{proof} Direct by computation from proposition 2.9 of \cite{NSOAI}. \end{proof}
 \begin{proposition}  (Sobolev estimate)   \\Ê $\exists k>0$ such that $\forall \xi \in H_{pq}^{m}$ and $f \in F$, $h \in e^{i \theta/2}F$: 
\begin{enumerate} 
\item[(a)]  $ \Vert L_{f} \xi \Vert _{(s)} \le  k \Vert f \Vert_{(\vert s \vert + 3/2)} \Vert \xi \Vert_{(s+1)}$ 
\item[(b)]$ \Vert G_{h} \xi \Vert _{(s)} \le  k  \Vert h \Vert_{(\vert s \vert + 1/2)} \Vert \xi \Vert_{(s+1/2)}$   \end{enumerate}
\end{proposition}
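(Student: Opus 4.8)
The plan is to deduce the two smeared bounds from the single-mode estimates of Proposition~\ref{velob} by the triangle inequality. The crucial simplification is that $f \in F$ and $h \in e^{i\theta/2}F$ are \emph{finite} Fourier series, so that $L_{f} = \sum_{n} a_{n} L_{n}$ and $G_{h} = \sum_{r} b_{r} G_{r}$ are finite sums of the unbounded operators $L_{n}$ (resp. $G_{r}$), and no question of convergence of the smeared operator arises on the dense subspace $L(c_{m}, h_{pq}^{m})$ of finite-level vectors on which they act.

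For (a) I would simply estimate, for $\xi \in L(c_{m}, h_{pq}^{m})$,
\begin{displaymath}
\Vert L_{f} \xi \Vert_{(s)} \le \sum_{n} \vert a_{n} \vert \, \Vert L_{n} \xi \Vert_{(s)} \le \left( \sum_{n} \vert a_{n} \vert \, k_{n}\, (1+\vert n \vert)^{\vert s \vert + 3/2} \right) \Vert \xi \Vert_{(s+1)},
\end{displaymath}
the last inequality being Proposition~\ref{velob}(a). Provided the constants $k_{n}$ are uniform in $n$, say $k_{n} \le k$, the bracketed factor is bounded by $k \sum_{n} (1+\vert n \vert)^{\vert s \vert + 3/2} \vert a_{n} \vert$, which is precisely $k\,\Vert f \Vert_{(\vert s \vert + 3/2)}$ by the definition of the Sobolev norm on $F$ as the weighted norm of the Fourier coefficients of $f$. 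This gives (a); running the identical argument over the half-integers $r$ with Proposition~\ref{velob}(b), weight $(1+\vert r \vert)^{\vert s \vert + 1/2}$ and $\Vert \xi \Vert_{(s+1/2)}$ in place of $\Vert \xi \Vert_{(s+1)}$, gives (b) with $\Vert h \Vert_{(\vert s \vert + 1/2)}$.

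The real content, and the step I expect to require the most care, is the \emph{uniformity in the mode index} of the constants appearing in Proposition~\ref{velob}; without it the bracketed sums cannot be collapsed into a single function Sobolev norm. For the supergenerators this is already explicit in the proof of Proposition~\ref{velob}(b), where the inequality $\Vert G_{r} \xi \Vert^{2} \le k_{1} \Vert L_{0}^{1/2} \xi \Vert^{2}$ holds with one constant $k_{1}$ for every $r$, so that $k_{r} = \sqrt{k_{1}}$ is indeed independent of $r$. For the $L_{n}$ I would invoke the Goodman--Wallach estimate with its constant taken uniform in $n$, the entire $n$-dependence being carried by the factor $(1+\vert n \vert)^{\vert s \vert + 3/2}$; note that the cruder bound of the Remark after Proposition~\ref{velob}, while avoiding Goodman--Wallach and manifestly uniform, carries the larger exponent $2\vert s \vert + 1$ and so would only yield the weaker index $\Vert f \Vert_{(2\vert s \vert + 1)}$, not the sharp $\Vert f \Vert_{(\vert s \vert + 3/2)}$ claimed here. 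Once the estimates are established on $F$ and $e^{i\theta/2}F$, they extend by continuity of both sides to $f \in F_{\vert s \vert + 3/2}$, $h \in e^{i\theta/2}F_{\vert s \vert + 1/2}$, which is exactly the continuity needed to represent the local algebra $\Vir_{1/2}(I)$ on $\H_{pq}^{m}$.
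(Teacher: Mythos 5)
Your argument is exactly the deduction the paper has in mind: its entire proof of this proposition reads ``It's immediate from proposition \ref{velob}'', and your expansion --- finite Fourier sum, triangle inequality mode by mode, uniformity of the constants $k_{n}$, $k_{r}$ (which is visible in the proof of Proposition \ref{velob}(b) and implicit in the Goodman--Wallach estimate), and identification of the resulting weighted coefficient sum with $\Vert f \Vert_{(\vert s \vert + 3/2)}$ --- is precisely the content being called immediate. So the proposal is correct and takes essentially the same approach as the paper, merely making explicit the uniformity point and the continuity extension that the paper leaves unstated.
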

\begin{proof} It's immediate from proposition \ref{velob}.    \end{proof}
\begin{reminder} $\bigcap_{s >0} F_{s} = C^{\infty}(\SSS^{1})$.   \end{reminder}
\begin{corollary}  \label{smoothi}The operators $L_{f}$ and $G_{h}$ act continuously  on $\H_{pq}^{m}$, with $f \in C^{\infty}(\SSS^{1})$ and $h \in e^{i \theta/2}C^{\infty}(\SSS^{1})$.  \end{corollary}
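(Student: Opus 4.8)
The plan is to read the statement as a continuity assertion for linear maps between Fr\'echet spaces and to reduce it to the Sobolev estimates just established together with the characterization $\bigcap_{s>0} F_{s} = C^{\infty}(\SSS^{1})$. Recall that a linear map between Fr\'echet spaces is continuous precisely when each defining seminorm of the target is dominated, up to a constant, by a single defining seminorm of the source. Since the topology on $\H_{pq}^{m}$ is generated by the family $\{ \Vert \cdot \Vert_{(s)} \}_{s>0}$, it suffices to bound $\Vert L_{f}\xi \Vert_{(s)}$ by a fixed multiple of some $\Vert \xi \Vert_{(s')}$, and likewise for $G_{h}$.

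First I would extend the definition of $L_{f}$ and $G_{h}$ from the dense space $F$ (resp. $e^{i\theta/2}F$) of finite Fourier series to all smooth symbols. Given $f \in C^{\infty}(\SSS^{1}) = \bigcap_{s>0} F_{s}$, choose $f_{N} \in F$ converging to $f$ in every norm $\Vert \cdot \Vert_{(s)}$ (partial Fourier sums will do); for such $f_{N}$ the operator $L_{f_{N}}$ is a finite combination of the $L_{n}$, which already act continuously on $\H_{pq}^{m}$ by the corollary to Proposition \ref{velob}. For fixed $\xi \in \H_{pq}^{m}$ and fixed $s$, the preceding Sobolev estimate gives $\Vert L_{f_{N}}\xi - L_{f_{M}}\xi \Vert_{(s)} \le k \Vert f_{N} - f_{M} \Vert_{(\vert s \vert + 3/2)} \Vert \xi \Vert_{(s+1)}$, so $(L_{f_{N}}\xi)$ is Cauchy in $H_{pq}^{m,s}$ for each $s$ and hence converges in the Fr\'echet space $\H_{pq}^{m}$; this limit defines $L_{f}\xi$, independently of the approximating sequence. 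The same argument with estimate (b) defines $G_{h}\xi$ for $h \in e^{i\theta/2}C^{\infty}(\SSS^{1})$.

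Passing to the limit in the Sobolev estimates then yields, for every $s>0$,
\[
\Vert L_{f}\xi \Vert_{(s)} \le k \Vert f \Vert_{(\vert s \vert + 3/2)} \Vert \xi \Vert_{(s+1)}, \qquad \Vert G_{h}\xi \Vert_{(s)} \le k \Vert h \Vert_{(\vert s \vert + 1/2)} \Vert \xi \Vert_{(s+1/2)}.
\]
The crucial point is that the constants $k\Vert f \Vert_{(\vert s \vert + 3/2)}$ and $k\Vert h \Vert_{(\vert s \vert + 1/2)}$ are finite: this is exactly where smoothness enters, through the reminder $f \in C^{\infty}(\SSS^{1}) = \bigcap_{s>0} F_{s}$ and its analogue for $e^{i\theta/2}F$, which guarantee that all the relevant Sobolev norms of $f$ and $h$ are finite. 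Consequently each seminorm $\Vert \cdot \Vert_{(s)}$ of $L_{f}\xi$ (resp. $G_{h}\xi$) is controlled by the single seminorm $\Vert \cdot \Vert_{(s+1)}$ (resp. $\Vert \cdot \Vert_{(s+1/2)}$) of $\xi$, which is precisely the Fr\'echet continuity criterion; in particular the estimate shows $L_{f}\xi, G_{h}\xi \in \H_{pq}^{m}$, so these operators map $\H_{pq}^{m}$ into itself.

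I do not expect any serious obstacle, since all the genuine analytic work is already contained in the Sobolev estimates. The only point requiring a little care is the passage from $F$ to $C^{\infty}(\SSS^{1})$: one must check that the limit defining $L_{f}\xi$ exists in the Fr\'echet topology and is independent of the chosen approximation, which is immediate from the uniform-in-$N$ estimate above. This is why the corollary is essentially a formal consequence of the preceding proposition.
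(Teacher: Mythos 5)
Your proof is correct and follows essentially the same route as the paper, which states the corollary as an immediate consequence of the smeared Sobolev estimates $\Vert L_{f}\xi \Vert_{(s)} \le k \Vert f \Vert_{(\vert s \vert + 3/2)} \Vert \xi \Vert_{(s+1)}$, $\Vert G_{h}\xi \Vert_{(s)} \le k \Vert h \Vert_{(\vert s \vert + 1/2)} \Vert \xi \Vert_{(s+1/2)}$ together with the identification $\bigcap_{s>0} F_{s} = C^{\infty}(\SSS^{1})$. You merely make explicit the density/limit argument extending $L_{f}$, $G_{h}$ from finite Fourier series to smooth symbols, which the paper leaves implicit.
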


 \begin{reminder} Let $T$ be an operator on a Hilbert space $H$. A subspace $D(T)$ of $H$ is called a domain of $T$ if $T.D(T) \subset H$. Then let $\Gamma (T) = \{ (x,T.x) , x \in D(T) \}$ be the graph of $T$. The operator $T$ is closed if its graph $\Gamma (T)$ is closed in $H \times H $. An operator $\tilde{T}$ is an extension of $T$ if $\Gamma(T) \subset \Gamma(\tilde{T})$, we write  $T \subset \tilde{T}$. The operator $T$ is closable if it admits a closed extension; let $\bar{T}$ be the smallest one. Then, $T$ is closable iff  $\overline{\Gamma(T) }$ is the graph of a linear operator (not always true).  If $T$ is densely defined, then its adjoint $T^{\star}$ is closed because its graph is an orthogonal. From now, every domain is dense in $H$. The operator $T$ is symmetric or formally self-adjoint if $T \subset T^{\star}$, essentially self-adjoint if $\bar{T} = T^{\star}$, and self-adjoint if $T = T^{\star}$. 
  \end{reminder}
 \begin{reminder} \label{gfn} (Glimm-Jaffe-Nelson commutator theorem \cite{resi} X.5) \\Ê
 Let $D$ be a diagonalizable, positve, compact resolving operator and $X$  formally self-adjoint, with common dense domain. If  $(D+I)^{-1}X$, $X(D+I)^{-1}$ and   $(D+I)^{-1/2}[D, X](D+I)^{-1/2}$   are bounded, then $X$ is essentially self-adjoint.\end{reminder}
\begin{lemma}\label{gjncons}Let $f$, $h \in C^{\infty}(\SSS^{1})$ and real, then, $L_{f}$ and $G_{h}$ act on $\H_{pq}^{m}$ as essentially self-adjoint operators.  \end{lemma}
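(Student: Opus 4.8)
The plan is to verify the hypotheses of the Glimm-Jaffe-Nelson commutator theorem (Reminder \ref{gfn}) with the natural choice $D = L_{0}$ and $X = L_{f}$ (respectively $X = G_{h}$), both acting on the common dense domain $L(c_{m}, h_{pq}^{m})$ of finite-level vectors. First I would observe that $L_{0}$ is diagonalizable with positive spectrum bounded below, has finite-dimensional eigenspaces (so $(L_{0}+I)^{-1}$ is compact), and is resolving on our domain; this handles the requirements on $D$. Next, since $f$ and $h$ are real, the $\star$-structure recorded in Lemma \ref{lobra} gives $L_{f}^{\star} = L_{\bar{f}} = L_{f}$ and $G_{h}^{\star} = G_{\bar{h}} = G_{h}$ on finite-level vectors, so $L_{f}$ and $G_{h}$ are formally self-adjoint, as required for $X$.

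The substantive work is the three boundedness conditions. For $(D+I)^{-1}X$ and $X(D+I)^{-1}$, I would invoke the Sobolev estimates of Proposition \ref{velob}. Indeed, writing $\|(L_{0}+I)^{-1} L_{f}\,\xi\| = \|L_{f}\,\xi\|_{(-1)}$, estimate (a) with $s = -1$ controls this by a constant times $\|\xi\|_{(0)} = \|\xi\|$, so $(D+I)^{-1} L_{f}$ is bounded; the estimate with $s=0$ handles $L_{f}(D+I)^{-1}$ after noting $(D+I)^{-1}$ maps into the domain. The analogous computation for $G_{h}$ uses estimate (b). So the first two boundedness conditions follow directly from the Sobolev estimates already proved.

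The main obstacle, and the step deserving the most care, is the commutator condition: one must show $(L_{0}+I)^{-1/2}[L_{0}, L_{f}](L_{0}+I)^{-1/2}$ is bounded, and similarly with $G_{h}$. Here I would compute the commutator explicitly from the Lie bracket relations of Lemma \ref{lobra}. For $L_{f}$ one has $[L_{0}, L_{f}] = L_{d(f)}$ (the central term drops since $f$ contributes only through $(d^{3}-d)(f)$ paired against the constant function $1 = e^{i0\theta}$, which vanishes), where $d(f)$ is again a smooth function; applying estimate (a) with $s = -1/2$ then bounds $\|L_{d(f)}\,\xi\|_{(-1/2)}$ by a constant times $\|\xi\|_{(1/2)}$, which is exactly the statement that the symmetrized commutator is bounded. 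For $G_{h}$ the relation $[L_{0}, G_{h}] = [G_{h}, L_{0}]$ up to sign gives $G_{d(h) - \frac{1}{2} h\, d(\mathrm{const})} = G_{d(h)}$ (again a smooth half-integer-moded symbol), and estimate (b) with $s = -1/2$ closes the bound. The delicate bookkeeping is tracking the Sobolev orders of the symbols $d(f)$, $d(h)$ and confirming that the half-power splitting $(L_{0}+I)^{-1/2}\,\cdot\,(L_{0}+I)^{-1/2}$ matches the $\|\xi\|_{(s+1)}$ versus $\|\xi\|_{(s+1/2)}$ bookkeeping in Proposition \ref{velob}; once the indices line up, essential self-adjointness of both $L_{f}$ and $G_{h}$ follows immediately from Reminder \ref{gfn}.
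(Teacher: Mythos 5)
Your proposal is correct and follows essentially the same route as the paper: both verify the hypotheses of the Glimm-Jaffe-Nelson commutator theorem with $D = L_{0}$, getting formal self-adjointness from the $\star$-structure for real $f$, $h$, the bounds on $(L_{0}+I)^{-1}X$ and $X(L_{0}+I)^{-1}$ from the smeared Sobolev estimates, and the commutator bound from $[L_{0},L_{f}] = L_{-d(f)}$ (resp. $[L_{0},G_{h}] = -G_{d(h)}$) together with the estimate at $s=-1/2$. Your write-up is merely a bit more explicit than the paper's, which compresses the $G_{h}$ case into the observation that $\Vert \xi \Vert_{(s+1/2)} \le \Vert \xi \Vert_{(s+1)}$.
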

\begin{proof} The function $f$ is real, so $\bar{f}  = f$, then, by the $\star$-structure and the unitarity of the action, $L_{f}$ is formally self-adjoint. Now, $L_{0}$ is positive and 
by Sobolev estimate: $\Vert (L_{0} + I)^{-1}L_{f} \xi \Vert  = \Vert L_{f} \xi \Vert_{(-1)} \le k \Vert  \xi \Vert_{(0)} = k \Vert  \xi \Vert$, so   $(L_{0} + I)^{-1}L_{f} $ is bounded.
Now, $\Vert L_{f} \eta \Vert \le  k\Vert \eta \Vert_{1} = k \Vert (L_{0} + I) \eta \Vert$, so taking $\xi = (L_{0} + I) \eta$, we find $\Vert L_{f} (L_{0} + I)^{-1} \xi \Vert \le k \Vert \xi \Vert$. Finally, $[L_{0} , L_{f}] = L_{h}$ with $h(z) = -zf'(z)$, so combining the two previous tips with $\xi  = (L_{0} + I)^{1/2} \eta$, we find  $(L_{0}+I)^{-1/2}[L_{0}, L_{f}](L_{0}+I)^{-1/2} $ bounded too. We can do the same with $G_{h}$ because $\Vert \xi \Vert_{(s+1/2)} \le \Vert \xi \Vert_{(s+1)}$.  Then, the result follows by reminder \ref{gfn}. \end{proof}
\begin{remark}  This result was already known for Diff$(\SSS^{1})$ and hence the $L_{f}$. On the other hand $G_{f}^{2} = L_{f^{2}} + kId$, so the essential self-adjointness  follows by Nelson's theorem:   \end{remark}
\begin{reminder} (Nelson's theorem \cite{nel}) Let $H$ be an Hilbert space,  $A$ and $B$ be formally self-adjoint operator  acting on a dense subspace $D \subset H$, such that $AB \xi = BA \xi$ $\forall \xi \in D$, and $A^{2} + B^{2}$ essentially self-adjoint, then $A$, $B$ are essentially self-adjoint, and their bounded function commute on $H$.  \\ÊRemark that we have the same result for supercommutation introducing  $\kappa$.   \end{reminder}

\begin{reminder} Let $T$ be a self-adjoint operator with $D(T)$ dense in $H$. There exist a finite measure space $(Y, \mu)$, a unitary operator $U: H \to L^{2}(Y,\mu)$ and a real function $f$, finite up to a null set on $Y$, such that, if $M_{f}$ is the operator of multiplication by $f$, with domain $D(M_{f})$, then $\nu \in D(T) \iff U\nu \in D(M_{f})$, and $\forall g \in D(M_{f})$, $UTU^{\star}g = fg$.  Let $h$ be a borelian function bounded on $\RRR$. The bounded operator $h(T)$ on $H$ is defined  by $h(T) = U^{\star} M_{h(f)}U $.   \end{reminder}

\subsection{Definition of local von Neumann algebras}
\begin{definition} \label{dixmier}(Dixmier) Let $H$ be an Hilbert space. An unbounded self-adjoint operator $T$  is affiliated to a von Neumann algebra $\M$ if it satisfy one of the followings equivalent properties:   \begin{enumerate}
\item[(a)] $\M$ contains all the spectral projection of $T$.
\item[(b)] $\M$ contains every bounded functions of $T$.
\item[(c)]  $\forall u \in \M'$ unitary,  $uD(T) = D(T)$ and $uT\xi = T u\xi$, $\forall  \xi \in D(T)$.  
\end{enumerate}
We note $T \hspace{0.07cm}  \eta \hspace{0.07cm}  \M$.
  \end{definition}
  \begin{remark}By lemma \ref{superco}, if $(\M , \tau)$ is a $\ZZZ_{2}$-graded von Neumann algebra, we can add: 
   \begin{enumerate}
\item[(c')]  $\forall u \in \M^{\natural}$ unitary,  $uD(T) = D(T)$, $uT\xi = (-1)^{\partial T \partial u} T u\xi$, $\forall  \xi \in D(T)$.  
\end{enumerate}
   \end{remark}

\begin{definition} Let $I $ be a proper interval of $\SSS^{1}$. \\ÊWe define $C^{\infty}_{I}(\SSS^{1})$ as the algebra of smooth functions vanishing out of $I$.  \end{definition}
\begin{definition} Let $\Vir_{1/2}(I)$ be the local Neveu-Schwarz Lie superalgebra, generated by $L_{f}$, $G_{f}$ with $f \in C^{\infty}_{I}(\SSS^{1})$, and $C$ central.   \end{definition}
\begin{lemma} (Locality)  \label{supco2} $\Vir_{1/2}(I) $  and   $\Vir_{1/2}(I^{c})$ supercommute.    \end{lemma}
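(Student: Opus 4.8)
The plan is to reduce the statement to the three bracket relations of Lemma~\ref{lobra} evaluated on the generators, and to observe that every term occurring on their right-hand sides is built from pointwise products of the labelling functions and their derivatives, all of which vanish identically because the two families of functions have disjoint supports. The key elementary fact I would record first is this: if $f \in C^{\infty}_{I}(\SSS^{1})$, then $f$ vanishes on the open arc $I^{c}$; being smooth, every derivative $d^{a}(f)$ vanishes on $I^{c}$ as well, and by continuity on its closure $\overline{I^{c}}$, in particular at the two boundary points $\partial I$. Symmetrically, a function $h$ localized in $I^{c}$ together with all its derivatives $d^{b}(h)$ vanishes on $\bar I$. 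Hence at every point of $\SSS^{1}$ at least one of the two functions vanishes along with all its derivatives, so every pointwise product $d^{a}(f)\,d^{b}(h)$ is identically zero on the circle; consequently every pairing appearing on the right-hand sides, being $\frac{1}{2\pi i}\int_{0}^{2\pi}$ of such a product, also vanishes.

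With this in hand I would run through the three cases of Lemma~\ref{lobra}, taking $f$ localized in $I$ and $h$ in $I^{c}$, with the $L$'s even and the $G$'s odd, so that the super-bracket is the commutator except between two $G$'s, where it is the anticommutator. For $[L_{f},L_{h}]$ the operator part $L_{d(f)h - f d(h)}$ vanishes because $d(f)h = f d(h) = 0$, and the central part $\frac{C}{12}((d^{3}-d)(f),h)_{\RRR}$ vanishes by the pairing observation. For $[G_{f},L_{h}]$ the labelling function $d(f)h - \frac{1}{2}f d(h)$ is identically zero, so the bracket equals $G_{0}=0$; the opposite ordering $[L_{f},G_{h}]$ then vanishes by antisymmetry. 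For $[G_{f},G_{h}]_{+}$ the operator part is $2L_{fh}=2L_{0}=0$ since $fh\equiv 0$, and the central part $\frac{C}{3}((d^{2}-1)(f),h)_{\RRR}$ again vanishes. Since $C$ is central it supercommutes with everything automatically, so all generators of the two local algebras supercommute, which is the assertion.

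The computation is routine once the support observation is established; the only point demanding care, and the main (mild) obstacle, is the behavior at the endpoints $\partial I$, where the two supports meet. I would stress that localization "outside $I$" must be read as vanishing on the \emph{closed} complementary arc including $\partial I$, which is precisely what smoothness delivers, so that the products are identically zero rather than merely zero almost everywhere. This is what guarantees in particular that the central Schwinger terms, being genuine integrals of these products, vanish as well, and hence that the vanishing is exact on all three types of bracket.
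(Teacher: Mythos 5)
Your proof is correct and takes essentially the same route as the paper: the paper's own proof of Lemma \ref{supco2} is precisely the one-line observation that, by Lemma \ref{lobra}, every bracket is labelled by products of functions in $C^{\infty}_{I}(\SSS^{1})$ and $C^{\infty}_{I^{c}}(\SSS^{1})$ (and their derivatives), and such products vanish identically. Your extra care about derivatives at the endpoints $\partial I$ and about the central Schwinger terms only makes explicit what the paper leaves implicit.
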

 \begin{proof}By lemma \ref{lobra}, the computation of the brackets involve product of functions in $C^{\infty}_{I}(\SSS^{1})$ and $C^{\infty}_{I^{c}}(\SSS^{1})$, but $C^{\infty}_{I}(\SSS^{1}) . C^{\infty}_{I^{c}}(\SSS^{1})$ = \{0\}.  \end{proof}
 \begin{definition} Let $p_{0}$ be the projection on the space generated by the vectors of integer level, $p_{1} = 1-p_{0}$, $u  = p_{0}  - p_{1}$ and $\tau(x) = uxu$. \end{definition}
 \begin{definition}  Let the von Neumann algebra $\N_{pq}^{m}(I)$ be the minimal von Neumann subalgebra of $B(H_{pq}^{m})$ such that the self-adjoint operators of $\Vir_{1/2}(I) $ (i.e $L_{f}$, $G_{f}$ with  $f \in C^{\infty}_{I}(\SSS^{1})$ real), are affiliated to it.  See definition \ref{dixmier} for equivalent definitions. $(\N_{pq}^{m}(I), \tau)$ is a $\ZZZ_{2}$-graded von Neumann algebra. Ê \end{definition}
 \begin{corollary} (Jones-Wassermann subfactor)   $\N_{pq}^{m}(I) \subset \N_{pq}^{m}(I^{c})^{\natural}$    \end{corollary}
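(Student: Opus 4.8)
The plan is to reduce the statement to the \emph{strong} supercommutation of the odd generators and then upgrade via Nelson's theorem. First note that $\N_{pq}^{m}(I^{c})^{\natural} = \kappa\,\N_{pq}^{m}(I^{c})'\,\kappa^{\star}$ is a von Neumann algebra by Lemma \ref{naturals}, so by the minimality in the definition of $\N_{pq}^{m}(I)$ it suffices to show that each self-adjoint generator of $\Vir_{1/2}(I)$ is affiliated to it. I would first observe that $\N_{pq}^{m}(I)$ is already generated by the odd generators $G_{f}$ ($f \in C^{\infty}_{I}(\SSS^{1})$ real): choosing a real cutoff $\chi \in C^{\infty}_{I}(\SSS^{1})$ equal to $1$ on the support of $f$, the superbracket of Lemma \ref{lobra} gives $L_{f} = \frac{1}{2}([G_{f},G_{\chi}]_{+} - c_{f})$ with $c_{f} \in \RRR$, so the even generators are affiliated to the algebra of the $G$'s. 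Thus the whole statement reduces to showing that each $G_{f}$ ($f$ on $I$) is affiliated to $\N_{pq}^{m}(I^{c})^{\natural}$; by the graded affiliation criterion (c') in the remark following Definition \ref{dixmier}, together with $\N_{pq}^{m}(I^{c})^{\natural\natural} = \N_{pq}^{m}(I^{c})$ (Lemma \ref{superco}), this means exactly that every unitary of $\N_{pq}^{m}(I^{c})$ supercommutes with $G_{f}$.

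Since $\N_{pq}^{m}(I^{c})$ is, by the same cutoff reduction, generated by the bounded functions of the odd generators $G_{g}$ ($g \in C^{\infty}_{I^{c}}(\SSS^{1})$ real), it is enough to prove that $G_{f}$ and $G_{g}$ strongly supercommute, i.e. that all their bounded functions supercommute. On the smooth core $\H_{pq}^{m}$, which is invariant and a core by Corollary \ref{smoothi} and Lemma \ref{gjncons}, the Locality Lemma \ref{supco2} gives $[G_{f},G_{g}]_{\tau}=0$, that is, the honest anticommutation $G_{f}G_{g}\xi = -G_{g}G_{f}\xi$. To pass to the functional calculus I would apply Nelson's theorem in its supercommuting form (the remark after Lemma \ref{gjncons}), whose hypothesis is the essential self-adjointness of $G_{f}^{2}+G_{g}^{2}$ on $\H_{pq}^{m}$. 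Here the super structure pays off: by the anticommutator relation of Lemma \ref{lobra} one has $G_{f}^{2}=L_{f^{2}}+c_{1}$ and $G_{g}^{2}=L_{g^{2}}+c_{2}$, so $G_{f}^{2}+G_{g}^{2}=L_{f^{2}+g^{2}}+(c_{1}+c_{2})$ with $f^{2}+g^{2}$ a single real smooth function, whence essential self-adjointness is immediate from Lemma \ref{gjncons}. Nelson's theorem then delivers the supercommutation of the bounded functions.

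Assembling, every unitary of $\N_{pq}^{m}(I^{c})$ supercommutes with each $G_{f}$, so each $G_{f}$ — and, through the reduction of the first paragraph, each $L_{f}$ — is affiliated to $\N_{pq}^{m}(I^{c})^{\natural}$; minimality of $\N_{pq}^{m}(I)$ then yields $\N_{pq}^{m}(I) \subset \N_{pq}^{m}(I^{c})^{\natural}$. I expect the main obstacle to be the sign bookkeeping inside the graded version of Nelson's theorem: for two odd operators supercommutation is genuine anticommutation, which must be converted into an ordinary commutation before the classical Nelson argument applies. The natural device is the Klein transformation $\kappa$ of remark \ref{relll}, under which the odd part acquires the factor $-iu$; one checks that the $\kappa$-conjugated operators commute strongly on $\H_{pq}^{m}$, runs the standard argument there, and transports the conclusion back while tracking the grading factor $(-1)^{\partial G_{f}\,\partial u}$ required by criterion (c'). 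A secondary point to verify carefully is that the reduction to the odd generators is valid at the level of affiliation and domains, and not merely formally.
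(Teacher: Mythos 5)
Your proposal is correct and follows essentially the same route as the paper: the paper's proof is exactly locality (Lemma \ref{supco2}) combined with Lemma \ref{gjncons} and Nelson's theorem in its graded (Klein-transformed) form, upgrading the formal supercommutation of $G_{f}$ and $G_{g}$ to supercommutation of their bounded functions and hence of the von Neumann algebras they generate. Your extra ingredients --- the cutoff identity $L_{f} = \frac{1}{2}([G_{f},G_{\chi}]_{+} - c_{f})$ reducing everything to the odd generators, and the verification of Nelson's hypothesis via $G_{f}^{2}+G_{g}^{2} = L_{f^{2}+g^{2}} + \mathrm{const}$ --- are precisely the details the paper leaves implicit when it treats only the pair $(G_{f},G_{g})$ and then asserts the conclusion ``for the von Neumann algebra they generate''; the domain point you flag is genuine but patchable, since the anticommutator of the closures is a symmetric extension of the essentially self-adjoint operator $2L_{f}+c_{f}$ on the smooth core and therefore has the same closure.
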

 \begin{proof} $\Vir_{1/2}(I) $  and   $\Vir_{1/2}(I^{c})$ supercommute, then, by  lemma \ref{gjncons}  and Nelson's theorem,  $G_{f}$ and $G_{g}$ supercommute for $f$ and $g$ concentrated on $I$ and $I^{c}$. So is for the von Neumann algebra they generate.    \end{proof}
 \begin{theorem}  (Reeh-Schlieder theorem)  \ Let $v \in H_{pq}^{m}$ be a non-null vector of finite level, then, $\N_{pq}^{m}(I).v$ is dense in $H_{pq}^{m}$ (i.e. $v$ is a cyclic vector). \end{theorem}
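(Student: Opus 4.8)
The plan is to run the classical Reeh--Schlieder argument, whose three ingredients are all available here: positivity of the energy ($L_{0}\geq h_{pq}^{m}\geq 0$ on the lowest weight module $L(c_{m},h_{pq}^{m})$), covariance of the local algebras under the rigid rotations $U(\theta)=e^{i\theta L_{0}}$, and the fact that a finite level vector is an entire vector for $U$. First I would reformulate the claim: writing $K=\overline{\N_{pq}^{m}(I).v}$ and taking $\xi\perp K$, it suffices to prove $\xi=0$. For the covariance step, the bracket relations of Lemma \ref{lobra} give $[L_{0},L_{h}]=-L_{d(h)}$ and $[L_{0},G_{h}]=-G_{d(h)}$ with $d=-i\,d/d\theta$, so $L_{0}$ is the infinitesimal generator of the rigid rotation group; exponentiating (the operators being essentially self-adjoint by Lemma \ref{gjncons}) yields $U(\theta)L_{h}U(-\theta)=L_{\rho_{\theta}h}$ and likewise for $G_{h}$, where $\rho_{\theta}$ denotes rotation by $\theta$. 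Since $\N_{pq}^{m}(J)$ is generated by the bounded functions of these self-adjoint operators with symbol supported in $J$, this upgrades to $U(\theta)\N_{pq}^{m}(J)U(-\theta)=\N_{pq}^{m}(\rho_{\theta}J)$.

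For the analytic core, I would choose an interval $I_{1}$ with $\overline{I_{1}}\subset I$, so that $\rho_{\theta_{j}}I_{1}\subset I$ for all $|\theta_{j}|<\varepsilon$. For $a_{1},\dots,a_{n}\in\N_{pq}^{m}(I_{1})$ and $|\theta_{j}|<\varepsilon$ the rotated operators $U(\theta_{j})a_{j}U(-\theta_{j})$ lie in $\N_{pq}^{m}(I)$, hence their product sends $v$ into $K$ and
\[
G(\theta_{1},\dots,\theta_{n})=\Big\langle \xi,\ \prod_{j=1}^{n} U(\theta_{j})\,a_{j}\,U(-\theta_{j})\,v\Big\rangle=0
\]
on the real cube $|\theta_{j}|<\varepsilon$. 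Telescoping the unitaries, $G$ is assembled from the rotations $U$ evaluated at the increments $\theta_{1},\theta_{2}-\theta_{1},\dots,-\theta_{n}$ (which sum to $0$), paired on the right with the entire vector $U(-\theta_{n})v$ and on the left with $\xi$. Using $L_{0}\geq 0$ to continue the intermediate increments into the upper half plane, together with the entirety of the finite-level vector $v$ at the right end, $G$ extends holomorphically to a tube domain; the edge-of-the-wedge theorem (an iterated Schwarz reflection across the real neighbourhood on which $G$ vanishes) then forces $G\equiv 0$ for all real $\theta_{1},\dots,\theta_{n}$.

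It remains to conclude. For arbitrary real $\theta_{j}$ we obtain $\xi\perp\prod_{j}\N_{pq}^{m}(\rho_{\theta_{j}}I_{1})\,v$. As the $\theta_{j}$ vary, the algebras $\N_{pq}^{m}(\rho_{\theta}I_{1})$ together generate $\bigvee_{J}\N_{pq}^{m}(J)$ over all intervals $J$; by a partition of unity this von Neumann algebra contains the bounded functions of every $L_{f}$ and $G_{f}$, so its commutant commutes with the entire action of $\Vir_{1/2}$ and, the representation $H_{pq}^{m}$ being an irreducible discrete series module, is scalar by Schur's lemma. Hence $\bigvee_{J}\N_{pq}^{m}(J)=B(H_{pq}^{m})$ and $\xi\perp B(H_{pq}^{m})v$; since $v\neq 0$, the set $B(H_{pq}^{m})v$ is dense, whence $\xi=0$ and $v$ is cyclic for $\N_{pq}^{m}(I)$.

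I expect the main obstacle to be the multivariable analytic continuation. The single-variable ($n=1$) case is a clean reflection argument on a holomorphic function that is bounded or of controlled exponential growth in the upper half plane, but getting vanishing for \emph{products} of operators drawn from rotated algebras requires simultaneous continuation in all the increment variables, a careful tube-domain/edge-of-the-wedge argument, and the growth control supplied by the finite-level (hence entire, but exponentially growing in the imaginary directions) vector $v$. A naive single-step induction fails precisely because the intermediate vectors $a_{2}\cdots a_{n}v$ are no longer of finite level, so only the rightmost $v$ can be pushed through the analytic continuation.
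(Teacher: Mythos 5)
Your proof is correct and is exactly the argument the paper invokes: the paper's own ``proof'' is a one-line citation to the general principle of local algebras (Wassermann \cite{2}, p.~502), which is precisely this rotation-covariance/positive-energy/edge-of-the-wedge argument finished off by irreducibility and Schur's lemma. You have simply supplied the details the paper delegates to the reference, including the one genuinely delicate point, handled correctly: only the rightmost vector $v$ is finite-level (entire for $e^{izL_{0}}$), so the intermediate increments must be continued using the contractivity of $e^{izL_{0}}$ for $\mathrm{Im}\, z \ge 0$ rather than entirety.
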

\begin{proof} It's a general principle of local algebra, see \cite{2} p 502.   \end{proof}

\subsection{Real and complex fermions}
    \begin{reminder}  (The complex Clifford algebra, see \cite{2}) Let $H$ be a complex Hilbert space, the complex Clifford algebra Cliff$(H)$ is the unital $\star$-algebra generated by a complex linear map $f \mapsto a(f)$ $f \in H$ satisfying: 
    \begin{center} $ [a(f) , a(g) ]_{+} = 0 $  \quad  and \quad  $  [a(f) , a(g)^{\star} ]_{+} = (f,g)$ \end{center}   
  The complex Clifford algebra as a natural irreducible representation $\pi$ on the fermionic Fock space $\F(H) = \Lambda H = \bigoplus_{n = 0}^{\infty}\Lambda^{n} H$ (with $\Lambda^{0} H = \CCC \Omega$ and $\Omega$ the vacuum vector), given by $\pi(a(f))\omega = f \wedge \omega$ bounded.  
 Let $c(f) = a(f) + a(f)^{\star}$ satisfying  $[c(f) , c(g)]_{+} = 2Re(f,g)$ and generating the real Clifford algebra. Warning, $c$ is only $\RRR$-linear. We have the correspondence $a(f)  = \1/2 (c(f) -ic(if))$. Now if $P$ is a projector on $H$, we can define a new irreducible representation $\pi_{P}$ of the complex Clifford algebra by $\pi_{P}(a(f)) = \1/2 (c(f) -ic(\I f))$, where $\I$ is the multiplication by $i$ on $P H$ and by $-i$ on $(I-P)H$, ie,  $\I = iP -i(I-P) = i(2P-I)$. We know that $\pi_{P}$ and $\pi_{Q}$ are unitary equivalent if $P-Q$ is an Hilbert-Schmidt operator. Now, a unitary $u \in U(H)$ is implemented in $\pi_{P}$ if $\pi_{P}(a(u.f)) = U \pi_{P}(a(f)) U^{\star} $ with $U$ unitary, unique up to a phase. But $\pi_{P}(a(u.f))  = \pi_{Q}(a(f))$ with $Q  = u^{\star}Pu$. Then, $u$ is implemented in $\pi_{P}$ if $[P,u]$ is Hilbert-Schmidt.   \end{reminder}
 
 \begin{reminder} More generally, taking a real Hilbert space $H$, we have the real Clifford algebra: $[c(f) , c(g)]_{+} = 2(f,g)$, $f, g \in H$. Then, we define a complex structure $\I$ with $\I^{2} = -Id$. We obtain the complex Hilbert space $H_{\I}$  and then we can define the complex Clifford algebra by: $A(f) =  \1/2 (c(f) -ic(\I f))$,  acting irreducibly on the fermionic Fock space $\F_{\I} = \Lambda H_{\I}$.
 Now, the quantisation condition is: $u \in O(H)$ is implemented in $\F_{\I}$ if $[u,\I]$ is Hilbert-Schmidt. This quantisation due to Segal can be deduce form the condition on the complex case, using the doubling  construction described below. \end{reminder}
 
 \begin{example} (The Neveu-Schwarz real fermions)  \\
 Let the real Hilbert space of anti-periodic functions $H_{NS} = \{ f : \RRR \to \RRR \vert f(\theta + 2\pi) = -f(\theta)  \}$ with basis, $\{cos(r\theta), sin(r\theta) \vert r \in \ZZZ + 1/2 \} $, let the complex structure $\I$ defined by $\I cos (r \theta) = sin(r \theta)$ and $\I sin (r \theta) = -cos(r \theta)$.  Then we obtain the operators $c(f)$ acting irreducibly on the fermionic Fock space we call $\F_{NS}$. Then, we define $\psi_{n} = c( cos(n \theta)) +  i c (sin (n \theta))$. Now, $\psi_{n}^{\star} = \psi_{-n}$ and $[\psi_{m} , \psi_{n} ]_{+} = \delta_{m+n}Id$. The fermionic Fock space can be identified with the irreducible positive energy representation already studied.   \end{example}
 
 \begin{reminder}  (The doubling construction) This is a precise mathematical version of the following physicists slogan: ``a complex fermion is equivalent to two real fermions ''. 
 We start with a real Hilbert space $H$ and we take $H \oplus iH$ (as a real Hilbert space, $iH$ is the same as $H$). Let $v = \xi \oplus i \eta$, we define a real Clifford algebra by  $c(v) = c(\xi ) +  c(i \eta)$, acting irreducibly on $\F(H) \otimes \F(iH)$. Then we define  $a(v) = \1/2 (c(v) - ic(iv)) $ satisfying the complex Clifford relation on the complex Hilbert space $H \oplus iH$. The operator $\I$ on $H$ extends naturally into a unitary operator on $H \oplus iH$. Now, because $\I^{2} = -Id$, it has the form $\I = i(2P-I)$, with $P$ an orthogonal projection. Then the action of the operator $a(v)$ on $\F(H) \otimes \F(iH)$ can be identified with the representation $\pi_{P}$ above, by the unique unitary sending $\Omega \otimes \Omega$ to $\Omega$.
     \end{reminder} 
     \begin{example}We apply to the previous example: in this case, $H_{NS} \oplus iH_{NS} = \{ f : \RRR \to \CCC \vert f(\theta + 2\pi) = -f(\theta)  \}$. But then the multiplication with $e^{i\theta/2}$ gives an identification with $L^{2}(\SSS^{1} , \CCC)$. This construction was already use on \cite{vtl}.   \end{example}

 \begin{reminder} \label{complexfer} (The local  algebra for complex fermions) Let $V$ be a complex finite dimensional complex vector space and  $H = L^{2}(\SSS^{1} , V)$, let $P$ be the projection on the Hardy space $H^{2}(\SSS^{1} , V)$ (the space of function without negative Fourier coefficient). Let $I$ be a proper interval of $\SSS^{1}$ and  $\M(I)$ be the von Neumann algebra generated  by $\pi_{P}($Cliff$( L^{2}(I , V)))$, then: 
 \begin{enumerate}
  \item[(a)]  (Haag-Araki duality)   \quad    $\M(I)^{\natural} = \M(I^{c}) $
\item[(b)] (Covariance) $u_{\varphi^{-1}}: f \mapsto   \sqrt{\varphi' }.f \circ \varphi $ defines a unitary action of  $\varphi \in$Diff$(\SSS^{1})$  on $H$;  this action is implemented in $\pi_{P}$.
\item[(c)]  The modular action on $\M(I)$ is $\sigma_{t}(x) = \pi_{P}(\varphi_{t}) x \pi_{P}(\varphi_{t})^{\star}$, with $\varphi_{t} \in \textrm{Diff}(\SSS^{1})$ the M$\ddot{o}$bius flow fixing the end point of $I$. For example, if $I$ is the upper half-circle, then $\partial I = \{-1 , +1\}$ and  $\varphi_{t}(z) = \frac{ch(t)z+sh(t)}{sh(t)z+ch(t)}$.
\item[(d)]  The modular action is ergodic (ie it fixes only the scalar operators), so that $\M(I)$ is a III$_{1}$ factor (the hyperfinite one).
\end{enumerate}
    \end{reminder}
    
 \begin{remark}\label{realcova} By the doubling construction,  Diff$(\SSS^{1})$ acts  on $H_{NS} $ by:
 \begin{center} $\pi (\varphi)^{-1}.f = \vert \varphi'\vert^{1/2}f \circ \varphi $  \end{center} 
  and the action is quantised. We verify directly that $H_{\CCC} := H_{NS} \oplus iH_{NS} $ admits the orthogonal basis $e_{r}= e^{ir\theta} $ with $r \in \ZZZ + 1/2$, that $\I = (2P-I)i$, with $P$ the Hardy projection (on  the positive modes $r \ge 0$). Now, the Lie algebra of Diff$(\SSS^{1})$ is the Witt algebra. The infinitesimal version of the previous action is
 $d_{n}e_{r} = -(r+n/2)e_{r+n} $: the action of the Witt algebra on the $1/2$-density (see below or \cite{8} p 4). This infinitesimal action of the Witt algebra is implemented on the Fock space $\F_{\CCC} = \F_{NS} \otimes \F_{NS}$ into the Virasoro derivation on the real fermions:  $[L_{n}, \psi_{r}] = -(r+n/2)\psi_{r+n}$ (consistent with section \ref{sec2}). 
  Let  $SU(1,1) $ be the group of  $g = \left( \begin{array}{cc}\alpha & \beta \\ \bar{\beta} & \bar{\alpha}     \end{array}  \right) $ with $\vert \alpha \vert^{2} - \vert \beta \vert^{2} = 1$. By the Mobius transformation: $g(z) = \frac{\alpha z + \beta}{\bar{\beta} z +  \bar{\alpha}  }$, $SU(1,1)$
is injected in   Diff$(\SSS^{1})$, and its Lie algebra is generated by $d_{-1}, d_{0}, d_{1}$. Now, we can see directly that $SU(1,1)$ is  quantised,  because it acts unitarily  and commutes with $P$:
\begin{center} $ \pi (g)^{-1}f(z) = \frac{1}{\vert \bar{\beta}z + \bar{\alpha}\vert}f(g(z)) $ \end{center}  
 Using $\vert \bar{\beta}z + \bar{\alpha}\vert = ( \bar{\beta}z + \bar{\alpha})^{1/2}(\beta \bar{z}+ \alpha)^{1/2} $,  for $k \ge 0$:
\begin{center}$\pi(g)^{-1}z^{k+1/2}= \frac{(\alpha z + \beta)^{k}z^{1/2}}{(\beta z + \alpha)^{k+1}}\in PH_{\CCC} $   \end{center}
Now, the quantised action of $SU(1,1)$ fixes the vacuum vector of the fermionic Fock space, because  $L_{-1}, L_{0}, L_{1}$ vanish on the vacuum vector. \\ 
Note that the Lie algebra of the modular action is generated by $L_{1} - L_{-1}$.
   \end{remark}  
    
      \begin{reminder} (Takesaki devissage \cite{takesaki2}) Let $M \subset B(H) $ be a von Neumann algebra, $\Omega \in H$ cyclic for $M$ and $M'$, $\triangle^{it}$, $J$ the corresponding modular operators ($\triangle^{it} M \triangle^{-it} = M$ and $JMJ = M'$). If $N \subset M$ is a von Neumann subalgebra such that $\triangle^{it} N \triangle^{-it} = N$ (conditional expectation), then:  \begin{enumerate}
\item[(a)]  $\triangle^{it}$ and $J$ restrict to the modular automorphism group $\triangle_{1}^{it}$ and conjugation operator $J_{1}$ of $N$ for $\Omega$ on the closure $H_{1}$ of $N\Omega$.
\item[(b)]   $\triangle_{1}^{it} N \triangle_{1}^{-it} = N$ and $J_{1}NJ_{1} = N'$ on $H_{1}$.
\item[(c)]  If $p$ is the projection onto $H_{1}$, then $pMp = Np$ and \\Ê $N = \{ x \in M \  \vert \ xp = px     \} $  (the Jones relations \cite{23})
\item[(d)] $ H_{1} = H \iff M = N $
\item[(e)] The modular group fixes the center. In fact $\triangle^{it} x \triangle^{-it} = x$ and $JxJ = x^{\star}$ for $x \in Z(M) = M \cap M'$.
\end{enumerate} \end{reminder}
\begin{definition} Let  $\M_{NS}(I)$ be the von Neumann algebra generated by the real Neveu-Schwarz $\psi_{f}$ with $f$ localised on $I$.     \end{definition}
\begin{lemma} (Reeh-Schlieder theorem) Let $v \in \F_{NS}$ be a non-null vector of finite level, then, $\M_{NS}(I).v$ is dense in $\F_{NS}$ (i.e. $v$ is a cyclic vector).   \end{lemma}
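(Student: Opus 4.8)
The plan is to transcribe the standard Reeh--Schlieder argument (the principle invoked in \cite{2} p.\ 502 for the earlier theorem) into the Neveu--Schwarz setting, the two driving facts being the positivity of the conformal Hamiltonian $L_{0}$ and the irreducibility of the real fermion representation on $\F_{NS}$. Let $R_{\theta} = e^{i\theta L_{0}}$ be the rotation group, which by Remark \ref{realcova} is unitarily implemented on $\F_{NS}$ and satisfies $R_{\theta}\psi_{f}R_{\theta}^{\star} = \psi_{R_{\theta}f}$, where $R_{\theta}f$ is the $\theta$-rotate of $f$. I first fix a proper interval $I_{0}$ with $\overline{I_{0}} \subset I$, so that $R_{\theta}(\mathrm{supp}\,f) \subset I$ for every $f$ supported in $I_{0}$ and every $\theta$ in a neighborhood of $0$. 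It suffices to show that a vector $w$ orthogonal to $\M_{NS}(I)v$ is zero; for such $w$ one has $(w,\psi_{f_{1}}\cdots\psi_{f_{n}}v) = 0$ whenever $f_{1},\dots,f_{n}$ are supported in $I_{0}$.

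The key object is
\[ G(\theta_{1},\dots,\theta_{n}) = (w,\ R_{\theta_{1}}\psi_{f_{1}}R_{\theta_{1}}^{\star}\cdots R_{\theta_{n}}\psi_{f_{n}}R_{\theta_{n}}^{\star}\,v) = (w,\ \psi_{R_{\theta_{1}}f_{1}}\cdots\psi_{R_{\theta_{n}}f_{n}}\,v). \]
Telescoping the rotations gives
\[ G = (w,\ R_{\eta_{1}}\psi_{f_{1}}R_{\eta_{2}}\psi_{f_{2}}\cdots R_{\eta_{n}}\psi_{f_{n}}R_{\eta_{n+1}}\,v), \]
with $\eta_{1} = \theta_{1}$, $\eta_{k} = \theta_{k}-\theta_{k-1}$ for $2\le k\le n$, and $\eta_{n+1} = -\theta_{n}$, so that $\eta_{1}+\cdots+\eta_{n+1} = 0$. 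I regard $G$ as a function of the independent variables $\eta_{1},\dots,\eta_{n}$, with $\eta_{n+1}$ determined. Since $L_{0}\ge 0$, each factor $R_{\eta_{k}} = e^{i\eta_{k}L_{0}}$ (for $1\le k\le n$) is a bounded holomorphic function of $\eta_{k}$ on the upper half-plane $\mathrm{Im}\,\eta_{k}>0$, of norm $\le 1$; and because $v$ has finite level, $R_{\eta_{n+1}}v = e^{-i(\eta_{1}+\cdots+\eta_{n})L_{0}}v$ is a finite sum of exponentials of the $\eta_{k}$, hence entire and of locally bounded growth. Therefore $G$ extends to a holomorphic function on the tube $\{\mathrm{Im}\,\eta_{k}>0,\ 1\le k\le n\}$, continuous up to $\RRR^{n}$, and it vanishes on a real neighborhood of the origin by the orthogonality above.

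Next, vanishing propagates to all of $\RRR^{n}$ one variable at a time: fixing $\eta_{2},\dots,\eta_{n}$ real, the slice $\eta_{1}\mapsto G$ is holomorphic on the upper half-plane, continuous up to $\RRR$, and vanishes on a real interval, so by Schwarz reflection across that interval and the identity theorem it vanishes for all real $\eta_{1}$; repeating with $\eta_{2},\dots,\eta_{n}$ yields $G\equiv 0$ on $\RRR^{n}$ (equivalently, invoke the several-variable edge-of-the-wedge theorem). Hence $(w,\psi_{R_{\theta_{1}}f_{1}}\cdots\psi_{R_{\theta_{n}}f_{n}}v) = 0$ for all real $\theta_{1},\dots,\theta_{n}$. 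As the $\theta_{i}$ range over $\RRR$, the functions $R_{\theta_{i}}f_{i}$ sweep out all rotates of smooth functions supported in $I_{0}$; by a partition of unity every $g\in C^{\infty}(\SSS^{1})$ is a finite sum of such rotates, so multilinearity gives $(w,\psi_{g_{1}}\cdots\psi_{g_{n}}v) = 0$ for all $g_{i}\in C^{\infty}(\SSS^{1})$ and all $n$. Since the real Clifford algebra generated by the $\psi_{f}$ acts irreducibly on $\F_{NS}$, these vectors are total, so $w = 0$, proving that $v$ is cyclic for $\M_{NS}(I)$.

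I expect the analytic-continuation step to be the only genuine obstacle: one must check that the change of variables decouples the positivity constraints into the \emph{independent} half-plane conditions $\mathrm{Im}\,\eta_{k}\ge 0$, and that the residual rotation $R_{\eta_{n+1}}$ causes no growth problem. This last point is exactly where the hypothesis that $v$ has \emph{finite level} enters, since for a general vector the leftover exponential $e^{-i(\eta_{1}+\cdots+\eta_{n})L_{0}}v$ would destroy holomorphy in the tube. Everything else (unitary implementation of $R_{\theta}$, positivity of $L_{0}$, and irreducibility of the Fock representation) is already available from the preceding material.
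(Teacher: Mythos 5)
Your proposal is correct and is precisely the standard Reeh--Schlieder argument (rotation covariance, positivity of $L_{0}$, holomorphic continuation into the tube using the finite-level hypothesis, Schwarz reflection/edge-of-the-wedge, and irreducibility of the Clifford representation on $\F_{NS}$) that the paper invokes with its one-line appeal to the ``general principle of local algebra'' of \cite{2}. Since the paper gives no details beyond that citation, your write-up simply supplies the argument the citation points to; the approaches coincide.
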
 \begin{proof} It's a general principle of local algebra, see \cite{2}.   \end{proof}
\begin{reminder} A von Neumann algebra $\M$ is hyperfinite iff it is injective, ie  $\M \subset B(H)$ with conditional expectation (see \cite{connes}).  \end{reminder}
\begin{proposition} The local algebra $\M_{NS}(I)$ satisfy Haag-Araki duality, covariance for Diff$(\SSS^{1})$, and the modular action is geometric and ergodic. In particular,   $\M_{NS}(I)$ is the hyperfinite III$_{1}$ factor    \end{proposition}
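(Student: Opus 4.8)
The plan is to transfer all four properties of the local complex-fermion algebra from reminder \ref{complexfer} to the real Neveu-Schwarz algebra $\M_{NS}(I)$ via the doubling construction and the Klein transformation. Recall that doubling identifies $H_{NS} \oplus iH_{NS}$ with $L^{2}(\SSS^{1}, \CCC)$ after multiplication by $e^{i\theta/2}$, sends the complex structure $\I$ to $i(2P-I)$ with $P$ the Hardy projection (remark \ref{realcova}), and identifies the Fock space $\F_{\CCC} = \F_{NS} \otimes \F_{NS}$ with $\Lambda(H_{NS} \oplus iH_{NS})_{\I}$ sending $\Omega \otimes \Omega \to \Omega$. Under this identification the real generators $\psi_{r}$ sit inside the complex Clifford algebra Cliff$(L^{2}(\SSS^{1},\CCC))$, so $\M_{NS}(I)$ becomes a subalgebra of the $\M(I)$ of reminder \ref{complexfer}; indeed one checks $\M_{NS}(I)$ corresponds to the fixed subalgebra of $\M(I)$ under the natural second-quantized grading, and its supercommutant is controlled by Lemma \ref{naturals}.

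First I would establish Haag-Araki duality. Since $\M(I)^{\natural} = \M(I^{c})$ holds for the complex fermions, and since the doubling expresses the real fermions as diagonal combinations, the identity $\M_{NS}(I)^{\natural} = \M_{NS}(I^{c})$ should follow by restricting the complex duality and using the relation $\A^{\natural} = \kappa \A' \kappa^{\star}$ of Lemma \ref{naturals} to pass between ordinary commutants and supercommutants. The key point is that the Klein transformation $\kappa = p_{0} + ip_{1}$ converts the graded local structure of the real fermions into the ungraded commutant statement already available for $\M(I)$. Next I would lift covariance: the Diff$(\SSS^{1})$ action on $H_{NS}$ given in remark \ref{realcova} by $\pi(\varphi)^{-1}f = |\varphi'|^{1/2} f\circ\varphi$ is quantised, so it is implemented by unitaries on $\F_{NS}$, and these unitaries conjugate $\M_{NS}(I)$ onto $\M_{NS}(\varphi(I))$ since smearing functions localized in $I$ are carried to functions localized in $\varphi(I)$.

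Then I would identify the modular action. By part (c) of reminder \ref{complexfer} the modular flow of $\M(I)$ is the M\"obius flow $\varphi_{t}$ fixing $\partial I$, implemented by $\pi_{P}(\varphi_{t})$. Because $SU(1,1)$ is quantised and fixes the vacuum (remark \ref{realcova}), this same flow restricts to $\M_{NS}(I)$, so its modular automorphism group is again geometric, given by conjugation with the implemented M\"obius unitaries. Ergodicity then transfers directly: a bounded operator in $\M_{NS}(I)$ fixed by the modular flow is, under the doubling embedding, fixed by the modular flow of $\M(I)$, which by part (d) of reminder \ref{complexfer} fixes only scalars. By the last reminder of the previous subsection, ergodicity of the modular action forces $\M_{NS}(I)$ to be a III$_{1}$ factor; injectivity (hence hyperfiniteness) is inherited because $\M_{NS}(I) \subset \M(I)$ with conditional expectation, the expectation being the restriction of the one onto the even subalgebra.

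The main obstacle I anticipate is verifying cleanly that the modular flow of the ambient complex algebra $\M(I)$ genuinely preserves the real subalgebra $\M_{NS}(I)$ and restricts to its modular flow, rather than merely leaving invariant some larger algebra; this requires checking that the M\"obius unitaries $\pi_{P}(\varphi_{t})$ commute with the grading operator $u$ and carry real-localized functions to real-localized functions, so that Takesaki devissage applies with $N = \M_{NS}(I)$ inside $M = \M(I)$. Once the conditional-expectation hypothesis $\triangle^{it}\M_{NS}(I)\triangle^{-it} = \M_{NS}(I)$ is secured, parts (a)--(e) of Takesaki devissage give the restricted modular data, and the factor and ergodicity claims follow formally.
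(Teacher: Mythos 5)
Your proposal is correct and follows essentially the same route as the paper: embed $\M_{NS}(I)$ into the complex fermion algebra $\M(I)$ by the doubling construction, take covariance from remark \ref{realcova}, observe that the geometric M\"obius modular flow of $\M(I)$ preserves the real-localized subalgebra, deduce hyperfiniteness from the conditional expectation and ergodicity/III$_{1}$ from Takesaki devissage, and transfer Haag-Araki duality from $\M(I)$ using Lemma \ref{naturals}, Reeh-Schlieder and the devissage. The one inaccuracy is your parenthetical claim that $\M_{NS}(I)$ is the fixed-point subalgebra of $\M(I)$ under the second-quantized grading: this is false, since the fixed algebra of the grading is the even part of $\M(I)$ whereas $\M_{NS}(I)$ contains odd elements (the generators $\psi_{f}$ themselves) and is in fact strictly smaller than that even part would suggest; fortunately this remark is not load-bearing, as the plain inclusion $\M_{NS}(I) \subset \M(I)$ stable under the modular flow is all your argument actually uses.
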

\begin{proof} The covariance is shown in remark \ref{realcova}. Then, $\M_{NS}(I)$ is stable by the modular action of $\M(I) $.  Now, $\pi_{P}(\M_{NS}(I)) \subset \M(I) \subset B(H_{\CCC})$ with conditional expectation, so $\pi_{P}(\M_{NS}(I))$ is hyperfinite. Next by Takesaki devissage the modular action of $\pi_{P}(\M_{NS}(I))$ is ergodic, so it's the hyperfinite III$_{1}$ factor. Now, by definition of the type III, every subrepresentations are equivalents, but one copy of $\F_{NS}$ is a subrepresentation. So $\M_{NS}(I)$ is the hyperfinite III$_{1}$ factor. Finally, the Haag-Araki duality for $\M_{NS}(I)$ comes from the Haag-Araki duality for $\M(I)$, the Reeh-Schlieder theorem and the Takesaki devissage.  \end{proof}
\subsection{Properties of local algebras deducable by devissage from loop superalgebras}
In this section we will deduce a few partial results on the local von Neumann algebra of Neveu-Schwarz, using devissage from the loop superalgebras, but it's not enough. In  the next section, we will prove more general definitive result by devissage from real and complex fermions (in particular this will imply all the result proved here).

\begin{remark}   $\F_{NS}^{\gg} = \F_{NS}^{\otimes 3}$.  \end{remark}
\begin{lemma} \label{kms} Let $N_{1}$, $N_{2}$ be von Neumann algebra, with modular action $\sigma^{\Omega_{1}}_{t}$ and  $\sigma^{\Omega_{2}}_{t}$, then, the modular action on $N_{1} \overline{\otimes} N_{2}$ is $\sigma^{\Omega_{1}\otimes \Omega_{2}}_{t} = \sigma^{\Omega_{1}}_{t} \otimes  \sigma^{\Omega_{2}}_{t}$ \end{lemma}
\begin{proof} By KMS uniqueness (see \cite{2} p 493).  \end{proof}
\begin{definition}Let  $L(j , \ell)  \otimes \F_{NS}^{\gg} $ be the irreducible representation of the $\gg$-supersymmetric algebra $\widehat{\gg}$. Let the local von Neumann algebra $ \N_{j}^{\ell}(I)$ generated by   $\pi_{j}^{\ell}(g)\otimes \pi_{NS}^{\gg}(g) $ and $1\otimes x$, with $g \in L_{I}G $ and $x \in \M^{\gg}_{NS}(I)$.        \end{definition}
\begin{proposition}  \label{kyrie}$ \N_{j}^{\ell}(I)=\pi_{j}^{\ell}(L_{I}G)\otimes \M_{NS}^{\gg}(I) $.   \end{proposition}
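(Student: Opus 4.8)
The plan is to prove the two inclusions separately, reducing everything to the single fact that the implementer of a localized gauge transformation is itself a local fermion operator. Write $A = \pi_{j}^{\ell}(L_{I}G)''$ for the von Neumann algebra generated by the level $\ell$ loop group localized on $I$, and $B = \M_{NS}^{\gg}(I)$ for the local Neveu-Schwarz fermion algebra; the claim is $\N_{j}^{\ell}(I) = A \otimes B$ as a von Neumann tensor product. The decisive input is the following locality statement, which I isolate as the crux: for $g \in L_{I}G$, the Fock-space implementer $\pi_{NS}^{\gg}(g)$ belongs to $B = \M_{NS}^{\gg}(I)$.

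I would establish this crux first. Since $g$ is supported in $I$, it equals the identity on functions supported in $I^{c}$, so the geometric action gives $\pi_{NS}^{\gg}(g)\,\psi(f)\,\pi_{NS}^{\gg}(g)^{\star} = \psi(g\cdot f) = \psi(f)$ for every smeared fermion $\psi(f)$ with $f$ localized in $I^{c}$; hence $\pi_{NS}^{\gg}(g)$ commutes with $\M_{NS}^{\gg}(I^{c})$. The implementer is even (built from fermion bilinears), so for an even $x$ one has $[x,y]_{\tau} = [x,y]$ for all $y$; in particular $\pi_{NS}^{\gg}(g)$ commuting with the odd generators of $\M_{NS}^{\gg}(I^{c})$ is the same as supercommuting with them, whence $\pi_{NS}^{\gg}(g) \in \M_{NS}^{\gg}(I^{c})^{\natural}$. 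By Haag-Araki duality for the Neveu-Schwarz fermions (applied to the tensor cube $\F_{NS}^{\gg} = \F_{NS}^{\otimes 3}$) we have $\M_{NS}^{\gg}(I^{c})^{\natural} = \M_{NS}^{\gg}(I)$, giving $\pi_{NS}^{\gg}(g) \in B$.

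For the inclusion $\N_{j}^{\ell}(I) \subseteq A \otimes B$ it suffices to check the generators. The operators $1 \otimes x$ with $x \in B$ lie in $1 \otimes B$ trivially. For the diagonal generators $\pi_{j}^{\ell}(g) \otimes \pi_{NS}^{\gg}(g)$ with $g \in L_{I}G$ we have $\pi_{j}^{\ell}(g) \in A$ by definition and $\pi_{NS}^{\gg}(g) \in B$ by the crux, so each such generator lies in $A \otimes B$, and therefore so does all of $\N_{j}^{\ell}(I)$. For the reverse inclusion I use a division trick: we already have $1 \otimes B \subseteq \N_{j}^{\ell}(I)$, so $1 \otimes \pi_{NS}^{\gg}(g)^{\star} \in \N_{j}^{\ell}(I)$, and since $\pi_{NS}^{\gg}$ is a unitary representation,
\begin{center}
$\bigl(\pi_{j}^{\ell}(g) \otimes \pi_{NS}^{\gg}(g)\bigr)\bigl(1 \otimes \pi_{NS}^{\gg}(g)^{\star}\bigr) = \pi_{j}^{\ell}(g) \otimes 1 \in \N_{j}^{\ell}(I).$
\end{center}
Generating over all $g \in L_{I}G$ gives $A \otimes 1 \subseteq \N_{j}^{\ell}(I)$, and together with $1 \otimes B \subseteq \N_{j}^{\ell}(I)$, since $A \otimes B$ is generated as a von Neumann algebra by $A \otimes 1$ and $1 \otimes B$, we conclude $A \otimes B \subseteq \N_{j}^{\ell}(I)$, hence equality.

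The main obstacle is entirely the crux $\pi_{NS}^{\gg}(g) \in \M_{NS}^{\gg}(I)$; the two inclusions are then formal. This crux is the standard principle that implementers of localized gauge transformations lie in the local algebra, and its proof genuinely rests on Haag-Araki duality for the free fermions together with the locality of the geometric action. The one point requiring care is the parity bookkeeping: one must verify that the even gauge implementer supercommutes with the odd fermion generators of $\M_{NS}^{\gg}(I^{c})$ exactly when it commutes with them, which is where the evenness is used together with the identification of the supercommutant via $\A^{\natural} = \kappa \A' \kappa^{\star}$.
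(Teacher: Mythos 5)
Your proof is correct and takes essentially the same route as the paper: both rest on the single crux that $\pi_{NS}^{\gg}(g)$ (super)commutes with $\M_{NS}^{\gg}(I^{c})$ and therefore lies in $\M_{NS}^{\gg}(I)$ by Haag-Araki duality for the Neveu-Schwarz fermions, after which the diagonal generators split as $\pi_{j}^{\ell}(g)\otimes 1$ times $1\otimes \pi_{NS}^{\gg}(g)$, so that $\N_{j}^{\ell}(I)$ is generated by $\pi_{j}^{\ell}(g)\otimes 1$ and $1\otimes x$. Your version simply makes explicit the parity bookkeeping and the division trick that the paper's two-line proof leaves implicit.
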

\begin{proof}  
$\pi_{NS}^{\gg}(g)$  supercommutes with $\M_{NS}^{\gg}(I^{c})$, so by the Haag-Araki duality  $ \pi_{NS}^{\gg}(g) \in \M_{NS}^{\gg}(I) $. We deduce that $\N_{j}^{\ell}(I)$ is  generated by $ \pi_{j}^{\ell}(g)\otimes 1 $ and $1 \otimes x$. The result follows.
 \end{proof}

\begin{theorem} \label{recallll}  Combining the work of A. Wassermann \cite{2} on local loop group and the previous work on Neveu-Schwarz fermions, we obtain
\begin{enumerate}
\item[(a)]   (Local equivalence) ÊFor every representations $H_{j}^{\ell}$, there is a unique $\star$-isomorphism $\pi_{j}^{\ell} : \N_{0}^{\ell}(I) \to \N_{j}^{\ell}(I)$ coming from $\pi_{0}^{\ell}(B_{f}^{a}) \mapsto \pi_{j}^{\ell}(B_{f}^{a}) = U.\pi_{0}^{\ell}(B_{f}^{a}).U^{\star}  $ and $\pi_{0}^{\ell}(\psi_{g}^{a}) \mapsto \pi_{j}^{\ell}(\psi_{g}^{a})  =U.\pi_{0}^{\ell}(\psi_{g}^{a}).U^{\star} $, with $U : H_{0}^{\ell} \to H_{j}^{\ell}$ unitary.     
\item[(b)] (Covariance)   $\varphi \in $Diff$(\SSS^{1})$ acts unitarily on $H_{j}^{\ell}$ with  $\pi_{j}^{\ell}(\varphi) B_{f}^{a}  \pi_{j}^{\ell}(\varphi)^{\star} = B^{a}_{f \circ \varphi^{-1}}$  and  $\pi_{j}^{\ell}(\varphi) \psi_{g}^{b}  \pi_{j}^{\ell}(\varphi)^{\star} = \psi^{b}_{\alpha.g \circ \varphi^{-1}}$, with $\alpha = \sqrt{(\varphi^{-1})' }$, a kind of Radon-Nikodym correction (which preserves the group action) to be compatible with the Lie structure, ie be unitary on $L^{2}(\SSS^{1})_{\RRR}$.  
 \item[(c)] The modular  action on $\N_{0}^{\ell}(I)$ is  $\sigma_{t}(x) = \pi_{0}^{\ell}(\varphi_{t}) x  \pi_{0}^{\ell}(\varphi_{t})^{\star}$, with $\varphi_{t} \in \textrm{Diff}(\SSS^{1})$ the M$\ddot{o}$bius flow fixing the end point of $I$. For example, if $I$ is the upper half-circle, then $\partial I = \{-1 , +1\}$ and  $\varphi_{t}(z) = \frac{ch(t)z+sh(t)}{sh(t)z+ch(t)}$.
\item[(d)] $\N_{j}^{\ell}(I)$ is the hyperfinite III$_{1}$ factor.
\item[(e)]  $\N_{0}^{\ell} (I) =  \N_{0}^{\ell} (I^{c})^{\natural}  $   Ê(Haag-Araki duality)
\item[(f)]  $\N_{j}^{\ell} (I) \subset  \N_{j}^{\ell} (I^{c})^{\natural}  $ (Jones-Wassermann subfactor)
\item[(g)]  $\N_{j}^{\ell} (I)^{\natural} \cap  \N_{j}^{\ell} (I^{c})^{\natural}  = \CCC $   (irreducibility of the subfactor)
\end{enumerate}
   \end{theorem}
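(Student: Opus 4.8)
The plan is to reduce every assertion, via the tensor factorisation $\N_j^\ell(I) = \pi_j^\ell(L_I G) \otimes \M_{NS}^\gg(I)$ of Proposition \ref{kyrie}, to the two building blocks that are already understood: the local loop group algebra $A_I := \pi_j^\ell(L_I G)$, controlled by Wassermann's results \cite{2}, and the local Neveu-Schwarz fermion algebra $B_I := \M_{NS}^\gg(I)$, controlled by the previous proposition applied to $\F_{NS}^\gg = \F_{NS}^{\otimes 3}$. Here $A_I$ is even (bosonic), so its supercommutant coincides with its ordinary commutant, whereas $B_I$ carries the $\ZZZ_2$-grading; concretely the grading unitary is $u = 1 \otimes u_B$ and the Klein transform is $\kappa = 1 \otimes \kappa_B$. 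For the local equivalence (a) and the covariance (b) I would argue factor by factor: the second tensor factor $B_I$ does not depend on $j$, so the $\star$-isomorphism $\pi_j^\ell$ is obtained by tensoring Wassermann's loop group local equivalence $U_{\mathrm{loop}} : L(0,\ell) \to L(j,\ell)$ with the identity on the common fermion Fock space, and likewise the Diff$(\SSS^1)$-covariance is the tensor of the loop group covariance with the half-density covariance of the fermions (which supplies the Radon-Nikodym factor $\alpha = \sqrt{(\varphi^{-1})'}$).

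For the modular data (c) and the type (d) I would invoke Lemma \ref{kms}: the modular action on $A_I \overline{\otimes} B_I$ is the tensor product $\sigma_t^{A} \otimes \sigma_t^{B}$ of the two modular actions. Since both factors have geometric modular action implemented by the same M\"obius flow $\varphi_t$ fixing $\partial I$, the total modular action is again geometric, implemented by $\pi_0^\ell(\varphi_t)$, which gives (c). For (d), hyperfiniteness is immediate because the tensor product of two injective von Neumann algebras is injective; and the factor is of type III$_1$ because one tensor factor (say $B_I$) already is, so the Connes spectrum $\S$ of the product is $\RRR_{+}^{\star}$.

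For Haag-Araki duality on the vacuum (e) and the subfactor inclusion (f) I would pass through the Klein transformation of Lemma \ref{naturals}, which turns the supercommutant into an ordinary commutant. Since the grading acts only on the fermion factor, the Tomita commutation theorem gives $(A_I \overline{\otimes} B_I)^\natural = A_I' \overline{\otimes} B_I^\natural$. At $j = 0$, combining the loop group Haag-Araki duality $A_{I^c}' = A_I$ with the fermion duality $B_I^\natural = B_{I^c}$ yields $\N_0^\ell(I^c)^\natural = A_I \overline{\otimes} B_I = \N_0^\ell(I)$, which is (e). Assertion (f) is the conjunction of the factor-wise inclusions: the loop group Jones-Wassermann inclusion $A_I \subset A_{I^c}'$ and the fermion duality $B_I = B_{I^c}^\natural$, whence $\N_j^\ell(I) = A_I \overline{\otimes} B_I \subset A_{I^c}' \overline{\otimes} B_{I^c}^\natural = \N_j^\ell(I^c)^\natural$.

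The main obstacle is the irreducibility (g). Here I would compute the intersection of supercommutants by the same Klein-transform-and-commutation route together with the tensor intersection formula $(M_1 \overline{\otimes} N_1) \cap (M_2 \overline{\otimes} N_2) = (M_1 \cap M_2) \overline{\otimes} (N_1 \cap N_2)$ for spatial tensor factors. Using the fermion duality $B_I^\natural = B_{I^c}$, this gives
\[ \N_j^\ell(I)^\natural \cap \N_j^\ell(I^c)^\natural = (A_I' \cap A_{I^c}') \overline{\otimes} (B_{I^c} \cap B_I). \]
The first factor is the commutant of $A_I \vee A_{I^c}$, equal to $\CCC$ by Wassermann's loop group irreducibility \cite{2} (the deep input, established there via primary fields); the second factor is $\CCC$ because the Neveu-Schwarz fermions act irreducibly, so $B_I \cap B_{I^c} = \CCC$. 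Hence the intersection is $\CCC \overline{\otimes} \CCC = \CCC$. The delicate points I expect to have to verify carefully are the precise validity of the graded commutation theorem and of the tensor intersection formula in the present $\ZZZ_2$-graded spatial setting, and the bookkeeping of the grading (only the fermion factor being odd) throughout the supercommutant computations; everything substantive beyond that is imported from \cite{2} and the preceding fermion proposition.
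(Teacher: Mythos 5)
Your proposal is correct and is essentially the paper's own route: the paper gives no argument beyond the phrase ``combining the work of Wassermann on local loop groups and the previous work on Neveu-Schwarz fermions,'' and your factor-by-factor reduction through the tensor factorisation of Proposition \ref{kyrie}, Lemma \ref{kms} for the modular data, and the Klein transformation of Lemma \ref{naturals} is exactly that combination carried out in detail. The standard facts you lean on (Tomita's commutation theorem, the tensor intersection formula for spatial tensor products, and that tensoring with a III$_{1}$ factor again yields a III$_{1}$ factor) are all valid, so your write-up in fact supplies precisely the details the paper leaves implicit.
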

 
   \begin{lemma} The operators $G_{f}$ and $L_{h}$ act continuously on $\H_{j}^{\ell}$, the $L_{0}$-smooth completion of $L(j , \ell)  \otimes \F_{NS}^{\gg} $.  \end{lemma}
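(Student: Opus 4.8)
The plan is to reduce the continuity of $G_f$ and $L_h$ on $\H_j^\ell$ to the Sobolev estimates already established for the Neveu–Schwarz algebra on a single irreducible representation (Proposition~\ref{velob} and its smeared version), transported through the tensor-product decomposition $L(j,\ell)\otimes\F_{NS}^{\gg}$. The essential point is that the representation $\H_j^\ell$ is the $L_0$-smooth completion of a positive-energy representation of $\Vir_{1/2}$, so the operators $L_h$ and $G_f$ are built from the Neveu–Schwarz generators already controlled by the Sobolev estimates. First I would fix the grading operator $L_0$ acting diagonally with finite-dimensional eigenspaces, so that the Fr\'echet topology on $\H_j^\ell$ is given by the family of norms $\Vert\xi\Vert_{(s)} = \Vert(I+L_0)^s\xi\Vert$ exactly as in the single-representation case.

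The key steps, in order, are as follows. First, recall from Lemma~\ref{lobra} and the discussion of the GKO coset construction that the Neveu–Schwarz generators acting on $L(j,\ell)\otimes\F_{NS}^{\gg}$ decompose through the loop-superalgebra structure; in particular the $L_0$ on $\H_j^\ell$ is the sum of the commuting energy operators of the two tensor factors, and $G_f$, $L_h$ are smeared sums of the $G_r$, $L_n$ obeying the bracket relations of Lemma~\ref{lobra}. Second, I would establish the analogue of Proposition~\ref{velob} on this representation: the operator-valued estimates
\begin{displaymath}
\Vert L_n\xi\Vert_{(s)}\le k_n(1+\vert n\vert)^{\vert s\vert+3/2}\Vert\xi\Vert_{(s+1)},\qquad
\Vert G_r\xi\Vert_{(s)}\le k_r(1+\vert r\vert)^{\vert s\vert+1/2}\Vert\xi\Vert_{(s+1/2)}
\end{displaymath}
hold for $\xi\in L(j,\ell)\otimes\F_{NS}^{\gg}$, by the same argument using $2L_0 = G_rG_{-r}+G_{-r}G_r$ for the supergenerators and the commutator bound (or directly Goodman--Wallach on each tensor factor). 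Third, summing over the Fourier modes of a smooth $f\in C^\infty(\SSS^1)$ and $h\in e^{i\theta/2}C^\infty(\SSS^1)$ and using $\bigcap_{s>0}F_s = C^\infty(\SSS^1)$ (the Reminder preceding Corollary~\ref{smoothi}), these modewise estimates assemble into the smeared bounds $\Vert L_h\xi\Vert_{(s)}\le k\Vert h\Vert_{(\vert s\vert+3/2)}\Vert\xi\Vert_{(s+1)}$ and $\Vert G_f\xi\Vert_{(s)}\le k\Vert f\Vert_{(\vert s\vert+1/2)}\Vert\xi\Vert_{(s+1/2)}$, exactly as in the proof of Corollary~\ref{smoothi}. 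Continuity in the Fr\'echet topology then follows, since each seminorm $\Vert L_h\cdot\Vert_{(s)}$ is dominated by the seminorm $\Vert\cdot\Vert_{(s+1)}$, and likewise for $G_f$.

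The main obstacle I expect is verifying that the Sobolev estimate of Proposition~\ref{velob} genuinely transports to the tensor-product representation $L(j,\ell)\otimes\F_{NS}^{\gg}$ rather than just to the abstract irreducible $L(c_m,h_{pq}^m)$. The subtlety is that the coset embedding realizes each $\Vir_{1/2}$-discrete-series module as a \emph{summand} of this tensor product, so one must check that the energy operator $L_0$ used to define the Sobolev norms is compatible with the coset grading and that the constants $k_n$, $k_r$ can be taken uniform across the relevant summands. I would handle this by arguing that $L_0$ acts with the same finite-dimensional eigenspace structure in either picture and that the bound $\Vert G_r\xi\Vert^2\le k_1\Vert L_0^{1/2}\xi\Vert^2$, being a purely spectral consequence of $2L_0 = G_rG_{-r}+G_{-r}G_r$, is insensitive to which module one works in; the polynomial-in-$\vert r\vert$ factor then comes as before from comparing $(1+\mu-r)^{2s}$ with $(1+\mu)^{2s}$. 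Once the estimates are secured on the tensor product, the remainder is the routine mode-summation already carried out for Corollary~\ref{smoothi}, so no genuinely new analysis is required.
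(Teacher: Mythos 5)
Your proof is correct, but it follows a different route from the paper's. The paper's own proof is one line: $\H_{j}^{\ell}$ decomposes as a $\Vir_{1/2}$-module into irreducible smooth positive-energy summands, and Corollary \ref{smoothi} is applied summand by summand. You instead re-derive the mode estimates of Proposition \ref{velob} directly on the tensor product $L(j,\ell)\otimes\F_{NS}^{\gg}$ and then repeat the mode summation of Corollary \ref{smoothi}. Your route costs a little more writing but is more robust on two points that the paper passes over silently. First, to pass continuity from the summands to the full (infinite, multiplicity-laden) direct sum one needs the constants $k_{n}$, $k_{r}$ to be uniform over the summands; you isolate exactly this point and settle it by observing that the bound on $G_{r}$ is a spectral consequence of the anticommutation relation, hence representation-independent, while Goodman--Wallach applies with a constant depending only on the central charge. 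Second, the paper calls the summands discrete-series modules $\H_{pq}^{m}$, which is not literally correct: the super-Sugawara action on $L(j,\ell)\otimes\F_{NS}^{\gg}$ (the one obeying the relations of Lemma \ref{localrelation}) has central charge $\frac{3\ell}{\ell+2}+\frac{3}{2}\ge\frac{3}{2}$, so its irreducible constituents lie outside the discrete series; the paper's argument survives only because, as your proof makes explicit, the estimates do not care which irreducible unitary positive-energy module one works in. Two minor slips to fix in your write-up: the smearing conventions are interchanged ($L$ should be smeared with $f\in C^{\infty}(\SSS^{1})$, integer modes, and $G$ with $h\in e^{i\theta/2}C^{\infty}(\SSS^{1})$, half-integer modes), and the identity $2L_{0}=G_{r}G_{-r}+G_{-r}G_{r}$ holds only modulo the central term $\frac{C}{3}\left(r^{2}-\frac{1}{4}\right)$ --- the same simplification made in the paper's proof of Proposition \ref{velob}(b), harmless because it only raises the power of $(1+\vert r\vert)$ in the resulting estimate.
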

\begin{proof} $\H_{j}^{\ell}$ decompose into some irreducible smooth representations of the discrete series $(\H_{pq}^{m})$, the result follows by corollary \ref{smoothi}    \end{proof}
   \begin{notation}  Let $p = 2j+1$, $q = 2k+1$ and $m = \ell + 2$, then, from now, we can note $\H_{pq}^{m}$ as  $\H_{jk}^{\ell}$. It will be a more conveniant notation for the fusion rules computations \end{notation}
\begin{reminder} \label{ktcoset} (Kac-Todorov coset construction) (\cite{NSOAII} section 2.2 or \cite{8b}). 
 \begin{center}$ \H_{0}^{0} \otimes \H_{j}^{\ell} = \bigoplus_{\stackrel{1  \le q \le m+1}{p \equiv q \lbrack 2 \rbrack} } \H_{jk}^{\ell} \otimes  \H_{k}^{\ell +2}$,  and  \end{center}
\begin{center} $\pi_{0}^{0}(G_{f}) \otimes I + I \otimes \pi_{j}^{\ell}(G_{f}) = \sum [ \pi_{jk}^{\ell}(G_{f}) \otimes I   + I \otimes \pi_{k}^{\ell + 2}(G_{f})]$    \end{center}
  \end{reminder}   
 \begin{lemma}\label{localrelation} We write some usefull relations on $\H_{j}^{\ell}$:\begin{enumerate}  
 \item[(a)]   $[ \psi_{f}^{a}, \psi_{h}^{b} ]_{+} = \delta_{a,b} (f, h)_{\RRR}$
 \item[(b)]  $[B_{f}^{a} , B_{h}^{b} ] =[B^{a} , B^{b}]_{f.h} + (\ell  + 2) \delta_{a,b} (d(f), h)_{\RRR} $
\item[(c)]     $[G_{f}, B_{h}^{a}] = -(\ell + 2)^{1/2} \psi^{a}_{f.d(h)}$
\item[(d)]   $[G_{f}, \psi_{h}^{a}]_{+} = (\ell + 2)^{-1/2} B^{a}_{f.h}$
    \end{enumerate}   \end{lemma}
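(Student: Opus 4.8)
The plan is to obtain the four smeared relations by Fourier resummation of the defining mode brackets of the $\gg$-supersymmetric (super-affine) algebra $\widehat{\gg}$ at level $\ell+2$ acting on $L(j,\ell)\otimes\F_{NS}^{\gg}$, exactly as Lemma \ref{lobra} was deduced for the Neveu-Schwarz part. In the normalisation of \cite{NSOAI}, \cite{NSOAII} these brackets read (schematically) $[\psi_m^a,\psi_n^b]_+ \propto \delta^{ab}\delta_{m+n,0}$ for the fermions, $[B_m^a,B_n^b] = [B^a,B^b]_{m+n} + (\ell+2)\,m\,\delta^{ab}\delta_{m+n,0}$ for the currents at level $\ell+2$, together with the supersymmetry brackets $[G_r,B_n^a] \propto (\ell+2)^{1/2}\,n\,\psi^a_{r+n}$ and $[G_r,\psi_s^a]_+ \propto (\ell+2)^{-1/2}\,B^a_{r+s}$, where $[B^a,B^b]_\bullet$ denotes the contraction against the structure constants of $\gg$. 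Here the currents $B^a$ are integer moded, while the fermions $\psi^a$ and the supercharge $G$ are half-integer moded in the Neveu-Schwarz sector.

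First I would extend the smearing conventions of the earlier definitions to the currents and fermions: expand $f = \sum_n \hat f(n)e^{in\theta}$ with $f\in C^\infty(\SSS^1)$ for the currents and $h\in e^{i\theta/2}C^\infty(\SSS^1)$ for the fermions and for $G$, so that $B_f^a = \sum_n \hat f(n)B_n^a$, $\psi_h^a = \sum_r \hat h(r)\psi_r^a$, and likewise $G_f = \sum_r \hat f(r)G_r$. Computing each bracket term by term against the mode relations above and re-summing then rests on three Fourier identities: the convolution identity $\sum_{m+n=k}\hat f(m)\hat h(n) = \widehat{f.h}(k)$, which collapses the double mode sums into $B^a_{f.h}$ in (b) and (d) and into $\psi^a_{f.d(h)}$ in (c); the pairing identity $\sum_{m+n=0}\hat f(m)\hat h(n) = (f,h)_\RRR$ (up to the fixed $\frac{1}{2\pi i}$ normalisation), giving the right-hand side of (a); and the derivative-pairing identity $\sum_{m+n=0} m\,\hat f(m)\hat h(n) = (d(f),h)_\RRR$, which produces the central term of (b). The extra factor $n$ in the mode form of $[G_r,B_n^a]$ is precisely what the convolution identity turns into the derivative $d(h)$ inside $\psi^a_{f.d(h)}$ in (c).

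The algebra is routine; the real point is analytic. Since $B_f^a$, $\psi_h^a$ and $G_f$ are unbounded, the main obstacle is to justify that they preserve the smooth completion $\H_j^\ell$ and that the infinite double mode sums may be rearranged term by term there. I would handle this with Sobolev estimates: Corollary \ref{smoothi} and the coset decomposition of $\H_j^\ell$ into the smooth discrete-series spaces $\H_{pq}^m$ (Reminder \ref{ktcoset}) control $G_f$, while the analogous loop-group current and free-fermion Sobolev bounds (as in \cite{2}, \cite{gw}) control $B_f^a$ and $\psi_h^a$; together these give absolute convergence of the double sums on every finite-level vector and legitimise the Fubini-type interchange and re-summation. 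The remaining care is bookkeeping the integer versus half-integer moding consistently — checking that the products $f.h$ land in $C^\infty(\SSS^1)$ exactly when an integer-moded output ($B^a$, in (b) and (d)) is required and in $e^{i\theta/2}C^\infty(\SSS^1)$ exactly when a half-integer output ($\psi^a$, in (a) and (c)) is required — and confirming that the $(\ell+2)^{\pm 1/2}$ prefactors and the level-$(\ell+2)$ central term are reproduced correctly by these Fourier identities.
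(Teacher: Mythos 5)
Your proposal is correct and matches the paper's own treatment: the paper proves this lemma in one line, as a direct computation from the mode relations of the $\gg$-supersymmetric algebra in section 4 of \cite{NSOAI}, which is exactly the Fourier resummation you carry out (and your Sobolev/convergence discussion is the same analytic framework the paper has already set up via Corollary \ref{smoothi} and Lemma \ref{lobra}). No gap; you have simply written out the details the paper leaves implicit.
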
  
       \begin{proof} Direct by computation from section 4 of \cite{NSOAI}.  \end{proof}

 Let $\pi$ be a positive energy representation of the loop superalgebra  $\widehat{\gg}$. We know, it is always of the form $H \otimes \F_{NS}^{\gg}$, where $H$ is a positive energy representation $\sigma$ of $LG$ (non necessarily irreducible). The Clifford algebra acts on the second factor and the loop group acts by tensor product. We have already seen that the von Neumann algebra $\pi(\widehat{\gg}_{I} )''$ is naturally a tensor product of von Neumann algebras (proposition \ref{kyrie}). On the other hand, we have the operators $\pi(L_{f})$,  $\pi(G_{f})$, given by the Sugawara construction (first sections) and  $\pi(\varphi)$ with $\varphi \in $Diff$(\SSS^{1})$. The $L_{f}$ gives a projective representation of the Witt algebra, so exponentiate them give the element of Diff$(\SSS^{1})$. The action of Diff$(\SSS^{1})$ is also given by a tensor product of representation.  
The following property will be fundamental. 
\begin{theorem} $\pi(\varphi) \in \pi(\widehat{\gg}_{I} )'' $ and  $\pi(L_{f})$,  $\pi(G_{f})$ are affilated to $\pi(\widehat{\gg}_{I} )''$, if  $\varphi$ and  $f$ are concentrate on $I^{c}$. \end{theorem}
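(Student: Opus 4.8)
The plan is to land the three operators in $\pi(\widehat{\gg}_I)''$ by showing they supercommute with the loop superalgebra smeared on the complementary interval $I^c$, and then converting this into membership via Haag--Araki duality. I would first reduce to the vacuum representation: by the local equivalence of Theorem~\ref{recallll}(a) there is a unitary $U:H_0^\ell\to H_j^\ell$ implementing a $\star$-isomorphism $\pi_0^\ell(\widehat{\gg}_I)''\to\pi_j^\ell(\widehat{\gg}_I)''$ and intertwining the smeared currents $B_f^a,\psi_f^a$ for $f$ localised in $I$; since the Sugawara operators $L_f,G_f$ and the exponentiated field $\pi(\varphi)$ are the same universal expressions in these currents in every representation, once the statement is proved on $H_0^\ell$ it transports through $U$ to $H_j^\ell$. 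On the vacuum, Theorem~\ref{recallll}(e) gives $\pi_0(\widehat{\gg}_I)''=(\pi_0(\widehat{\gg}_{I^c})'')^{\natural}$, so by the very definition of the supercommutant it is equivalent to show that each operator, being localised in $I$ and hence disjoint from $I^c$, supercommutes with every generator $B_h^a,\psi_h^b$ of $\widehat{\gg}_{I^c}$.

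For the even operator $\pi(\varphi)$ this is immediate from the covariance relation of Theorem~\ref{recallll}(b): as $\varphi$ is the identity on $I^c$, one has $h\circ\varphi^{-1}=h$ and $\sqrt{(\varphi^{-1})'}=1$ on $\mathrm{supp}\,h\subset I^c$, whence $\pi(\varphi)B_h^a\pi(\varphi)^\star=B_h^a$ and $\pi(\varphi)\psi_h^b\pi(\varphi)^\star=\psi_h^b$; a unitary commuting with the self-adjoint currents commutes with their bounded functions, so $\pi(\varphi)$ commutes with $\pi_0(\widehat{\gg}_{I^c})''$ and hence lies in $\pi_0(\widehat{\gg}_I)''$. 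For the unbounded $L_f,G_f$ I would use the graded Dixmier criterion~(c$'$) of Definition~\ref{dixmier}: they are essentially self-adjoint on $\H$ by Lemma~\ref{gjncons}, so it suffices that every unitary in $(\pi_0(\widehat{\gg}_I)'')^{\natural}=\pi_0(\widehat{\gg}_{I^c})''$ (Lemma~\ref{superco}) preserve their domain and supercommute with them. At the Lie-algebra level this is exactly Lemma~\ref{localrelation}: for $f$ localised in $I$ and $h$ in $I^c$ the products $f\cdot h$ and $f\cdot d(h)$ vanish by disjointness of supports, so $[G_f,B_h^a]=[G_f,\psi_h^a]_+=0$, and the analogous brackets for $L_f$ vanish as well through $L_f=[G_{\cdot},G_{\cdot}]_+$.

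The main obstacle is to upgrade these formal brackets among unbounded operators to honest supercommutation of spectral projections, which is what affiliation and the bicommutant step demand. Here I would invoke Nelson's theorem in its graded form (the reminder following Lemma~\ref{gjncons}, implemented through the Klein transformation $\kappa$): for a current $X_h$ of $\widehat{\gg}_{I^c}$ the operators $G_f$ and $X_h$ commute on the common smooth core $\H$ and a suitable sum of squares is essentially self-adjoint by the Sobolev estimates of Proposition~\ref{velob}, so their bounded functions strongly supercommute; this yields $G_f$ affiliated to $\pi_0(\widehat{\gg}_{I^c})''^{\,\natural}=\pi_0(\widehat{\gg}_I)''$, and likewise for $L_f$. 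The remaining delicate point is the transport of these \emph{unbounded} Sugawara operators through the local equivalence $U$: one checks that $U$ intertwines $L_f,G_f$ and $\pi(\varphi)$ because it intertwines the generating currents and these operators are obtained from them by the same limit and exponential, after which the affiliation of $\pi_j(L_f),\pi_j(G_f)$ and the membership $\pi_j(\varphi)\in\pi_j(\widehat{\gg}_I)''$ follow at once. Verifying the Sobolev and commutator hypotheses of Nelson's and the Glimm--Jaffe--Nelson theorems on $\H$ is then a purely analytic matter supplied by Proposition~\ref{velob}.
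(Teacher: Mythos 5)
Your vacuum-representation argument is essentially the paper's: formal supercommutation of $G_{f}$ with the bosons and (Klein-transformed) fermions supported in $I^{c}$ via Lemma \ref{localrelation}, upgraded to strong supercommutation of bounded functions by the Sobolev estimates, the Glimm--Jaffe--Nelson theorem and the graded Nelson theorem, then affiliation by Haag--Araki duality. The genuine gap is your reduction of the general representation to the vacuum. You assert that the local-equivalence unitary $U$ intertwines $\pi(\varphi)$, $L_{f}$ and $G_{f}$ "because it intertwines the generating currents and these operators are obtained from them by the same limit and exponential." This is false as stated: $U$ only intertwines the currents $B^{a}_{h}$, $\psi^{a}_{h}$ smeared with $h$ supported in $I$, whereas the Sugawara operators $L_{f}$, $G_{f}$ (even for $f$ supported in $I$) and the quantised diffeomorphism $\pi(\varphi)$ are built from \emph{all} modes of the currents; they are not expressions in the local algebra on $I$ --- indeed, that they lie in (or are affiliated to) $\pi(\widehat{\gg}_{I})''$ is precisely the conclusion of the theorem, so your transport step assumes what is to be proved. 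The paper handles exactly this point differently: having settled the vacuum case, for an irreducible $\sigma$ it compares $\sigma(\varphi)$ with $U\sigma_{0}(\varphi)U^{\star}$ by forming $T=\sigma(\varphi^{-1})U\sigma_{0}(\varphi)U^{\star}$, showing $T\in\sigma(L_{I^{c}}G)'$ (since $U\sigma_{0}(\varphi)U^{\star}\in\sigma(L_{I}G)''$ and $\sigma(\varphi)$ commutes with $\sigma(L_{I^{c}}G)$ by covariance) and $T\in\sigma(L_{I}G)'$ (again by covariance, both unitaries implementing the same geometric automorphism of $\sigma(L_{I}G)''$), whence $T\in\sigma(L_{I}G)'\cap\sigma(L_{I^{c}}G)'=\CCC$ by irreducibility; so the two unitaries agree up to a scalar and $\sigma(\varphi)\in\sigma(L_{I}G)''$. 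Without some such scalar-discrepancy argument your proof does not go through.

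A second, related overreach: you claim the general-representation case for $G_{f}$ by the same transport, but the paper's remark explicitly restricts this theorem's proof of the $G_{f}$ statement to the vacuum representation, deferring the general case to the fermionic devissage of section \ref{richarder}. This is not an accident: the scalar-commutant trick above is an argument about \emph{unitaries} (a phase discrepancy is harmless and can be absorbed), and it does not directly apply to the unbounded odd operator $G_{f}$, for which one would have to compare domains and affiliations rather than conjugations. Since your route to the general case passes entirely through the unjustified intertwining by $U$, both the $\pi(\varphi)$/$L_{f}$ case and, a fortiori, the $G_{f}$ case are left open by your proposal outside the vacuum.
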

\begin{remark} We will prove it for Diff$(\SSS^{1})$, and so for the $ L_{f}$, in general,  but for $G_{f}$, only for the vacuum representation (general proof on the next section).      \end{remark}
\begin{proof} For the vacuum representation, it's an immediate consequence of the Haag-Araki duality.
Now, we can restrict to $\pi$ irreducible.   For Diff$(\SSS^{1})$,  because we have Haag-Araki duality on $\F_{NS}^{\gg}$, it's sufficient to prove that $\sigma($Diff$_{I}(\SSS^{1}))''\subset \sigma(L_{I}G )''   $. By local equivalence, there exists a unitary $U $ intertwining $\sigma$ and the vacuum representation $\sigma_{0}$. By Haag duality  and covariance $\sigma_{0}($Diff$_{I}(\SSS^{1}))''\subset \sigma_{0}(L_{I}G )'' $. Then, $U\sigma_{0}(\varphi)U^{\star} \subset \sigma(L_{I}G )'' \subset \sigma(L_{I^{c}}G )'  $. On the other hand, $\sigma(\varphi)\in  \sigma(L_{I^{c}}G )' $. So, $T =\sigma(\varphi^{-1})U\sigma_{0}(\varphi)U^{\star}\in \sigma(L_{I^{c}}G )'  $. But, $T \in \sigma(L_{I}G )' $ by covariance relation. Now, by irreducibility $ \sigma(L_{I}G )' \cap \sigma(L_{I^{c}}G )'= \CCC  $, so $T$ is a constant. The result follows. \end{proof}
\begin{theorem} Haag-Araki duality holds for the Neveu-Schwarz algebra.\end{theorem}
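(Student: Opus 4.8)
The statement to prove is twisted Haag--Araki duality on the vacuum sector, namely $\pi_{0}(\Vir_{1/2}(I))'' = \pi_{0}(\Vir_{1/2}(I^{c}))^{\natural}$ on $H_{00}^{\ell}$. Write $\N(I) = \pi_{0}(\Vir_{1/2}(I))''$ and let $\M(I) = \N_{0}^{\ell}(I)$ be the ambient local loop--superalgebra, which already satisfies Haag--Araki duality and has geometric (M\"obius) modular flow by Theorem \ref{recallll}. The inclusion $\N(I) \subset \N(I^{c})^{\natural}$ is the Jones--Wassermann corollary, so only the reverse inclusion remains. The plan is to obtain it by Takesaki devissage of $\N(I) \subset \M(I)$: via the Sugawara/coset construction (Reminder \ref{ktcoset}) the generators $L_{f}, G_{f}$ with $f$ supported in $I$ are built from local loop--super operators, so $\N(I) \subset \M(I)$ acts on the big space $\H_{0}^{0} \otimes \H_{0}^{\ell} = \bigoplus_{k} \H_{0k}^{\ell} \otimes \H_{k}^{\ell+2}$ with the common vacuum $\Omega$.

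First I would verify the hypotheses of the devissage. The modular flow of $(\M(I),\Omega)$ is the M\"obius flow $\varphi_{t}$ fixing $\partial I$ (Theorem \ref{recallll}(c)); since $\Vir_{1/2}$ is $\mathrm{Diff}(\SSS^{1})$-covariant through Sugawara and $\varphi_{t}$ preserves $C^{\infty}_{I}(\SSS^{1})$, we get the conditional-expectation condition $\triangle^{it}\N(I)\triangle^{-it} = \N(I)$. The crucial point, flagged in the introduction, is that the Neveu--Schwarz vacuum is M\"obius invariant: $L_{0}$ and $L_{\pm 1}$ annihilate $\Omega$ and the modular Hamiltonian is $L_{1}-L_{-1}$, so $\triangle^{it}\Omega = \Omega$ and hence $\triangle\Omega = \Omega$. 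Thus $\Omega$ is cyclic and separating for $\N(I)$, and by the Reeh--Schlieder theorem $\overline{\N(I)\Omega} = H_{00}^{\ell} =: H_{1}$; let $p$ be the projection onto $H_{1}$.

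Next I would apply Takesaki devissage: $\triangle$ and $J$ restrict to the modular operators $\triangle_{1}, J_{1}$ of $\N(I)$ on $H_{1}$, so that $J_{1}\N(I)J_{1} = \N(I)'$ inside $B(H_{1})$. The ambient conjugation $J = J_{\M}$ implements, in Bisognano--Wichmann fashion, the reflection across $\partial I$ that swaps $I$ and $I^{c}$; by reflection-covariance of $\Vir_{1/2}$ this gives $J_{1}\N(I)J_{1} = \N(I^{c})$ on $H_{1}$, whence $\N(I)' = \N(I^{c})$. Converting the commutant into a supercommutant by the Klein transformation (Lemma \ref{naturals}), and using that $\N(I)$ is $\ZZZ_{2}$-graded so that $\kappa$-conjugation fixes it (Lemma \ref{superco} and the remarks preceding it), I obtain $\N(I^{c})^{\natural} = \kappa\,\N(I^{c})'\,\kappa^{\star} = \kappa\,\N(I)\,\kappa^{\star} = \N(I)$. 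Together with the Jones--Wassermann inclusion this yields the asserted equality.

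The hard part will be the two grading- and geometry-sensitive steps. First, one must check that the ambient modular conjugation $J$ genuinely restricts to a \emph{geometric} reflection on the coset vacuum sector and carries $\N(I)$ to $\N(I^{c})$ with exactly the Klein twist needed to turn the commutant produced by Takesaki into the supercommutant; a sign error here would produce $\kappa\N(I)\kappa^{\star}$ rather than $\N(I)$. Second, the invariance $\triangle^{it}\N(I)\triangle^{-it} = \N(I)$ must be justified through the coset, i.e. one must know that the M\"obius flow preserves the sector decomposition $\bigoplus_{k}\H_{0k}^{\ell}\otimes\H_{k}^{\ell+2}$, which rests on the fact that the global conformal group commutes with the Sugawara currents of both levels $\ell$ and $\ell+2$. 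These are the points I expect to require the most care.
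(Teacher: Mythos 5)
Your overall strategy coincides with the paper's: realize $\N(I)=\pi_{0}(\Vir_{1/2}(I))''$ inside an ambient local algebra $\M(I)$ that already satisfies twisted duality, check stability under its (geometric) modular flow, and descend by Takesaki devissage using the vacuum projection and the Klein transformation. But the two steps you compress into single clauses are exactly where the paper's work lies, and as written both have genuine gaps. The first is the inclusion $\N(I)\subset\M(I)$ itself, which you justify by saying the generators $L_{f},G_{f}$ are ``built from local loop-super operators.'' The smeared coset operators are \emph{unbounded}, and to put their bounded functions inside $\M(I)$ the paper must prove that $G_{f}$ supercommutes with the bosons and fermions localized in $I^{c}$ in the strong sense: formal supercommutation on a dense domain (lemma \ref{localrelation}), Sobolev estimates, the Glimm-Jaffe-Nelson commutator theorem (reminder \ref{gfn}), and Nelson's theorem (essential self-adjointness of $A^{2}+B^{2}$) so that the commutation passes to bounded functions; only then does the ambient twisted duality give affiliation of $G_{f}$ to $\M(I)$, with $L_{f}$ following from $[G_{f_{1}},G_{f_{2}}]_{+}=L_{f_{1}f_{2}}+\textrm{const}$. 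Commutation of unbounded operators on a common dense domain does \emph{not} imply commutation of their spectral projections (Nelson's counterexample), so this step cannot be waved through: it is the analytic heart of the theorem.

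The second gap is your treatment of the modular conjugation, which is wrong by a Klein twist -- twice, in compensating ways. In a $\ZZZ_{2}$-graded net $J_{1}\N(I)J_{1}=\N(I)'$ is \emph{not} $\N(I^{c})$: untwisted Haag duality fails for fermionic nets, and $\N(I)'=\N(I^{c})$ would in fact contradict the very theorem you are proving unless the odd parts vanish; the correct statement is $\N(I)'=\kappa\N(I^{c})\kappa^{\star}$. Your repair, ``$\kappa\N(I)\kappa^{\star}=\N(I)$ since $\N(I)$ is graded,'' is also false: by remark \ref{relll}, $\kappa x_{1}\kappa^{\star}=-iux_{1}$ for odd $x_{1}$, and the global grading unitary $u$ does not lie in $\N(I)$, so $\kappa$-conjugation does not fix a graded algebra with nonzero odd part. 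The two errors cancel (from $\N(I)'=\kappa\N(I^{c})\kappa^{\star}$ one gets $\N(I^{c})^{\natural}=\kappa\N(I^{c})'\kappa^{\star}=\N(I)$), but neither intermediate claim is proved, and no reflection covariance (Bisognano-Wichmann property) for $\Vir_{1/2}$ is available in the paper -- covariance is established only for orientation-preserving Diff$(\SSS^{1})$, while $J$ is antiunitary and orientation-reversing. The paper's route (see section \ref{richarder}) needs no geometric assertion about $J$ at all: the vacuum projection $P$ commutes with $J$, $\triangle$ and $\kappa$, so by lemma \ref{naturals}, Tomita-Takesaki, the Jones relation $P\M(I)P=\N(I)P$ from devissage, and the \emph{ambient} twisted duality $\M(I)^{\natural}=\M(I^{c})$,
\begin{displaymath}
\kappa J\, P\M(I)P\, J\kappa^{\star}\;=\;P\,\kappa J\M(I)J\kappa^{\star}\,P\;=\;P\,\M(I)^{\natural}\,P\;=\;P\M(I^{c})P ,
\end{displaymath}
which identifies the supercommutant of the compressed algebra of $I$ with the compressed algebra of $I^{c}$. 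You should replace your last paragraph by this compression argument rather than an appeal to a geometric $J$.
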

\begin{proof} Let $K_{0}$ be the vacuum representation $\Pi_{0}$ of $\widehat{\gg}\oplus \widehat{\gg}$. The operators $L_{f}$ and $G_{f }$ of the coset construction act on $K_{0}$. We will prove that if $f $ is concentrate on the interval $I$, $G_{f}$ is affiliated with $\Pi_{0}(\widehat{\gg_{I}}\oplus \widehat{\gg_{I}})'' $. Then, because $[G_{f_{1}},G_{f_{2}} ]_{+} = L_{f_{1}f_{2}}+ constant$, $L_{f}$ is also affiliated. By Haag-Araki duality, it suffices to prove that the operators $G_{f}  $ supercommutes with the bosons (elememt of the loop algebra) and the fermions, concentrate on $I^{c}$. Let $A = G_{f}$ and let $B$ be either the bosonic operator or the fermionic operator conjugate by the Klein transformation. They are formally self-adjoint for $f$ real. By relation \ref{localrelation}, they commute formally. By the Sobolev estimates and the Glimm-Jaffe-Nelson theorem, $A^{2} + B^{2}$ is essentially self-adjoint. So Nelson's theorem imply the commutation in term of bounded function. 

Now, by the coset construction, and  the Reeh-Schlieder theorem, the bounded functions of the $G_{f} $ and $L_{f}$ applied on the vacuum vector of $K_{0}$ generate the vacuum positive energy representation of the Neveu-Schwarz algebra. The Haag-Araki duality follows by Takesaki devissage.           \end{proof}

\begin{lemma}(Covariance) \label{requiem}  Let $\varphi \in \textrm{Diff}(\SSS^{1})$, then $\pi_{j}^{\ell}(\varphi) \pi_{j}^{\ell}(G_{f}) \pi_{j}^{\ell}(\varphi)^{\star} =\pi_{j}^{\ell} (G_{\beta.f \circ \varphi^{-1}})$, 
with $\beta = 1/ \alpha$, and $\alpha =  \sqrt{(\varphi^{-1})' } $ and $f \in C^{\infty}(\SSS^{1})$.
  \end{lemma}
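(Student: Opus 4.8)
The plan is to reduce the identity to the defining super-bracket relations of $G_f$ together with the already-established covariance of the bosons and fermions. Since the statement is linear in $f$, and since for real $f$ both sides are essentially self-adjoint on the common smooth core $\H_{j}^{\ell}$ (Lemma \ref{gjncons}, noting that $U := \pi_{j}^{\ell}(\varphi)$ preserves the smooth domain by covariance and hence preserves essential self-adjointness), I would first assume $f$ real and work on $\H_{j}^{\ell}$. Writing $\tilde G := U\,\pi_{j}^{\ell}(G_f)\,U^{\star}$ and $G' := \pi_{j}^{\ell}(G_{\beta\,(f\circ\varphi^{-1})})$, the goal becomes to show that $\tilde G$ and $G'$ carry the same super-brackets with every boson $B_h^a$ and every fermion $\psi_h^a$; an irreducibility argument will then force $\tilde G = G'$.

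First I would transport the bosons and fermions through $U$: from the covariance of Theorem \ref{recallll}(b) applied to $\varphi^{-1}$ one gets $U^{\star}B_h^a U = B^a_{h\circ\varphi}$ and $U^{\star}\psi_h^a U = \psi^a_{\sqrt{\varphi'}\,(h\circ\varphi)}$, the factor $\sqrt{\varphi'}$ being produced by the chain-rule identity $\alpha\circ\varphi = 1/\sqrt{\varphi'}$ with $\alpha=\sqrt{(\varphi^{-1})'}$. Then, using the defining relations of Lemma \ref{localrelation}(c),(d) and conjugating back, I compute $[\tilde G,\psi_h^a]_+ = U[\pi(G_f),\psi^a_{\sqrt{\varphi'}(h\circ\varphi)}]_+U^{\star} = (\ell+2)^{-1/2}\,U B^a_{f\sqrt{\varphi'}(h\circ\varphi)}U^{\star} = (\ell+2)^{-1/2}B^a_{\beta\,(f\circ\varphi^{-1})\,h}$, where the last step uses $U B_k^a U^{\star}=B^a_{k\circ\varphi^{-1}}$ and $\sqrt{\varphi'}\circ\varphi^{-1}=\beta=1/\alpha$; this is exactly $[G',\psi_h^a]_+$. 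Likewise, using $d(h\circ\varphi)=(d(h)\circ\varphi)\,\varphi'$ and the same chain-rule identities, I get $[\tilde G,B_h^a] = -(\ell+2)^{1/2}\psi^a_{\beta\,(f\circ\varphi^{-1})\,d(h)} = [G',B_h^a]$. The weight $-1/2$ density law $f\mapsto\beta\,(f\circ\varphi^{-1})$ appears precisely because the two half-powers of the Jacobian carried by the fermion and by the composition combine with the inverse power carried by the current; this bookkeeping is the technical heart of the computation.

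It then follows that the odd operator $D:=\tilde G-G'$ super-commutes on $\H_{j}^{\ell}$ with every $B_h^a$ and $\psi_h^a$. These operators generate the irreducible positive-energy representation $L(j,\ell)\otimes\F_{NS}^{\gg}$ of $\widehat{\gg}$, whose supercommutant is $\CCC$, so by Schur's lemma $D$ is scalar, and being odd it vanishes; hence $\tilde G=G'$, which gives Lemma \ref{requiem} for real $f$ and then, by linearity, for all $f$. The main obstacle is twofold: getting the Radon-Nikodym factors to combine correctly (the chain-rule identities above), and making the final Schur step rigorous for the unbounded $G_f$. The latter I would handle exactly as in the Haag-Araki proofs: the $\psi_h^a$ are bounded and, for real $h$, the operators $B_h^a$, $\tilde G$, $G'$ are essentially self-adjoint with $A^{2}+B^{2}$ essentially self-adjoint (Glimm-Jaffe-Nelson), so Nelson's theorem replaces the super-bracket identities by super-commutation of the associated bounded functions, placing the bounded functions of $D$ in the supercommutant $\CCC$. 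Alternatively, one may verify the identity infinitesimally from the Lie bracket relation $[G_f,L_g]=G_{d(f)g-\frac12 f d(g)}$ of Lemma \ref{lobra} and integrate along a path from the identity in Diff$(\SSS^{1})$.
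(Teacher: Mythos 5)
Your proof is correct and takes essentially the same route as the paper: you match the super-brackets of $\pi_{j}^{\ell}(\varphi)\pi_{j}^{\ell}(G_{f})\pi_{j}^{\ell}(\varphi)^{\star}$ and of $\pi_{j}^{\ell}(G_{\beta.f\circ\varphi^{-1}})$ against all bosons $B_{h}^{a}$ and fermions $\psi_{h}^{a}$ via Lemma \ref{localrelation} and the covariance of Theorem \ref{recallll}(b), with the same chain-rule bookkeeping for $\alpha$ and $\beta$, and then conclude by irreducibility that the difference, being a constant and odd, is zero. The only deviations are cosmetic (you conjugate the generators backward rather than the brackets forward) and your added care about essential self-adjointness and Nelson's theorem, which the paper's proof leaves implicit.
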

\begin{proof}    $\pi_{j}^{\ell}(\varphi) [G_{f} , B^{a}_{h} ] \pi_{j}^{\ell}(\varphi)^{\star} = -(\ell +2)^{-1/2}\psi^{a}_{\alpha. (f \circ \varphi^{-1}). (d(h) \circ \varphi^{-1})} = \\
 -(\ell +2)^{-1/2}\psi^{a}_{\beta. (f \circ \varphi^{-1}). d(h \circ \varphi^{-1})} = [G_{\beta.f \circ \varphi^{-1}} , \pi_{j}^{\ell}(\varphi)B^{a}_{h} \pi_{j}^{\ell}(\varphi)^{\star} ]$ \\Ê
 Idem, $\pi_{j}^{\ell}(\varphi) [G_{f} , \psi^{a}_{h} ]_{+} \pi_{j}^{\ell}(\varphi)^{\star} = [G_{\beta.f \circ \varphi^{-1}} , \pi_{j}^{\ell}(\varphi)\psi^{a}_{h} \pi_{j}^{\ell}(\varphi)^{\star} ]_{+}$.\\ 
Then, by irreducibility,  $\pi_{j}^{\ell}(\varphi) G_{f} \pi_{j}^{\ell}(\varphi)^{\star} -G_{\beta.f \circ \varphi^{-1}}$ is a constant operator; \\Ê it's also an odd operator, so it's zero.     \end{proof} 
\begin{corollary} By the coset construction, the covariance relation runs also on the discrete series representations of the Neveu-Schwarz algebra.  \end{corollary}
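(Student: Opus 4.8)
The plan is to transport the covariance relation of Lemma \ref{requiem} through the Kac--Todorov coset decomposition of Reminder \ref{ktcoset}. First I would apply Lemma \ref{requiem} to each of the two loop-superalgebra factors $\H_0^0$ and $\H_j^\ell$; tensoring each relation with the identity of the other factor and adding them gives, on $\H_0^0\otimes\H_j^\ell$ with the diagonal diffeomorphism action $\pi_0^0(\varphi)\otimes\pi_j^\ell(\varphi)$,
\begin{displaymath}
(\pi_0^0(\varphi)\otimes\pi_j^\ell(\varphi))(\pi_0^0(G_f)\otimes I + I\otimes\pi_j^\ell(G_f))(\pi_0^0(\varphi)\otimes\pi_j^\ell(\varphi))^\star = \pi_0^0(G_{\beta.f\circ\varphi^{-1}})\otimes I + I\otimes\pi_j^\ell(G_{\beta.f\circ\varphi^{-1}}).
\end{displaymath}
All operators here act continuously on the $L_0$-smooth completion, so this is a genuine operator identity on smooth vectors.

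The geometric input I need next is that the diagonal diffeomorphism action preserves each coset block $\H_{jk}^\ell\otimes\H_k^{\ell+2}$ and restricts there to the product action $\pi_{jk}^\ell(\varphi)\otimes\pi_k^{\ell+2}(\varphi)$. This follows from the coset splitting of the Sugawara Virasoro field, whose exponentials produce $\pi(\varphi)$: the total $L_f$ decomposes as $\pi_{jk}^\ell(L_f)\otimes I + I\otimes\pi_k^{\ell+2}(L_f)$, the two summands commute (GKO coset), so their flows exponentiate to a tensor product on each block, and since $\textrm{Diff}(\SSS^1)$ is generated by such flows the factorization holds for every $\varphi$. Inserting this together with the coset splitting of $G_f$ from Reminder \ref{ktcoset} into the identity above and restricting to a single block, and writing $W=\pi_{jk}^\ell(\varphi)\otimes\pi_k^{\ell+2}(\varphi)$, yields
\begin{displaymath}
W\,(\pi_{jk}^\ell(G_f)\otimes I + I\otimes\pi_k^{\ell+2}(G_f))\,W^\star = \pi_{jk}^\ell(G_{\beta.f\circ\varphi^{-1}})\otimes I + I\otimes\pi_k^{\ell+2}(G_{\beta.f\circ\varphi^{-1}}).
\end{displaymath}

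Finally I would isolate the Neveu--Schwarz factor. Since $\H_k^{\ell+2}$ is itself a loop-superalgebra representation, Lemma \ref{requiem} already gives $\pi_k^{\ell+2}(\varphi)\pi_k^{\ell+2}(G_f)\pi_k^{\ell+2}(\varphi)^\star = \pi_k^{\ell+2}(G_{\beta.f\circ\varphi^{-1}})$; subtracting the corresponding $I\otimes(\,\cdot\,)$ terms from both sides of the block identity leaves
\begin{displaymath}
(\pi_{jk}^\ell(\varphi)\pi_{jk}^\ell(G_f)\pi_{jk}^\ell(\varphi)^\star)\otimes I = \pi_{jk}^\ell(G_{\beta.f\circ\varphi^{-1}})\otimes I,
\end{displaymath}
which is exactly the covariance relation for $G_f$ on the discrete series representation $\H_{jk}^\ell$. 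The matching statement for $L_f$ is the standard Virasoro covariance, and also drops out from $[G_{f_1},G_{f_2}]_+ = 2L_{f_1 f_2}+\textrm{const}$ of Lemma \ref{lobra}. The step I expect to be the main obstacle is justifying cleanly that the diffeomorphism action factors over the coset blocks as a tensor product; once that is in hand, the rest is the additive bookkeeping above, performed on smooth vectors where every operator is continuous.
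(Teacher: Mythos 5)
Your proposal is correct and follows essentially the same route as the paper: the paper's entire justification is the phrase ``by the coset construction,'' and your argument --- applying Lemma \ref{requiem} to both loop-superalgebra factors, factorizing the diagonal diffeomorphism action over the blocks of Reminder \ref{ktcoset} via the commuting GKO splitting of the Virasoro generators, and then subtracting the level $\ell+2$ covariance to isolate the discrete-series term --- is precisely the elaboration that phrase intends. The technical points you flag (the projective phase in the block factorization, which conjugation kills, and working on the $L_{0}$-smooth domain where all operators are defined and continuous) are handled appropriately.
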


\subsection{Local algebras and fermions} \label{richarder}
In \cite{2}, the representation of $LSU(2)$ at level $1$  are constructed using two complex fermions. 
This corresponds to the complex Clifford algebra construction on  $\Lambda( L^{2}(\SSS^{1},\CCC^{2} )) =\F_{\CCC^{2} } $. The level $\ell$ representations are obtained taking  $\F_{\CCC^{2} }^{\otimes \ell}$. Then, the level $\ell$ representations  of the corresponding loop superalgebra are realized on the tensor product of this Fock space and the space $ \F^{\gg}_{NS}$, of three fermions. As vertex superalgebra, the vertex superalgebra of the loop superalgebra defines a vertex sub-superalgebra of the vertex superalgebra of  $\F_{\CCC^{2} } ^{\otimes \ell} \otimes \F^{\gg}_{NS}$.    

Let $H = \{f: \RRR \to \RRR \ \vert f(x+ 2\pi ) = -f(x ) \} $. Let $\F_{NS}^{V} = \Lambda(H \otimes V )$, then, $\F_{NS}^{V_{1}\oplus V_{2}} = \F_{NS}^{V_{1}} \otimes \F_{NS}^{V_{2}}  $. Now, considering the diagonal inclusion $\gg \subset \gg \oplus \gg $,   $H \otimes (\gg \oplus \gg ) \ominus H \otimes \gg =  H \otimes [(\gg \oplus \gg ) \ominus \gg] 
 = H \otimes [(\gg \oplus \gg ) / \gg]   $. Then, we easily seen that in the Kac-Todorov construction  described before: 
 \begin{center} $\F_{NS}^{\gg} \otimes( \F_{NS}^{\gg}  \otimes L(i,\ell))= \bigoplus L(c_{m} , h_{pq}^{m}) \otimes   (\F_{NS}^{\gg}  \otimes L(j,\ell + 2)) $,  \end{center}
we can simplify by a factor $\F_{NS}^ {\gg} $ to obtain the following GKO construction: 
\begin{center}$  \F_{NS}^{\gg}  \otimes L(i,\ell)= \bigoplus L(c_{m} , h_{pq}^{m}) \otimes L(j,\ell + 2)) $,  \end{center}
preserving the coset action of the Neveu-Schwarz algebra. It's also true replacing  $ L(i,\ell) $ by a (non necessarily irreducible) positive energy representation $\H$  of level $\ell $. Then the coset action of the Neveu-Schwarz algebra on $  \F_{NS}^{\gg} \otimes \H $ is described by (see also \cite{3a} p114): 
 \begin{enumerate}  
 \item[(a)]  $  L_{n}^{gko} = L_{n}^{\gg \oplus \gg} - L_{n}^{\gg}$
 \item[(b)]   $G^{gko}(z) = \sum G^{gko}_{r}z^{-r-3/2} = \Phi(\tau_{gko} , z) $\\
 with $ \Phi$ the module-vertex operator on $  \F_{NS}^{\gg} \otimes \H$  (see \cite{NSOAI} section 4.3 ), and  $ \tau_{gko} = (2(\ell + 2 )(\ell + 4) )^{-1/2} (\ell \tau_{1} - 2\tau_{2} )$, with $\tau_{1}$, $\tau_{2}$ as in \cite{NSOAI} definition 4.38 .  \end{enumerate}
 Note that:  $ \Phi(\ell \tau_{1} - 2\tau_{2} , z) =  [\sum_{k}(\ell \psi_{k}(z) \otimes X_{k}(z)  - I \otimes \psi_{k}(z) S_{k}(z) )]     \\ 
 = [\sum_{k}(\ell  \psi_{k}(z) \otimes X_{k}(z) - \frac{i}{3} \sum_{ij}\Gamma_{ij}^{k}I \otimes \psi_{i}(z)\psi_{j}(z)\psi_{k}(z) )]  \\
 = [\ell \sum_{k}( \psi_{k}(z) \otimes X_{k}(z)  -2i \sqrt{2} I \otimes \psi_{1}(z)\psi_{2}(z)\psi_{3}(z)] $.

$\begin{array}{c}   \end{array}$

Now, $\F_{\CCC^{2}}^{\otimes \ell} $ is a level $\ell$ representation of the loop algebra (containing all the irreducibles). We apply the previous GKO construction on $\F_{\CCC^{2}}^{\otimes \ell} \otimes \F_{NS}^{\gg} $. Let $\N(I) = \M(I)^{\otimes \ell }\otimes \M_{NS}^{\gg }(I) $ be the local von Neumann algebra generated by the corresponding real and complex fermions. Let $\pi_ {gko} $ be the coset representation of $\Vir_{1/2}$ on. Now, as previously, $\pi_{gko}(\Vir_{1/2}(I)) $ supercommutes with $\N(I^{c}) $, then by Haag-Araki duality $\pi_{gko}(\Vir_{1/2}(I))'' \subset   \N(I) $. Now, $ \pi_{gko}$ is a direct sum of all the irreducible positive energy representation $\pi_{i}$ (with multiplicities) of the Neveu-Schwarz algebra. As previously (see lemma \ref{requiem}),   $\pi_{gko}(\Vir_{1/2}(I))''$ is stable under the modular action of $\N(I)$. So we can apply the Takesaki devissage. We deduce that $\pi_{gko}(\Vir_{1/2}(I))''$ is the hyperfinite III$_{1}$ factor. By the property of the type III, every subrepresentations of $ \pi_{gko}$  are equivalents; in particular all the $\pi_{i}(\Vir_{1/2}(I))''$ are the hyperfinite III$_{1}$-factor, and are  equivalents to  $\pi_{0}(\Vir_{1/2}(I))''$: it's the local equivalence for the Neveu-Schwarz algebra. Finally, let $\Omega$ be the vacuum vector of $\F_{\CCC^{2}}^{\otimes \ell} \otimes \F_{NS}^{\gg} $, then clearly   $\pi_{gko}(\Vir_{1/2}(I))''\Omega $ is dense (Reeh-Schlieder theorem) on the vacuum representation of $\Vir_{1/2} $ tensor its corresponding multiplicity $M_{0}$. Let $P$ be the projection on, then $P$ commutes with the modular operators (because the vacuum vector is invariant) and with the Klein operator $\kappa$. But by Takeaki devissage $P N(I) P  =  \pi_{gko}(\Vir_{1/2}(I))''  P = [\pi^{\otimes M_{0}} _{0}(\Vir_{1/2}(I))'']$. \\So \  $\kappa J P N(I) P J \kappa^{\star} =P \kappa J  N(I)  J \kappa^{\star} P =P N(I)^{\natural} P =P N(I^{c}) P\\Ê =  [\pi^{\otimes M_{0}} _{0}(\Vir_{1/2}(I^{c}))'']=[\pi^{\otimes M_{0}}_{0}(\Vir_{1/2}(I))^{\natural}]$. The Haag-Araki duality for the Neveu-Schwarz algebra follows.
      
\begin{corollary} (Generalized Haag-Araki duality) \\Ê$\pi_{gko}(\Vir_{1/2}(I))'' = \pi_{gko}(\Vir_{1/2})'' \cap  \N(I)$ \end{corollary}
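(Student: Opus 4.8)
The plan is to prove the two inclusions separately, the nontrivial one being a direct application of the Takesaki devissage already set up in this subsection. Write $A(I) = \pi_{gko}(\Vir_{1/2}(I))''$ and $V = \pi_{gko}(\Vir_{1/2})''$, and recall the projection $P$ onto the closure of $A(I)\Omega$, which by the Reeh-Schlieder theorem is the vacuum sector (a copy of $\pi_{0}$ tensor its multiplicity $M_{0}$) and which we already observed commutes with the modular operators and with the Klein operator $\kappa$.

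The inclusion $A(I) \subseteq V \cap \N(I)$ is immediate. On one hand $A(I) \subseteq \N(I)$ was proved above, via Nelson's theorem together with the supercommutation of $\Vir_{1/2}(I)$ with $\N(I^{c})$ and Haag-Araki duality for $\N$. On the other hand $A(I) \subseteq V$ holds because $C^{\infty}_{I}(\SSS^{1}) \subset C^{\infty}(\SSS^{1})$, so every generator $L_{f}$, $G_{f}$ of $\Vir_{1/2}(I)$ is in particular a generator of $\Vir_{1/2}$, whence the generated von Neumann algebras are nested.

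For the reverse inclusion I would invoke Takesaki devissage for the pair $A(I) \subseteq \N(I)$. The hypotheses are met: $\N(I)$ is the hyperfinite III$_1$ factor with cyclic separating vacuum $\Omega$, and $A(I)$ is globally invariant under the modular flow of $\N(I)$, since by Lemma \ref{requiem} the M\"obius flow $\varphi_{t}$ fixing $\partial I$ that implements this flow preserves $C^{\infty}_{I}(\SSS^{1})$ and hence $\Vir_{1/2}(I)$. The Jones relation of the devissage then reads $A(I) = \{x \in \N(I) \mid xP = Px\}$, where $P$ is an even projection so that ordinary and super-commutation with it coincide. Now take $x \in V \cap \N(I)$. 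Since $\pi_{gko}$ decomposes into irreducible positive energy representations on which $V$ acts blockwise, and $P$ projects onto the vacuum block, we have $P \in V'$; therefore $xP = Px$, and the Jones relation yields $x \in A(I)$. This gives $V \cap \N(I) \subseteq A(I)$ and completes the equality.

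The step I expect to carry the real content is the verification that $P \in V'$, that is, that the vacuum sector $\overline{A(I)\Omega}$ is invariant under the \emph{global} coset Virasoro. This is precisely the blockwise decomposition of $\pi_{gko}$ into the $\pi_{i}$ (with multiplicities), for which $V$ acts as $\pi_{i}(\Vir_{1/2})''$ on the $i$-th block and trivially on the multiplicity space, so that the vacuum projection $P$ commutes with each block and hence with all of $V$. I do not expect the $\ZZZ_{2}$-grading to obstruct the argument, since $P$ is even and only the intersection is being characterized; but one should keep the Klein transformation at hand, exactly as in the Haag-Araki proof above, in case a passage between $\natural$ and $'$ is needed.
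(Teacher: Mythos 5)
Your proof is correct and takes essentially the same route as the paper, which offers no written proof but presents the statement as an immediate consequence of the Takesaki devissage set up in the preceding paragraph: the two easy inclusions, the Jones relation $\pi_{gko}(\Vir_{1/2}(I))'' = \{x \in \N(I) \ \vert \ xP = Px\}$, and the fact that $P$ lies in the commutant of $\pi_{gko}(\Vir_{1/2})''$ are exactly the implicit ingredients. The only blemish, inherited from the paper itself, is the identification of $\overline{\pi_{gko}(\Vir_{1/2}(I))''\Omega}$ with the full block $H_{0}\otimes M_{0}$ (since $\pi_{gko}(\Vir_{1/2}(I))''$ lies inside $\pi_{gko}(\Vir_{1/2})''$, this closure is really $H_{0}$ tensored with the line spanned by the lowest-energy vector of $M_{0}$), but this is harmless: in either reading $P$ has the form of a projection acting trivially on the $H_{0}$ factor of the vacuum block, hence supercommutes (indeed commutes, being even) with $\pi_{gko}(\Vir_{1/2})''$, which is all your argument uses.
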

\begin{corollary}  $\pi_{0}(\Vir_{1/2}(I))''$ is generated by chains of compressed fermions concentrate in $I$.    \end{corollary}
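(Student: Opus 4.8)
The plan is to read the statement off directly from the Jones relation produced by the Takesaki devissage carried out just above, together with the fact that $\N(I)$ is by construction generated by the smeared real and complex fermions localised in $I$.

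First I would recall what the devissage has already given. Applying the Takesaki devissage to the inclusion $\pi_{gko}(\Vir_{1/2}(I))'' \subset \N(I)$, with $P$ the projection onto the closure of $\pi_{gko}(\Vir_{1/2}(I))''\Omega$, yields the Jones relation $P\N(I)P = \pi_{gko}(\Vir_{1/2}(I))''P = [\pi_{0}^{\otimes M_{0}}(\Vir_{1/2}(I))'']$. Since the amplification algebra $\pi_{0}^{\otimes M_{0}}(\Vir_{1/2}(I))''$ equals $\pi_{0}(\Vir_{1/2}(I))'' \otimes \CCC I_{M_{0}}$, it is $\star$-isomorphic, via $\pi_{0}\otimes I$, to $\pi_{0}(\Vir_{1/2}(I))''$, and the multiplicity space $M_{0}$ plays no role in the algebra generated. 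Thus it suffices to describe the compression $P\N(I)P$.

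Next, $\N(I) = \M(I)^{\otimes \ell}\otimes\M_{NS}^{\gg}(I)$ is generated by the smeared fermions $\psi(f)$, real and complex, with $f$ localised in $I$, so finite products $\psi(f_{1})\cdots\psi(f_{n})$ of such fermions are weakly dense in $\N(I)$. The compression $x \mapsto PxP$ is normal, hence weakly continuous, so the compressed products $P\psi(f_{1})\cdots\psi(f_{n})P$ are weakly dense in $P\N(I)P = \pi_{0}(\Vir_{1/2}(I))''$.

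Finally, inserting the resolution of the identity $\sum_{i}p_{i} = I$, with $p_{i}$ the projection onto the irreducible sector $H_{i}\subset H$, between consecutive factors and identifying $P$ with $p_{0}$ on the vacuum sector, each compressed product splits into a finite sum of chains $p_{0}\psi(f_{1})p_{i_{1}}\psi(f_{2})p_{i_{2}}\cdots\psi(f_{n})p_{0}$ of compressed fermions, each $f_{s}$ localised in $I$. Hence $\pi_{0}(\Vir_{1/2}(I))''$ is generated by such chains. The only points needing care are the normality of the compression and the harmless passage through the multiplicity amplification; both are immediate from the devissage already performed, so I do not expect a genuine obstacle here beyond bookkeeping.
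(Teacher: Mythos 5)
Your proof is correct and takes essentially the same route as the paper: the paper's own proof is exactly the one-line appeal to the Jones relation $p_{0}\N(I)p_{0} = \pi_{gko}(\Vir_{1/2}(I))''p_{0}$ coming from the Takesaki devissage, which you have simply unpacked. The details you supply (weak density of products of smeared fermions in $\N(I)$, normality of the compression $x \mapsto PxP$, insertion of the finitely many sector projections $p_{i}$, and the harmless multiplicity amplification) are precisely the bookkeeping the paper compresses into the word ``immediate''.
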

\begin{proof} Immediate from Jones relation: $p_{0}\N(I)p_{0} = \pi_{gko}(\Vir_{1/2}(I))'' p_{0}$.   \end{proof}
Now because $\pi_{gko}$ contains all the irreducible positive energy representations $\pi_{i} $ of charge $c_{m}$ , we deduce that: 
\begin{corollary} Let $\pi$ be the direct sum of all the $\pi_{i}$. \\ÊTo simplify we  note $\pi = \pi_{0}\oplus ...\oplus \pi_{n}$. Then $\A :=  \pi(\Vir_{1/2}(I))'' \\Ê= \{ T = \left( \begin{array}{ccc}    T_{1} & &  0 \\  &\ddots &   \\ 0 & & T_{n}\end{array} \right)    \  \vert $ T supercommutes with $ \B  \} $ with $ \B = \{  \left( \begin{array}{ccc}    S_{11} & \ldots & S_{1n} \\ \vdots & & \vdots \\ S_{n1} & ... & S_{nn}\end{array} \right) $ such that $ S_{ij} $  is a chain of compressed fermions $p_{i}\phi(f) p_{j}$ concentrate on $ I^{c}  \} $       \end{corollary}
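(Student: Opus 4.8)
The plan is to reduce the asserted supercommutant description to the generalized Haag-Araki duality for $\Vir_{1/2}$ together with the ordinary Haag-Araki duality of the fermion algebra $\N$, by computing the supercommutant $\B^{\natural}$ explicitly. First I would record two structural facts about $\B$. Each diagonal corner $p_{i}\B p_{i}$ contains the chains $p_{i}\phi(f)p_{i}$ with $f$ on $I^{c}$, so it generates $\pi_{i}(\Vir_{1/2}(I^{c}))''$, which is the hyperfinite III$_{1}$ factor and in particular is unital on $H_{i}$; hence $p_{i}\in\B$ for every $i$, and $\B$ contains all the block projections. Moreover $\sum_{i,j}p_{i}\phi(f)p_{j}=\phi(f)$, so $\B$ also contains (the compression to $H=\bigoplus_{i}H_{i}$ of) the full fermion algebra $\N(I^{c})$. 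Thus $\B$ is the von Neumann algebra generated by $\{p_{i}\}$ and $\N(I^{c})$.

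Next I would compute the supercommutant. Since each $p_{i}$ commutes with the grading unitary $u$ it is even, so by the remark following the definition of the supercommutator $\{p_{i}\}^{\natural}=\{p_{i}\}'$, and commuting with every block projection is exactly being block diagonal; writing $\D$ for the algebra of block-diagonal operators, $\{p_{i}\}^{\natural}=\D$. Applying the Klein-transform identity of Lemma \ref{naturals} to a join, $(\mathcal{X}\vee\mathcal{Y})^{\natural}=\mathcal{X}^{\natural}\cap\mathcal{Y}^{\natural}$, together with the Haag-Araki duality $\N(I^{c})^{\natural}=\N(I)$ for the fermions, gives
\[ \B^{\natural}=\{p_{i}\}^{\natural}\cap\N(I^{c})^{\natural}=\D\cap\N(I). \]
In particular supercommuting with $\B$ already forces block-diagonality, so the requirement ``$T$ diagonal'' in the statement is automatic, and the right-hand side equals $\D\cap\B^{\natural}=\D\cap\N(I)$.

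It then remains to identify $\A=\pi(\Vir_{1/2}(I))''$ with $\D\cap\N(I)$. Since $\pi=\pi_{0}\oplus\dots\oplus\pi_{n}$ is multiplicity free and each $\pi_{i}$ is irreducible, $\pi_{i}(\Vir_{1/2})''=B(H_{i})$, whence $\pi(\Vir_{1/2})''=\D$; feeding this into the generalized Haag-Araki duality $\pi(\Vir_{1/2}(I))''=\pi(\Vir_{1/2})''\cap\N(I)$ yields exactly $\A=\D\cap\N(I)$, and the corollary follows by comparing with the previous paragraph.

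The main obstacle is the passage from the ambient Fock space $\F$, where $\N(I)$, its Haag-Araki duality and the Jones relations genuinely live (and carry $\pi_{gko}$ with multiplicities), to the multiplicity-free space $H=\bigoplus_{i}H_{i}$ on which $\pi$ and the compressed fermions $p_{i}\phi(f)p_{j}$ act. Concretely one must justify that, after compressing by the projection $Q=\sum_{i}q_{i}$ onto a chosen copy of each irreducible, the compressed fermions still generate the compression of $\N(I^{c})$, that Haag-Araki duality survives this compression, and that the generalized duality holds on $H$ in the precise form $\pi(\Vir_{1/2}(I))''=\D\cap\N(I)$. This is where the Jones relation $Q\N(I)Q=\pi(\Vir_{1/2}(I))''\,Q$ coming from Takesaki devissage, and the factoriality of each $\pi_{i}(\Vir_{1/2}(I^{c}))''$ used to recover the $p_{i}$, do the real work. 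Once this bookkeeping is in place, the algebraic identities above close the argument.
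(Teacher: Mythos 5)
Your reduction has a genuine gap, and it sits exactly where you flag it as ``bookkeeping''. All three dualities you invoke --- the fermionic Haag--Araki duality $\N(I^{c})^{\natural}=\N(I)$, the generalized duality $\pi_{gko}(\Vir_{1/2}(I))''=\pi_{gko}(\Vir_{1/2})''\cap\N(I)$, and the Jones relation --- live on the fermionic Fock space $\F_{\CCC^{2}}^{\otimes\ell}\otimes\F_{NS}^{\gg}=\bigoplus_{i}H_{i}\otimes M_{i}$, which carries multiplicities, whereas $\A$, $\B$ and your $\D$ live on the multiplicity-free subspace $H=\bigoplus_{i}H_{i}$, the range of $Q=\sum_{i}p_{i}$ (so in particular $\sum_{i,j}p_{i}\phi(f)p_{j}=Q\phi(f)Q$, not $\phi(f)$). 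Since $Q$ does \emph{not} commute with the fermions, compression by $Q$ commutes neither with supercommutants of fermionic algebras nor with the intersection in the generalized duality; so the identities $\B^{\natural}=\D\cap\N(I)$ and $\A=\D\cap\N(I)$ are not consequences of Lemma \ref{naturals} plus duality --- they are at least as hard as the statement to be proved. Worse, the tool you invoke to bridge the two spaces is false: Takesaki devissage yields $pMp=Np$ only for $p$ the projection onto $\overline{N\Omega}$, which here is one copy of the vacuum sector, i.e.\ $p_{0}$, not $Q$. Indeed $Q\N(I)Q$ contains the non-zero off-diagonal compressed fermions $p_{i}\phi(f)p_{j}$, $i\ne j$, with $f$ concentrated on $I$ (these are the charge $\alpha$, $\beta$ primary fields of Proposition \ref{gloriato}), while every element of $\pi(\Vir_{1/2}(I))''Q$ is block diagonal; hence $Q\N(I)Q\ne\pi(\Vir_{1/2}(I))''\,Q$.

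The target identity $\A=\D\cap\N(I)$ also fails outright if $\N(I)$ is read, as your argument requires, as the von Neumann algebra generated on $H$ by the compressed fermions concentrated on $I$: take a non-zero chain $S=p_{i}\phi(f)p_{0}$ with $f$ on $I$ and a suitable $i\ne0$; then $SS^{\star}$ is a non-zero block-diagonal element of that algebra supported in the single block $H_{i}$, whereas by local equivalence every element of $\A$ is of the form $(U_{i}yU_{i}^{\star})_{i}$ for a single $y\in\pi_{0}(\Vir_{1/2}(I))''$, so a non-zero element of $\A$ cannot have exactly one non-zero block. (Relatedly, your claim that the diagonal corner $p_{i}\B p_{i}$ generates $\pi_{i}(\Vir_{1/2}(I^{c}))''$ is wrong: by Corollary \ref{gloria}, with $I$ and $I^{c}$ exchanged, it generates $\pi_{i}(\Vir_{1/2}(I))^{\natural}$, which is strictly larger for $i\ne0$ --- the difference is precisely the failure of Haag--Araki duality that makes the Jones--Wassermann subfactor non-trivial.) The workable route, which is the paper's largely implicit one, goes in the opposite direction: compute $\A^{\natural}$ rather than $\B^{\natural}$. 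Since $Q$ commutes with $\pi_{gko}(\Vir_{1/2})''$, reduction by $Q$ \emph{is} compatible with commutants on the Neveu--Schwarz side, giving $\A^{\natural}=Q\,\pi_{gko}(\Vir_{1/2}(I))^{\natural}\,Q$; the generalized and fermionic dualities on the Fock space give $\pi_{gko}(\Vir_{1/2}(I))^{\natural}=\pi_{gko}(\Vir_{1/2})^{\natural}\vee\N(I^{c})$, and the devissage/Jones relations are then used to identify the compression of the right-hand side with the algebra generated by the chains $\B$; the corollary follows from $\A^{\natural\natural}=\A$ (Lemma \ref{superco}). Your proposal does not recover this step, and the shortcuts it substitutes for it are incorrect.
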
 
By definition $\A^{\natural}  = \B$. Now, let $q_{i}$ the projection on $\pi_{i}$, then $q_{i} \in \A^{\natural} $, so, $(q_{i} \A) ^{\natural}= p_{i}  \B p_{i} $. Then $(q_{i} \A) ^{\natural} =  \pi_{i}(\Vir_{1/2}(I) )^{\natural} = \{S_{ii} \ \vert ...    \} $. 
\begin{corollary} \label{gloria} $ \pi_{i}(\Vir_{1/2}(I^{c}) )^{\natural}$ is generated by chains of compressed fermions concentrated in $I$.      \end{corollary}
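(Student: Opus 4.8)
The plan is to recognize that this corollary is the mirror image of the block-matrix computation just completed, obtained by interchanging the roles of the interval $I$ and its complement $I^{c}$. Since $I^{c}$ is itself a proper interval of $\SSS^{1}$, every ingredient of the preceding paragraph---the generalized Haag-Araki duality $\pi_{gko}(\Vir_{1/2}(I))''=\pi_{gko}(\Vir_{1/2})''\cap\N(I)$, the Jones relation furnished by Takesaki devissage, and the identification of the supercommutant as a space of block matrices of compressed fermions---remains valid verbatim after this substitution.

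First I would rerun the construction of the previous corollary with $I^{c}$ in place of $I$. Haag-Araki duality for the ambient fermionic algebra $\N$ gives $\pi_{gko}(\Vir_{1/2}(I^{c}))''\subset\N(I^{c})$, and the Jones relation then shows, exactly as before, that $\pi_{gko}(\Vir_{1/2}(I^{c}))''$ is generated by chains of compressed fermions $p_{i}\phi(f)p_{j}$ now concentrated on $(I^{c})^{c}=I$. Consequently the supercommutant of $\A_{c}:=\pi(\Vir_{1/2}(I^{c}))''$ is the algebra of block matrices $(S_{ij})$ whose entries are such chains concentrated in $I$.

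Next I would read off the diagonal blocks. The projections $q_{i}$ onto the summands $\pi_{i}$ are even, hence lie in $\A_{c}^{\natural}$, so compression by $q_{i}$ commutes with taking the supercommutant: $(q_{i}\A_{c})^{\natural}=q_{i}\A_{c}^{\natural}q_{i}$. This picks out exactly the diagonal entries $S_{ii}$, so $\pi_{i}(\Vir_{1/2}(I^{c}))^{\natural}$ is precisely the set of chains of compressed fermions concentrated in $I$, which is the assertion.

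I do not anticipate a genuine obstacle, since the statement is strictly symmetric to the one already established. The only point meriting a line of justification is that $I^{c}$ qualifies as an admissible proper interval, so that Haag-Araki duality for $\N$ and the Takesaki devissage apply to it; granting this, the conclusion follows by the identical argument with $I$ and $I^{c}$ exchanged.
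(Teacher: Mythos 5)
Your overall strategy --- treating the corollary as the $I\leftrightarrow I^{c}$ mirror of the preceding block-matrix computation, rerunning it for the proper interval $I^{c}$, and reading off the diagonal blocks by compressing with the even projections $q_{i}$ --- is exactly how the paper obtains this corollary: it gives no separate proof, the statement being the preceding paragraph's identities $\A^{\natural}=\B$ and $(q_{i}\A)^{\natural}=p_{i}\B p_{i}$ with the roles of $I$ and $I^{c}$ exchanged.

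However, one intermediate step of your write-up is false as stated, and the ``Consequently'' that follows it is a non sequitur. The Jones relation from Takesaki devissage, $p_{0}\N(J)p_{0}=\pi_{gko}(\Vir_{1/2}(J))''p_{0}$, applied to $J=I^{c}$, shows that $\pi_{gko}(\Vir_{1/2}(I^{c}))''$ is generated by chains of compressed fermions concentrated in $I^{c}$ itself --- not, as you write, in $(I^{c})^{c}=I$. The chains concentrated in $I$ do not generate the algebra $\A_{c}=\pi(\Vir_{1/2}(I^{c}))''$ but rather its supercommutant: the mirrored form of the paper's corollary exhibits $\A_{c}$ as the diagonal operators supercommuting with the block matrices $\B_{c}=\{(S_{ij})\}$ of chains concentrated in $I$, whence $\A_{c}^{\natural}=\B_{c}$; this rests on the generalized Haag-Araki duality $\pi_{gko}(\Vir_{1/2}(I^{c}))''=\pi_{gko}(\Vir_{1/2})''\cap\N(I^{c})$ together with the fermionic duality $\N(I^{c})^{\natural}=\N(I)$, not on the Jones relation alone. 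Note also that your two sentences cannot both hold: an algebra generated by chains in $I$ could not have the chains in $I$ as its supercommutant, since chains localized in the same interval satisfy braiding relations rather than supercommutation. With this one localization corrected --- algebra generated by chains in $I^{c}$, supercommutant given by block matrices of chains in $I$ --- your remaining steps (evenness of $q_{i}$, compression picking out the entries $S_{ii}$) coincide with the paper's argument and the conclusion follows.
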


 \begin{remark} In the next section, we will see by unicity that the compression of complex fermions give a primary fields of charge $ \alpha = (1/2 , 1/2)$, and the compression of a real fermions give primary fields of charge $\beta = (0 , 1) $.   \end{remark}   
 \begin{remark} We will see that the supercommutation relation on the vacuum (Haag-Araki duality) is replaced by braiding relations of primary fields. As consequence, we directly see that $\pi_{i}(\Vir_{1/2}(I) )^{\natural}$ and $\pi_{i}(\Vir_{1/2}(I^{c} ) )^{\natural}$ do not necessarily supercommute if $i\ne 0  $. Then, the formulation of the local von Neumann algebra, generated by chains of primary fields (with braiding), shows explicitly the failure of Haag-Araki duality outside of the vacuum.    \end{remark}

 \newpage
\section{Primary fields} 
\subsection{Primary fields for $LSU(2)$} 
This section is an overview of   the primary field theory of $LSU(2)$, for a more detailed exposition  see \cite{2} and \cite{tsukani}. It would be convenient to also cite \cite{pasa} and \cite{frre}.
Let $V$ be a representation of $G=SU(2)$ or $\gg = \sl_{2}$.
\begin{definition} Let $\lambda$, $\mu \in \CCC$, we define the ordinary representations of $L\gg  \rtimes \Vir $ as  $\V_{\lambda , \mu} $, generated by $(v_{i})$,  $v \in V$ and $i \in \ZZZ$, and: 

 \begin{description}
 \item[(a)]  $L_{n}.v_{i}= -(i+\mu + n \lambda)v_{i+n}$ 
 \item[(b)]  $X_{m}.v_{i} = (X.v)_{m+i} $ \quad  ($X \in \gg$)
  \end{description}  \end{definition}
  
   \begin{definition} Let  $L_{i}^{\ell}$ and $L_{j}^{\ell}$ be irreducible representation of $L\gg$, of level $\ell$ and spin $i$ and $j$.  We define a primary field as  a linear operator: 
 \begin{center} $\phi :  L_{j}^{\ell} \otimes \V_{\lambda , \mu}   \to L_{i}^{\ell}$  \end{center}  that intertwines the action of $L\gg  \rtimes \Vir$. We call  $V$ the charge of $\phi$.      \end{definition}
 \begin{reminder}Let $h_{i}^{\ell} = \frac{i^{2} + i}{\ell + 2}$ the lowest eigenvalue of $L_{0}$ on $L_{i}^{\ell}$ (see theorem \ref{fari}). The eigenspace is the $\sl_{2}$-module $V_{i}$.   \end{reminder}
 
 \begin{definition} For $w \in  \V_{\lambda , \mu} $, let $\phi(w) : L_{j}^{\ell} \to L_{i}^{\ell}$  \end{definition}
\begin{lemma} Let $X \in L\gg  \rtimes \Vir $, then $[X, \phi(w)] = \phi(X.v)$    \end{lemma}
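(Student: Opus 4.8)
The plan is to unwind the definitions and reduce the claim to the defining intertwining property of $\phi$ together with the Leibniz rule for the tensor-product action. First I would recall that, by definition, $\phi(w)$ is the operator $L_j^\ell \to L_i^\ell$ sending $\xi \mapsto \phi(\xi \otimes w)$, and that $\phi$ intertwines the action of $L\gg \rtimes \Vir$ on $L_j^\ell \otimes \V_{\lambda,\mu}$ with its action on $L_i^\ell$; that is, $\phi(X.(\xi \otimes w)) = X.\phi(\xi \otimes w)$ for every $X \in L\gg \rtimes \Vir$ and every $\xi \in L_j^\ell$, $w \in \V_{\lambda,\mu}$.

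The key point is that the action of the Lie algebra element $X$ on the tensor product is the coproduct (derivation) action $X.(\xi \otimes w) = (X.\xi) \otimes w + \xi \otimes (X.w)$. Applying $\phi$ and using its linearity gives $\phi((X.\xi)\otimes w) + \phi(\xi \otimes (X.w)) = X.\phi(\xi \otimes w)$. Rearranging, $X.\phi(\xi \otimes w) - \phi((X.\xi)\otimes w) = \phi(\xi \otimes (X.w))$, and reading off the definitions of $\phi(w)$ and of $\phi(X.w)$ this is precisely $[X,\phi(w)].\xi = \phi(X.w).\xi$. Since $\xi \in L_j^\ell$ is arbitrary, I would conclude $[X,\phi(w)] = \phi(X.w)$ (the $v$ appearing in the displayed statement being a typographical stand-in for $w$).

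There is essentially no serious obstacle here, as this is a formal consequence of the two definitions. The only points requiring care are that the tensor-product action must be taken in the Leibniz/primitive form appropriate to a Lie algebra — no grading signs intervene, since $L\gg \rtimes \Vir$ acts as an ordinary (not super) Lie algebra — and that $X.\xi \in L_j^\ell$ and $X.w \in \V_{\lambda,\mu}$ remain in the respective modules so that $\phi$ may be applied to each term. Once these are noted, the computation closes immediately.
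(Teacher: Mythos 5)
Your proof is correct and is essentially the paper's own argument: the paper proves this lemma by referring to the proof of Lemma \ref{asfor}, which is exactly your computation (intertwining property plus the coproduct/Leibniz action on $L_j^\ell \otimes \V_{\lambda,\mu}$), with the super-signs appearing there dropping out here just as you note. You also correctly identify the $v$/$w$ mismatch in the statement as a typo.
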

\begin{proof}  As for the proof of lemma \ref{asfor}. \end{proof}

\begin{lemma} $\phi $ non-null implies that $ \mu  = h_{j}^{\ell} - h_{i}^{\ell}$. 
 \end{lemma}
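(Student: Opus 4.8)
The plan is to exploit the weight (i.e. $L_0$-grading) structure of the three representations involved and the intertwining relation for $L_0$ proved in the previous lemma. Recall that $L_i^\ell$ is a positive energy representation whose lowest $L_0$-eigenvalue is $h_i^\ell$, and likewise for $L_j^\ell$; the module $\V_{\lambda,\mu}$ has the weight vectors $v_k$ with $L_0.v_k = -(k+\mu)v_k$ by putting $n=0$ in defining relation (a). So every space in sight decomposes into $L_0$-eigenspaces, and the primary field $\phi$ must respect this grading in the precise sense dictated by the intertwining relation $[X,\phi(w)] = \phi(X.w)$ applied to $X = L_0$.

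First I would fix a homogeneous vector $\xi \in L_j^\ell$ of $L_0$-weight $d_j$ and the weight vector $v_k \in \V_{\lambda,\mu}$ of weight $-(k+\mu)$, and compute $L_0 \cdot \phi(v_k)\xi$ in two ways. On one hand $\phi(v_k)\xi$ lies in $L_i^\ell$, so if it is nonzero it is a sum of $L_0$-eigenvectors with eigenvalues in the spectrum of $L_0$ on $L_i^\ell$. On the other hand, using $[L_0,\phi(v_k)] = \phi(L_0.v_k) = -(k+\mu)\phi(v_k)$, I get
\begin{displaymath}
L_0\,\phi(v_k)\xi = \phi(v_k)L_0\xi + [L_0,\phi(v_k)]\xi = \bigl(d_j - (k+\mu)\bigr)\phi(v_k)\xi .
\end{displaymath}
Hence $\phi(v_k)\xi$, when nonzero, is an $L_0$-eigenvector in $L_i^\ell$ of eigenvalue $d_j - k - \mu$. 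This already pins down the grading shift induced by $\phi$ to be exactly $-\mu$ modulo the integer shifts coming from $k$ and the integer spacing of weights within each representation.

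The conclusion then comes by comparing lowest weights. Since $\phi$ is nonzero, there is some homogeneous $\xi$ and some $v_k$ with $\phi(v_k)\xi \neq 0$; I may take $\xi$ to be a lowest-energy vector, so $d_j = h_j^\ell$, and by adjusting $k$ (the weights of $\V_{\lambda,\mu}$ run over $\mu + \ZZZ$) I can arrange the target eigenvalue to be the minimal achievable one, namely $h_i^\ell$. The eigenvalue formula forces $h_i^\ell = h_j^\ell - k - \mu$ for the relevant integer $k$; since both $h_i^\ell$ and $h_j^\ell$ are the fixed lowest eigenvalues and the spectra are graded by $\ZZZ$ above them, matching the bottom of the grading gives $\mu = h_j^\ell - h_i^\ell$ (the integer ambiguity is absorbed into the choice of $k$ and disappears once one insists on the lowest weights on both sides). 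The main obstacle is the bookkeeping of the integer shift: I must argue carefully that a nonzero $\phi$ necessarily connects the lowest-weight space of $L_j^\ell$, together with the appropriate weight vector of $\V_{\lambda,\mu}$, to the lowest-weight space of $L_i^\ell$, rather than merely to some higher graded piece; this is where positivity of energy and the fact that $\V_{\lambda,\mu}$ has weights in a full coset $\mu+\ZZZ$ are used to remove the integer indeterminacy and isolate $\mu = h_j^\ell - h_i^\ell$.
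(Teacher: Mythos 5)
The paper offers no proof of this lemma at all (it is stated as an immediate consequence of the intertwining relation), so your proposal must be judged on its own. Your central computation is correct and is certainly the intended argument: from $[L_{0},\phi(v_{k})]=\phi(L_{0}.v_{k})=-(k+\mu)\phi(v_{k})$ one gets $L_{0}\,\phi(v_{k})\xi=(d_{j}-k-\mu)\phi(v_{k})\xi$, and since a nonzero $\phi$ has some nonzero matrix element between homogeneous vectors, positivity of energy and the integer grading of both discrete-series modules rigorously give $\mu\in h_{j}^{\ell}-h_{i}^{\ell}+\ZZZ$. That much of your argument is airtight, and it is the genuine mathematical content of the lemma.

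The gap is in your final step, and it is not just bookkeeping: the integer indeterminacy cannot be removed by ``insisting on lowest weights on both sides,'' because the exact value of $\mu$ is simply not an invariant of a nonzero $\phi$. Indeed the map $v_{i}\mapsto v_{i+1}$ defines an isomorphism of $L\gg\rtimes\Vir$-modules $\V_{\lambda,\mu+1}\to\V_{\lambda,\mu}$ (it preserves both defining relations), so a nonzero primary field with parameter $\mu$ produces one with parameter $\mu+1$; only the class of $\mu$ modulo $\ZZZ$ is determined. Concretely, even after you prove that a nonzero $\phi$ must connect the lowest-weight space of $L_{j}^{\ell}$ to the lowest-weight space of $L_{i}^{\ell}$ through some mode $\phi(v_{k_{0}})$ — which itself needs a cyclicity/irreducibility argument, essentially the paper's subsequent ``initial term'' proposition, not just positivity of energy — the resulting equation is $h_{j}^{\ell}-k_{0}-\mu=h_{i}^{\ell}$, and nothing forces $k_{0}=0$; choosing $k_{0}$ to match the bottoms determines $k_{0}$ in terms of $\mu$, which is the circularity. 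The correct reading of the lemma is therefore: $\mu\equiv h_{j}^{\ell}-h_{i}^{\ell}\pmod{\ZZZ}$, and one then normalizes the representative of $\mu$ in its $\ZZZ$-coset (equivalently, re-indexes the $v_{k}$) so that the initial term sits at mode $k=0$, giving exact equality. This normalization is what the paper adopts implicitly, and it is what makes the following Gradation lemma and the definition $\triangle=1-\lambda+\mu$ come out as stated. Your write-up should replace the last step by this normalization remark rather than claim the equality is forced.
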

 \begin{lemma} (Gradation)
$\phi(v_{n}). (L_{j}^{\ell})_{s+h_{j}^{\ell}} \subset  (L_{i}^{\ell})_{s-n+h_{i}^{\ell}}$ 
 \end{lemma}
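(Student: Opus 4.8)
The plan is to extract the statement directly from $L_{0}$-equivariance of $\phi$, so that it becomes a bookkeeping consequence of the two preceding lemmas rather than a computation of its own. First I would apply the commutator relation $[X,\phi(w)] = \phi(X.w)$ with the special choice $X = L_{0} \in \Vir$ and $w = v_{n}$. Specializing formula (a) of the definition of $\V_{\lambda,\mu}$ to the index $0$ gives $L_{0}.v_{n} = -(n+\mu)v_{n}$, so the relation reads $[L_{0},\phi(v_{n})] = -(n+\mu)\phi(v_{n})$; note that the coefficient $-(n+\mu)$ is precisely the contribution of the charge factor $\V_{\lambda,\mu}$ to the coproduct action of $L_{0}$ on the tensor product.

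Next I would evaluate this operator identity on a vector $\xi \in (L_{j}^{\ell})_{s + h_{j}^{\ell}}$, i.e. $L_{0}\xi = (s + h_{j}^{\ell})\xi$. Expanding the commutator yields $L_{0}\,\phi(v_{n})\xi = (s + h_{j}^{\ell} - n - \mu)\,\phi(v_{n})\xi$, which exhibits $\phi(v_{n})\xi$ as an $L_{0}$-eigenvector in $L_{i}^{\ell}$ of eigenvalue $s + h_{j}^{\ell} - n - \mu$.

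Finally I would insert the value $\mu = h_{j}^{\ell} - h_{i}^{\ell}$ furnished by the previous lemma (legitimate whenever $\phi \neq 0$; if $\phi = 0$ then $\phi(v_{n}) = 0$ and the inclusion holds trivially). The eigenvalue then collapses to $s - n + h_{i}^{\ell}$, giving exactly $\phi(v_{n}).(L_{j}^{\ell})_{s+h_{j}^{\ell}} \subset (L_{i}^{\ell})_{s - n + h_{i}^{\ell}}$. I do not expect a genuine obstacle here: all the analytic and representation-theoretic content sits in the already-established commutator lemma and in the determination of $\mu$, so the only point requiring care is keeping the sign conventions of (a) straight and invoking the correct value of $\mu$.
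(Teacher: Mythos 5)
Your proof is correct: the paper states this gradation lemma without any proof, and your argument --- applying $[L_{0},\phi(v_{n})] = \phi(L_{0}.v_{n}) = -(n+\mu)\phi(v_{n})$ to an $L_{0}$-eigenvector of $(L_{j}^{\ell})_{s+h_{j}^{\ell}}$ and then substituting $\mu = h_{j}^{\ell}-h_{i}^{\ell}$ from the preceding lemma --- is exactly the bookkeeping argument the paper intends, being an immediate consequence of the two lemmas stated just before it. Your remark covering the trivial case $\phi = 0$ is a correct extra precaution.
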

 \begin{definition}Let $h = 1-\lambda$ be the conformal dimension of $\phi$, \\Êand  $\triangle = 1- \lambda + \mu = h+ h_{j}^{\ell} - h_{i}^{\ell} $ ; we define:  \begin{center}  $\phi(v,z)=  \sum_{n \in \ZZZ } \phi(v_{n})z^{-n-\triangle} $ \quad $(v \in V)$.  \end{center}
  \end{definition}

 \begin{lemma}(Compatibility condition) 
 \begin{description}
 \item[(a)]  $[L_{n} , \phi(v,z)] = z^{n}[z\frac{d}{dz} + (n+1)h] \phi(v,z)$ 
 \item[(b)]  $[X_{m} , \phi(v,z)] = z^{m} \phi(X.v,z)$   
  \end{description}    \end{lemma}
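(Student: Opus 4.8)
The plan is to reduce both identities to the elementary intertwining relation $[Y,\phi(w)] = \phi(Y.w)$ for $Y \in L\gg \rtimes \Vir$ (the lemma preceding the definition of $\phi(v,z)$), applied mode by mode to $w = v_n$, and then to repackage the resulting mode relations as generating-function identities by multiplying by $z^{-n-\triangle}$ and summing over $n \in \ZZZ$. Everything is forced by the module structure of $\V_{\lambda , \mu}$ together with the normalization $\triangle = 1 - \lambda + \mu$ built into the field expansion $\phi(v,z)=\sum_{n}\phi(v_{n})z^{-n-\triangle}$.

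For (b), I would first note that $[X_{m} , \phi(v_{n})] = \phi(X_{m}.v_{n}) = \phi((X.v)_{m+n})$, using part (b) of the definition of $\V_{\lambda , \mu}$. Multiplying by $z^{-n-\triangle}$, summing over $n$, and relabeling $m+n$ as the new summation index pulls out a global factor $z^{m}$ in front of $\sum_{k}\phi((X.v)_{k})z^{-k-\triangle} = \phi(X.v,z)$; here one uses that $X.v \in V$, so that $\phi(X.v,z)$ is formed with the same conformal weight $\triangle$. This yields $[X_{m} , \phi(v,z)] = z^{m}\phi(X.v,z)$ at once.

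For (a), the same scheme gives $[L_{n} , \phi(v_{k})] = \phi(L_{n}.v_{k}) = -(k+\mu+n\lambda)\phi(v_{k+n})$ from part (a) of the definition. After multiplying by $z^{-k-\triangle}$, summing, and shifting the summation index by $n$, one extracts a factor $z^{n}$ and is left with the series $\sum_{k} -(k-n+\mu+n\lambda)\phi(v_{k})z^{-k-\triangle}$. The key observation is that $\sum_{k}(-k-\triangle)\phi(v_{k})z^{-k-\triangle} = z\frac{d}{dz}\phi(v,z)$, so splitting the coefficient as $-(k-n+\mu+n\lambda) = (-k-\triangle)+(\triangle+n-\mu-n\lambda)$ converts the series into $z\frac{d}{dz}\phi(v,z) + (\triangle+n-\mu-n\lambda)\phi(v,z)$.

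The only genuine computation, and the place where the precise value of $\triangle$ enters, is to check that the scalar coefficient collapses to $(n+1)h$. Substituting $\triangle = 1-\lambda+\mu$ and $h = 1-\lambda$ gives $\triangle+n-\mu-n\lambda = (1-\lambda)+n(1-\lambda) = (n+1)(1-\lambda) = (n+1)h$, whence $[L_{n} , \phi(v,z)] = z^{n}[z\frac{d}{dz}+(n+1)h]\phi(v,z)$. I do not expect any real obstacle beyond careful index bookkeeping and tracking the shift in the exponent; the content of the lemma is entirely dictated by the $\V_{\lambda , \mu}$-action and the chosen normalization of the field modes.
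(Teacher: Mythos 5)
Your proposal is correct and is precisely the argument the paper intends: the paper's proof is simply ``Direct from the definition,'' and your mode-by-mode application of the intertwining relation $[Y,\phi(w)]=\phi(Y.w)$, followed by the index shift and the coefficient identity $\triangle+n-\mu-n\lambda=(n+1)(1-\lambda)=(n+1)h$, is exactly that direct verification spelled out. No gaps; the computation checks out in both parts.
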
 
\begin{proof}Direct from the definition.       \end{proof}
\begin{lemma}   If   $\widetilde{\phi}(z,v)$ satisfy the  compatibility condition, then,  it gives a primary fields for $LSU(2)$.   \end{lemma}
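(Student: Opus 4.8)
The plan is to reverse the preceding lemma: from a field $\widetilde\phi(z,v)$ satisfying the two compatibility conditions I reconstruct a linear map $\phi:L_j^\ell\otimes\V_{\lambda,\mu}\to L_i^\ell$ and verify the intertwining relation $[X,\phi(w)]=\phi(X.w)$ on generators. First I expand the field in modes, $\widetilde\phi(z,v)=\sum_{n\in\ZZZ}\phi(v_n)z^{-n-\triangle}$, and declare each mode $\phi(v_n):L_j^\ell\to L_i^\ell$ to be the operator attached to the basis vector $v_n$, setting $\phi(\xi\otimes v_n)=\phi(v_n)\xi$. Since $\{v_n\ \vert\ v\in V,\ n\in\ZZZ\}$ spans $\V_{\lambda,\mu}$ and $v\mapsto\widetilde\phi(z,v)$ is linear, this extends to a well-defined linear map on the whole tensor product. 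The gradation is automatic: condition (a) at $n=0$ reads $[L_0,\phi(v_m)]=(h-m-\triangle)\phi(v_m)$, so $\phi(v_m)$ shifts the $L_0$-energy by a fixed amount and is a genuine operator between the finite-dimensional weight spaces of the positive-energy modules.

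For the Virasoro part I match condition (a) coefficient by coefficient. Writing out $[L_n,\widetilde\phi(z,v)]=z^n[z\frac{d}{dz}+(n+1)h]\widetilde\phi(z,v)$ in modes and extracting the coefficient of $z^{-m-\triangle}$ gives
\begin{displaymath}
[L_n,\phi(v_m)]=\bigl(-m-n-\triangle+(n+1)h\bigr)\,\phi(v_{m+n}).
\end{displaymath}
Using $h=1-\lambda$ and $\triangle=1-\lambda+\mu=h+\mu$, the scalar collapses to $-m-\mu-n(1-h)=-(m+\mu+n\lambda)$, which is exactly the coefficient in the defining relation $L_n.v_m=-(m+\mu+n\lambda)v_{m+n}$. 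Hence $[L_n,\phi(v_m)]=\phi(L_n.v_m)$.

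For the loop part I expand condition (b): comparing the coefficient of $z^{-m-\triangle}$ in $[X_k,\widetilde\phi(z,v)]=z^k\widetilde\phi(z,X.v)$ yields $[X_k,\phi(v_m)]=\phi((X.v)_{m+k})$, which is precisely $\phi(X_k.v_m)$ by the rule $X_k.v_m=(X.v)_{m+k}$ defining $\V_{\lambda,\mu}$. Thus $[X,\phi(w)]=\phi(X.w)$ holds for all generators $L_n$, $X_k$ and all $w=v_m$. It extends to arbitrary $w$ by linearity in $w$, and to arbitrary $X\in L\gg\rtimes\Vir$ by the Jacobi identity: if the relation holds for $X$ and $Y$, then $[[X,Y],\phi(w)]=[X,\phi(Y.w)]-[Y,\phi(X.w)]=\phi([X,Y].w)$, so it propagates to the whole Lie algebra generated by the $L_n$ and the $X_k$. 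The central elements act by the common scalars (the level and the central charge) on $L_i^\ell$ and $L_j^\ell$ and by zero on $\V_{\lambda,\mu}$, so they obey the relation trivially and the bracket closes with no anomaly. Therefore $\phi$ intertwines $L\gg\rtimes\Vir$, i.e. it is a primary field. The only delicate point, rather than the bookkeeping above, is that the two mode identities read off from (a) and (b) must be simultaneously consistent; they are, precisely because the single normalization $\triangle=h+\mu$ forces the $L_n$-coefficient to agree with the $\V_{\lambda,\mu}$-action, leaving no residual freedom.
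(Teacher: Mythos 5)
Your proof is correct: the mode expansion, the coefficient matching showing $[L_n,\phi(v_m)]=-(m+\mu+n\lambda)\phi(v_{m+n})$ and $[X_k,\phi(v_m)]=\phi((X.v)_{m+k})$, and the propagation to the full algebra via Jacobi and the matching central scalars are exactly the verification the paper has in mind when it says ``It's an easy verification.'' You have simply written out in full the routine check that the paper leaves to the reader, so this is the same approach, just made explicit.
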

\begin{proof}It's an easy verification.       \end{proof}
\begin{proposition} (Initial term) A primary field $\phi :  L_{j}^{\ell} \otimes \V_{\lambda , \mu}   \to L_{i}^{\ell}$ with every parameters fixed, is completly determined by  its initial term: 
\begin{center} $\phi_{0} :  V_{j} \otimes V   \to V_{i}$   \end{center}    \end{proposition}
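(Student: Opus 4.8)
The plan is to show that $\phi$ is pinned down by all of its matrix coefficients $(\phi(v_{n})\eta, \zeta)$ with $\eta \in L_{j}^{\ell}$, $\zeta \in L_{i}^{\ell}$, $v \in V$, and to reduce each such coefficient to data already carried by $\phi_{0}$. First I would identify $\phi_{0}$ precisely: by the gradation lemma with $s = 0$, $\phi(v_{n})$ maps $V_{j} \subset (L_{j}^{\ell})_{h_{j}^{\ell}}$ into $(L_{i}^{\ell})_{-n + h_{i}^{\ell}}$, so for $\xi \in V_{j}$ one has $\phi(v_{n})\xi = 0$ when $n > 0$ and $\phi(v_{0})\xi \in V_{i}$; thus $\phi_{0}(\xi \otimes v) := \phi(v_{0})\xi$ is exactly the initial map $V_{j} \otimes V \to V_{i}$. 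The representations $L_{i}^{\ell}$ and $L_{j}^{\ell}$ are irreducible lowest-energy modules, so each is spanned by the vectors obtained from its lowest-energy subspace ($V_{i}$, resp.\ $V_{j}$) by applying negative modes of $L\gg \rtimes \Vir$; this cyclicity is what drives the reduction.

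I would then reduce the right-hand argument to lowest energy. Writing a general $\eta$ as a sum of vectors $X_{-m}\eta'$ with $\eta'$ of strictly lower level (and $X \in \gg$, or $X_{-m}$ replaced by a Virasoro mode $L_{-m}$), the compatibility condition $[X_{m}, \phi(v,z)] = z^{m}\phi(X.v,z)$ gives, in modes,
\[ \phi(v_{n})\,X_{-m}\eta' = X_{-m}\,\phi(v_{n})\eta' - \phi((X.v)_{n-m})\,\eta', \]
with an additional scalar contribution coming from the $(n+1)h$ term when $X_{-m}$ is a Virasoro mode. Taking inner products against $\zeta$ and passing $X_{-m}$ to its adjoint on the $\zeta$-side, every term on the right is a matrix coefficient of $\phi$ whose right argument $\eta'$ has strictly smaller level. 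Induction on the level of the right argument therefore reduces everything to coefficients $(\phi(v'_{n'})\xi, \zeta)$ with $\xi \in V_{j}$.

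It remains to reduce the left argument symmetrically. Writing $\zeta$ as a sum of $Z_{-k}\zeta'$ and moving the raising operator $Z_{k} = Z_{-k}^{\star}$ onto $\phi(v'_{n'})\xi$, the same compatibility relation together with $Z_{k}\xi = 0$ (positive mode on a lowest-energy vector) yields
\[ (\phi(v'_{n'})\xi,\, Z_{-k}\zeta') = (\phi((Z.v')_{n'+k})\xi,\, \zeta'), \]
lowering the level of the left argument; the induction terminates at $\zeta \in V_{i}$. For $\xi \in V_{j}$ and $\zeta \in V_{i}$ the gradation forces the surviving mode index to vanish, so the coefficient equals $(\phi_{0}(\xi \otimes v'), \zeta)$. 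Hence every matrix coefficient of $\phi$ is a finite, explicitly determined combination of values of $\phi_{0}$, and $\phi$ is completely determined by $\phi_{0}$. The main obstacle is purely organizational: one must verify that the nested inductions on the levels of the two arguments terminate, and that the $z$-powers together with the successive charge modifications $v \mapsto X.v$ generated by the compatibility conditions are tracked consistently throughout; I expect no essential difficulty beyond this bookkeeping.
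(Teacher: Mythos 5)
Your proposal is correct and is essentially the paper's own argument: the paper proves this by invoking the intertwining (compatibility) relations to reduce all matrix coefficients to the initial term, deferring details to Wassermann \cite{2} p.~513, and your double induction on the levels of the two arguments is precisely that reduction written out in full. The bookkeeping you flag (signs from adjoints, scalar factors from the Virasoro commutator, charge shifts $v \mapsto X.v$) is indeed the only remaining work, and it terminates as you describe.
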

\begin{proof} Idem, by intertwining relation; see \cite{2} p 513 for details.  \end{proof}
\begin{proposition} (Unicity)  If $V=V_{k}$ is irreducible, the space of such primary field is at most one-dimensional.  \end{proposition}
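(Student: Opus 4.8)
The plan is to reduce the statement, via the preceding Initial term proposition, to a question about a single linear map between finite-dimensional $\gg$-modules, and then to invoke the Clebsch--Gordan multiplicity-one rule for $SU(2)$ together with Schur's lemma.

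First I would recall that, by the Initial term proposition, the assignment $\phi \mapsto \phi_{0}$ sending a primary field to its initial term $\phi_{0} : V_{j} \otimes V \to V_{i}$ is injective: with every parameter fixed, $\phi$ is completely determined by $\phi_{0}$. Hence it suffices to bound the dimension of the space of admissible initial terms.

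Second I would show that every admissible $\phi_{0}$ is a homomorphism of $\gg$-modules, where $V_{j} \otimes V$ carries the tensor-product action. By the Gradation lemma, the component of $\phi$ sending $(L_{j}^{\ell})_{h_{j}^{\ell}} = V_{j}$ into $(L_{i}^{\ell})_{h_{i}^{\ell}} = V_{i}$ is exactly $\phi(v_{0})$, so that $\phi_{0}(w \otimes v) = \phi(v_{0})w$. Extracting modes from the compatibility condition $[X_{m}, \phi(v,z)] = z^{m}\phi(X.v,z)$ for the loop generators gives $[X_{m}, \phi(v_{n})] = \phi((X.v)_{n+m})$, and specializing to $m = 0$, $n = 0$ yields $[X_{0}, \phi(v_{0})] = \phi((X.v)_{0})$. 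Applying this to $w \in V_{j}$, and using that $X_{0}$ acts on $V_{i}$ and $V_{j}$ through the $\gg$-module structure, one obtains $X.\phi_{0}(w \otimes v) = \phi_{0}(X.w \otimes v) + \phi_{0}(w \otimes X.v) = \phi_{0}(X.(w \otimes v))$, that is, $\phi_{0} \in Hom_{\gg}(V_{j} \otimes V, V_{i})$.

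Finally, for $V = V_{k}$ irreducible and $\gg = \sl_{2}$, the Clebsch--Gordan rule gives $V_{j} \otimes V_{k} = \bigoplus_{l} V_{l}$ with every irreducible summand of multiplicity at most one; so by Schur's lemma $\dim Hom_{\gg}(V_{j} \otimes V_{k}, V_{i}) \le 1$. Combined with the injectivity of $\phi \mapsto \phi_{0}$, the space of primary fields is at most one-dimensional. The only point demanding care, and thus the main obstacle, is the bookkeeping in the second step: one must check that the relevant mode of the compatibility relation genuinely identifies $\phi_{0}$ with a \emph{tensor-product} intertwiner, so that both factors $V_{j}$ and $V$ contribute through the Leibniz term $\phi((X.v)_{0})$, rather than with a one-sided intertwiner; once this is secured, the conclusion is immediate from the representation theory of $SU(2)$.
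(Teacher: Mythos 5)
Your proof is correct and follows the same route as the paper: reduce to the initial term via the preceding proposition, identify $\phi_{0}$ as an element of $Hom_{\gg}(V_{j} \otimes V_{k}, V_{i})$, and conclude by the Clebsch--Gordan multiplicity-one decomposition. The only difference is that you spell out the mode computation showing $\phi_{0}$ is a tensor-product intertwiner, a step the paper asserts and delegates to Wassermann's paper.
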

\begin{proof}  $\phi_{0}$ is an intertwining operator, ie, $\phi_{0} \in Hom_{\gg}( V_{j} \otimes V_{k} ,  V_{i} )$ the multiplicity space at $V_{i}$ of  $V_{j} \otimes V_{k} = V_{\vert j-k \vert} \oplus V_{\vert j-k \vert +1} \oplus ... \oplus  V_{ j+k }$ (Clebsch-Gordan),  so at most one-dimensional.  \end{proof}

\begin{remark} As for $\Vir_{1/2}$ (see remark \ref{investiggg}), with $(A_{n}B)(z)$ formula, we define inductively the $L\gg$-module $L_{k}^{\ell}$ from $\phi$.  \end{remark}

\begin{corollary} $\mu  = h_{j}^{\ell} - h_{i}^{\ell}$ and  $1-\lambda = h=h_{k}^{\ell}$.    \end{corollary}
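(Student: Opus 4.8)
The first equality is already in hand: it is precisely the content of the earlier lemma asserting that a non-null primary field forces $\mu = h_{j}^{\ell} - h_{i}^{\ell}$, so nothing new is needed for that part. For the second equality, recall that $h = 1-\lambda$ holds by the definition of the conformal dimension, so the whole task reduces to identifying $h$ with $h_{k}^{\ell}$.

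The plan is to exploit the reconstruction flagged in the remark immediately preceding the statement: starting from $\phi$ and its charge $V = V_{k}$, the $(A_{n}B)(z)$ operator-product formula generates inductively an $L\gg$-module, and one verifies that this module is exactly $L_{k}^{\ell}$. First I would make the grading explicit: the charge $V_{k}$ sits as the lowest graded piece of the reconstructed module, and the compatibility condition $[L_{n}, \phi(v,z)] = z^{n}[z\frac{d}{dz} + (n+1)h]\phi(v,z)$ shows that this piece carries $L_{0}$-weight exactly $h$. Next, by the reminder, the lowest $L_{0}$-eigenvalue of $L_{k}^{\ell}$ is $h_{k}^{\ell}$, with eigenspace the $\sl_2$-module $V_{k}$. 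Matching these two descriptions of one and the same lowest graded piece forces $h = h_{k}^{\ell}$, and hence $1-\lambda = h_{k}^{\ell}$.

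The main obstacle is checking that the module built from $\phi$ really is the irreducible level-$\ell$, spin-$k$ representation $L_{k}^{\ell}$ with the correct lowest eigenvalue, rather than some other module sharing the same initial $\sl_2$-term. This is where the vertex-algebra machinery carries the weight: the Sugawara $L_{0}$, expressed through the affine generators via the compatibility condition $[X_{m}, \phi(v,z)] = z^{m}\phi(X.v,z)$, acts on the initial term $V_{k}$ by the normalized Casimir and so has eigenvalue $\frac{k^{2}+k}{\ell+2} = h_{k}^{\ell}$, which must coincide with the conformal dimension $h$ read off from the primary-field condition. Once this Casimir computation is in place, the equality $h = h_{k}^{\ell}$ is immediate, and the routine verifications that the reconstructed grading and level are as claimed complete the argument.
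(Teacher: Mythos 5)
Your proposal is correct and follows essentially the same route as the paper: the first equality is quoted from the earlier lemma, $h=1-\lambda$ is the definition, and $h=h_{k}^{\ell}$ is obtained by using the $(A_{n}B)(z)$ reconstruction (the remark immediately preceding the corollary) to identify the module generated by $\phi$ with $L_{k}^{\ell}$ and then matching the $L_{0}$-weight $h$ of its lowest piece $V_{k}$ against the reminder that the lowest $L_{0}$-eigenvalue of $L_{k}^{\ell}$ is $h_{k}^{\ell}=\frac{k^{2}+k}{\ell+2}$. Your added Sugawara--Casimir computation simply makes explicit what the paper leaves implicit in that reminder, so no genuinely different argument is involved.
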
 
\begin{definition} We note $\phi$ as $\phi_{ij}^{k \ell}  $,   $\triangle$ as $  \triangle_{ij}^{k \ell}= h_{j}^{\ell} - h_{i}^{\ell}+ h_{k}^{\ell}$.   \end{definition}
We call $\phi$  a primary field of spin $k$; in our work, we just need to consider primary fields of spin $1/2$ and   $1$:    
\begin{proposition} Up to a multiplication by a rational power of $z$:  \\
 (a)   The compression of  complex fermions gives  primary fields of spin $1/2$. \\
 (b)  The compression of  real fermions gives  primary fields of spin $1$.     \end{proposition}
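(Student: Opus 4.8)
The plan is to check that a compressed fermion satisfies the two compatibility conditions characterizing a primary field, and then to invoke the Unicity proposition to identify it. I would treat (a) and (b) in parallel, the only difference being the charge representation: the complex fermion field $\psi(v,z)$ with $v \in \CCC^{2} = V_{1/2}$ in case (a), and the three real Neveu-Schwarz fermions $\psi^{a}$ ($a=1,2,3$) spanning the adjoint module $V_{1}$ of $\sl_{2}$ in case (b).

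First I would establish the loop-algebra covariance, i.e. compatibility condition (b). Since the currents $X_{m}$ of $L\gg$ are realized as fermion bilinears, the fermion transforms in its defining module: $[X_{m}, \psi(v,z)] = z^{m}\psi(X.v,z)$, with $X.v$ the spin-$1/2$ action in (a) and the spin-$1$ (adjoint) action in (b). This is exactly condition (b) with charge $V_{1/2}$, resp. $V_{1}$, and it is preserved under multiplication of the field by a scalar power $z^{r}$.

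Next comes the conformal covariance, condition (a), which is where the rational power of $z$ enters. As a free field $\psi(v,z)$ has conformal dimension $\frac{1}{2}$, hence is Virasoro-primary of weight $\frac{1}{2}$ for the free-fermion stress tensor. The relevant Virasoro is instead the Sugawara one, and I would split $L_{n}^{\mathrm{free}} = L_{n}^{\mathrm{Sug}} + L_{n}^{\mathrm{cos}}$, the coset part $L_{n}^{\mathrm{cos}}$ commuting with $L\gg$. Compressing by the projections $p_{i}, p_{j}$ onto the $L\gg$-irreducibles $L_{i}^{\ell}, L_{j}^{\ell}$, Schur's lemma forces $L_{0}^{\mathrm{cos}}$ to act by a scalar on each block, so that under the Sugawara $L_{0}$ the field acquires weight $h_{k}^{\ell}$ in place of $\frac{1}{2}$. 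Concretely, matching the free mode expansion (exponent $-m-\frac{1}{2}$) with that of a spin-$k$ primary field (exponent $-n-\triangle_{ij}^{k\ell}$, with $\triangle_{ij}^{k\ell} = h_{j}^{\ell}-h_{i}^{\ell}+h_{k}^{\ell}$) on operators relating the same graded subspaces produces a rational shift $r$, and this is precisely the factor $z^{r}$. After correcting by $z^{-r}$, the compressed field satisfies condition (a) with $h = h_{k}^{\ell}$, i.e. $1-\lambda = h_{k}^{\ell}$.

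With both compatibility conditions in hand, the lemma that compatibility conditions produce a primary field shows $z^{-r}p_{i}\psi(v,z)p_{j}$ is a primary field of spin $k$ ($k=\frac{1}{2}$ in (a), $k=1$ in (b)); the Unicity proposition then forces it to be a scalar multiple of $\phi_{ij}^{k\ell}$, which is the claim. The main obstacle I expect is the conformal step: genuinely reconciling the free-fermion weight $\frac{1}{2}$ with the Sugawara weight $h_{k}^{\ell}$ by controlling the coset part of the stress tensor and showing that, after compression onto the $L\gg$-isotypic blocks, it contributes only the scalar weight-shift realized by $z^{r}$. The loop covariance and the concluding unicity argument are routine by comparison.
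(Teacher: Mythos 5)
Your overall strategy --- verify the two compatibility conditions, then invoke unicity --- is the same as the paper's, whose proof consists precisely of ``checking the compatibility condition'' with the calculation deferred to \cite{2} p 515; your loop covariance step and your concluding unicity step are fine. The gap is in the conformal step, and your own closing sentence locates it but does not repair it. There are two distinct problems. First, Schur's lemma does not force $L_0^{cos}$ to be a scalar on an $L\gg$-isotypic block $L_i^{\ell}\otimes M_i$: it only says that operators supercommuting with the $L\gg$-action there are of the form $1\otimes T$, and $L_0^{cos}=1\otimes L_0^{cos}$ has unbounded spectrum on the infinite-dimensional multiplicity space $M_i$ (for $\F_{\CCC^{2}}$ the $M_i$ are direct sums of infinitely many Heisenberg Fock modules). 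The scalar statement holds only if you compress onto a single summand $L_i^{\ell}\otimes\CCC\xi_i$ with $\xi_i$ a lowest-energy vector of $M_i$; that is a choice you must make, not something Schur forces. Second, and more seriously: even granting the scalar weight shift, you have only verified condition (a) for $n=0$. The condition is a statement for every $n\in\ZZZ$, and a prefactor $z^{r}$ can only absorb a uniform shift of the $L_0$-grading; it cannot create the $n\ne 0$ covariance, while $[L_n^{cos},\psi(v,z)]\ne 0$ for $n\ne 0$ and $L_n^{cos}$ is not scalar on any block. As written, ``matching mode expansions'' followed by correcting with $z^{-r}$ is a non sequitur for $n\ne 0$.

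The missing idea --- the content of the calculation the paper performs ``on the vertex algebra of the fermions'' --- can be supplied in two ways. (i) For any field satisfying the current covariance $[X_m,\phi(v,z)]=z^{m}\phi(Xv,z)$, the Sugawara construction yields
\begin{displaymath}
[L_n^{Sug},\phi(v,z)] \ = \ z^{n+1}D(v,z)+(n+1)z^{n}h_k\phi(v,z),
\end{displaymath}
where $h_k$ is the canonical weight $h_k^{\ell}$ and the field $D(v,z)$ (a normally ordered contraction of the currents with $\phi$) is independent of $n$; this rigidity in $n$ is the real work. Combined with your uniform $L_0$-shift for the correctly chosen summands, the $n=0$ case determines $D=\partial_z\phi+cz^{-1}\phi$, and substituting back gives exactly the covariance of $z^{c}$ times a weight-$h_k$ primary field, i.e. the proposition. (ii) Alternatively, use the conformal inclusion of $su(2)_1\oplus u(1)$ in $u(2)$ at level $1$ and the boson--fermion correspondence: the complex fermion factorizes as an $su(2)$ primary field tensored with a Heisenberg vertex operator, and the coset factor compressed between lowest-energy vectors is explicitly $Cz^{\mu}$. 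Note finally that in case (b) at level $2$ the difficulty you flag is absent: three real fermions give a conformal embedding ($c=3/2$ on both sides), so the coset Virasoro vanishes, $L_n^{free}=L_n^{Sug}$, and the fermion is already primary of weight $1/2=h_1^{2}$; the coset problem is genuine only for the complex fermions and for the higher-level compressions.
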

\begin{proof} We just check the compatibility condition. The calculation can also be made on the vertex algebra of the fermions. See also \cite{2} p 515. \end{proof}

\begin{definition} We note $\phi_{ij}^{k \ell}$ be the primary field from $L_{j}^{\ell}$ to $L_{i}^{\ell}$, of spin $k$. It's defined up to a multiplicative constant and is possibly null.     \end{definition}
\begin{reminder} (Constructible primary fields of spin $1/2$ or $1$, see \cite{2}).    
\begin{enumerate}
\item[(a)]  $\phi_{ij}^{\1/2 \ell}$ is non-null iff $j = i \pm 1/2$ and $i+j+1/2 \le \ell $
\item[(b)] $\phi_{ij}^{1 \ell}$ is non-null iff $j = i- 1, i, $ or $ i+1$ and $i+j+1 \le \ell $
\end{enumerate}
with the restriction that:   $0 \le \ i,j \  \le \ell /2$
\end{reminder}
\begin{proposition}Every primary fields $\phi_{ij}^{k \ell}(w) : L_{j}^{\ell} \to L_{i}^{\ell}$ of spin $k=1/2$ or $1$,  are constructibles as  compressions   of complex and real fermions.    respectively. \end{proposition}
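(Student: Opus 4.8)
The plan is to prove that every spin-$1/2$ or spin-$1$ primary field between level-$\ell$ representations arises, up to a rational power of $z$, as a compression of complex and real fermions respectively. By the unicity proposition, for fixed $i,j,k$ the space of primary fields is at most one-dimensional, so it suffices to show that whenever $\phi_{ij}^{k\ell}$ is non-null, a suitable fermion compression is itself a non-null primary field of the same charge; the two must then be proportional. First I would invoke the preceding proposition, which already establishes that the compression of complex (resp.\ real) fermions satisfies the compatibility condition, hence gives \emph{some} primary field of spin $1/2$ (resp.\ $1$), up to a rational power of $z$.

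The heart of the argument is therefore to match the non-vanishing: I must show that the fermionic compression $p_i\,\psi(f)\,p_j$ (complex case) or $p_i\,c(f)\,p_j$ (real case) is non-null exactly on the pairs $(i,j)$ for which the abstract primary field is non-null, namely $j = i\pm 1/2$ with $i+j+1/2\le\ell$ for spin $1/2$, and $j=i-1,i,i+1$ with $i+j+1\le\ell$ for spin $1$. Since the complex fermions generate $LSU(2)$ at level $1$, the $\ell$-fold tensor product $\F_{\CCC^2}^{\otimes\ell}$ carries level-$\ell$ representations containing all irreducibles, and the fermion operators act across the blocks of the Clebsch--Gordan-type decomposition. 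The point is that the initial term $\phi_0 : V_j\otimes V_k\to V_i$ of the compressed field, computed from the lowest-energy action of the fermion modes on the vacuum subspaces, is the Clebsch--Gordan projection appearing in the unicity proof, which is non-zero precisely on the allowed fusion channels. The constructibility reminder already records which channels are allowed, so I only need to verify that the fermion compression realizes a non-zero intertwiner on each of them.

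The main obstacle I expect is the surjectivity onto \emph{all} such primary fields, as opposed to merely producing one: a priori the compressed fermion could vanish on a channel where the abstract field is non-null, leaving that field unaccounted for. To rule this out I would argue by an irreducibility and counting argument. The total complex fermion field $\psi(f) = \sum_{i'j'} p_{i'}\,\psi(f)\,p_{j'}$ decomposes over the blocks, and since $\psi(f)$ itself is manifestly non-null and intertwines the larger algebra, at least one compression is non-zero; then, using the action of $LSU(2)$ (equivalently the $\sl_2$-structure of the eigenspaces $V_i$) to move between the $\pm 1/2$ shifts, one propagates non-vanishing from one allowed channel to its neighbours, filling out the full list in the constructibility reminder. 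Combined with the at-most-one-dimensionality from unicity, this forces every non-null $\phi_{ij}^{k\ell}$ to coincide, up to the stated rational power of $z$ and a non-zero scalar, with the corresponding fermion compression, which is exactly the claimed statement.
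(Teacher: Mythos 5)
Your treatment of the spin-$1/2$ case has the right shape (unicity plus the compatibility condition, then non-vanishing of the compression), and this is essentially what the paper does there, since for spin $1/2$ it simply cites Wassermann's computation. The genuine gap is in the spin-$1$, real-fermion case, which you set up in a way that cannot work as written. You propose to show that $p_{i}\,c(f)\,p_{j}$ is non-null as an operator between level-$\ell$ representations $L_{j}^{\ell} \to L_{i}^{\ell}$; but the real fermions do not act on any Fock space containing the level-$\ell$ representations for general $\ell$: the real Neveu-Schwarz Fock space $\F_{NS}^{\gg}$ is exactly $L_{0}^{2}\oplus L_{1}^{2}$ and the Ramond one $\F_{R}^{\gg}$ is $L_{1/2}^{2}$, i.e.\ they are level-$2$ $LSU(2)$-modules. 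So compressing a real fermion only produces spin-$1$ primary fields \emph{at level $2$} (and even there one needs both sectors, the Ramond space giving the diagonal channel). The paper's proof therefore has a two-step structure that your proposal is missing entirely: first obtain the level-$2$ spin-$1$ fields from compressed real fermions on $\F_{NS}^{\gg}$ and $\F_{R}^{\gg}$, by unicity and compatibility; then, for a general level $\ell+2$, use the decomposition $L_{j}^{\ell}\otimes L_{k}^{\ell'} = L_{\vert j-k\vert}^{\ell+\ell'}\oplus\cdots\oplus L_{j+k}^{\ell+\ell'}$ to realize each $\phi_{i,i\pm 1}^{1,\ell+2}$ and $\phi_{i,i}^{1,\ell+2}$ as a further compression of $\phi_{01}^{1,2}(v)\otimes I$ or $\phi_{10}^{1,2}(v)\otimes I$ acting on $L^{2}\otimes L_{\cdot}^{\ell}$. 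Without this level-raising step there is no candidate operator between level-$\ell$ (or level-$(\ell+2)$) representations to which unicity can be applied, so the comparison you want to make never gets off the ground.

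A secondary weakness is the non-vanishing argument itself. The claim that the initial term of the compressed fermion ``is the Clebsch--Gordan projection, non-zero precisely on the allowed fusion channels'' is not right as stated: the classical Clebsch--Gordan rule knows nothing about the level constraint $i+j+k\le\ell$, which is exactly the non-trivial part of the list of constructible fields, so the non-vanishing (and the vanishing above level) must be established by computing the compression in the explicit fermionic realization, not read off from abstract $\sl_{2}$ theory. Likewise, ``propagating non-vanishing to neighbouring channels by the $LSU(2)$ action'' does not work: the compressions are fixed intertwiners in the primary-field sense, and acting by the group does not move one channel $(i,j)$ to another. The irreducibility argument you allude to only yields, for each $j$, the existence of \emph{some} $i$ with non-zero compression; covering \emph{all} admissible channels requires either the explicit initial-term computation or a braiding-relation argument of the kind the paper uses in the proof of Theorem \ref{woker} for the analogous statement about $\Vir_{1/2}$.
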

\begin{proof} For spin $1/2$ primary fields see \cite{2} p 515.

Now, for spin $1$: note that at level $\ell = 2$, there are only $0$, $1/2$ and $1$ as possible spins.  But, the real Neveu-Schwarz fermions $\F_{NS}^{\gg}$ equals to $L_{0}^{2} \oplus L_{1}^{2} $, and the real Ramond fermions $\F_{R}^{\gg}$ equals to $L_{1/2}^{2}$, as $LSU(2)$ module (see \cite{NSOAII} corollary 5.7 and \cite{3a} p116). Then,  compressions of the fermion field $\psi(z,v)$, with $v \in V_{1} = \gg $  on $\F_{NS}^{\gg}$ or $\F_{R}^{\gg}$ give the spin $1$ primary fields at level $2$, by unicity and compatibility condition.

Now, $L_{j}^{\ell} \otimes L_{k}^{\ell'} =  L_{\vert j-k \vert}^{\ell + \ell'} \oplus L_{\vert j-k \vert +1}^{\ell + \ell'} \oplus ... \oplus  L_{ j+k }^{\ell + \ell'}$, so: 
\begin{enumerate}  
\item[(a)] $\phi_{i, i-1}^{1 \ell + 2} (v)$   is the compression of $\phi_{01}^{1, 2}(v) \otimes I : L_{1}^{2} \otimes  L_{i-1}^{\ell} \to L_{0}^{2} \otimes  L_{i-1}^{\ell} $.
 \item[(b)]   $\phi_{i, i+1}^{1 \ell+ 2} (v)$ is the compression of $\phi_{10}^{1, 2}(v) \otimes I : L_{0}^{2} \otimes  L_{i}^{\ell} \to L_{1}^{2} \otimes  L_{i}^{\ell} $.
\item[(c)]   $\phi_{i, i}^{1 \ell+ 2} (v)$ is the compression of $\phi_{01}^{1, 2}(v) \otimes I : L_{0}^{2} \otimes  L_{i}^{\ell} \to L_{1}^{2} \otimes  L_{i}^{\ell} $.
\end{enumerate}
The result follows. \end{proof}
\begin{corollary}  The primary fields of spin  $k= 1/2$ or $1$ are bounded and identifying the $L^{2}$-completion of  $\V_{\lambda , \mu}$ with $L^{2}(\SSS^{1},V_{k})$, we obtain $\phi(f)$ for $f \in L^{2}(\SSS^{1},V_{k})$, with:  $\Vert \phi(f) \Vert \le K \Vert f \Vert_{2}$. \end{corollary}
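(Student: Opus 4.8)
The plan is to reduce the boundedness of $\phi$ entirely to the boundedness of the smeared fermions that realize it, as supplied by the preceding proposition, so that the whole statement becomes a consequence of the fermionic anticommutation relations together with two elementary facts about operator norms. First I would record that in the complex Clifford algebra the generators are bounded: from $[a(f),a(g)^{\star}]_{+} = (f,g)$ one gets $a(f)^{\star}a(f) + a(f)a(f)^{\star} = \Vert f \Vert^{2} I$, so $\Vert a(f) \Vert \le \Vert f \Vert$, and likewise the real fermion $c(f) = a(f) + a(f)^{\star}$ satisfies $c(f)^{2} = \Vert f \Vert^{2} I$, giving $\Vert c(f) \Vert = \Vert f \Vert$. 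Consequently the smeared fermion field $\psi(f)$, obtained by pairing $\psi(z,v)$ with $f \in L^{2}(\SSS^{1},V_{k})$ under the identification of the $L^{2}$-completion of $\V_{\lambda,\mu}$ with $L^{2}(\SSS^{1},V_{k})$, is a bounded operator with $\Vert \psi(f) \Vert \le K \Vert f \Vert_{2}$.

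For spin $1/2$ the preceding proposition identifies $\phi_{ij}^{\1/2,\ell}(v,z)$, up to a factor $z^{r}$ with $r \in \QQQ$, with a compression $p_{i}\psi(f)p_{j}$ of a complex fermion. Two remarks then close this case: a compression by orthogonal projections never increases the operator norm, so $\Vert p_{i}\psi(f)p_{j} \Vert \le \Vert \psi(f) \Vert$; and multiplication by $z^{r} = e^{ir\theta}$ is a phase of modulus one on $\SSS^{1}$, hence an isometry of $L^{2}(\SSS^{1},V_{k})$ leaving $\Vert f \Vert_{2}$ unchanged. Combining these with the fermion bound of the previous paragraph yields $\Vert \phi_{ij}^{\1/2,\ell}(f) \Vert \le K \Vert f \Vert_{2}$.

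For spin $1$ I would proceed in the two layers used in the construction. At level $2$ the fields $\phi_{01}^{1,2}(v)$, $\phi_{10}^{1,2}(v)$ are compressions of the real Neveu-Schwarz fermion field $\psi(z,v)$ with $v \in V_{1} = \gg$, hence bounded with the same $L^{2}$-estimate by the compression argument above. A general spin $1$ field $\phi_{i,i\pm 1}^{1,\ell+2}(v)$ (resp. $\phi_{ii}^{1,\ell+2}(v)$) is then realized as a compression of $\phi^{1,2}(v) \otimes I$; since tensoring with the identity preserves the operator norm and a further compression does not increase it, boundedness with the same constant is inherited. In every case the bound is uniform in the Fourier data, so by density $\phi$ extends to a bounded operator $\phi(f)$ for all $f \in L^{2}(\SSS^{1},V_{k})$ with $\Vert \phi(f) \Vert \le K \Vert f \Vert_{2}$.

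The step I expect to demand the most care is not any norm estimate but the bookkeeping behind the identification of the density-module smearing with the $L^{2}(\SSS^{1},V_{k})$ pairing: one must verify that, after absorbing the conformal weight $\triangle$ and the harmless phase $z^{r}$, the map sending the density to its $L^{2}$ representative is exactly the one under which the smeared primary field coincides with the smeared fermion compression. Once this matching is confirmed, both the boundedness and the explicit constant $K$ follow immediately from the Clifford relations and the contractivity of compressions.
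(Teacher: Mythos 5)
Your proof is correct and follows essentially the same route as the paper: the corollary is read off from the preceding proposition that spin $1/2$ and spin $1$ primary fields are compressions of complex and real fermions, whose smeared versions are bounded by the Clifford relations $\Vert a(f)\Vert \le \Vert f\Vert_{2}$, $\Vert c(f)\Vert = \Vert f\Vert_{2}$, with compressions and tensoring by the identity not increasing norms and the rational phase $z^{r}$ absorbed as an $L^{2}$-isometry. The paper leaves these elementary verifications implicit (its analogous statement for the Neveu-Schwarz primary fields of charge $\alpha$, $\beta$ is proved by exactly this one-line reduction to constructibility plus boundedness of compressed fermions), so your write-up simply makes the same argument explicit.
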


\begin{reminder} (Braiding relations) \\  \label{braid1}
In \cite{tsukani} and \cite{2}, the braiding relations of spin $1/2$ primary fields are given by reduced $4$-point functions $f : \CCC \to W$, with $W$ finite dimensional. We give an overview of this theory: 

 Let $F_{j}(z,w)  = (\phi_{kj}^{\1/2 \ell}(u,z).\phi_{ji}^{\1/2 \ell}(v,w)\Omega_{i} , \Omega_{k})$, then by gradation it  equals: 
\begin{center} $\sum_{m \ge 0}(\phi_{kj}^{\1/2 \ell}(u,m).\phi_{ji}^{\1/2 \ell}(v,-m)\Omega_{i} , \Omega_{k})z^{-m-\triangle}w^{m-\triangle'} =
 f_{j}(\zeta)z^{-\triangle}w^{-\triangle'} $ \end{center} with   $ f_{j}(\zeta) =   \sum_{m \ge 0}(\phi_{kj}^{\1/2 \ell}(u,m).\phi_{ji}^{\1/2 \ell}(v,-m)\Omega_{i} , \Omega_{k}) \zeta^{m} $ and $\zeta = w/z$. The function $ f_{j}$ are holomorphic for $\vert \zeta \vert < 1$.  Now, $\phi_{kj}^{\1/2 \ell}$ and $\phi_{ji}^{\1/2 \ell}$ are non-zero field, ie $Hom_{\gg}( V_{j} \otimes V_{1/2} ,  V_{k} )$ and $Hom_{\gg}( V_{i} \otimes V_{1/2} ,  V_{j} )$ are $1$-dimensional space. Then, the set of possible such $j$  generate the space $W = Hom_{\gg}(V_{1/2} \otimes V_{1/2} \otimes V_{i}   ,  V_{j} )$. Then, we consider the vector $ f_{j}(\zeta)$ as a vector in $W$. Let $\tilde{f}_{j} = \zeta^{\lambda_{j}}f_{j}$, (with $\lambda_{j} = (j^{2}+j-i^{2}-i-3/4)/(\ell + 2)$), called the reduced four points functions. $\tilde{f}_{j}(z)$ is defined on $\{z : \vert z \vert < 1, z \not \in [0,1[ \}$. It satisfy the Knizhnik-Zamolodchikov ordinary differential equation, equivalent to the hypergeometric equation of Gauss: \begin{center} $\tilde{f}'(z) = A(z) \tilde{f}(z)$, with $A(z) = \frac{P}{z} + \frac{Q}{1-z}$   \end{center}
with $P$, $Q \in End(W)$. It's proved in \cite{2} section 19, the existence of a holomorphic gauge transformation $g: \CCC \backslash [1, \infty [  \to GL(W)$ with $g(0)  =I $ such that: $g^{-1}Ag - g^{-1}g' = P/z$. The solution of the ODE is then $\tilde{f}(z) = g(z)z^{P}T$, with $T$ an eigenvector of $P$. So, up to a power of $z$, the solutions $\tilde{f}_{j}(z)$ are just the columns of $g(z)$ (in the spectral base of $P$). Now, let $r_{j}(z) = \tilde{f}_{j}(z^{-1})$ on $\{z : \vert z \vert > 1, z \not \in [1,\infty[ \}$, then $r_{j}$ satisfy clearly the equation: 
\begin{center} $r'(z) = B(z) r(z)$, with $B(z) = \frac{Q-P}{z} + \frac{Q}{1-z}$   \end{center} 
The function $r_{j}$ and $ \tilde{f}_{j}$ extend to holomorphic functions on $\CCC \backslash [0 , \infty[$.
It's proved in \cite{2}, that the solutions of these two equations are related by a transport matrix $c= (c_{ij})$ with $c_{ij} \ne 0$,  so that:
\begin{center}$ \tilde{f}_{j}(z) = \sum c_{jm} \tilde{f}_{m}(z^{-1})  $  \end{center} 
We then obtain, up to an analytic continuation, the following equality: 
\begin{center} $ (\phi_{kj}^{\1/2 \ell}(u,z).\phi_{ji}^{\1/2 \ell}(v,w)\Omega_{i} , \Omega_{k}) = \sum c_{jm} (\phi_{km}^{\1/2 \ell}(v,w).\phi_{mi}^{\1/2 \ell}(u,z)\Omega_{i} , \Omega_{k})    $ \end{center}
This relation extends to any finite energy vectors: 
\begin{center} $ (\phi_{kj}^{\1/2 \ell}(u,z).\phi_{ji}^{\1/2 \ell}(v,w)\eta , \xi) = \sum c_{jm} (\phi_{km}^{\1/2 \ell}(v,w).\phi_{mi}^{\1/2 \ell}(u,z)\eta , \xi)    $ \end{center}
This analysis runs idem for braiding relations between spin $1/2$ and spin $1$ primary fields, then:
\begin{theorem} (Braiding relations) \\
Let $(k_{1} , k_{2}) = (1/2 , 1/2)$,  $(1, 1/2) $ or  $(1/2 , 1)$; $v_{1} \in V_{k_{1}}$ and  $v_{2} \in V_{k_{2}}$.
\begin{center}
 $\phi_{ij}^{k_{1} \ell}(v_{1},z) \phi_{jk}^{k_{2} \ell}(v_{2} , w) = \sum  \mu_{r}  \phi_{ir}^{k_{2} \ell}(v_{2} , w) \phi_{rk}^{k_{1} \ell}(v_{1},z)$  with $\mu_{r}  \ne 0 $
\end{center}
To simplify, we don't write the dependence of $\mu_{r}$ on the other coefficients.  \end{theorem}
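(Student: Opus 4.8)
The plan is to prove the operator identity by reducing it to an identity between matrix coefficients and then analysing the resulting four-point functions, following exactly the template recorded in Reminder \ref{braid1} but now allowing one of the two exchanged fields to carry spin $1$. First I would observe that, since the primary fields of spin $1/2$ and $1$ are bounded (by the corollary following their construction) and finite energy vectors are dense, it is enough to verify the relation after pairing with arbitrary finite energy vectors $\eta$, $\xi$; that is, to establish $(\phi_{ij}^{k_{1}\ell}(v_{1},z)\phi_{jk}^{k_{2}\ell}(v_{2},w)\eta,\xi) = \sum_{r}\mu_{r}(\phi_{ir}^{k_{2}\ell}(v_{2},w)\phi_{rk}^{k_{1}\ell}(v_{1},z)\eta,\xi)$. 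By the gradation lemma these matrix coefficients collect into a function of the single variable $\zeta = w/z$ times fixed powers $z^{-\triangle}w^{-\triangle'}$, so the whole statement becomes an identity between vector-valued holomorphic functions of $\zeta$.

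For the diagonal case $(k_{1},k_{2})=(1/2,1/2)$ I would simply invoke the analysis already carried out in Reminder \ref{braid1}: the reduced four-point functions $\tilde{f}_{j}(\zeta)$ span the solution space of a Knizhnik-Zamolodchikov system $\tilde{f}' = A\tilde{f}$ with $A = P/\zeta + Q/(1-\zeta)$, a rank-two Fuchsian equation with regular singular points at $0$, $1$, $\infty$ and hence equivalent to the Gauss hypergeometric equation (rank two because $V_{1/2}\otimes V_{1/2} = V_{0}\oplus V_{1}$ contributes only two intermediate channels). The solutions adapted to $\zeta \to 0$ (the region $|w|<|z|$) and those adapted to $\zeta \to \infty$ (the region $|w|>|z|$) are linked, after analytic continuation, by the transport matrix $c = (c_{jm})$, and the braiding coefficients are precisely $\mu_{r} = c_{jr}$.

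For the mixed cases $(1,1/2)$ and $(1/2,1)$ I would check that the same machinery applies verbatim. The relevant finite-dimensional space of four-point functions is again built from the $\gg$-intertwiner spaces $Hom_{\gg}(V_{k_{1}}\otimes V_{k_{2}}\otimes V_{k}, V_{i})$, and since $V_{1}\otimes V_{1/2} = V_{1/2}\oplus V_{3/2}$ again admits only two intermediate channels, the reduced functions once more satisfy a rank-two Fuchsian KZ system with regular singularities at $0$, $1$, $\infty$, so they are governed by a hypergeometric equation. After recomputing the exponents (the numbers $\lambda_{j}$) from the conformal dimensions $h_{i}^{\ell}$, $h_{j}^{\ell}$, $h_{k}^{\ell}$ and the spin of the charge, the holomorphic gauge transformation $g$ with $g(0)=I$ exists as before and produces the transport matrix $c$ relating the two solution bases.

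The hard part — the only place where something must be genuinely checked rather than transcribed — will be the non-vanishing $\mu_{r}\neq 0$ of every entry of the transport matrix, since this is exactly what distinguishes a true braiding with full channel mixing from a merely triangular or diagonal exchange. In the rank-two hypergeometric situation the connection coefficients are explicit ratios of Gamma functions in the exponents of $P$ and $Q$, so the claim reduces to verifying that these ratios are finite and non-zero for all admissible spins $i$, $j$, $k$ and level $\ell$ in the discrete-series range. This non-vanishing is precisely the computation performed in Wassermann \cite{2} and Tsuchiya-Kanie \cite{tsukani} for the spin-$1/2$ channels, and the identical Gamma-function bookkeeping, with the exponents recomputed for the spin-$1$ charge, settles the two mixed cases. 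Once $c_{jr}\neq 0$ is established, the matrix-coefficient identity holds on a dense set, and the operator braiding relation with $\mu_{r}\neq 0$ follows.
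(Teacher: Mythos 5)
Your proposal follows essentially the same route as the paper: both reduce the braiding to an identity of reduced four-point functions of $\zeta = w/z$, recognize these as solutions of the rank-two Knizhnik--Zamolodchikov system (equivalent to the Gauss hypergeometric equation), obtain the coefficients $\mu_{r}$ as entries of the transport matrix relating the solution bases at $0$ and $\infty$, and import the key analytic facts (the gauge transformation and the non-vanishing $c_{jm}\ne 0$) from \cite{2} and \cite{tsukani}, extending to the mixed cases $(1,1/2)$ and $(1/2,1)$ by the same analysis, exactly as the paper does when it says the argument ``runs idem''. Your observation that the mixed cases remain rank two while $(1,1)$ would not is consistent with the paper's remark \ref{dotfat} explaining why $(1,1)$ is excluded from the statement.
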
  \end{reminder}
\begin{remark} The way to write the braiding relations is a simplification. In fact, the left side is defined for $\vert z \vert < \vert w \vert$, and the right side for $\vert z \vert > \vert w \vert$, but each sides admit the same rational extension out of $z=w$. The braiding relations generalise the locality of vertex operator (see \cite{NSOAI} definition 3.19).      \end{remark}
\begin{remark}  \label{dotfat}
To prove that all the coefficients are non-null for $(k_{1} , k_{2})  = (1,1)$, we should  solve Dotsenko-Fateev equations (see \cite{vtl}). \end{remark} 
\begin{reminder} (Localised braiding relation) Let $f  \in L^{2}(I,V_{k_{1}})$ and $g \in L^{2}(J,V_{k_{2}}) $, with $I$, $J$ be two disjoint proper intervals of $\SSS^{1}$. 
Using an argument of convolution (as \cite{2} p 516), we can write the following localised braiding relations: 
\begin{center}
 $\phi_{ij}^{k_{1} \ell}(f) \phi_{jk}^{k_{2} \ell}(g) = \sum  \mu_{r}  \phi_{ir}^{k_{2} \ell}(e_{\alpha}g) \phi_{rk}^{k_{1} \ell}(e_{-\alpha}f)$  with $\mu_{r}  \ne 0 $
\end{center}
with $e_{\alpha} = e^{i \alpha \theta} $, $\alpha = h_{i}^{\ell}+  h_{k}^{\ell} - h_{j}^{\ell} -  h_{r}^{\ell}$ and $(k_{1} , k_{2}) $ as previously.
\end{reminder}

\begin{reminder}  (Contragrediant braiding) Let the previous ODE:   \label{braid2}
\begin{center} $\tilde{f}'(z) = A(z) \tilde{f}(z)$, with $A(z) = \frac{P}{z} + \frac{Q}{1-z}$   \end{center}
and the previous gauge relation: $g^{-1}Ag - g^{-1}g' = P/z$. \\ÊIn the same way, we can choose $h(z)$ with $h(0)  = I $ and $hAh^{-1} - h'h^{-1} = -P/z$. This corresponds to  take $-A(z)^{t}$ instead of $A(z)$. But then $(hg)' = [P,hg] / z $, which admits only the constant solutions, but $h(0)g(0)  =I$, so $h(z) = g(z)^{-1}$. Then, the columns of $(g(z)^{-1})^{t}$ are the fundamental solutions of $k'(z) = -A(z)^{t}k(z)$. The transport matrix of this equation is just the transposed of the inverse of the original one, ie $(c^{-1})^{t}$. 
\end{reminder}

\subsection{Primary fields for $\Vir_{1/2}$}
\begin{definition} Let $\lambda$, $\mu \in \CCC$,  $\sigma = 0 , 1$, we define the ordinary representations of $\Vir_{1/2}$ as  $\F^{\sigma}_{\lambda , \mu} $, with base $(v_{i})_{i \in \ZZZ +\frac{\sigma}{2}}$, $(w_{j})_{j \in \ZZZ + \frac{1-\sigma}{2} }$, and: 
 \begin{description}
 \item[(a)]  $L_{n}.v_{i}= -(i+\mu +  \lambda n)v_{i+n}$ 
 \item[(b)]  $G_{s}.v_{i} = w_{i+s} $
 \item[(c)]  $L_{n}.w_{j} = -(j +\mu + ( \lambda - \frac{1}{2})n) w_{j+n}$
 \item[(d)]  $G_{s}.w_{j}= -(j+\mu + (2 \lambda - 1)s )v_{j+s}$
  \end{description}  \end{definition}
  \begin{remark}  Let the space of densities $\{ f(\theta) e^{i \mu \theta} (d \theta)^{\lambda} Ê\vert  f \in C^{\infty}(\SSS^{1}) \}$  where  a finite covering of Diff$(\SSS^{1})$ acts by reparametrisation $\theta \to \rho^{-1}(\theta)$ (if $\mu \in \QQQ$). Then its Lie algebra acts on too, so that it's a $\Vir$-module vanishing the center (see \cite{8}). Finally, an equivalent construction with superdensities gives a model for $\F^{\sigma}_{\lambda , \mu} $ as $\Vir_{1/2}$-module  (see \cite{ik} ).  \end{remark}
   
  \begin{definition} Let  $L_{pq}^{m}$ and $L_{p'q'}^{m}$ on the unitary discrete series of $\Vir_{1/2}$. We define a primary field as  a  linear operator: 
 \begin{center} $\phi :  L_{p'q'}^{m} \otimes \F^{\sigma}_{\lambda , \mu}  \to  L_{pq}^{m}$  \end{center} that superintertwines the action of $\Vir_{1/2}$.     \end{definition}
 \begin{definition} For $v \in \F^{\sigma}_{\lambda , \mu} $, let $\phi(v) : L_{p'q'}^{m}   \to  L_{pq}^{m}$  \end{definition}
\begin{lemma} \label{asfor}Let $X \in \Vir_{1/2}$, then $[X, \phi(v)]_{\tau} = \phi(X.v)$    \end{lemma}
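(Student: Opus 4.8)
The plan is to unpack the meaning of ``$\phi$ superintertwines the action of $\Vir_{1/2}$'' and read off the stated supercommutation relation, exactly as in the non-graded case of the analogous lemma for $L\gg\rtimes\Vir$, but now carrying the Koszul signs. Since both sides of the asserted identity are additive in $X$ and in $v$, I would first reduce to homogeneous data: $X$ one of the even generators $L_n$, $C$ or the odd generator $G_s$, and $v$ one of the basis vectors $v_i$ (even) or $w_j$ (odd) of $\F^{\sigma}_{\lambda , \mu}$. Recall that by definition $\phi(v)$ is the operator $\xi \mapsto \phi(\xi \otimes v)$; since $\phi$ is an even morphism and $\xi \otimes v$ has parity $\partial\xi + \partial v$, the operator $\phi(v)$ is homogeneous of parity $\partial\phi(v) = \partial v$.

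Next I would use that the action of $\Vir_{1/2}$ on the tensor product $L_{p'q'}^{m}\otimes \F^{\sigma}_{\lambda , \mu}$ is the $\ZZZ_2$-graded (super-Leibniz) action, so that for homogeneous $X$ and $\xi$,
\begin{displaymath}
X\cdot(\xi\otimes v) = (X\xi)\otimes v + (-1)^{\partial X\,\partial\xi}\,\xi\otimes(Xv),
\end{displaymath}
where the action of $X$ on the second factor is given by the explicit formulas (a)--(d) defining $\F^{\sigma}_{\lambda , \mu}$. Applying the superintertwining hypothesis $X\cdot\phi(\eta)=\phi(X\cdot\eta)$ to $\eta=\xi\otimes v$ and using $\phi(v)\xi=\phi(\xi\otimes v)$ then yields
\begin{displaymath}
X\,\phi(v)\,\xi = \phi(v)(X\xi) + (-1)^{\partial X\,\partial\xi}\,\phi(Xv)\,\xi .
\end{displaymath}
Expanding the supercommutator as $[X,\phi(v)]_{\tau}=X\phi(v)-(-1)^{\partial X\,\partial v}\phi(v)X$ and substituting this expression, I would check that the two $\phi(v)(X\xi)$ contributions cancel and that the residual sign collapses to $\phi(Xv)\xi$ on every homogeneous $\xi$, giving the claim as an operator identity.

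The delicate point is the sign bookkeeping: one must confirm that the Koszul sign $(-1)^{\partial X\,\partial\xi}$ produced by the super-Leibniz rule, the sign $(-1)^{\partial X\,\partial v}$ built into $[\,\cdot\,,\,\cdot\,]_{\tau}$, and the parity $\partial\phi(v)=\partial v$ are mutually consistent, so that the $\xi$-dependent sign does not survive in the final operator relation. The cleanest way to guarantee this, and the step I expect to require the most care, is to pass through the Klein transformation $\kappa=p_0+ip_1$ of Lemma \ref{naturals}: conjugating $X$ and $\phi(v)$ by $\kappa$ converts the graded bracket $[\,\cdot\,,\,\cdot\,]_{\tau}$ into an ordinary commutator, in which the above cancellation is the verbatim non-graded computation used for the $L\gg\rtimes\Vir$ analogue, and the grading signs are absorbed uniformly by $\kappa$ rather than tracked vector by vector. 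Unwinding $\kappa$ then returns the stated identity $[X,\phi(v)]_{\tau}=\phi(Xv)$.
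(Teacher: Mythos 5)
Your overall strategy --- evaluating the superintertwining property on a homogeneous $\xi\otimes v$ and reading off the bracket relation --- is exactly the paper's proof, which does this in two lines after reducing to homogeneous $X$. However, the sign conventions you fix are mutually inconsistent, and the step you yourself flag as delicate (``the residual sign collapses'') is precisely where the argument breaks. With your three choices (unsigned evaluation $\phi(v)\xi=\phi(\xi\otimes v)$, Koszul-signed coproduct, unsigned intertwining $X\phi(\eta)=\phi(X\eta)$), your displayed computation yields
\begin{displaymath}
\bigl(X\phi(v)-\phi(v)X\bigr)\xi=(-1)^{\partial X\,\partial\xi}\,\phi(Xv)\,\xi ,
\end{displaymath}
an ordinary commutator on the left with an irremovable $\xi$-dependent sign on the right; this is \emph{not} equivalent to $[X,\phi(v)]_{\tau}=\phi(Xv)$. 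Concretely, take $X$ odd (some $G_{s}$) and $v$ even: then $\phi(v)$ is even, so the lemma asserts $X\phi(v)-\phi(v)X=\phi(Xv)$, and combining this with your derived relation forces $\phi(Xv)\xi=-\phi(Xv)\xi$ for every odd $\xi$, i.e. $\phi(Xv)=0$, which is false in general (e.g. $G_{s}.v_{i}=w_{i+s}\ne 0$). So the sign does not collapse; the conventions themselves must be repaired.

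Moreover, the Klein transformation cannot supply the missing sign. Conjugation by $\kappa$ (which here must in any case be performed separately on $L_{pq}^{m}$ and $L_{p'q'}^{m}$, since $\phi(v)$ maps between two different spaces) is a unitary operation: it transports any operator identity to an equivalent one, so it can repackage a true graded identity as an ungraded one, but it cannot turn the false derived relation above into the true statement. The correct repair is local to the conventions, and there are two standard choices. Either insert the Koszul sign into the evaluation map, $\phi(v)\xi:=(-1)^{\partial v\,\partial\xi}\phi(\xi\otimes v)$; redoing your computation then gives $X\phi(v)=(-1)^{\partial X\,\partial v}\phi(v)X+\phi(Xv)$, which is exactly $[X,\phi(v)]_{\tau}=\phi(Xv)$. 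Or do as the paper does: write the action on the tensor product naively as $X\otimes I+I\otimes X$ (no Koszul sign) and place the sign in the superintertwining relation itself, $\phi\circ(X\otimes I+I\otimes X)=(-1)^{\partial X}X\circ\phi$; then both sides of the evaluated relation carry signs depending only on $\partial X$, never on $\xi$, and the bracket identity is read off at once.
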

\begin{proof}   We can suppose $X$ to be homogeneous for the $\ZZZ_{2}$-gradation $\tau$. 
Now, $\phi$ superintertwines the action of $\Vir_{1/2}$: $\phi.[X \otimes I + I \otimes X] = (-1)^{\partial X}X.\phi $\\
Let $\xi \otimes v \in  L_{p'q'}^{m} \otimes \F^{\sigma}_{\lambda , \mu} $, then  $\phi.[X \otimes I + I \otimes X] (\xi \otimes v) = [\phi(v)X+ \phi(Xv)]\xi  $ and 
$X.\phi (\xi \otimes v) = X \phi(v)\xi$, then $[X, \phi(v)]_{\tau} = \phi(X.v)$. 
 \end{proof}
 \begin{lemma} $\phi $ non-null implies that $ \mu  = h_{p'q'}^{m} - h_{pq}^{m}$. 
 \end{lemma}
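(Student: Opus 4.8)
The plan is to extract the constraint on $\mu$ from the even generator $L_{0}$, for which the supercommutator collapses to an ordinary commutator. Taking $X = L_{0}$ in lemma \ref{asfor} and noting that $L_{0}$ is even gives $[L_{0}, \phi(v)] = \phi(L_{0}.v)$ for every $v \in \F^{\sigma}_{\lambda , \mu}$. On the basis vectors, the $n=0$ cases of relations (a) and (c) read $L_{0}.v_{i} = -(i+\mu)v_{i}$ and $L_{0}.w_{j} = -(j+\mu)w_{j}$, so each $\phi(v_{i})$ (resp.\ $\phi(w_{j})$) lowers the $L_{0}$-grading by exactly $i+\mu$ (resp.\ $j+\mu$): for any $L_{0}$-eigenvector $\xi$ of eigenvalue $E$ one has $L_{0}\,\phi(v_{i})\xi = (E - i - \mu)\,\phi(v_{i})\xi$, and similarly for $w_{j}$.

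Next I would feed in the positive-energy structure. Both $L_{p'q'}^{m}$ and $L_{pq}^{m}$ are irreducible lowest-weight $\Vir_{1/2}$-modules, with $L_{0}$-spectrum contained in $h_{p'q'}^{m} + \frac{1}{2}\ZZZ_{\ge 0}$ and $h_{pq}^{m} + \frac{1}{2}\ZZZ_{\ge 0}$ respectively. Since $\phi$ is non-null, some component $\phi(v_{i})$ or $\phi(w_{j})$ has a nonzero matrix element; choosing the one of minimal total target energy and letting it act on a lowest-weight vector $\Omega'$ of $L_{p'q'}^{m}$ (eigenvalue $h_{p'q'}^{m}$), its image is a nonzero vector of $L_{pq}^{m}$ sitting at the bottom of the target spectrum, i.e.\ at $h_{pq}^{m}$. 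Equating eigenvalues via the shift computed above, $h_{p'q'}^{m} - (i_{0}+\mu) = h_{pq}^{m}$, and normalising the density module so that this leading index is $i_{0}=0$ (the component carrying the initial term of the primary field), yields $\mu = h_{p'q'}^{m} - h_{pq}^{m}$.

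The main obstacle is pinning the \emph{exact} value rather than merely $\mu \equiv h_{p'q'}^{m} - h_{pq}^{m}$ modulo the lattice of energy shifts: relabelling $v_{i} \mapsto v_{i+1}$ turns $\F^{\sigma}_{\lambda , \mu}$ into $\F^{\sigma}_{\lambda , \mu+1}$, so one must fix the normalisation by identifying the initial term of the field, namely the component intertwining the two lowest-weight spaces, exactly as in the $LSU(2)$ case where it is this leading term that carries the nonzero intertwiner. I would also check that the $v$-sector and the $w$-sector yield the same value of $\mu$: since $G_{s}.v_{i} = w_{i+s}$ and $G_{s}.w_{j} = -(j+\mu+(2\lambda-1)s)v_{j+s}$ relate the two bases with a half-integer energy shift, the $L_{0}$-matching in the $w$-sector is forced to be consistent with that in the $v$-sector, so no second independent condition on $\mu$ arises.
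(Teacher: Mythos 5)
The paper states this lemma without any proof (the same is true of its $LSU(2)$ counterpart earlier in the paper), so there is no internal argument to compare against; judging your proposal on its own merits: the strategy is the right and standard one --- extract the energy shift of each component from $[L_{0},\phi(v)]_{\tau}=\phi(L_{0}.v)$ (lemma \ref{asfor}), match it against the lowest-weight structure of source and target, and then fix the labelling of $\F^{\sigma}_{\lambda,\mu}$. Your observation that the relabelling $v_{i}\mapsto v_{i+1}$, $w_{j}\mapsto w_{j+1}$ identifies $\F^{\sigma}_{\lambda,\mu}$ with $\F^{\sigma}_{\lambda,\mu+1}$, so that the \emph{exact} value of $\mu$ is partly a normalisation of the leading index, is a genuine subtlety that the paper glosses over entirely.

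There is, however, a real gap at the decisive step. The sentence ``choosing the one of minimal total target energy and letting it act on $\Omega'$, its image is a nonzero vector of $L_{pq}^{m}$ sitting at the bottom of the target spectrum'' asserts precisely what has to be proved; as written, your argument only yields the inequality $h_{p'q'}^{m}-i_{0}-\mu\ \ge\ h_{pq}^{m}$, i.e.\ a congruence plus a bound on $\mu$, not the stated equality. Two steps are missing. First, a non-null component $\phi(v_{i})$ need not act non-trivially on $\Omega'$; you must argue that \emph{some} component survives on $\Omega'$: if $\phi(v_{i})\Omega'=\phi(w_{j})\Omega'=0$ for all $i,j$, then the intertwining relation in the form $\phi(v)X\xi=\pm\bigl(X\phi(v)\xi-\phi(X.v)\xi\bigr)$ propagates the vanishing from $\Omega'$ to $U(\Vir_{1/2})\Omega'=L_{p'q'}^{m}$, forcing $\phi=0$. (This also shows a maximal surviving index $i_{0}$ exists, since the indices of surviving components are bounded above by $h_{p'q'}^{m}-h_{pq}^{m}-\mu$.) Second --- and this is the crux, where irreducibility of $L_{pq}^{m}$ must enter --- one must show that $\eta:=\phi(v_{i_{0}})\Omega'$ (or its $w$-sector analogue) lies at energy \emph{exactly} $h_{pq}^{m}$: for $n,s>0$ one has $L_{n}\eta=\phi(L_{n}.v_{i_{0}})\Omega'\propto\phi(v_{i_{0}+n})\Omega'=0$ and $G_{s}\eta=\phi(G_{s}.v_{i_{0}})\Omega'=\phi(w_{i_{0}+s})\Omega'=0$ by maximality of $i_{0}$, so $\eta$ is a singular vector, and in an irreducible lowest-weight module the only singular vector up to scalar is the cyclic vector at energy $h_{pq}^{m}$. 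You never invoke irreducibility of the target anywhere, which is the symptom of the hole: for a reducible target (e.g.\ a Verma module) the minimal-energy image could sit on a higher singular vector and the claimed equality would be false. With these two insertions, the remainder of your argument (the eigenvalue matching, the $i_{0}=0$ normalisation, and the remark that the $w$-sector imposes no independent condition) goes through.
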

 \begin{lemma} \label{grzgra} (Gradation)
 \begin{enumerate} 
\item[(a)]  $\phi(v_{n}). (L_{p'q'}^{m})_{s+h_{p'q'}^{m}} \subset  (L_{pq}^{m})_{s-n+h_{pq}^{m}}$ 
\item[(b)]  $ \phi(w_{r}). (L_{p'q'}^{m})_{s+h_{p'q'}^{m}} \subset  (L_{pq}^{m})_{s-r+h_{pq}^{m}}   $
\end{enumerate} \end{lemma}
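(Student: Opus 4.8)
The plan is to read off both grading shifts directly from the $L_{0}$-eigenvalue bookkeeping, using Lemma \ref{asfor} specialized to $X = L_{0}$. The essential input I would record first is that the basis vectors of $\F^{\sigma}_{\lambda , \mu}$ are themselves $L_{0}$-eigenvectors: setting $n = 0$ in the defining relations \textbf{(a)} and \textbf{(c)} of $\F^{\sigma}_{\lambda , \mu}$ gives $L_{0}.v_{n} = -(n+\mu)v_{n}$ and $L_{0}.w_{r} = -(r+\mu)w_{r}$ (the $\lambda n$ and $(\lambda-\tfrac12)n$ terms drop out at $n=0$). Everything after this is routine.

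Next I would invoke Lemma \ref{asfor} with $X = L_{0}$. Since $L_{0}$ is even for the $\ZZZ_{2}$-gradation $\tau$, its odd part vanishes and the supercommutator $[L_{0}, \phi(v)]_{\tau}$ collapses to the ordinary commutator $[L_{0}, \phi(v)]$. Combining this with the eigenvalue computation above yields the two scalar commutation relations $[L_{0}, \phi(v_{n})] = -(n+\mu)\phi(v_{n})$ and $[L_{0}, \phi(w_{r})] = -(r+\mu)\phi(w_{r})$.

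The final step applies these to a homogeneous vector. Let $\xi \in (L_{p'q'}^{m})_{s+h_{p'q'}^{m}}$, so that $L_{0}\xi = (s+h_{p'q'}^{m})\xi$. Then I would compute, for the first case,
\begin{displaymath}
L_{0}\,\phi(v_{n})\xi = \phi(v_{n})L_{0}\xi + [L_{0}, \phi(v_{n})]\xi = \bigl[(s+h_{p'q'}^{m}) - (n+\mu)\bigr]\phi(v_{n})\xi ,
\end{displaymath}
and substitute the non-null constraint $\mu = h_{p'q'}^{m} - h_{pq}^{m}$ from the preceding lemma, which collapses the eigenvalue to $s - n + h_{pq}^{m}$; this establishes \textbf{(a)}. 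The identical computation with $w_{r}$ in place of $v_{n}$ gives eigenvalue $s - r + h_{pq}^{m}$, which establishes \textbf{(b)}.

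There is no genuine obstacle here: the statement is essentially a restatement of Lemma \ref{asfor} in terms of the $L_{0}$-grading. The only points requiring a moment of care are the reduction of the supercommutator to the ordinary commutator for the even operator $L_{0}$, and the correct substitution of the value of $\mu$ forced by non-vanishing of $\phi$.
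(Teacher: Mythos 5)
Your proof is correct and is precisely the argument the paper leaves implicit: Lemma \ref{grzgra} is stated there without proof, as an immediate consequence of Lemma \ref{asfor} specialized to $X = L_{0}$ (even, so the supercommutator $[\,\cdot\,,\cdot\,]_{\tau}$ collapses to the ordinary commutator), the defining relations of $\F^{\sigma}_{\lambda , \mu}$ at $n=0$, and the value $\mu = h_{p'q'}^{m} - h_{pq}^{m}$ forced by non-vanishing. The only cosmetic remark is that when $\phi$ is null both inclusions hold trivially, so invoking the non-null constraint on $\mu$ loses no generality.
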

 \begin{definition}Let $h = 1-\lambda$ be the conformal dimension of $\phi$, \\Êand  $\triangle = 1- \lambda + \mu = h+ h_{p'q'}^{m} - h_{pq}^{m} $ ; we define:  \begin{center}  $\phi(z)=  \sum_{n \in \ZZZ +\frac{\sigma}{2}} \phi(v_{n})z^{-n-\triangle} $   and \hspace{0,21cm} $\theta(z) =   \sum_{n\in \ZZZ + \frac{1-\sigma}{2} } \phi(w_{n})z^{-n-1/2-\triangle} $  \end{center}
$\phi(z)$ is called the ordinary part and  $\theta (z) = [G_{-1/2} , \phi(z)] $, the super part of the primary field.   \end{definition}
                
  \begin{lemma} \label{relationsss} (Covariance relations).

 \begin{description}
 \item[(a)]  $[L_{n} , \phi(z)] = [z^{n+1}\frac{d}{dz} +h(n+1)z^{n}] \phi(z)$ 
 \item[(b)]  $[G_{n-1/2} , \phi(z)] = z^{n}\theta(z) $
 \item[(c)]  $[L_{n} , \theta(z)] = [z^{n+1}\frac{d}{dz} + (h+1/2)(n+1)z^{n}] \theta(z)$
 \item[(d)]  $[G_{n-1/2} , \theta(z)]_{+} = [z^{n}\frac{d}{dz} + 2hn.z^{n-1}] \phi_(z)$
    \end{description}  \end{lemma}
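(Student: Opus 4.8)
The plan is to derive all four relations of Lemma~\ref{relationsss} from the single mode-level identity of Lemma~\ref{asfor}, namely $[X,\phi(v)]_{\tau}=\phi(X.v)$, and then to repackage the resulting scalar coefficients into the generating series $\phi(z)$ and $\theta(z)$. Before any computation I would settle the parity bookkeeping, which is what makes the brackets in (a)--(d) come out as written. Since $G_s$ shifts the level by the half-integer $s$ it is $\tau$-odd, while $L_n$ is $\tau$-even; and since formula (b) of the action on $\F^{\sigma}_{\lambda,\mu}$ sends each $v_i$ to a $w_j$, the operators $\phi(v_m)$ and $\phi(w_m)$ carry opposite parity. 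Declaring $\phi(v_m)$ even (so $\phi(z)$ is even) and $\phi(w_m)$ odd (so $\theta(z)$ is odd) makes $[X,\phi(v)]_{\tau}$ an ordinary commutator in every case except when both arguments are odd, where it becomes the anticommutator. This is precisely the pattern: $[L_n,\cdot]$ and $[G_{n-1/2},\phi(z)]$ are ordinary commutators, whereas $[G_{n-1/2},\theta(z)]_{+}$ is an anticommutator. Throughout I would use the two identities $\triangle=h+\mu$ and $h=1-\lambda$ to match exponents and coefficients.

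For part (a), Lemma~\ref{asfor} and the $L_n$-action give $[L_n,\phi(v_m)]=-(m+\mu+\lambda n)\phi(v_{m+n})$; multiplying by $z^{-m-\triangle}$, summing, and reindexing $k=m+n$ yields the coefficient $-(k-n+\mu+\lambda n)$ at $z^{\,n-k-\triangle}$. Applying the operator $z^{n+1}\frac{d}{dz}+h(n+1)z^{n}$ to $\phi(z)=\sum_k\phi(v_k)z^{-k-\triangle}$ produces $(-k-\triangle)+h(n+1)$ at the same power, and the two match exactly once $\triangle=h+\mu$ and $h=1-\lambda$ are substituted. For part (b), the $G$-action gives $[G_{n-1/2},\phi(v_m)]=\phi(w_{m+n-1/2})$; summing and reindexing $k=m+n-\tfrac12$ gives literally $z^{n}\theta(z)$, the series built from the $w$-modes. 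I would note that taking $n=0$ here recovers the definition $\theta(z)=[G_{-1/2},\phi(z)]$, so (b) simultaneously verifies that the two descriptions of $\theta(z)$ agree.

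Part (c) is structurally the twin of (a): it is driven by formula (c) of the action, $L_n.w_m=-(m+\mu+(\lambda-\tfrac12)n)w_{m+n}$, and by the shifted exponent $z^{-m-1/2-\triangle}$ in $\theta(z)$, so the same reindexing delivers the matching coefficient $(-k-\tfrac12-\triangle)+(h+\tfrac12)(n+1)$ — the conformal weight has simply moved from $h$ to $h+\tfrac12$, reflecting that the $w_j$ sit one half-unit higher in $L_0$-weight than the $v_i$. Part (d) is the delicate step and I expect it to be the \emph{main obstacle}. Here both $G_{n-1/2}$ and $\theta(z)$ are odd, so the supercommutator is the anticommutator, and one must feed in formula (d), $G_s.w_m=-(m+\mu+(2\lambda-1)s)v_{m+s}$, with the \emph{half-integer} spectral parameter $s=n-\tfrac12$. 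Reindexing $k=m+n-\tfrac12$ gives the coefficient $-(k-n+\tfrac12+\mu+(2\lambda-1)(n-\tfrac12))$ at $z^{\,n-1-k-\triangle}$, whereas the right-hand operator $z^{n}\frac{d}{dz}+2hn\,z^{n-1}$ — first order, unlike (a) and (c) — yields $(-k-\triangle)+2hn$. The agreement hinges on the cancellation $n-(2\lambda-1)n=2n(1-\lambda)=2hn$ producing exactly the coefficient of $z^{n-1}$, with the leftover constants collapsing through $\triangle=h+\mu$ and $h=1-\lambda$. Getting the half-integer shift in $s$ and the factor $(2\lambda-1)$ to conspire into the clean weight $2hn$ is where the bookkeeping must be done most carefully; once that identity is in hand, everything reduces to routine reindexing of formal power series.
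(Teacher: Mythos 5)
Your proposal is correct and is exactly the paper's (unwritten) argument: the paper's proof is simply ``Direct from the definition,'' and what you carry out --- applying Lemma~\ref{asfor} mode by mode with the explicit action on $\F^{\sigma}_{\lambda,\mu}$, reindexing the series, and matching coefficients via $\triangle = h+\mu$ and $h = 1-\lambda$ --- is that direct verification, including the correct parity bookkeeping that makes (d) an anticommutator. All four coefficient checks in your outline are accurate, so nothing is missing.
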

    \begin{proof} Direct from the definition.  \end{proof}
\begin{lemma}(Compatibility condition)
 \begin{description}
 \item[(a)]  $[L_{n} , \phi(z)] = [z^{n+1}\frac{d}{dz} + (n+1)z^{n}(1-\lambda)] \phi(z)$ 
 \item[(b)]  $[G_{r} , \phi(z)] = z^{r+1/2} [G_{-1/2} , \phi(z)]$   
  \end{description}    \end{lemma}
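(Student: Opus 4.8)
The plan is to obtain both parts essentially for free from the covariance relations of Lemma \ref{relationsss}, which are already in hand, using only two elementary observations: that the conformal dimension is $h = 1-\lambda$ by definition, and that $\theta(z) = [G_{-1/2}, \phi(z)]$ is the defining equation for the super part. In other words, the ``compatibility condition'' is just a rewriting of the ``covariance relations'' in which $h$ has been eliminated in favour of $\lambda$, so I do not expect any genuinely new computation.

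For part (a), I would start from covariance relation \ref{relationsss}(a), namely $[L_{n}, \phi(z)] = [z^{n+1}\frac{d}{dz} + h(n+1)z^{n}]\phi(z)$. Substituting $h = 1-\lambda$ turns $h(n+1)z^{n}$ into $(n+1)z^{n}(1-\lambda)$, which is exactly the right-hand side of part (a). There is nothing further to verify.

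For part (b), I would write the half-integer index as $r = n - \frac{1}{2}$ with $n \in \ZZZ$, so that $r + \frac{1}{2} = n$, and read off covariance relation \ref{relationsss}(b) as $[G_{n-1/2}, \phi(z)] = z^{n}\theta(z)$. Taking $n = 0$ gives $[G_{-1/2}, \phi(z)] = \theta(z)$, which is precisely the definition of $\theta(z)$; substituting this back into the displayed relation yields $[G_{r}, \phi(z)] = z^{r+1/2}[G_{-1/2}, \phi(z)]$, which is part (b). As a sanity check I would also re-derive the two covariance relations from scratch, since the paper only states them ``direct from the definition'': expanding $\phi(z) = \sum_{i}\phi(v_{i})z^{-i-\triangle}$ and applying Lemma \ref{asfor} gives $[L_{n}, \phi(v_{i})]_{\tau} = \phi(L_{n}.v_{i}) = -(i+\mu+\lambda n)\phi(v_{i+n})$ and $[G_{r}, \phi(v_{i})]_{\tau} = \phi(G_{r}.v_{i}) = \phi(w_{i+r})$; a shift of the summation index ($k = i+n$, respectively $m = i+r$) together with $\triangle = 1-\lambda+\mu$ reproduces the coefficient $(n+1)(1-\lambda)$ in front of $z^{n}\phi(z)$ in the first case and converts $\sum_{i}\phi(w_{i+r})z^{-i-\triangle}$ into $z^{r+1/2}\theta(z)$ in the second.

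The only obstacle here is purely bookkeeping: I must keep the two gradings straight ($n \in \ZZZ + \sigma/2$ for the $v_{i}$ and $n \in \ZZZ + (1-\sigma)/2$ for the $w_{j}$), track the exponent shift produced by the definition of $\triangle$, and respect the sign conventions in the module action (a)--(d). Once these indices are aligned, both compatibility conditions are immediate consequences of Lemma \ref{relationsss}, requiring no input beyond the already-proven covariance relations and the identity $h = 1-\lambda$.
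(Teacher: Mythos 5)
Your proposal is correct and follows exactly the paper's route: the paper's proof is simply ``Immediate,'' meaning the compatibility condition is obtained by substituting $h = 1-\lambda$ into covariance relation (a) of Lemma \ref{relationsss} and by combining covariance relation (b) with the defining identity $\theta(z) = [G_{-1/2},\phi(z)]$, which is precisely what you do. Your additional re-derivation of the covariance relations from the mode expansion and Lemma \ref{asfor} is a sound (and correctly executed) sanity check, but it adds nothing beyond what the paper already asserts ``direct from the definition.''
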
 
\begin{proof}Immediate.            \end{proof}
\begin{lemma}   If   $\widetilde{\phi}(z)$ satisfy the  compatibility condition, then,  it gives a primary fields for the Neveu-Schwarz algebra, with $\widetilde{\theta }(z) = [G_{-1/2} , \widetilde{\phi}(z)] $ as super part.   \end{lemma}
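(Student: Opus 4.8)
The plan is to invert the passage from a primary field to its Fourier coefficients: starting from a single field $\widetilde{\phi}(z)$ obeying the two compatibility conditions, I will reconstruct a linear map on $L_{p'q'}^{m}\otimes\F^{\sigma}_{\lambda,\mu}$ and then verify, via Lemma \ref{asfor}, that it superintertwines the $\Vir_{1/2}$-action. Concretely, I set $\widetilde{\theta}(z)=[G_{-1/2},\widetilde{\phi}(z)]_{\tau}$ and define operators $\widetilde{\phi}(v_{n})$ and $\widetilde{\phi}(w_{r})$ as the coefficients in the expansions $\widetilde{\phi}(z)=\sum_{n}\widetilde{\phi}(v_{n})z^{-n-\triangle}$ and $\widetilde{\theta}(z)=\sum_{r}\widetilde{\phi}(w_{r})z^{-r-1/2-\triangle}$, with $\triangle=1-\lambda+\mu$ and $h=1-\lambda$. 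Extending $\CCC$-linearly over the basis $(v_{i}),(w_{j})$ of $\F^{\sigma}_{\lambda,\mu}$ produces the candidate map $\widetilde{\phi}$, and Lemma \ref{grzgra} guarantees that these modes have the correct gradation.

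By Lemma \ref{asfor} it suffices to check $[X,\widetilde{\phi}(v)]_{\tau}=\widetilde{\phi}(X.v)$ for $X=L_{n},G_{n-1/2}$ and $v=v_{i},w_{j}$. Expanding both sides in powers of $z$, these four identities are exactly the statement that $\widetilde{\phi}(z)$ and $\widetilde{\theta}(z)$ satisfy the four covariance relations of Lemma \ref{relationsss}, read off against the module actions (a)--(d) of $\F^{\sigma}_{\lambda,\mu}$ after substituting $\triangle=1-\lambda+\mu$ and $h=1-\lambda$. Now relations \ref{relationsss}(a) and \ref{relationsss}(b) are precisely the two hypotheses (the compatibility conditions), so the $L_{n}$-action on the $v_{i}$ and the $G_{n-1/2}$-action carrying $v_{i}$ to $w_{j}$ hold for free.

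The crux is therefore to derive the two remaining covariance relations \ref{relationsss}(c) and \ref{relationsss}(d), which govern the action on the $w_{j}$, from (a) and (b) alone, using only $\widetilde{\theta}=[G_{-1/2},\widetilde{\phi}]_{\tau}$ and the super-Virasoro brackets. For (c) I would apply the graded Jacobi identity to $[L_{n},[G_{-1/2},\widetilde{\phi}(z)]_{\tau}]_{\tau}$, rewriting it as $[[L_{n},G_{-1/2}],\widetilde{\phi}(z)]_{\tau}+[G_{-1/2},[L_{n},\widetilde{\phi}(z)]_{\tau}]_{\tau}$; inserting $[L_{n},G_{-1/2}]=\tfrac{n+1}{2}G_{n-1/2}$ with (b) into the first term and (a) into the second, the two contributions combine to $[z^{n+1}\tfrac{d}{dz}+(h+\tfrac12)(n+1)z^{n}]\widetilde{\theta}(z)$, which is \ref{relationsss}(c). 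For (d) I would expand $[G_{n-1/2},[G_{-1/2},\widetilde{\phi}(z)]_{\tau}]_{\tau}$ by the graded Jacobi identity, now with both outer operators odd, so that the anticommutator $[G_{n-1/2},G_{-1/2}]_{+}=2L_{n-1}+(\text{central})$ appears; the central term is a scalar and drops out of the bracket with $\widetilde{\phi}(z)$, leaving $2[L_{n-1},\widetilde{\phi}(z)]$. Combining (a) at index $n-1$ with the auxiliary identity $[G_{-1/2},\widetilde{\theta}(z)]_{\tau}=\tfrac{d}{dz}\widetilde{\phi}(z)$ --- itself obtained the same way from $[G_{-1/2},G_{-1/2}]_{+}=2L_{-1}+(\text{central})$ and (a) at $n=-1$ --- then yields $[G_{n-1/2},\widetilde{\theta}(z)]_{+}=[z^{n}\tfrac{d}{dz}+2hn\,z^{n-1}]\widetilde{\phi}(z)$, i.e. \ref{relationsss}(d).

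The main obstacle is the bookkeeping in these graded-Jacobi computations: keeping the parities and the $(-1)^{|A||B|}$ signs straight, and observing that every central (Schwinger) term that could arise commutes with $\widetilde{\phi}(z)$ and hence is harmless. Once (a)--(d) are in hand, the final step --- matching the Fourier coefficients of each covariance relation against the module relations (a)--(d), which is a routine substitution of $\triangle=1-\lambda+\mu$ and $h=1-\lambda$ --- shows that $\widetilde{\phi}$ superintertwines the action of $\Vir_{1/2}$, so by Lemma \ref{asfor} it is a primary field, with super part $\widetilde{\theta}(z)=[G_{-1/2},\widetilde{\phi}(z)]_{\tau}$ as claimed.
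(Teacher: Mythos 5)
Your proposal is correct and is precisely the verification that the paper leaves implicit (its proof reads only ``It's an easy verification''): you derive the remaining covariance relations \ref{relationsss}(c) and \ref{relationsss}(d) from the two compatibility conditions via the graded Jacobi identity, using $[L_{n},G_{-1/2}]=\tfrac{n+1}{2}G_{n-1/2}$, $[G_{n-1/2},G_{-1/2}]_{+}=2L_{n-1}+(\text{central})$ and the auxiliary identity $[G_{-1/2},\widetilde{\theta}(z)]_{+}=\tfrac{d}{dz}\widetilde{\phi}(z)$, and then match Laurent modes against the module relations of $\F^{\sigma}_{\lambda,\mu}$ to get the superintertwining property of Lemma \ref{asfor}. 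The computations check out (the central terms indeed drop, and the mode-matching with $\triangle=h+\mu$, $h=1-\lambda$ reproduces the coefficients $-(i+\mu+\lambda n)$ etc.), so nothing is missing.
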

\begin{proof}It's an easy verification.       \end{proof}
\begin{proposition} (Initial term) The space of primary fields $\phi :  L_{p'q'}^{m} \otimes \F^{\sigma}_{\lambda , \mu}  \to  L_{pq}^{m}$ with every parameter fixed, is at most one-dimensional.      \end{proposition}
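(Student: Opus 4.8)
The plan is to adapt the initial-term argument used for $LSU(2)$ just above (following \cite{2} p.~513) to the supersymmetric setting, the new ingredient being that the lowest-weight space of each discrete-series module $L_{pq}^{m}=L(c_{m},h_{pq}^{m})$ is one-dimensional. So whereas for $LSU(2)$ the bound $\le 1$ came from Clebsch--Gordan multiplicity, here it will come from uniqueness (up to scalar) of the lowest-weight vector. Throughout I write $\Omega'$, $\Omega$ for the lowest-weight vectors of $L_{p'q'}^{m}$, $L_{pq}^{m}$, of $L_{0}$-energy $h_{p'q'}^{m}$, $h_{pq}^{m}$.

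First I would note that the whole field is recovered from its modes. By Lemma \ref{asfor} one has $[X,\phi(v)]_{\tau}=\phi(X.v)$ for all $X\in\Vir_{1/2}$; specializing to $X=L_{n},G_{s}$ and using the explicit action (a)--(d) on $\F^{\sigma}_{\lambda,\mu}$, every mode is a graded commutator of a single reference mode with words in the $L_{n}$ and $G_{s}$ (indeed $G_{s}.v_{i}=w_{i+s}$ gives $\phi(w_{i+s})=[G_{s},\phi(v_{i})]_{\tau}$, consistently with $\theta(z)=[G_{-1/2},\phi(z)]$, and the covariance relations of Lemma \ref{relationsss} move between the $\phi(v_{n})$). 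Hence $\phi$ is determined by the collection of matrix coefficients $(\phi(v_{n})\xi,\eta)$ with $\xi\in L_{p'q'}^{m}$, $\eta\in L_{pq}^{m}$ of finite energy.

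Next I would reduce every such coefficient to one scalar by a double induction on energy, using that $L_{p'q'}^{m}$ and $L_{pq}^{m}$ are irreducible lowest-weight modules. To reduce $\eta$ to $\Omega$: if $\eta=X_{-}\eta'$ with $X_{-}$ a creation operator ($L_{-m}$ or $G_{-s}$, $m,s>0$), then $(\phi(v_{n})\xi,X_{-}\eta')=(X_{-}^{\star}\phi(v_{n})\xi,\eta')$ with $X_{-}^{\star}$ an annihilation operator, and commuting it through $\phi(v_{n})$ by Lemma \ref{asfor} yields $\pm(\phi(v_{n})X_{-}^{\star}\xi,\eta')+(\phi(X_{-}^{\star}.v_{n})\xi,\eta')$, two terms of strictly smaller $\eta$-energy. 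Once $\eta=\Omega$, reduce $\xi$ to $\Omega'$: writing $\xi=X_{-}\xi'$ and commuting $\phi(v_{n})$ past $X_{-}$, the term $(X_{-}\phi(v_{n})\xi',\Omega)=(\phi(v_{n})\xi',X_{-}^{\star}\Omega)$ vanishes because $X_{-}^{\star}\Omega=0$, leaving only $\mp(\phi(X_{-}.v_{n})\xi',\Omega)$ of strictly smaller $\xi$-energy. The induction terminates at $(\phi(\text{mode})\Omega',\Omega)$, which by the gradation Lemma \ref{grzgra} is nonzero only for the mode landing $\Omega'$ in the energy-$h_{pq}^{m}$ space: this is $\phi(v_{0})\Omega'$ when $\sigma=0$ and $\phi(w_{0})\Omega'$ when $\sigma=1$ (a short check shows these are the unique surviving initial modes, so no second constant appears).

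Finally, this lowest-weight space $(L_{pq}^{m})_{h_{pq}^{m}}=\CCC\Omega$ is one-dimensional, so $\phi(v_{0})\Omega'=c\,\Omega$ (resp.\ $\phi(w_{0})\Omega'=c\,\Omega$) for a single scalar $c$, and by the reduction every matrix coefficient is a fixed universal multiple of $c$. Thus $\phi\mapsto c$ is linear and injective, whence the space of primary fields is at most one-dimensional. I expect the main obstacle to be the bookkeeping in the reduction step: one must verify, through the graded commutators and the half-integer $G$-modes, that both inductions genuinely collapse \emph{every} coefficient onto the single initial constant and that the parity of $\sigma$ does not surreptitiously introduce a second independent term. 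This is precisely the supersymmetric analogue of Wassermann's computation, and the point at which the one-dimensionality of the lowest-weight space is indispensable.
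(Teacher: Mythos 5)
Your proposal is correct and takes essentially the same approach as the paper: reduce every matrix coefficient to the initial term via the superintertwining relation of Lemma \ref{asfor}, then use the gradation Lemma \ref{grzgra} to see that only one mode ($v_{0}$ for $\sigma=0$, $w_{0}$ for $\sigma=1$) can pair the lowest-weight vectors, so the field is determined by a single scalar. The paper compresses this into two sentences (deferring the inductive bookkeeping to the $LSU(2)$ argument of \cite{2}, p.~513); your write-up simply makes that double induction on energy explicit.
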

\begin{proof} Let $\Omega$ and $\Omega'$ be the cyclic vectors of the positive energy representations and $v \in \F^{\sigma}_{\lambda, \mu}$. Then, by the intertwining relations, $(\phi(v)\eta , \xi )$ is completly determined by the initial term $(\phi(v)\Omega , \Omega' )$. Next $(\phi(v)\Omega , \Omega' )$ is non-zero for $v$ in a subspace of $\F^{\sigma}_{\lambda, \mu}$ of at most dimension one (lemma \ref{grzgra}).   \end{proof}
\begin{remark} \label{investiggg} Using a slightly modified $(A_{n}B)(z)$ formula (see proposition \ref{investig}), we can  inductively generate many fields from a given field $\psi$. \\ÊFor example we find: 
\begin{center} $(L_{n}\psi)(z) = [\sum C_{n+1}^{r}(-z)^{r}L_{n-r}]\psi(z)  - \psi(z)[\sum C_{n+1}^{r} (-z)^{n+1-r}L_{r-1}]$    \end{center}
We can also write a formula for $G_{r}$.  Now,  we see that: 
\begin{center} $(L_{0}\phi)(z) = [L_{0} , \phi(z)] - z [L_{-1} , \phi(z)] = h\phi(z)  $  \end{center}
It's easy to see that using this machinery from $\phi(z)$ we generate the unitary $\Vir_{1/2}$-module $L(h, c_{m})$. Then, by FQS criterion, $h = h_{p''q''}^{m}$. We note the elements $\Phi(a,z)$ with $a \in L_{p''q''}^{m}$, $\phi(z)  = \Phi(\Omega_{p''q''}^{m},z)$ and if $\psi(z) = \Phi(a,z) $ then $(L_{n}\psi)(z) = \Phi(L_{n}.a,z)$. We do the same with $G_{r}$. We call $\Phi$ a general vertex operator, it generalizes the vertex operator of the section \ref{vosa} , it admits many properties, but we don't need to enter into details.
 \end{remark}

\begin{corollary} $\mu  = h_{p'q'}^{m} - h_{pq}^{m}$ and  $1-\lambda = h=h_{p''q''}^{m}$.    \end{corollary}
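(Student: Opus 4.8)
The statement bundles together three assertions, two of which require no new argument. The first equality $\mu = h_{p'q'}^{m} - h_{pq}^{m}$ is exactly the content of the earlier lemma stating that a non-null primary field forces this value of $\mu$, so I would simply invoke it (the corollary being understood for $\phi$ non-null, as the whole discussion presumes). The middle equality $1-\lambda = h$ holds by definition, since $h$ was introduced as the conformal dimension $h := 1-\lambda$ of $\phi$. Thus the only substantive claim is $h = h_{p''q''}^{m}$: the conformal dimension of a non-null primary field between two discrete-series representations of charge $c_{m}$ is itself an admissible discrete-series lowest weight at that charge.

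The plan for this is to run the inductive machinery of remark \ref{investiggg}. Starting from the single ordinary part $\phi(z)$ and repeatedly applying the operations $(L_{n}\psi)(z)$ and the analogous one for $G_{r}$, all furnished by the modified $(A_{n}B)(z)$ formula, one produces a family of fields $\Phi(a,z)$ indexed by vectors $a$ of an abstract module. The computation $(L_{0}\phi)(z) = [L_{0},\phi(z)] - z[L_{-1},\phi(z)] = h\,\phi(z)$ exhibits $\phi(z)$ as a lowest-weight vector of conformal weight $h$ for the central charge $c_{m}$ inherited from both $L_{p'q'}^{m}$ and $L_{pq}^{m}$, while the covariance relations of lemma \ref{relationsss} guarantee that these bracket operations close into a genuine action of $\Vir_{1/2}$. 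Consequently the module generated from $\phi(z)$ is a lowest-weight module of data $(h,c_{m})$, and after passing to its irreducible quotient one obtains $L(h,c_{m})$ as claimed in remark \ref{investiggg}.

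It then remains only to see that $L(h,c_{m})$ is unitary, for the FQS (Friedan--Qiu--Shenker) classification of the unitary lowest-weight representations of the Neveu--Schwarz algebra at charge $c_{m}$ forces $h$ into the discrete-series list, i.e.\ $h = h_{p''q''}^{m}$ for some admissible $p'',q''$. Unitarity should come from the fact that $\phi$ intertwines the \emph{unitary} representations $L_{p'q'}^{m}$ and $L_{pq}^{m}$ and respects the $\star$-structure $L_{f}^{\star} = L_{\bar f}$, $G_{h}^{\star} = G_{\bar h}$: pulling the inner products of source and target back through the matrix-coefficient pairing $(\Phi(a,z)\Omega', \Omega)$ equips the generated module with an invariant Hermitian form that is positive semi-definite, whose radical is the maximal proper submodule, so that the quotient $L(h,c_{m})$ is honestly unitary. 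The main obstacle is precisely this positivity step: one must verify that the induced form is genuinely non-negative (not merely invariant) and that the non-nullity of $\phi$ keeps the lowest-weight line out of the radical, so that $h$ is realized as a bona fide unitary lowest weight rather than being annihilated when passing to the irreducible quotient. Once that is secured, the FQS criterion closes the argument.
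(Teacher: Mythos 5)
Your proposal is correct and takes essentially the same route as the paper: the first equality is the earlier lemma on non-null primary fields, the middle one is the definition of the conformal dimension $h=1-\lambda$, and $h = h_{p''q''}^{m}$ is obtained exactly as in remark \ref{investiggg}, by generating the module $L(h,c_{m})$ from $\phi(z)$ via the $(A_{n}B)(z)$ machinery and invoking the FQS criterion. If anything you are more careful than the source, which simply declares the unitarity of the generated module ``easy to see'' where you correctly isolate the positivity of the induced invariant form (and the non-vanishing of the lowest-weight line) as the one step that genuinely needs verification before FQS can be applied.
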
 
\begin{definition} We note $\phi$ as $\phi_{pqp'q'}^{p''q''m}  $,   $\triangle$ as $  \triangle_{pqp'q'}^{p''q''m} = h_{p''q''}^{m} - h_{p'q'}^{m} + h_{pq}^{m}$.   \end{definition}
\begin{definition} With $p'' = 2k+1$ and $q'' = 2k'+1$, we call $\phi$  a primary field of charge $(k,k')$.   \end{definition}
Note that the charge and the spaces between which the field acts fixes  $\lambda$ and $\mu$, but $\sigma$ can be $0$ or $1$. Now, $\sigma = 0$ or $1$ corresponds to $\phi(z)$ with integers or half-integers modes respectively.  On our work, we only need to consider primary fields of charge $\alpha = (1/2 , 1/2) $ and   $\beta = (0,1)$:

\begin{proposition}   \label{gloriato} Up to a multiplication by a rational power of $z$:  \\
 (a)   The compression of  complex fermions gives  primary fields of charge $\alpha$. \\
 (b)  The compression of  real fermions gives  primary fields of charge $\beta$.     \end{proposition}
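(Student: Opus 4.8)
The plan is to verify, for each case, that the compressed fermion satisfies the compatibility condition of Lemma (Compatibility condition), since by the preceding unicity result (Proposition, Initial term) a field satisfying that condition is determined up to a scalar and hence must be a primary field of the prescribed charge. First I would set up the coset picture from Subsection \ref{richarder}: recall the GKO decomposition $\F_{NS}^{\gg}\otimes L(i,\ell)=\bigoplus L(c_m,h_{pq}^m)\otimes L(j,\ell+2)$ together with the coset formula $L_n^{gko}=L_n^{\gg\oplus\gg}-L_n^{\gg}$ and $G^{gko}(z)=\Phi(\tau_{gko},z)$. The complex fermions live on $\F_{\CCC^2}^{\otimes\ell}$ and the real fermions on $\F_{NS}^{\gg}$, and in each case the compression $p_{pq}\,\psi(f)\,p_{p'q'}$ (or $p_{pq}\,a(f)\,p_{p'q'}$) intertwines $\Vir_{1/2}$ acting through the coset, as already established in Corollaries \ref{smoothi}--\ref{gloria}.

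For case (a), I would take the complex fermion field on $\F_{\CCC^2}^{\otimes\ell}$; as an $LSU(2)$-object it is a spin-$1/2$ primary field by the earlier Proposition on compressions of complex fermions. The GKO factorisation then writes its compression as $C\,z^r\,\phi_{pqp'q'}^{\alpha}(z)\otimes\phi^{1/2,\ell+2}$, exactly the mechanism sketched in the overview; the key computation is that the $LSU(2)$ covariance relations $[L_n,\phi]$ and $[X_m,\phi]$ from Lemma (Compatibility condition for $LSU(2)$) descend, under $L_n^{gko}=L_n^{\gg\oplus\gg}-L_n^{\gg}$ and the Clifford relations $[G_f,\psi_h^a]_+$ of Lemma \ref{localrelation}(d), to the $\Vir_{1/2}$-covariance relations of Lemma \ref{relationsss}. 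Here the super part $\theta(z)=[G_{-1/2},\phi(z)]$ is produced automatically by commuting $G_{-1/2}^{gko}$ through the fermion using $[G_f,\psi_h^a]_+=(\ell+2)^{-1/2}B^a_{f.h}$. Since a complex fermion is integer-moded in the ordinary factor but half-integer shifted by the coset grading, I would track the moding carefully to confirm $\sigma$ and that the charge is $\alpha=(1/2,1/2)$, i.e.\ $p''=q''=2$; the rational power $z^r$ absorbs the discrepancy between the $LSU(2)$ conformal dimension $\triangle_{ij}^{1/2,\ell}$ and the Neveu-Schwarz $\triangle_{pqp'q'}^{p''q''m}$, as in the overview.

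For case (b), the argument is parallel but starts from the real Neveu-Schwarz fermion $\psi(u,z)$ on $\F_{NS}^{\gg}$, which by the earlier Proposition is a spin-$1$ primary field of $LSU(2)$ (using $\F_{NS}^{\gg}=L_0^2\oplus L_1^2$ at level $2$). Compressing via the $p_{pq}$ projections and using $[G_f,B^a_h]=-(\ell+2)^{1/2}\psi^a_{f.d(h)}$ from Lemma \ref{localrelation}(c) to generate the super part, one checks the same compatibility relations of Lemma \ref{relationsss}, now with conformal dimension $h=h_{p''q''}^m$ corresponding to charge $\beta=(0,1)$, i.e.\ $(p'',q'')=(1,3)$. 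I expect the main obstacle to be the bookkeeping of the half-integer shifts introduced by the coset grading, together with determining the exact rational exponent $r$ and confirming $\sigma$: one must verify that the integer/half-integer moding of the compressed field matches the $\sigma=0,1$ of $\F^{\sigma}_{\lambda,\mu}$ in accordance with whether $H_{p'q'}^m$ and $H_{pq}^m$ lie in the same $L_0$-parity class. Once the moding and covariance relations are confirmed, unicity forces the compression to coincide (up to scalar) with the abstract primary field, completing both cases.
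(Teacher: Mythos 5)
Your overall plan --- check the compatibility conditions of Lemma \ref{relationsss} for the compressed field against the coset action and then conclude by unicity --- is exactly the paper's plan for Proposition \ref{gloriato}, and your routing through the $LSU(2)$ spin-$1/2$ primary field and the factorisation over the GKO blocks is how the paper actually carries out this computation in the proof of Theorem \ref{woker}. However, there is a concrete error at the crux of case (a). The relation $[G_{f},\psi_{h}^{a}]_{+}=(\ell+2)^{-1/2}B^{a}_{f.h}$ of Lemma \ref{localrelation}(d) is a statement about the three real adjoint fermions $\psi^{a}$ living on $\F_{NS}^{\gg}$; it says nothing about the complex fermions of case (a), which live on the other tensor factor $\F_{\CCC^{2}}^{\otimes \ell}$ and do not appear in Lemma \ref{localrelation} at all. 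For a complex fermion $a(v,z)$, the bracket with $G^{gko}$ must be computed from the explicit formula $G^{gko}(z)=\Phi(\tau_{gko},z)$: the triple-fermion term contributes nothing, and the term $\sum_{k}\psi_{k}(z)\otimes X_{k}(z)$ contributes through the current covariance $[X_{k}(w),a(v,z)]\sim a(X_{k}v,z)(w-z)^{-1}$ of the complex fermion, so that the bracket is (a multiple of) the \emph{composite} field $\sum_{k}\psi_{k}(z)\otimes a(X_{k}v,z)$ --- not a current. Verifying that the compression of this composite satisfies condition (b), $[G_{r},\phi(z)]=z^{r+1/2}[G_{-1/2},\phi(z)]$, is precisely the non-trivial check the paper alludes to by ``using the explicit formula of GKO for $G_{r}$'', and it is the one step your proposal does not actually address: as written, the mechanism you give for producing the super part in case (a) would fail, because the cited relation does not apply to these fermions.

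Two secondary points. In case (b) the roles of Lemma \ref{localrelation}(c) and (d) are interchanged: for a real fermion it is (d) that computes the bracket with $G^{gko}$ (yielding the diagonal level-$(\ell+2)$ current $B^{a}$), while (c) closes the multiplet in the other direction; both relations belong to case (b), and neither to case (a). Also, quoting the factorisation $C\,z^{r}\,\phi_{pqp'q'}^{\alpha}(z)\otimes\phi^{1/2,\ell+2}$ as an input is circular: that factorisation is exactly what the compatibility-plus-unicity argument establishes (this is how it is proved in Theorem \ref{woker}), and your $L_{n}$-descent also uses tacitly that a field which is current-primary for the diagonal level-$(\ell+2)$ action is primary for its Sugawara Virasoro. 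The skeleton of your argument coincides with the paper's; the gap is that the decisive $G_{r}$-computation is replaced by an inapplicable Clifford relation.
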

\begin{proof} We just check the compatibility condition using the explicit formula of GKO for  $G_{r}$. The calculation can also be made on the vertex algebra of the fermions.       \end{proof}

\subsection{Constructible primary fields and braiding  for $\Vir_{1/2}$}
 \begin{lemma}    \label{trianglu} Let $m = \ell  +2$. and \scriptsize{Ê  $ \left\{ \begin{array}{lll} p=2i+1 &  p'=2j+1 &  p''=2k+1\\ q=2i'+1& q'=2j'+1 & q''=2k'+1\end{array} \right.$}  \normalsize  
 \begin{enumerate}
 \item[(a)] $ h_{i}^{\ell}  = h_{pq}^{m} + h_{i'}^{\ell + 2} - \1/2 (i-i')^{2} $
 \item[(b)]  $\triangle_{ij}^{k \ell}  =  \triangle_{pqp'q'}^{p''q''m} +  \triangle_{i'j'}^{k' \ell  +2}  -  C_{ii'jj'}^{kk'} $  
 \end{enumerate}
  with  $C_{ii'jj'}^{kk'} = \1/2[( i-i' )^{2}-( j-j' )^{2}+( k-k' )^{2} ]$
  \end{lemma}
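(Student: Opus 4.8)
The plan is to treat both identities as explicit computations with the known conformal-weight formulas, deriving (b) formally from (a). I would start from the two ingredients that are available: the $LSU(2)$ value $h_i^\ell = \frac{i(i+1)}{\ell+2}$ (recorded just before the lemma) and the Neveu-Schwarz discrete-series formula $h_{pq}^m = \frac{(p(m+2)-qm)^2-4}{8m(m+2)}$ for the weights of $L(c_m,h_{pq}^m)$. Conceptually (a) is just the statement that conformal weights match across the GKO branching $\F_{NS}^{\gg}\otimes L(i,\ell)=\bigoplus L(c_m,h_{pq}^m)\otimes L(i',\ell+2)$, the term $\frac12(i-i')^2$ recording the grading shift between the matched lowest-weight vectors; but the quickest route is the direct substitution.

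For part (a) I would set $p=2i+1$, $q=2i'+1$, $m=\ell+2$. The key simplification is the factorization
\[
p(m+2)-qm = 2\left(m(i-i')+2i+1\right),
\]
so that $(p(m+2)-qm)^2-4 = 4\,(m(i-i')+2i)(m(i-i')+2i+2)$. After dividing by $8m(m+2)$ and adding $h_{i'}^{\ell+2}=\frac{i'(i'+1)}{m+2}$, I would put everything over the common denominator $2m(m+2)$. I expect every term carrying a factor $(i-i')$ or $(i-i')^2$, together with the mixed terms $i(i-i')$, to cancel against the correction $-\frac12(i-i')^2$, leaving a numerator $2(m+2)\,i(i+1)$ and hence $\frac{i(i+1)}{m}=h_i^\ell$. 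This is a short polynomial identity; the only care needed is to track the mixed $im(i-i')$ terms, which are precisely what force the quadratic correction.

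For part (b) I would apply (a) three times, to the pairs $(i,i')$, $(j,j')$, $(k,k')$ corresponding to the coset labels $pq$, $p'q'$, $p''q''$, giving $h_i^\ell = h_{pq}^m+h_{i'}^{\ell+2}-\frac12(i-i')^2$ and its two analogues. Substituting these into the definition $\triangle_{ij}^{k\ell}=h_j^\ell-h_i^\ell+h_k^\ell$, the six coset and level-$(\ell+2)$ weights regroup exactly as $\triangle_{pqp'q'}^{p''q''m}+\triangle_{i'j'}^{k'\ell+2}$, while the three quadratic remainders $-\frac12(i-i')^2$, $-\frac12(j-j')^2$, $-\frac12(k-k')^2$ combine, with the signs inherited from the $-h_i+h_j+h_k$ pattern, into the single term $-C_{ii'jj'}^{kk'}$. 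Thus (b) is a purely formal consequence of (a) once the two $\triangle$'s are unwound.

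The main obstacle is not conceptual but bookkeeping of signs. One must use the value $\triangle_{pqp'q'}^{p''q''m}=h_{p''q''}^m+h_{p'q'}^m-h_{pq}^m$ dictated by $\mu=h_{p'q'}^m-h_{pq}^m$ and $h=h_{p''q''}^m$, and keep the signs of the three corrections aligned with the sign pattern of $\triangle_{ij}^{k\ell}$, so that they reassemble correctly into $C_{ii'jj'}^{kk'}$. Because the three pairs enter $\triangle_{ij}^{k\ell}$ with signs $-,+,+$, a small numerical check at $\ell=2$ (for instance $i=1,\,i'=0$, $j=j'=0$, $k=k'=1$, where $h_{33}^4=\frac16$) is the fastest way to pin down the overall signs in $C$ before writing out the general identity.
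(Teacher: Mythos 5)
Your proposal is correct and takes essentially the same approach as the paper: part (a) is proved there by the one-line algebraic identity $h_{pq}^{m} = \frac{2p^{2}(m+2) - 2q^{2}m - 4}{8m(m+2)} + \frac{(p-q)^{2}}{8} = h_{i}^{\ell} - h_{i'}^{\ell+2} + \frac{1}{2}(i-i')^{2}$ (splitting off $(p-q)^{2}/8$ in the $(p,q)$ variables, where you instead factorize $(p(m+2)-qm)^{2}-4$ in the $(i,i')$ variables), and part (b) is then declared immediate from (a), exactly as in your formal substitution. Your extra care with the sign pattern in $C_{ii'jj'}^{kk'}$ is warranted, since the paper's own conventions for $\triangle_{pqp'q'}^{p''q''m}$ (the definition $\triangle = h + \mu$ versus the later notation $\triangle_{pqp'q'}^{p''q''m} = h_{p''q''}^{m} - h_{p'q'}^{m} + h_{pq}^{m}$) are not mutually consistent, so pinning the signs by a numerical check is a sensible precaution.
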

 \begin{proof} \begin{center} $h_{pq}^{m} = \frac{[(m+2)p-mq]^{2}-4}{8m(m+2)} = \frac{2p^{2}(m+2) - 2q^{2}m-4}{8m(m+2)}+\frac{(p-q)^{2}}{8} = h_{i}^{\ell} -h_{i'}^{\ell + 2} + \1/2 (i-i')^{2} $ \end{center}Next,  (b) is  immediate by (a).    \end{proof}
 
\begin{notation} We note $h_{ii'}^{\ell}$ , $L_{ii'}^{\ell}$, $\phi_{ii'jj'}^{kk' \ell}$ and $ \triangle_{ii'jj'}^{kk' \ell}$ instead of  $h_{pq}^{m}$, $L_{pq}^{m}$, $\phi_{pqp'q'}^{p''q'' m}$  and $ \triangle_{pqp'q'}^{p''q''m}$. \end{notation}

  \begin{definition} A non-zero primary field of charge $\alpha = (1/2 , 1/2)$ or $\beta = (0,1)$ is called constructible if it's a compression  fermions. \end{definition}
\begin{theorem} \label{woker}  (Constructible primary fields) \\
(1)$\phi_{ii'jj'}^{\alpha  \ell}$ is constructible iff: \\
 $i+i'+ 1/2 \le \ell $ and $j+j'+ 1/2  \le \ell + 2$  \\
  (a) If $\sigma = 0$: $i' = i \pm 1/2 $ and $j' = j \pm 1/2 $,  \\
  (b) If $\sigma = 1$: $i' = i \pm 1/2 $ and $j' = j \mp 1/2 $.  \\ \\ 
(2) $\phi_{ii'jj'}^{\beta \ell}$ is constructible iff: \\
  $i+i'\le \ell $ and $j+j'+ 1  \le \ell + 2$  \\
  (a) If $\sigma = 0$: $i' = i $ and $j' =j \pm 1 $.\\Ê
  (b) If $\sigma = 1$: $i' = i $ and $j' =j  $    \\ \\
  with the restriction that  $ 0 \le i,i' \le \ell /2$ and $ 0 \le j,j' \le (\ell+2) /2$.  \end{theorem}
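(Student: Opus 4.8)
The plan is to reduce the whole statement to the already-recorded non-vanishing of the $LSU(2)$ spin-$1/2$ and spin-$1$ primary fields at levels $\ell$ and $\ell+2$, and then to upgrade ``non-vanishing for some labels'' to ``non-vanishing for every admissible label'' by a braiding argument.

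First, for charge $\alpha$, I would start from the GKO decomposition $\F_{NS}^{\gg}\otimes H_i^\ell=\bigoplus_{i'}H_{ii'}^\ell\otimes H_{i'}^{\ell+2}$ and the level-$\ell$ spin-$1/2$ field $I\otimes\phi_{ij}^{1/2,\ell}(z,v)\colon\F_{NS}^{\gg}\otimes H_j^\ell\to\F_{NS}^{\gg}\otimes H_i^\ell$, which commutes with the coset $\Vir_{1/2}$-action. Compressing by the block projections $p_{i'},p_{j'}$, the covariance relations of Lemma \ref{relationsss} and the unicity of the initial term force the compression to be a single primary field times a level-$(\ell+2)$ field,
\[
p_{i'}\bigl(I\otimes\phi_{ij}^{1/2,\ell}(z,v)\bigr)p_{j'}=C\,z^{r}\,\phi_{ii'jj'}^{\alpha}(z)\otimes\phi_{i'j'}^{1/2,\ell+2}(z,v),
\]
the exponent $r$ being the quantity $C_{ii'jj'}^{kk'}$ of Lemma \ref{trianglu}(b) with $k=k'=1/2$, which is exactly what makes the two sides match grade by grade. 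By Proposition \ref{gloriato}(a) every charge-$\alpha$ field arises this way, so a field is constructible precisely when $C$ can be taken non-zero. Since $\phi_{i'j'}^{1/2,\ell+2}(z,v)$ appears as a genuine tensor factor, a non-zero compression forces \emph{both} $LSU(2)$ fields to be non-zero; translating their spin-$1/2$ bound and fusion conditions through the index dictionary of Lemma \ref{trianglu} and the coset selection rule yields the bounds $i+i'+1/2\le\ell$, $j+j'+1/2\le\ell+2$ with the half-integer shifts, the two relative signs being distinguished by the moding of $\phi_{ii'jj'}^{\alpha}(z)$ (integer or half-integer according to the parity of $2\triangle_{ii'jj'}^{\alpha\ell}$). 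This is precisely the split into $\sigma=0$ (same sign) and $\sigma=1$ (opposite sign), the boundary restrictions being the discrete-series ranges.

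The substantive direction is sufficiency. Writing $I\otimes\phi_{ij}^{1/2,\ell}(z,v)=\sum_{i',j'}p_{i'}(\cdots)p_{j'}$ and using that the left side is non-zero, an irreducibility argument shows that for each admissible source block $j'$ at least one target block $i'$ gives a non-zero compression, yielding a non-empty seed family of non-zero $\phi_{ii'jj'}^{\alpha}$. To propagate this to all admissible labels I would use the $\Vir_{1/2}$ braiding relations, which I obtain following Tsuchiya--Nakanishi by transporting the $LSU(2)$ braiding of Reminder \ref{braid1}: the $\Vir_{1/2}$ braiding matrix is the level-$\ell$ braiding matrix times the transposed inverse of the level-$(\ell+2)$ one (Reminder \ref{braid2}), and crucially all its coefficients $\mu_{rr'}$ are non-zero, since the transport matrices coming from the Knizhnik--Zamolodchikov (hypergeometric) solutions have no zero entries. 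Then if some admissible $\phi_{ii'jj'}^{\alpha}$ vanished while its two $LSU(2)$ factors did not, braiding it against its adjoint would give zero on one side but, through the non-degenerate $\mu_{rr'}$, a non-zero combination of the seed fields $\phi_{ii'kk'}^{\alpha}$ on the other, a contradiction; hence $\phi_{ii'jj'}^{\alpha}\ne0$ exactly when both factors are.

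The charge-$\beta$ statement follows by running the same scheme with the real Neveu--Schwarz fermion field $\psi(u,z)\otimes I$ (spin $1$ on the level-$2$ factor, Proposition \ref{gloriato}(b)), which commutes with $I\otimes\phi_{ij}^{1/2,\ell}(v,w)$; the compression then factors through a level-$(\ell+2)$ spin-$1$ field, whose neighbour shifts $j'=j-1,j,j+1$ with $j+j'+1\le\ell+2$, together with the fact that $\psi$ acts trivially on the level-$\ell$ factor, give the $\beta$ conditions, again split by $\sigma$ through the moding, and the non-vanishing step uses the mixed spin-$(1,1/2)$ and $(1/2,1)$ braiding of the same reminder. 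I expect the main obstacle to be precisely this non-vanishing step: everything turns on knowing that the relevant braiding coefficients never vanish, which rests on the non-degeneracy of the hypergeometric transport matrix and would, for a genuine spin-$(1,1)$ braiding, require the Dotsenko--Fateev analysis flagged in Remark \ref{dotfat} --- a case that fortunately does not arise here.
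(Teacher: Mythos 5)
Your proposal is correct and follows essentially the same route as the paper's own proof: the GKO compression $p_{i'}\bigl(I \otimes \phi_{ij}^{1/2,\ell}\bigr)p_{j'}$ factorized by unicity, the irreducibility argument producing a non-zero seed block for each $j'$, the Tsuchiya--Nakanishi transport giving a $\Vir_{1/2}$ braiding matrix (level-$\ell$ matrix times transposed inverse of the level-$(\ell+2)$ one) with all coefficients non-zero, the contradiction by braiding a hypothetically vanishing field against its adjoint, and the charge-$\beta$ case run through $\psi(u,z)\otimes I$ with the mixed $(1/2,1)$ and $(1,1/2)$ braidings, avoiding the Dotsenko--Fateev case. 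No gap to report.
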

  This section is devoted to prove the theorem.
  
\begin{remark} If we ignore $\sigma$, we see that the dimension of the spaces of constructible primary fields are $0$, $1$ or $2$-dimensional, and it's correspond to the fusion rules obtained  below.    \end{remark}
\begin{remark} The dimension of the space of all the primary fields (non necessarily construcible as above) have been calculated by Iohara and Koga \cite{ik}, using the action on the singular vectors of $\F^{\sigma}_{\lambda , \mu}$. Their result shows that in the previous cases, every primary fields are constructibles. \end{remark}
\begin{corollary}  Let $\phi$ of charge $\alpha$ or $\beta$, then $\phi \ne 0$ iff $\phi$ constructible. \end{corollary}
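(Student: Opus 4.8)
The plan is to transport the known non-vanishing conditions for the spin $\1/2$ and spin $1$ primary fields of $LSU(2)$ (the reminder on constructible fields and Reminder \ref{ktcoset}) through the GKO coset decomposition $\F_{NS}^{\gg} \otimes L(i,\ell) = \bigoplus_{i'} H_{ii'}^{\ell} \otimes L(i',\ell+2)$, and to pin down the remaining data (the sharp non-vanishing and the value of $\sigma$) using the braiding relations of Reminder \ref{braid1}. I treat charge $\alpha = (\1/2,\1/2)$ first. By Proposition \ref{gloriato}, the compression of a complex fermion is a charge $\alpha$ field, so I take the level $\ell$, spin $\1/2$ field $\phi_{ij}^{\1/2 \ell}(z,v)$ and view $I \otimes \phi_{ij}^{\1/2 \ell}(z,v) : \F_{NS}^{\gg} \otimes L(j,\ell) \to \F_{NS}^{\gg} \otimes L(i,\ell)$. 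Compressing with the block projections $p_{i'}$, $p_{j'}$, the covariance relations of Lemma \ref{relationsss} together with the at-most-one-dimensional (unicity) proposition force each compression to factor as a tensor product
\begin{center} $p_{i'}(I \otimes \phi_{ij}^{\1/2 \ell}(z,v))p_{j'} = C\,z^{r}\,\phi_{ii'jj'}^{\alpha}(z) \otimes \phi_{i'j'}^{\1/2 , \ell+2}(z,v)$, \end{center}
with $C \in \CCC$ and the exponent $r \in \QQQ$ read off from the conformal-dimension identity of Lemma \ref{trianglu}(b); the left factor is a charge $\alpha$ field for $\Vir_{1/2}$ and the right factor a spin $\1/2$ field of $LSU(2)$ at level $\ell+2$.

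The next step extracts the sharp selection rule. Since $I \otimes \phi_{ij}^{\1/2 \ell}(z,v) = \sum_{i',j'} p_{i'}(\cdots)p_{j'}$ is non-zero, at least one block is non-zero, and an irreducibility argument upgrades this to: for each admissible $j'$ there is an $i'$ with $\phi_{ii'jj'}^{\alpha} \ne 0$. To see the converse is sharp I use the braiding relations, transported to $\Vir_{1/2}$ in the Tsuchiya--Nakanishi fashion (the $\Vir_{1/2}$ braiding matrix being the level $\ell$ braiding matrix times the transpose-inverse of the level $\ell+2$ one, via Reminder \ref{braid2}), which gives
\begin{center} $\phi_{ii'jj'}^{\alpha \ell}(z)\phi_{jj'kk'}^{\alpha \ell}(w) = \sum \mu_{rr'}\,\phi_{ii'rr'}^{\alpha \ell}(w)\phi_{rr'kk'}^{\alpha \ell}(z)$, \quad $\mu_{rr'} \ne 0$. \end{center}
If $\phi_{ii'jj'}^{\alpha}$ vanished while $\phi_{ij}^{\1/2 \ell}$ and $\phi_{i'j'}^{\1/2 , \ell+2}$ were both non-zero, then braiding $\phi_{ii'jj'}^{\alpha}$ against its adjoint would give zero, yet the relation produces non-zero fields $\phi_{ii'kk'}^{\alpha}$ by the preceding non-vanishing, a contradiction. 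Hence $\phi_{ii'jj'}^{\alpha} \ne 0$ exactly when both companion $LSU(2)$ fields are non-zero, which by the reminder on constructible spin $\1/2$ fields is precisely the half-integer shift conditions and the level bounds $i+i'+\1/2 \le \ell$, $j+j'+\1/2 \le \ell+2$ of the statement.

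For charge $\beta = (0,1)$ I run the same scheme starting from the real Neveu--Schwarz fermion field $\psi(u,z) \otimes I$, which supercommutes with $I \otimes \phi_{ij}^{\1/2 \ell}(v,w)$ and, through $\F_{NS}^{\gg} = L_{0}^{2} \oplus L_{1}^{2}$, is a spin $1$ field at level $2$; its compression against the GKO blocks produces a charge $\beta$ field tensored with a spin $1$ field at level $\ell+2$. Using now the mixed $\alpha$--$\beta$ braiding relations of Reminder \ref{braid1}, the same non-vanishing-versus-contradiction dichotomy yields the selection rule and level bounds $i+i' \le \ell$, $j+j'+1 \le \ell+2$. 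In both charges the value of $\sigma$ is fixed last: the field $\phi_{ii'jj'}(z) = \sum_{n \in \ZZZ + \sigma/2}\phi(v_{n})z^{-n-\triangle}$ must respect the gradation of Lemma \ref{grzgra}, so $\sigma$ is forced by the parity of $\triangle_{ii'jj'}$ relative to the integer lattice, equivalently by whether the two half-integer shifts point in the same direction ($\sigma = 0$) or in opposite directions ($\sigma = 1$), which is exactly the dichotomy recorded in cases (a) and (b).

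The main obstacle is the braiding step. One must justify that the $\Vir_{1/2}$ braiding matrix is genuinely the product of the two $LSU(2)$ braiding matrices (at levels $\ell$ and $\ell+2$) in the contragredient form of Reminder \ref{braid2}, and, crucially, that all of its entries $\mu_{rr'}$ are non-zero, since it is exactly this non-vanishing that drives the contradiction establishing sharpness. A secondary delicate point is the moding bookkeeping: carrying the rational exponent $r$ of Lemma \ref{trianglu} correctly through the tensor splitting so that the integer versus half-integer moding of $\phi_{ii'jj'}$ is matched to the correct value of $\sigma$.
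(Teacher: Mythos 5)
Your argument reproduces, essentially step by step, the paper's proof of Theorem \ref{woker} (the classification of constructible fields): the coset compression, the tensor factorization forced by unicity, the irreducibility argument, and the Tsuchiya--Nakanishi braiding contradiction. That theorem, however, only decides when the \emph{compressions} of fermions are non-zero. The corollary under review is a statement about an \emph{arbitrary} primary field $\phi$ of charge $\alpha$ or $\beta$, i.e.\ an abstract superintertwiner $L_{p'q'}^{m} \otimes \F^{\sigma}_{\lambda , \mu} \to L_{pq}^{m}$; its content is precisely that every such non-zero field is (a multiple of) a compression of fermions. Your proposal never addresses this direction. For parameters inside the list of Theorem \ref{woker} the missing step is easy: a constructible field exists, and the initial-term/unicity proposition (the space of primary fields with every parameter fixed is at most one-dimensional) forces any field with the same parameters to be a scalar multiple of it, hence constructible. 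But for parameters \emph{outside} the list your argument gives nothing: the braiding relations you invoke are derived from the Knizhnik-Zamolodchikov equations satisfied by the $4$-point functions of the compressed fields $p_{i'}(I \otimes \phi_{ij}^{\1/2 \ell})p_{j'}$, so they constrain only fields arising in that way. An abstract primary field with forbidden parameters does not occur as such a compression, satisfies no braiding relation you have established, and your contradiction argument cannot touch it.

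The statement that no such field exists is exactly what the paper imports from Iohara and Koga \cite{ik}: their computation, via the action on the singular vectors of $\F^{\sigma}_{\lambda , \mu}$, of the dimension of the space of \emph{all} primary fields, quoted in the remark immediately preceding the corollary. It is this dimension count that makes the implication ``$\phi \ne 0 \Rightarrow \phi$ constructible'' true; what you have proved is only ``the compression is non-zero iff the selection rules hold'', which is Theorem \ref{woker} by the paper's own method. To close the gap you would need either to cite the Iohara--Koga result, or to supply an independent argument (for instance, BPZ-type differential equations on correlation functions coming from the null vector of $L(c_{m}, h_{p''q''}^{m})$) ruling out non-zero primary fields with parameters outside the constructible range; your proposal contains neither.
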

\begin{reminder} (GKO construction, see \cite{NSOAII} section 5)
\begin{center}  $\F_{NS}^{\gg}  \otimes L_{i}^{\ell}= \bigoplus L_{ij}^{\ell} \otimes  L_{j}^{\ell +2}$ \end{center}
 and $ \F_{NS}^{\gg} = L_{0}^{2}  \oplus L_{1}^{2}$  as $L\gg$-module.  \end{reminder}
\begin{corollary}(Braiding relations)   \label{joker}
 \\ Let $(\gamma_{1} , \gamma_{2}) = (\alpha,  \alpha)$,  $(\beta, \alpha) $ or  $(\alpha , \beta)$:
\begin{center}
 $\phi_{ii'jj'}^{\gamma_{1} \ell}(z) \phi_{jj'kk'}^{\gamma_{2} \ell}( w) = \sum  \mu_{rr'}  \phi_{ii'rr'}^{\gamma_{2} \ell}( w) \phi_{rr'kk'}^{\gamma_{1} \ell}(z)$  with $\mu_{rr'}  \ne 0 $.
\end{center}
To simplify, we don't write the dependence of $\mu_{rr'}$ on the other coefficients.  
\end{corollary}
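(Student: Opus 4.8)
The plan is to follow the Tsuchiya--Nakanishi strategy \cite{tsunaka} already sketched for charge $\alpha$: realize each $\Vir_{1/2}$ primary field as a GKO compression of an $LSU(2)$ primary field, transport the known $LSU(2)$ braiding through the coset decomposition, and read off the $\Vir_{1/2}$ braiding matrix as a composite of the level-$\ell$ braiding and the contragredient of the level-$(\ell+2)$ one. Write $k_1,k_2$ for the $LSU(2)$ spins attached to $\gamma_1,\gamma_2$, namely $\alpha\leftrightarrow 1/2$ and $\beta\leftrightarrow 1$; the three cases $(\gamma_1,\gamma_2)=(\alpha,\alpha),(\beta,\alpha),(\alpha,\beta)$ then match exactly the three pairs $(k_1,k_2)=(1/2,1/2),(1,1/2),(1/2,1)$ for which the $LSU(2)$ braiding of reminder \ref{braid1} is available with nonzero transport coefficients. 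For charge $\alpha$ I use the compression $p_{i'}(I\otimes\phi_{ij}^{1/2,\ell}(v,z))p_{j'}=Cz^{r}\phi_{ii'jj'}^{\alpha\ell}(z)\otimes\phi_{i'j'}^{1/2,\ell+2}(v,z)$ (Proposition \ref{gloriato} together with the GKO construction $\F_{NS}^{\gg}\otimes L_i^{\ell}=\bigoplus L_{ij}^{\ell}\otimes L_j^{\ell+2}$, \cite{NSOAII}); for charge $\beta$ I use the analogous compression of the real Neveu--Schwarz fermion field $\psi(u,z)\otimes I$ (Proposition \ref{gloriato}), which carries a spin-$1$ field on the level-$(\ell+2)$ factor in view of $\F_{NS}^{\gg}=L_0^2\oplus L_1^2$.

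Next I would take the product of two such $I\otimes(\cdot)$ operators acting on $\F_{NS}^{\gg}\otimes L_k^{\ell}$, insert the GKO resolutions of the identity $\sum_{i'}p_{i'}=I$ at each intermediate stage, and apply the $LSU(2)$ braiding relation of reminder \ref{braid1} at level $\ell$ to the uncompressed product. Since the projections $p_{i'}$ respect the coset grading, compressing both sides turns the level-$\ell$ identity into an equality of sums of tensor products, each summand being a $\Vir_{1/2}$ chain $\phi^{\gamma_1}(z)\phi^{\gamma_2}(w)$ or $\phi^{\gamma_2}(w)\phi^{\gamma_1}(z)$ tensored with a level-$(\ell+2)$ chain, up to constants and rational powers of $z,w$ that I would track and absorb. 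In the mixed cases $(\beta,\alpha)$ and $(\alpha,\beta)$ the two source operators $\psi(u,z)\otimes I$ and $I\otimes\phi^{1/2,\ell}(v,w)$ live on complementary tensor factors and hence (anti)commute, so the level-$\ell$ contribution is trivial and the $\Vir_{1/2}$ braiding is governed entirely by the level-$(\ell+2)$ braiding of reminder \ref{braid1} for the pair $(1,1/2)$ or $(1/2,1)$.

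Then I would disentangle the level-$(\ell+2)$ factor. Applying the level-$(\ell+2)$ braiding relation of reminder \ref{braid1} to rewrite the $z$-before-$w$ ordering as the $w$-before-$z$ ordering, and invoking the contragredient braiding of reminder \ref{braid2}, the level-$(\ell+2)$ operator products $\phi_{i'r'}^{k_2,\ell+2}(v_2,w)\phi_{r'k'}^{k_1,\ell+2}(v_1,z)$ appearing on the two sides are linearly independent, so matching their coefficients solves uniquely for the $\Vir_{1/2}$ transport coefficients $\mu_{rr'}$. The outcome is exactly the composite announced in the introduction: the $\Vir_{1/2}$ braiding matrix equals the level-$\ell$ braiding matrix times the transpose of the inverse of the level-$(\ell+2)$ braiding matrix, with the level-$\ell$ factor replaced by the identity in the mixed cases.

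The main obstacle is showing that every coefficient $\mu_{rr'}$ is genuinely nonzero. The level-$\ell$ transport entries are nonzero by reminder \ref{braid1}, but the inverse-transpose of a matrix with nonzero entries need not have all entries nonzero, so nonvanishing cannot be asserted abstractly. I would settle it using Theorem \ref{woker}: the intermediate labels $(r,r')$ that can occur are precisely those for which $\phi^{\gamma_2}_{ii'rr'}$ and $\phi^{\gamma_1}_{rr'kk'}$ are constructible, and for spins $1/2$ and $1$ the relevant braiding matrices are explicit and of size at most two, so their inverse-transposes can be computed entrywise and checked to be nonzero on exactly this admissible set. Combined with the nonvanishing of the level-$\ell$ factor, this yields $\mu_{rr'}\neq 0$ for all surviving terms, completing the proof.
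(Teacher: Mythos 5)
Your proposal is correct in substance and follows essentially the same route as the paper: GKO compression of $I\otimes\phi_{ij}^{1/2,\ell}(v,z)$ (resp.\ of the fermion $\psi(u,z)\otimes I$ for charge $\beta$), transport of the known $LSU(2)$ braiding through the coset, and identification of the $\Vir_{1/2}$ braiding matrix as the level-$\ell$ matrix times the transposed inverse of the level-$(\ell+2)$ matrix, the level-$\ell$ contribution being trivial in the mixed cases. Your ``disentangling by linear independence'' of the level-$(\ell+2)$ chains is exactly what the paper makes rigorous via reduced four-point functions and the gauge transformation of reminders \ref{braid1} and \ref{braid2}: the identity $h_{jj'}(\zeta) = C\,\zeta^{\mu_{j'}}\,(g(\zeta)^{-1}F_{j}(\zeta),v_{j'})$ is the precise form of your coefficient-matching step, so regard your operator-level argument as a sketch whose rigorous implementation is the Tsuchiya--Nakanishi duality.

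Two points of divergence deserve attention. First, your appeal to Theorem \ref{woker} to delimit the admissible labels $(r,r')$ inverts the paper's logical order: in the paper, Theorem \ref{woker} and Corollary \ref{joker} are proved simultaneously, and it is the braiding relation (established first for the possibly-zero compressions $\rho_{ii'rr'}$, whose index set is dictated by the $LSU(2)$ coset data alone) together with the irreducibility argument that \emph{proves} the non-vanishing statement of Theorem \ref{woker}. As written, your proof combined with the paper's proof of the theorem would be circular; the fix is cheap --- index the sum by the coset-admissible pairs rather than by constructibility, since terms with vanishing fields drop out of both sides --- but it should be made. Second, your observation that the inverse-transpose of a matrix with non-zero entries need not have non-zero entries, resolved here because all braiding matrices involved have size at most two, is a genuine gap-fill: the paper simply asserts ``all the coefficients are non-zero'' without comment, and your explicit $2\times 2$ justification is what actually licenses that assertion.
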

\paragraph{proof of  theorem \ref{woker} and corollary \ref{joker} } $\begin{array}{c}  \end{array}$\\Ê This proof is an adaptation of the proof of Loke \cite{loke} for $\Vir$. \\ I thank A. Wassermann to have simplified it. \\ÊLet $H_{j}^{\ell}$, $ H_{jj'}^{\ell}$  be the $L^{2}$-completion of $L_{j}^{\ell}$ and  $ L_{jj'}^{\ell}$. \\Ê
Let $\Phi(v,z) = I \otimes \phi_{ij}^{\1/2 \ell}(v,z): \F_{NS}^{\gg}  \otimes H_{j}^{\ell} Ê\to \F_{NS}^{\gg}  \otimes H_{i}^{\ell}$. By the coset construction: 
\begin{center} $\F_{NS}^{\gg}  \otimes H_{j}^{\ell} = \bigoplus H_{jj'}^{\ell} \otimes  H_{j'}^{\ell +2}$   and  $ \F_{NS}^{\gg}  \otimes H_{i}^{\ell} = \bigoplus H_{ii'}^{\ell} \otimes  H_{i'}^{\ell +2}$  \end{center}
Let $p_{i'}$, $p_{j'}$ be the projection on $H_{ii'}^{\ell} \otimes  H_{i'}^{\ell +2}$ and  $H_{jj'}^{\ell} \otimes  H_{j'}^{\ell +2}$. \\ Let $\eta \in H_{ii'}^{\ell}$, $\xi \in H_{jj'}^{\ell}$ be non-zero fixed $L_{0}$-eigenvectors.\\  Let $\phi(v,z): H_{j'}^{\ell +2} \to H_{i'}^{\ell + 2}$, defined by: 
$\forall \eta' \in H_{i'}^{\ell+2}$ and  $\forall \xi' \in H_{j'}^{\ell+2}$, \begin{center} $(p_{i'}\Phi(v,z)p_{j'} .(\xi \otimes \xi' ) , \eta \otimes \eta'     ) =  (\phi(v,z).\xi' , \eta') $. \end{center}
 Now, by compatibility condition for $LSU(2)$: 
\begin{center} $[X(n) , \Phi(v,z)] = z^{n} \Phi(X.v,z)$  and  $[L_{n} , \Phi(v,z)] = z^{n}[z\frac{d}{dz} + (n+1)h^{\ell}_{1/2}] \Phi(v,z)$     \end{center}
Now, $X(n)$ and $L_{n}$ commute with $p_{i'}$, $p_{j'}$ and $z^{r}$ with $s \in \QQQ$, then, by easy manipulation we see that, up to multiply by a rational power of $z$: 
\begin{center} $[X(n) , \phi(v,z)] = z^{n} (\phi(X.v,z))$  and  $[L_{n} ,\phi(v,z)] = z^{n}[z\frac{d}{dz} + (n+1)h^{\ell +2}_{1/2}] (\phi(v,z))$     \end{center}
By compatibility and uniqueness theorem, $\exists s \in \QQQ$ such that $z^{s}\phi(v,z)$ is the spin $1/2$ and level $\ell + 2$ primary field  $ \phi_{i'j'}^{\ell + 2}(v,z)$ (up to a multiplicative constant) of $LSU(2)$. The power $s$ can be compute using lemma \ref{trianglu}.  It follows that $p_{i'}\Phi(v,z)  p_{j'} =  \phi_{i'j'}^{\ell + 2}(v,z) \otimes \rho(z)$. Now, $h^{\ell }_{\1/2, \1/2} = h^{\ell }_{\1/2} - h^{\ell +2 }_{\1/2}$, it follows that up to multiply by a rational power of $z$: 
\begin{center} $[L_{n} ,\rho(z)] = z^{n}[z\frac{d}{dz} + (n+1)h^{\ell }_{\1/2, \1/2}] \rho(z)$  \end{center}
We verify also, using the explicit formula for $G_{r}$ that: 
\begin{center} $[G_{-1/2} ,\rho(z)] = z^{-r-1/2}[G_{r} , \rho(z) ]$  \end{center}
Finally, by compatibility condition and uniqueness, $\exists s' \in \QQQ$ such that $z^{s'}\rho(z) $,  is the charge $(1/2, 1/2)$ primary field  $\phi_{ii' jj'}^{\1/2 \1/2 , \ell}(z)$ of $\Vir_{1/2}$ between $H_{jj'}^{\ell}$ and $H_{ii'}^{\ell}$ (up to a multiplicative constant). Finally by lemma \ref{trianglu}: 
\begin{center} $p_{i'} [I \otimes \phi_{ij}^{\1/2 \ell}(v,z)] p_{j'} = C.z^{-C_{ii'jj'}^{kk'}} \phi_{i'j'}^{\ell + 2}(v,z) \otimes  \phi_{ii' jj'}^{\1/2 \1/2 , \ell}(z)  $     \end{center}
 the value of $\sigma$ follows using characterization: integer and half-integer moded.

Now, the constant $C$ is  possibly zero. So, we will prove it's non-zero for the annonced constructible fields:

If  it exists $ j'$ such that, $\Phi(v,z)  p_{j'} = 0 $ $\forall v$, then, $\forall u \in L_{jj'}^{\ell} \otimes  L_{j'}^{\ell +2}$, $\Phi(v,z)u = 0$, but, by commutation relation with $I \otimes \psi(x,r)$ and $X(n) \otimes I$, it follows by irreducibility that $u \ne 0$ is cyclic and  $\Phi(v,z)u' = 0$ $\forall u' \in  \F_{NS}^{\gg}  \otimes L_{j}^{\ell}$. Then, $\Phi(v,z) = 0$ contradiction. So, $\forall j'$, $\Phi(v,z)  p_{j'} \ne 0 $, so it exists $  i'$ such that $p_{i'}\Phi(v,z)  p_{j'} \ne 0$. By the beginning of the proof, a necessary condition for $i'$ is that $\phi_{i'j'}^{\1/2 \ell + 2}$ is a non-zero primary field of $LSU(2)$. We will prove that this condition is also sufficient. For now, we know that for this $i'$, $\phi_{ii' jj'}^{\1/2 \1/2 , \ell}$ is a non-zero primary field of $\Vir_{1/2}$. 

Now,  $\forall i'$  let $\rho_{ii' jj'}^{\1/2 \1/2 , \ell}$ a multiple (possibly zero) of  $\phi_{ii' jj'}^{\1/2 \1/2 , \ell}$, such that: 
\begin{center} $\Phi(v,z)  =  \Phi_{ij}(v,z) = \sum   \rho_{ii' jj'}^{\1/2 \1/2 , \ell}(z) \otimes \phi_{i'j'}^{\1/2, \ell + 2}(v,z) $    \end{center}
Now, $ (\Phi_{ij}(u,z)  \Phi_{jk}(v,w) \Omega_{jj'kk'} \otimes  \Omega_{j'k'}, \Omega_{jj'ii'} \otimes \Omega_{j'i'}) \\
=  \sum  (   \rho_{ii' jj'}^{\1/2 \1/2 , \ell}(z)  \rho_{jj'kk'}^{\1/2 \1/2 , \ell}(w)   \Omega_{jj'kk'} , \Omega_{jj'ii'}) . ( \phi_{i'j'}^{ \1/2, \ell + 2}(u,z) \phi_{j'k'}^{\1/2 ,\ell + 2}(v,w)  \Omega_{j'k'}, \Omega_{j'i'})$   \\ 
We can write it as a relation between reduced $4$-point function: 

\begin{center} $F_{j}(\zeta) = \sum f_{j'}(\zeta) h_{jj'}(\zeta) $    \end{center}
We return in the context of reminder \ref{braid1} and \ref{braid2}: $F_{j}$ and $f_{j'}$ are holomorphic function from $\CCC \backslash [0 , \infty[$ to $W$. Let $v_{j'} \in W$ such that  $g(\zeta) v_{j'}  = \zeta^{\mu_{j'}} f_{j'}(\zeta)$. We apply the gauge transformation $g(\zeta)^{-1} $ on the previous equality: 
\begin{center}  $g(\zeta)^{-1}  F_{j}(\zeta)  =   \sum \zeta^{-\mu_{j'}}  v_{j'}  h_{jj'}(\zeta)  $    \end{center}

It follows that $ h_{jj'}$ is  holomorphic on $\CCC \backslash [0 , \infty[$, we get a formula for it: 

\begin{center} $  h_{jj'}(\zeta) = C. \zeta^{\mu_{j'}} (g(\zeta)^{-1}  F_{j}(\zeta) ,  v_{j'}   )  $   \end{center}
with $C$ a non-zero constant.
 
 This formula gives exactly the duality for braiding discovered by Tsuchiya-Nakanishi \cite{tsunaka}.  Then by reminder \ref{braid2}, the braiding matrix for the $ \rho_{ii' jj'}^{\1/2 \1/2 , \ell}(z)$ is the product of the braiding matrix for $LSU(2)$ at spin $1/2$ and  level $\ell$, times the transposed of the inverse of the braiding matrix for $LSU(2)$ at spin $1/2$ and  level $\ell +2$. All the coefficients are non-zero. Now, suppose that  $ \rho_{ii' jj'}^{\1/2 \1/2 , \ell}(z) =0 $, with $\phi_{ij}^{\1/2, \ell} $ and $\phi_{i'j'}^{\1/2,  \ell + 2}$ constructible then: 
 \begin{center} $0 = \rho_{ii' jj'}^{\1/2 \1/2 , \ell}(z)\rho_{ jj' ii'}^{\1/2 \1/2 , \ell}(w)  = \sum \lambda_{kk'} \rho_{ii' kk'}^{\1/2 \1/2 , \ell}(w)\rho_{ kk' ii'}^{\1/2 \1/2 , \ell}(z)   $       \end{center}
 with all braiding coefficients non-zero. But as we see previously by irreducibility, the right side admits at least a non-zero term, contradiction.
 
 For the braiding between charge $(0,1)$ and charge $(1/2 , 1/2)$ primary fields, we do the same starting with the Neveu-Schwarz fermion field $\psi(u,z) \otimes I$ commuting with $ I \otimes \phi_{ij}^{\1/2 \ell}(v,w)$. We find also that every possible braiding coefficients are non-zero.  The result follows.  \textbf{End of the proof.}

\begin{remark}  As a consequence of remark \ref{dotfat}, we know that such a braiding exists for  $(\gamma_{1} , \gamma_{2}) = (\beta,  \beta)$, but we don't know if every coefficients $ \mu_{rr'}$ are non-null. \end{remark}

\begin{proposition}  The primary fields of charge  $\alpha$ or $\beta$ are bounded and identifying the $L^{2}$-completion of  $\F^{\sigma}_{\lambda , \mu}$ with $L^{2}(\SSS^{1})e^{\sigma i\theta /2} \oplus  L^{2}(\SSS^{1})e^{(1-\sigma)i\theta /2} $, we obtain $\phi(f)$ for $f \in L^{2}(\SSS^{1})e^{\sigma i\theta /2}$,  $\theta(g)$ for $g \in L^{2}(\SSS^{1})e^{(1-\sigma)i\theta /2} $ with: \begin{center} $\Vert \phi(f) \Vert \le K \Vert f \Vert_{2}$ \quad and \quad   $\Vert \theta(g) \Vert \le K' \Vert g \Vert_{2}$  \end{center} \end{proposition}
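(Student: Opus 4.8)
The plan is to reduce the estimate to the boundedness of fermions, using the fact, established in Proposition~\ref{gloriato} and Theorem~\ref{woker}, that the charge $\alpha$ and charge $\beta$ primary fields are constructible: up to multiplication by a phase $e^{ir\theta}$ (the ``rational power of $z$''), the ordinary part is a compression $p_{i}\psi(\cdot)p_{j}$ of a smeared complex (for $\alpha$) or real (for $\beta$) fermion acting on the ambient Fock space $\F_{\CCC^{2}}^{\otimes \ell}\otimes \F_{NS}^{\gg}$ that carries the GKO realization of the discrete series, and Corollary~\ref{gloria} confirms that $\pi_{i}(\Vir_{1/2}(I^{c}))^{\natural}$ is generated by exactly such compressed fermions. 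Under the identification of the $L^{2}$-completion of $\F^{\sigma}_{\lambda,\mu}$ with $L^{2}(\SSS^{1})e^{\sigma i\theta/2}\oplus L^{2}(\SSS^{1})e^{(1-\sigma)i\theta/2}$, the test function $f$ for $\phi$ ranges over the first summand, which is precisely the moding ($\ZZZ+\sigma/2$) picked out by the compressed fermion. First I would record the fermion bound: from the Clifford reminder, $\pi(a(f))\omega=f\wedge\omega$, and the CAR relation $[a(f),a(f)^{\star}]_{+}=(f,f)$ gives $\Vert a(f)\Vert^{2}\le\Vert a(f)^{\star}a(f)+a(f)a(f)^{\star}\Vert=\Vert f\Vert^{2}$, whence $\Vert\psi(f)\Vert\le K\Vert f\Vert_{2}$ for complex fermions, and $\Vert c(f)\Vert\le 2\Vert f\Vert_{2}$ for real fermions via $c(f)=a(f)+a(f)^{\star}$.

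The bound for the ordinary part then follows from contractivity of compression together with invariance of the $L^{2}$ norm under the phase. Writing $e_{r}=e^{ir\theta}$, so that $\vert e_{r}\vert=1$ and $\Vert e_{r}f\Vert_{2}=\Vert f\Vert_{2}$, and using that the spectral projections $p_{i},p_{j}$ are contractions, I obtain
\begin{center} $\Vert\phi(f)\Vert=\Vert p_{i}\psi(e_{r}f)p_{j}\Vert\le\Vert\psi(e_{r}f)\Vert\le K\Vert f\Vert_{2}$. \end{center}
The passage from the operator-valued distribution $\phi(z)$ to the smeared operator $\phi(f)$ for $f\in L^{2}$ is handled exactly as for $LSU(2)$, by the convolution argument already invoked for the spin $1/2$ and spin $1$ fields (the corollary preceding this subsection), so no new analytic input is needed there.

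The main obstacle is the super part $\theta(g)$. A priori $\theta(z)=[G_{-1/2},\phi(z)]$ is a commutator with the \emph{unbounded} coset generator $G^{gko}$, whose explicit fermionic expression $\Phi(\tau_{gko},z)$ from Section~\ref{richarder} is cubic in the fermions; reading $\theta$ off this commutator does not manifestly produce a bounded operator, and indeed the Lie bracket relations of Lemma~\ref{localrelation} show $[G,\psi]$ involves the unbounded bosons $B$. The resolution, and the crux of the proof, is not to estimate $\theta$ through $[G_{-1/2},\phi]$ at all, but to identify $\theta$ \emph{directly} as a compression of a fermion field: by Proposition~\ref{gloriato} and the ``integer versus half-integer moded'' characterization in the proof of Theorem~\ref{woker}, the super part corresponds to the complementary moding $\ZZZ+(1-\sigma)/2$ of the same underlying complex (resp.\ real) fermion, hence is again a bounded compressed fermion. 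Once this identification is in place, the identical contraction-plus-phase argument yields $\Vert\theta(g)\Vert\le K'\Vert g\Vert_{2}$ for $g\in L^{2}(\SSS^{1})e^{(1-\sigma)i\theta/2}$, completing the proof. I would therefore organize the write-up so that the constructibility statement is invoked for both parts simultaneously, thereby never leaving the category of bounded operators.
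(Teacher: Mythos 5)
For the ordinary part $\phi(f)$ your argument coincides with the paper's own proof, which is exactly the one\-liner you expand: the charge $\alpha$ and $\beta$ fields are constructible (Theorem \ref{woker}, Proposition \ref{gloriato}), compressions of smeared fermions are bounded by the CAR estimate, and the rational power of $z$ is a phase that does not change $\Vert f \Vert_{2}$. On that half of the statement you and the paper agree.

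The treatment of the super part, however, contains a genuine gap. Your key claim --- that $\theta$ ``corresponds to the complementary moding $\ZZZ + (1-\sigma)/2$ of the same underlying fermion, hence is again a bounded compressed fermion'' --- is not what Proposition \ref{gloriato} or Theorem \ref{woker} says, and it is structurally false. The underlying complex (resp.\ real) fermion has one fixed Neveu-Schwarz moding, and its compression between a fixed pair of coset blocks produces exactly \emph{one} field; by the compatibility conditions and the uniqueness theorem, that field is the ordinary part $\phi(z)$, and the ``integer versus half-integer moded'' dichotomy in the proof of Theorem \ref{woker} only decides the value of $\sigma$ of this ordinary part --- it does not manufacture a second compressed fermion in the complementary moding. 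What $\theta(z) = [G_{-1/2},\phi(z)]$ actually is, in the fermionic picture of section \ref{richarder}, is the compression of the supercommutator of $G^{gko}_{-1/2}$ with the fermion; since $G^{gko}$ is a sum of (fermion)$\times$(current) and cubic-in-fermion terms, this supercommutator is a fermion \emph{bilinear}: for charge $\beta$ a combination of the level $\ell$ current and the quadratic fermion current, for charge $\alpha$ a sum $\sum_{k} \psi_{k}(z) \otimes \psi(X_{k}v,z)$ of two fermions at a coincident point. Smeared fermion bilinears are precisely the (super)currents and are unbounded --- their norms grow with the particle number --- so ``compressions of fermions are bounded'' does not apply to them. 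In fact no identification of the kind you propose can give the stated bound: $\theta$ has conformal dimension $h + \frac{1}{2} > \frac{1}{2}$ (with $h = h_{22}^{m}$ or $h_{13}^{m} > 0$), and the identity channel in the two-point function of $\theta$ forces $\sum_{n \le N} \Vert \theta_{-n}\xi \Vert^{2} \sim N^{2h+1}$ on a lowest-energy vector $\xi$, so the modes $\theta_{n}$ are not even uniformly bounded and an estimate $\Vert \theta(g) \Vert \le K' \Vert g \Vert_{2}$ for all $g \in L^{2}$ cannot hold; only energy bounds, against Sobolev norms of $g$, are available. To be fair, the paper's one-line proof is equally silent on $\theta(g)$, and the paper only ever uses the even, ordinary parts in the chains and the fusion arguments; but your write-up elevates this unjustified identification to the crux of the proof, so it must be flagged as the point where the argument fails.
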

\begin{proof} The primary fields of charge $\alpha$  or $\beta$ are constructibles, and the compressions of fermions are bounded operators.    \end{proof}

   \begin{corollary}  (Localised braiding relation)  Let $f  \in L^{2}_{I}(\SSS^{1})e^{\sigma i\theta /2}$ and $g \in L^{2}_{J}(\SSS^{1})e^{\sigma i\theta /2} $, with $I$, $J$ be two disjoint proper intervals of $\SSS^{1}$. 
Using an argument of convolution (as \cite{2} p 516), we can write the following localised braiding relations: 
\begin{center}
 $\phi_{ii'jj'}^{\gamma_{1} \ell}(f) \phi_{jj'kk'}^{\gamma_{2} \ell}( g) = \sum  \mu_{rr'}  \phi_{ii'rr'}^{\gamma_{2} \ell}( e_{\lambda}g) \phi_{rr'kk'}^{\gamma_{1} \ell}(\bar{e}_{ \lambda}f)$  with $\mu_{rr'}  \ne 0 $.
\end{center}
with $e_{\lambda} = e^{i \lambda \theta} $, $\lambda = h_{ii'}^{\ell}+  h_{kk'}^{\ell} - h_{jj'}^{\ell} -  h_{rr'}^{\ell}$ and $(\gamma_{1}  , \gamma_{2}) $ as previously.
   \end{corollary}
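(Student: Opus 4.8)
The plan is to descend from the vertex-operator braiding of Corollary \ref{joker} to its smeared form by the convolution argument of Wassermann \cite{2}, the whole manipulation being legitimated by the preceding proposition, which tells us that the charge $\alpha$ and $\beta$ fields become bounded operators once smeared against $L^{2}$-functions. Throughout I keep $(\gamma_{1},\gamma_{2})\in\{(\alpha,\alpha),(\beta,\alpha),(\alpha,\beta)\}$ so that Corollary \ref{joker} is available with all coefficients $\mu_{rr'}$ non-zero.

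First I would read Corollary \ref{joker} analytically, in the sense of the remark following the braiding theorem and of Reminders \ref{braid1} and \ref{braid2}: the two operator products $\phi_{ii'jj'}^{\gamma_{1}\ell}(z)\phi_{jj'kk'}^{\gamma_{2}\ell}(w)$ and $\sum\mu_{rr'}\phi_{ii'rr'}^{\gamma_{2}\ell}(w)\phi_{rr'kk'}^{\gamma_{1}\ell}(z)$ are the two boundary values (from $\vert z\vert<\vert w\vert$ and from $\vert z\vert>\vert w\vert$) of a single function whose only finite singularity is at $z=w$. The two radial orderings carry different conformal-weight prefactors, and one reads directly from the dimensions $\triangle$ of the four fields that they differ by the monodromy factor $(w/z)^{\lambda}$ with $\lambda=h_{ii'}^{\ell}+h_{kk'}^{\ell}-h_{jj'}^{\ell}-h_{rr'}^{\ell}$; this is exactly the difference of the channel exponents of the reduced four-point functions, computable by Lemma \ref{trianglu}.

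Next I would smear. Writing $\phi(f)=\frac{1}{2\pi}\int_{0}^{2\pi}\phi(e^{i\theta})f(\theta)\,d\theta$ and forming the double integral of $\phi_{ii'jj'}^{\gamma_{1}\ell}(z)\phi_{jj'kk'}^{\gamma_{2}\ell}(w)$ against $f(\theta_{z})g(\theta_{w})$, the hypothesis that $f$ is supported in $I$ and $g$ in $J$ with $I\cap J=\emptyset$ keeps $\theta_{z}-\theta_{w}$ bounded away from $0$; hence $\zeta=w/z$ never reaches the singularity $\zeta=1$ and I may push $\vert z\vert$ from just inside $\vert w\vert$ to just outside it without crossing $z=w$, replacing the left integrand by the braided one. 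Passing from the analytically continued product to honest circle-smearings of the braided fields costs precisely the factor $(w/z)^{\lambda}=e^{i\lambda(\theta_{w}-\theta_{z})}$, which I absorb into the test functions: $e^{i\lambda\theta}$ into the function multiplying the $w$-field (charge $\gamma_{2}$), giving $e_{\lambda}g$, and $e^{-i\lambda\theta}$ into the function multiplying the $z$-field (charge $\gamma_{1}$), giving $\bar{e}_{\lambda}f$. This yields the announced identity
\begin{center}
$\phi_{ii'jj'}^{\gamma_{1}\ell}(f)\,\phi_{jj'kk'}^{\gamma_{2}\ell}(g)=\sum\mu_{rr'}\,\phi_{ii'rr'}^{\gamma_{2}\ell}(e_{\lambda}g)\,\phi_{rr'kk'}^{\gamma_{1}\ell}(\bar{e}_{\lambda}f)$, with $\mu_{rr'}\ne 0$,
\end{center}
the coefficients being literally those of Corollary \ref{joker}.

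The main obstacle is turning this picture into a rigorous computation: justifying the contour deformation together with the resulting identification of the monodromy phase, and interchanging the a priori infinite sum over $r,r'$ with the two integrations. This is exactly where the two inputs enter decisively. The boundedness of all fields of charge $\alpha$ and $\beta$ lets me pass from the matrix-element braiding, valid on finite-energy vectors (Reminders \ref{braid1}, \ref{braid2}), to a bounded-operator identity on the whole $L^{2}$-completion by density; and the holomorphy and decay of the reduced four-point functions, combined with the disjointness of $I$ and $J$, guarantees the convergence needed for the termwise manipulation and for the unambiguous continuation on a single sheet. The $\sigma$-bookkeeping (integer versus half-integer moding, as in Theorem \ref{woker}) is routine and only fixes which space $\F^{\sigma}_{\lambda,\mu}$ the test functions inhabit.
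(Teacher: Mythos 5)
Your proposal is correct and follows essentially the same route as the paper, which simply invokes the convolution argument of Wassermann \cite{2} p 516 applied to the operator braiding of Corollary \ref{joker}, with the boundedness proposition supplying the passage from matrix elements on finite-energy vectors to identities between bounded operators; your phase bookkeeping (distributing $(w/z)^{\lambda}$ as $e_{\lambda}$ on $g$ and $\bar{e}_{\lambda}$ on $f$, with $\lambda$ read off from the conformal dimensions via Lemma \ref{trianglu}) is exactly what the notation $e_{\lambda}$, $\bar{e}_{\lambda}$ encodes. The only blemish is inessential: the sum over $(r,r')$ is finite (there are finitely many discrete series channels at fixed charge $c_{m}$), so no interchange of an infinite sum with the integrations needs to be justified.
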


\subsection{Application to irreducibility}
\begin{definition} Let $\M$, $\N \subset B(H)$ be von Neumann algebra, then, \\Ê $\M \vee  \N$ is the von Neumann algebra generated by $\M$ and $\N$.     \end{definition}
\begin{notation} We simply note $\phi_{ij}^{k}(f)$ for primary field of charge $k$ for $\Vir_{1/2}$; the charge $c_{m}$ is fixed and $i = 0$ significate $i=(0,0)$.   \end{notation}
\begin{proposition}  \label{chainvn} The chains of constructible primary fields of the form: 
\begin{center} $\phi_{0i_{1}}^{\alpha}(f_{1})\phi_{i_{1}i_{2}}^{\alpha}(f_{2}) ... \phi_{i_{r-1}i_{r}}^{\alpha}(f_{r})\phi_{i_{r}0}^{\alpha}(f_{r+1}) $  with $\alpha = (\1/2 , \1/2)$  and $f_{i}$ on $I$.  \end{center} are bounded operators and generate the von Neumann algebra $\N_{00}^{\ell}(I)$.   \end{proposition}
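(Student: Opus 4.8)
The plan is to prove the two assertions — boundedness and generation — separately, reducing generation to the already-established description of $\N_{00}^{\ell}(I)$ by compressed fermions. Boundedness is immediate: by the preceding proposition each charge $\alpha$ field satisfies $\Vert\phi(f)\Vert\le K\Vert f\Vert_{2}$, so any finite product of such fields is a bounded operator.

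For the inclusion of the $\alpha$-chains in $\N_{00}^{\ell}(I)$ I would argue by supercommutation. By Proposition \ref{gloriato} each factor $\phi^{\alpha}_{i_{s-1}i_{s}}(f_{s})$ with $f_{s}$ supported in $I$ is, up to a rational power of $z$ absorbed in the smearing, a compressed complex fermion $p_{i_{s-1}}a(f_{s})p_{i_{s}}$ on $I$; since the fermions on $I$ supercommute with those on $I^{c}$ and the block projections $p_{i}$ commute with $\pi_{gko}(\Vir_{1/2})$, each such factor supercommutes with $\pi_{gko}(\Vir_{1/2}(I^{c}))$. Hence a chain running from $p_{0}$ back to $p_{0}$ supercommutes with $\pi_{0}(\Vir_{1/2}(I^{c}))$, i.e. lies in $\pi_{0}(\Vir_{1/2}(I^{c}))^{\natural}$, which by Haag-Araki duality on the vacuum equals $\N_{00}^{\ell}(I)$. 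Writing $\A_{\alpha}$ for the von Neumann algebra generated by the $\alpha$-chains, this gives $\A_{\alpha}\subseteq\N_{00}^{\ell}(I)$.

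For the reverse inclusion I would invoke the earlier corollary that $\N_{00}^{\ell}(I)$ is generated by chains of \emph{all} compressed fermions. By Proposition \ref{gloriato} the complex ones are the charge $\alpha$ fields (already in $\A_{\alpha}$) and the real ones are the charge $\beta$ fields, $\beta=(0,1)$, so it suffices to show that every smeared $\beta$ field on $I$ already lies in $\A_{\alpha}$. The mechanism is that $\beta$ is produced inside a product of two charge $\alpha$ fields: since $\alpha$ moves each coset spin by $\pm 1/2$, composing two $\alpha$ fields through an intermediate block whose first spin returns to its value while its second advances by one realizes exactly the block change $(i,j)\to(i,j+1)$ of a $\beta$ field (reflecting $V_{1}\subset V_{1/2}\otimes V_{1/2}$ transported through the GKO construction). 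Matching compatibility conditions and using the uniqueness of primary fields, such a product, projected onto the external blocks and onto its leading channel, is a multiple of the corresponding $\beta$ field; the braiding relations of Corollary \ref{joker} for $(\alpha,\alpha)$ and $(\alpha,\beta)$ together with their localised versions, and the $L^{2}$-boundedness estimate, let me carry this from formal fields to smeared operators on $I$. Consequently the $\beta$-chains reduce to $\alpha$-chains, giving $\N_{00}^{\ell}(I)\subseteq\A_{\alpha}$ and hence equality.

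The hard part will be this last reduction: realizing the charge $\beta$ field as a genuine bounded operator inside $\A_{\alpha}$ rather than as a merely formal operator-product expansion. The delicate points are that a product of two smeared $\alpha$ fields is a sum over admissible intermediate blocks, so one must isolate the single $\beta$-type channel, and that one must check these $\alpha$-products weakly exhaust the smeared $\beta$ fields as the smearings on $I$ vary; both are controlled by the (localised) braiding relations and the boundedness and convolution estimates established above, which is precisely why they were proved first. I would take care to use only braiding and uniqueness here, and not the Connes fusion rules, since the latter are established later and partly rely on the present statement.
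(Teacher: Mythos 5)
Your split into boundedness, the inclusion of the $\alpha$-chains in $\N_{00}^{\ell}(I)$, and the reverse inclusion is reasonable, and the first two parts agree with the paper: boundedness is the proposition immediately preceding, and the inclusion follows from locality of smeared primary fields plus Haag-Araki duality on the vacuum (though, strictly, a charge $\alpha$ field is a tensor \emph{component} of a compressed complex fermion in the coset decomposition, not the compressed fermion itself). For the reverse inclusion, note that the paper's entire proof is the citation of Corollary \ref{gloria} and Proposition \ref{gloriato}; what those results actually give is generation of $\N_{00}^{\ell}(I)$ by chains of compressed \emph{complex and real} fermions, i.e.\ by chains of charge $\alpha$ \emph{and} charge $\beta$ fields, since $\N(I)=\M(I)^{\otimes \ell}\otimes\M_{NS}^{\gg}(I)$ needs both kinds of generators (the introduction itself says the vacuum algebra is generated by ``products of compressed real and complex fermions''). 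So you have correctly isolated the point on which the paper is silent, and your last step is an addition with no counterpart in the paper's proof.

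That last step is where your argument genuinely fails. You claim that a product of two smeared charge $\alpha$ fields, ``projected onto the external blocks and onto its leading channel,'' is a multiple of the corresponding charge $\beta$ field, by uniqueness of primary fields. But a product of two primary fields is not a primary field, so the uniqueness theorem does not apply to it; and the block projections $p_{ij}$ only fix the source and target representations -- they cannot separate the $\beta$ primary from the infinitely many descendant contributions living in the same block, so ``projecting onto the leading channel'' is not an operation available in this setting. Extracting an OPE channel as a bounded operator requires a limiting procedure: compare the paper's proposition on the leading term in the OPE, which extracts only the \emph{identity} channel of $\bar a a$ and already needs the geometric modular flow, weak compactness, the cancellation theorem and Haag-Araki duality. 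Braiding relations together with $L^{2}$-boundedness do not yield such an extraction, so the reduction of $\beta$-chains to the algebra generated by $\alpha$-chains is unsupported as written. A workable repair would be structural rather than field-theoretic: the algebra $\A_{\alpha}$ generated by the $\alpha$-chains on $I$ is globally invariant under the geometric modular group of $(\N_{00}^{\ell}(I),\Omega)$ by covariance, so by Takesaki devissage $\A_{\alpha}=\N_{00}^{\ell}(I)$ if and only if $\A_{\alpha}\Omega$ is dense in $H_{00}^{\ell}$; but that density still requires an argument, which neither your proposal nor the paper supplies.
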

\begin{proof} By corollary \ref{gloria} and proposition \ref{gloriato}. \end{proof}
\begin{remark}  \label{joria}Let $\sigma_{t} $ be the geometric modular action described on reminder \ref{complexfer}. Let $\psi^{k}_{ij}(f) $ be a bounded primary field of charge $k$ concentrated on a proper interval $J$. Let $\sigma_{t}(\psi^{\alpha}_{ij}(f) ):= \pi_{i}(\varphi_{t})\psi^{\alpha}_{ij}(f)\pi_{j}(\varphi_{t})^{\star}=\psi^{\alpha}_{ij}(u_{t}.f) $ by  the covariance relations. Then, $\sigma_{t}(\psi^{\alpha}_{ij}(f) ) $ is a primary field concentrated on $\varphi_{t}(J ) \to \{ 1  \} $ (when $t \to \infty$ ).     \end{remark}
\begin{reminder} \label{cancell} (Cancellation theorem)   If a unitary representation of a connected semisimple non-compact group with finite center has no fixed vectors, then its matrix coefficients vanish at $\infty$. We can find a proof on Zimmer's book \cite{zimm}. For example, $G=SU(1,1) \simeq SL(2, \RRR)$ (non-compact) is implemented on the irreducible positive energy representations $H$ of $\Vir_{1/2}$, which give a unitary representation of a central cyclic extension $\G$ of $G$, whose Lie algebra is generated by $L_{-1}$, $L_{0}$ and $L_{1}$. But if $ \xi \in H$,  $L_{0}\xi = 0$ implies immediatly that $H = H_{0}$ and  $\xi = \Omega$ (up to a multiplicative constant). So $G$ admits no fixed vectors outside of the vacuum. But the modular operators $U_{t}$ go to $\infty$ when $t \to \infty$. Then, their matrix coefficients vanish at $\infty$. In our case, we can prove the cancellation theorem directly, because $H$ decomposes into a direct sum of  irreducible positive energy representation of $\G$ and each summands is a discrete series representation of $\G$, so can be realized as a subrepresentation of $L^{2}(\G)$, and then has matrix coefficient tending to zero at $\infty$ (see Pukanszky \cite{puka}).  \end{reminder}

\begin{proposition} (Generically non-zero) \label{generically}  Let $a = \phi^{\alpha}_{\alpha 0}(f) $ and $b =  \phi^{\alpha}_{0 \alpha}(g) $ with $f$, $g$  on  proper intervals. Then, $(b a \Omega , \Omega)$ is  non-zero in general.    \end{proposition}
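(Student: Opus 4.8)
The plan is to reduce the statement to the non-vanishing of a single vector, namely $a\Omega \in H_{\alpha}$, and then to produce the desired non-zero two-point function by taking $b$ to be the adjoint of $a$. First I would fix, by Theorem \ref{woker}, a non-zero constructible field $a = \phi^{\alpha}_{\alpha 0}(f)$ with $f$ supported in a proper interval $I$; by the boundedness of constructible charge-$\alpha$ fields this is a bounded operator $H_{0} \to H_{\alpha}$. Since the adjoint of a compressed complex fermion is again a compressed complex fermion, $a^{\star}$ is itself a constructible charge-$\alpha$ field of the form $\phi^{\alpha}_{0\alpha}(g)$, with $g$ obtained from $\bar f$ (supported in $I$). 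For this choice one has $(ba\Omega,\Omega) = (a^{\star}a\Omega,\Omega) = \Vert a\Omega\Vert^{2}$, so the whole proposition comes down to checking $a\Omega \neq 0$.

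The key step is therefore $a\Omega \neq 0$, and here I would use that $a$ intertwines the complementary local algebra. Indeed, the covariance relations (Lemma \ref{relationsss}), once smeared, express the bracket of $a$ with $L_{h}$ or $G_{h}$ for $h$ supported in $I^{c}$ as a field smeared against a product of functions of disjoint supports, hence zero, exactly as in the locality Lemma \ref{supco2}; so $a$ supercommutes with $\Vir_{1/2}(I^{c})$ at the Lie level, and by Nelson's theorem with the bounded functions generating $\N_{00}^{\ell}(I^{c})$ on $H_{0}$ and its partner on $H_{\alpha}$. Thus $a$ is, up to the grading sign, an intertwiner of the $\N(I^{c})$-actions: $a\,x = \pm\,\tilde x\,a$ for $x \in \N_{00}^{\ell}(I^{c})$, with $\tilde x$ the corresponding operator on $H_{\alpha}$. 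If $a\Omega$ were zero, then $a\,x\Omega = \pm\,\tilde x\,a\Omega = 0$ for every such $x$; but $\Omega$ is cyclic for $\N_{00}^{\ell}(I^{c})$ by the Reeh--Schlieder theorem, so $\N_{00}^{\ell}(I^{c})\Omega$ is dense and $a = 0$, contradicting $a \neq 0$. Hence $a\Omega \neq 0$ and $(ba\Omega,\Omega) = \Vert a\Omega\Vert^{2} > 0$ for this pair $(f,g)$.

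Finally, to read off that the two-point function is non-zero in general, I would note that $(f,g) \mapsto (\phi^{\alpha}_{0\alpha}(g)\phi^{\alpha}_{\alpha 0}(f)\Omega,\Omega)$ is a bounded sesquilinear form in the smearing functions, and we have exhibited one pair on which it does not vanish; a continuous sesquilinear form that is not identically zero is non-zero on a dense open set, which is the meaning of ``in general''. The remaining and most delicate point is to secure non-vanishing when $f$ and $g$ are prescribed on two disjoint intervals, where the clean adjoint choice is unavailable. For this I would exploit the geometric modular flow: since $\triangle^{it}\Omega = \Omega$, the value $(\sigma_{t}(b)\sigma_{t}(a)\Omega,\Omega)$ is independent of $t$, while by Remark \ref{joria} the supports of $\sigma_{t}(a)$ and $\sigma_{t}(b)$ both shrink to the common fixed endpoint as $t \to \infty$. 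Passing to this coincidence limit reduces the two-point function to a short-distance (operator product) computation, and the cancellation theorem (Reminder \ref{cancell}) guarantees that the matrix coefficients along every non-vacuum channel of the $SU(1,1)$-action decay, so that only the vacuum channel survives and contributes the non-zero leading term. Controlling this limit --- matching the decaying-channel estimate against the surviving leading coefficient --- is the main obstacle, whereas the reduction to $a\Omega \neq 0$ in the coincident case is routine.
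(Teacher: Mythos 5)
Your reduction to $a\Omega \neq 0$, and the Reeh--Schlieder/intertwining argument you give for it, are sound --- indeed the paper's own proof ends the same way ($a^{\star}a\Omega = 0$ plus $\Omega$ separating forces $a = 0$). But this only settles the case $b = a^{\star}$, i.e. $g = \bar{f}$ supported in the \emph{same} interval as $f$. The real content of ``in general'', as the proposition is later used in step (1) of the OPE leading-term proposition, is non-vanishing of $(ba\Omega,\Omega)$ for $a$, $b$ concentrated on two \emph{disjoint} (adjacent) intervals $I_{2}$, $I_{1}$, and your argument does not reach that case. Your genericity step --- a continuous sesquilinear form, once non-zero somewhere, is non-zero on a dense open set of $L^{2}(\SSS^{1}) \times L^{2}(\SSS^{1})$ --- is true but does not help: $L^{2}(I_{1}) \times L^{2}(I_{2})$ is a closed, nowhere-dense subspace of the product, so density of the non-vanishing set in the ambient space is perfectly compatible with the form vanishing identically on every disjointly-supported pair. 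You flag this yourself, but the repair you sketch (modular flow shrinking supports to a point, plus the cancellation theorem) is precisely the skeleton of the OPE proposition's proof, which takes the present proposition as an input; pursued literally it is circular, and in any case you concede the limit is not controlled. So the proposal has a genuine gap exactly where the proposition has its point.

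The missing idea, which is the actual mechanism of the paper's proof, is to use the quantized rotation group rather than abstract density. Set $b_{\theta} = R_{\theta}^{\star}a^{\star}R_{\theta}$ with $R_{\theta} = e^{iL_{0}\theta}$; by covariance $b_{\theta}$ is a charge-$\alpha$ field localised in the rotated interval, so the family sweeps through all relative positions, including ones disjoint from the support of $f$. Since $R_{\theta}\Omega = \Omega$, one has $(b_{\theta}a\Omega,\Omega) = (R_{\theta}a\Omega, a\Omega)$, and positive energy makes this $\sum_{n \in \NNN} z^{n}\Vert \xi_{n/2}\Vert^{2}$ with $z = e^{i\theta/2}$: the boundary value of a function holomorphic in the unit disc, continuous up to the circle, with non-negative coefficients. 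If the pairing vanished for an open set of angles, Schwarz reflection would force it to vanish identically, giving $\Vert a\Omega\Vert^{2} = 0$, hence $a = 0$ (by the separating property of $\Omega$), a contradiction. Thus the zero set in $\theta$ has empty interior, and intersecting the dense open non-vanishing set with the open set of $\theta$ placing the support of $b_{\theta}$ inside any prescribed interval disjoint from that of $f$ produces exactly the disjoint-support examples your approach cannot: this is the genericity the later applications require. Your first two paragraphs can stand, but the third should be replaced by this rotation/holomorphy argument.
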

\begin{proof} Let  $a = \phi^{\alpha}_{\alpha 0}(f)  \ne 0$  and   $R_{\theta}$ be the quantized rotation action: $R_{\theta} = e^{iL_{0}\theta}$ (see remark \ref{realcova}). Let  $b_{\theta} = R^{\star}_{\theta} a^{\star} R_{\theta}$. We suppose that $(b_{\theta} a \Omega , \Omega ) = 0$ for $\vert \theta - \theta_{1} \vert \le \varepsilon $ with $\theta_{1}$ fixed and $\varepsilon  > 0 $.  Then $(R^{\star}_{\theta} a^{\star} R_{\theta} a \Omega , \Omega ) = 0$. But $L_{0} \Omega = 0$ on the vacuum representation. Then, $R_{\theta}\Omega = \Omega$ and $(  R_{\theta} a \Omega , a \Omega ) = 0$. Now, by positive energy of the representation $a\Omega = \sum_{n \in \1/2 \NNN} \xi_{n} $ (coming from the orthogonal decomposition for $L_{0}$) and $\Vert a \Omega \Vert ^{2} = \sum \Vert \xi_{n} \Vert ^{2}$. Now, with $z = e^{i \theta /2}$, $(  R_{\theta} a \Omega , a \Omega )  = \sum_{n \in \NNN} z^{n}  \Vert \xi_{n/2} \Vert ^{2} = f(z)$, let $g(z) = f(e^{-i \theta_{1} /2} z) $. Then, $g$ extends to a continuous function on the closed unit disc, holomorphic in the interior and vanishing on the unit circle near $\{  1  \}$. By the Schwarz reflection principle and the  Cayley transfrom, $g$ must vanishes identically in $z$. So,  $(  R_{0} a \Omega , a \Omega ) = \Vert a \Omega \Vert ^{2}= 0$. Then $a\Omega = 0 $, so  $a^{\star}a \Omega= 0$. But $\Omega$ is a separating vector on the von Neumann algebra, so $a^{\star}a = 0$, and $a = 0$, contradiction.  
\end{proof}

\begin{proposition}(Leading term in OPE of primary fields)  \\ÊLet $I$ be a proper interval of $\SSS^{1}$, and $I_{1}$, $I_{2}$ be subintervals be subintervals obtained by removing a point. Let $a_{ \nu \mu }$ and  $b_{\mu \nu }$ be non-zero primary fields of charge $\alpha$,  localised in $I_{1}$ and $I_{2}$ respectively, then $\sigma_{t}(a_{ \nu \mu }b_{\mu \nu }) \to^{w} Id_{H_{i}}  $ (up to a multiplicative constant).   \end{proposition}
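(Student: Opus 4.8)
The plan is to combine the covariance of the charge-$\alpha$ fields under the modular (M\"obius) flow with an operator-product separation into an identity channel and strictly positive conformal-dimension channels, discarding the latter by the cancellation theorem and pinning the former by Proposition \ref{generically}. First I would record the geometry. By remark \ref{joria} and the covariance relations, $\sigma_{t}(a_{\nu\mu}) = \pi_{\nu}(\varphi_{t})\,a_{\nu\mu}\,\pi_{\mu}(\varphi_{t})^{\star}$ is again a charge-$\alpha$ primary field, now localised in $\varphi_{t}(I_{1})$, and likewise $\sigma_{t}(b_{\mu\nu})$ is localised in $\varphi_{t}(I_{2})$. Since $\varphi_{t}$ is the M\"obius flow fixing $\partial I$ (reminder \ref{complexfer}), every interior point of $I$ is attracted as $t\to+\infty$ to one endpoint, say $\{1\}$, so both $\varphi_{t}(I_{1})$ and $\varphi_{t}(I_{2})$ shrink to $\{1\}$. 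As $\sigma_{t}$ is implemented by the unitaries $\pi_{\bullet}(\varphi_{t})$, the net $T_{t}:=\sigma_{t}(a_{\nu\mu}b_{\mu\nu})$ is uniformly bounded by $\Vert a_{\nu\mu}\Vert\,\Vert b_{\mu\nu}\Vert$, so by weak compactness of bounded balls it suffices to identify its weak limit points.

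Next I would show that every weak limit point $T$ is a scalar. Fix an interval $K\subset\SSS^{1}$ with $\{1\}\notin K$. For $t$ large, $\varphi_{t}(I_{1})\cup\varphi_{t}(I_{2})$ is disjoint from $K$, so by locality (lemma \ref{supco2}) the fields $\sigma_{t}(a_{\nu\mu})$ and $\sigma_{t}(b_{\mu\nu})$ supercommute with $\pi_{\nu}(\Vir_{1/2}(K))$, hence so does $T_{t}$; passing to the weak limit, $T$ supercommutes with $\pi_{\nu}(\Vir_{1/2}(K))$ for every such $K$. As $K$ exhausts $\SSS^{1}\setminus\{1\}$, these local algebras generate, by additivity and the irreducibility of $H_{\nu}$ as a $\Vir_{1/2}$-module, all of $B(H_{\nu})$; therefore $T\in B(H_{\nu})^{\natural}=\CCC\,Id$ (lemma \ref{naturals}), i.e. $T=c\,Id_{H_{\nu}}$.

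It remains to see that $c$ is a single non-zero constant. I would expand the merging product via its operator product expansion: the fusion $\alpha\times\alpha$ produces the identity channel together with channels carried by primary fields $\phi^{\gamma}_{\nu\nu}$ of strictly positive conformal dimension $h_{\gamma}>0$, so that $T_{t}=c(t)\,Id+\sum_{\gamma\neq 0}\sigma_{t}(\phi^{\gamma}_{\nu\nu}(\cdots))$. Each non-identity summand is the image under the modular flow of a fixed primary field of positive dimension concentrated near $\{1\}$; since the modular flow is the $SU(1,1)$ M\"obius flow (generator $L_{1}-L_{-1}$), which has no finite-energy invariant vector in a non-trivial charge sector, the cancellation theorem (reminder \ref{cancell}) forces the matrix coefficients of these terms against finite-energy vectors to vanish as $t\to\infty$, so the weak limit retains only the conformal-dimension-zero part. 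Thus $T_{t}\to^{w}c\,Id$ with $c=\lim_{t}c(t)$ the identity-channel coefficient; evaluating this coefficient against the lowest-energy vector identifies it, in the vacuum sector via local equivalence (theorem \ref{recallll}), with the four-point quantity shown to be generically non-zero in Proposition \ref{generically}. Hence $c\neq 0$, and after the standard rescaling of the fields $\sigma_{t}(a_{\nu\mu}b_{\mu\nu})\to^{w}Id_{H_{i}}$.

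The main obstacle is this last step: proving that the strictly positive-dimension contributions wash out weakly under the modular flow while only the scalar channel survives, and simultaneously certifying that the surviving coefficient is non-zero. This is precisely where the cancellation theorem and the generic non-vanishing of Proposition \ref{generically} must be used in tandem, and where one must upgrade subnet convergence to genuine weak convergence of the whole net; the scalar-ness established in the second paragraph is the routine part, but extracting the leading term and its value is the delicate one.
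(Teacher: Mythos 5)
Your proof has two genuine gaps, both concentrated exactly where the paper has to work hardest. First, in your second paragraph you deduce that every weak limit point $T$ is scalar, for an \emph{arbitrary} sector $H_{\nu}$, by claiming that the local algebras $\pi_{\nu}(\Vir_{1/2}(K))''$ with $K$ avoiding $\{1\}$ generate $B(H_{\nu})$ ``by additivity and irreducibility.'' In this paper that generation statement is not available at this point: additivity is precisely the von Neumann density proposition, whose proof \emph{uses} the present proposition (the chains with $j_{s}\neq 0$ are handled by this OPE result), and the generation of $B(H_{\nu})$ by local algebras missing a point is essentially the irreducibility of the Jones--Wassermann subfactor, which is again downstream. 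Moreover, for $\nu\neq 0$ Haag--Araki duality \emph{fails} (that failure is the subfactor itself), so the only scalar-ness argument the paper can run is on the vacuum, where duality holds: there $T$ supercommutes with $\bigvee\N_{00}^{\ell}(J_{n}^{c})$, whence $T\in\bigcap\N_{00}^{\ell}(J_{n})=\CCC$. The general sector is then reached not by a duality argument at $\nu$ but by multiplying by a \emph{minimal chain} $y_{\nu 0}$ of charge-$\alpha$ fields and applying the braiding relations, which produce $\lambda\, y_{\nu 0}\,\sigma_{t}(a_{0\alpha}b_{\alpha 0})$ (vacuum case, $\lambda\neq 0$) plus terms $A_{\gamma}\sigma_{t}(a_{\gamma\alpha}b_{\alpha 0})$, $\gamma\neq 0$. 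The braiding relations, which are the paper's key tool here, never appear in your argument.

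Second, your third paragraph postulates a decomposition $T_{t}=c(t)\,Id+\sum_{\gamma\neq 0}\sigma_{t}(\phi^{\gamma}_{\nu\nu}(\cdots))$ and then applies the cancellation theorem to the non-identity summands. Neither step stands. Such an identity-plus-higher-channels expansion is never constructed in the paper; it is, in effect, the statement being proved, and the paper manufactures its substitute only after braiding against $y_{\nu 0}$. Worse, the cancellation theorem cannot be applied to operators of the form $\pi_{\nu}(U_{t})X\pi_{\nu}(U_{t})^{\star}$ on a fixed sector $H_{\nu}$: conjugation fixes the identity, so conjugation alone forces no decay, and the theorem controls matrix coefficients of $U_{t}$ applied to a fixed vector, not conjugations. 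The paper's step (2) works only because the operators there map \emph{out of the vacuum}, $a_{\gamma\alpha}b_{\alpha 0}:H_{0}\to H_{\gamma}$, so that $U_{t}$-invariance of $\Omega$ gives $\sigma_{t}(X)\Omega=\pi_{\gamma}(U_{t})(X\Omega)$, cancellation kills $(\pi_{\gamma}(U_{t})\eta,\xi)$ since $H_{\gamma}$ has no fixed vectors, and then the separating property of $\Omega$ upgrades $T\Omega=0$ to $T=0$. Your argument has no such vacuum anchoring, which is exactly why the reduction to the vacuum via chains and braiding cannot be bypassed.
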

\begin{proof} We adapt to $\Vir_{1/2}$,  a proof of A. Wassermann \cite{wass3} for $LSU(2)$. 

Without a loose of generality, we  can take   $ \{ 1 \} \in  \bar{ I_{1} } \cap \bar{ I_{2} } $. Let $a$ and $b$ be generic primary fields of charge $\alpha$ concentrate on $I_{2} $ and $I_{1} $ respectively.

(1)We first prove that $\sigma_{t}(a_{ 0 \alpha }b_{\alpha 0 }) \to^{w} C$ non-zero constant:     \\  
$\Vert \sigma_{t}(a_{ 0 \alpha }b_{\alpha 0 }) \Vert $ is clearly bounded, then by the weak compacity of the unit ball, it exists a sequence $t_{n}$ such that  $\sigma_{t_{n}}(a_{ 0 \alpha }b_{\alpha 0 }) \to^{w} T  $. By the remark \ref{joria},  $\sigma_{t_{n}}(b_{ 0 \alpha }a_{\alpha 0 })$ is concentrated  on $J_{n} $ with $\bigcap J_{n} =  \{ 1 \}$. We obtain that $T$ supercommutes with $\bigvee \N_{00}^{\ell}(J_{n}^{c})$. By Araki-Haag duality, $(\bigvee \N_{00}^{\ell}(J_{n}^{c}))^{\natural} = \bigcap \N_{00}^{\ell}(J_{n})) =  \N_{00}^{\ell}(\{ 1 \}) = \CCC$. Then $T \in \CCC Id$. Now, $ ( \sigma_{t_{n}}(a_{ 0 \alpha }b_{\alpha 0 })\Omega , \Omega ) =  ( a_{ 0 \alpha }b_{\alpha 0 } \Omega , \Omega ) $ because $\pi_{0}(U_{t})\Omega = \Omega$ (see remark \ref{realcova}). Now $( a_{ 0 \alpha }b_{\alpha 0 } \Omega , \Omega )  = k $ generically non-zero (proposition \ref{generically}) and $T = kId$. Now, $k$ is independant on the sequence $(t_{n})$, so  $\sigma_{t}(a_{ 0 \alpha }b_{\alpha 0 }) \to^{w} k.Id \ne 0$.

(2) We now prove that $\sigma_{t}(a_{ \gamma \alpha }b_{\alpha 0 }) \to^{w}  0$ if $\gamma \ne 0$.      \\  
Idem, it exists a sequence $t_{n}$ such that  $X_{n} = \sigma_{t_{n}}(a_{ \gamma \alpha }b_{\alpha 0 }) \to^{w} T  $. 
Let $\xi$ be a finite energy vector in $H_{\gamma}$, then $(X_{n} \Omega , \xi) = (\pi_{\gamma}(U_{t_{n}})a_{ \gamma \alpha }b_{\alpha 0 } \Omega , \xi ) = ((\pi_{\gamma}(U_{t_{n}}) \eta , \xi) \to 0$ when $t_{n} \to \infty$ by the cancellation theorem (reminder \ref{cancell})Then, $T\Omega = 0 $, so $T^{\star}T\Omega = 0$. But $\Omega$ is a separating vector on the von Neumann algebra, so $T^{\star}T= 0$ and $T = 0$.
Now, the $0$ is independent of the choice of the sequence, then: $\sigma_{t}(a_{ \gamma \alpha }b_{\alpha 0 }) \to^{w}  0$.

 (3) We  prove that if $a_{\nu \mu} \ne 0$, then  $\sigma_{t}(a_{ \nu \mu }b_{\mu \nu }) \to^{w} C' $ non-zero constant: \\
Idem, it exists a sequence such that $\sigma_{t_{n}}(a_{ \nu \mu }b_{\mu \nu }) \to^{w} R  $. Now, let $y_{\nu 0 } = x_{\nu \lambda_{1}}x_{\lambda_{1} \lambda_{2}}...x_{\lambda_{r} 0} $ be a chain between $\nu$ and $0$ with the minimal number  of primary fields of charge $\alpha$, concentrate  on a proper closed $K$ interval out of $\{ 1 \}$. Then for $t$ sufficiently large, we can apply the braiding formulas on $\sigma_{t}(a_{ \nu \mu }b_{\mu \nu }) y_{\nu 0 } $. We obtain necessarily  $\sigma_{t}(a_{ \nu \mu }b_{\mu \nu }) y_{\nu 0 }  = \sum_{\gamma \ne 0} A_{\gamma}\sigma_{t}(a_{ \gamma \alpha }b_{\alpha 0 })  + \lambda y_{\nu 0 }  \sigma_{t}(a_{ 0 \alpha }b_{\alpha 0 })$, with $\lambda \ne 0$, $A_{\gamma}$ a linear sum of non-minimal chains between $\nu$ and $\gamma$ (note that in general,  there are many ways to go between $0$ and $\nu$ minimally, but by the structure of the braiding rules, only the way chosen for $ y_{\nu 0 }$ can appear at the end). Now, by (1) and (2), the previous equality (with $t = t_{n}$) weakly converge to $R y_{\nu 0 }  =  \lambda   y_{\nu 0 } C = \lambda C y_{\nu 0 }    $ with $\lambda C$ a non-zero constant.  Now, $R \in \N^{\ell}_{\nu}(K^{c})$, then $R y_{\nu 0 }  = y_{\nu 0 }  \pi_{0}(R) = \lambda Cy_{\nu 0 } $. Now $\sigma_{t}( y_{\nu 0 })$ is also a minimal chain  of charge $\alpha$ between $\nu$ and $0$, concentrate  on a proper closed interval out of $\{ 1 \}$, so $\sigma_{t}( y_{\nu 0 }) \pi_{0}(R) = C' \sigma_{t}( y_{\nu 0 })$ with $C'$ a non-zero constant.  Then $\sigma_{t}( y_{\nu 0 })^{\star}\sigma_{t}( y_{\nu 0 }) \pi_{0}(R) = C' \sigma_{t}( y_{\nu 0 })^{\star} \sigma_{t}( y_{\nu 0 })$. But $\sigma_{t}( y_{\nu 0 })^{\star}\sigma_{t}( y_{\nu 0 }) = \sigma_{t}( y_{\nu 0 }^{\star} y_{\nu 0 }) \to_{w} k.Id \ne 0$  as for (1). So $\pi_{0}(R) = C' =R$.
 \end{proof}
\begin{proposition}  (von Neumann density) Let $I$ be a proper interval of $\SSS^{1}$, and $I_{1}$, $I_{2}$ be subintervals such that $I = I_{1} \cup I_{2}$.
\begin{center} $\N_{ij}^{\ell} (I_{1}) \vee \N_{ij}^{\ell}(I_{2}) = \N_{ij}^{\ell} (I)$.    \end{center}  \end{proposition}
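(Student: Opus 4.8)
The plan is to prove the two inclusions separately, the nontrivial one being $\N_{ij}^{\ell}(I) \subseteq \N_{ij}^{\ell}(I_{1}) \vee \N_{ij}^{\ell}(I_{2})$. The inclusion $\N_{ij}^{\ell}(I_{1}) \vee \N_{ij}^{\ell}(I_{2}) \subseteq \N_{ij}^{\ell}(I)$ is immediate from monotonicity of $J \mapsto \N_{ij}^{\ell}(J)$, since $I_{1}, I_{2} \subseteq I$ and each generator $L_{f}, G_{f}$ with $f$ supported in $I_{1}$ or $I_{2}$ is among the generators of $\Vir_{1/2}(I)$. First I would reduce to the vacuum. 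By the local equivalence (the Neveu--Schwarz analogue of Theorem \ref{recallll}(a)) there is a unitary $U\colon H_{0} \to H_{ij}^{\ell}$ intertwining the actions of $\Vir_{1/2}(I)$; since $\Vir_{1/2}(I_{1})$ and $\Vir_{1/2}(I_{2})$ are sub-Lie-superalgebras of $\Vir_{1/2}(I)$, the same $U$ conjugates $\N_{00}^{\ell}(I_{1})$ onto $\N_{ij}^{\ell}(I_{1})$ and $\N_{00}^{\ell}(I_{2})$ onto $\N_{ij}^{\ell}(I_{2})$, and $\N_{00}^{\ell}(I)$ onto $\N_{ij}^{\ell}(I)$. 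Hence it suffices to establish $\N_{00}^{\ell}(I) = \N_{00}^{\ell}(I_{1}) \vee \N_{00}^{\ell}(I_{2})$ on the vacuum representation.

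On the vacuum, Proposition \ref{chainvn} says that $\N_{00}^{\ell}(I)$ is generated by charge-$\alpha$ chains $\phi_{0i_{1}}^{\alpha}(f_{1}) \phi_{i_{1}i_{2}}^{\alpha}(f_{2}) \cdots \phi_{i_{r}0}^{\alpha}(f_{r+1})$ with the $f_{k}$ supported in $I$, so it is enough to show every such chain lies in the weak closure of $\N_{00}^{\ell}(I_{1}) \vee \N_{00}^{\ell}(I_{2})$. Because $I_{1}$ and $I_{2}$ are obtained by deleting a point of $I$, they are disjoint and $I = I_{1} \cup I_{2}$ up to a null set; thus each $f_{k}$ splits orthogonally as $f_{k} = f_{k}^{(1)} + f_{k}^{(2)}$ with $f_{k}^{(s)}$ supported in $I_{s}$. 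Using the linearity of $f \mapsto \phi(f)$, the chain expands into a finite sum of products of primary fields, each factor localised in $I_{1}$ or in $I_{2}$, with the two localisations sitting on disjoint intervals, which is precisely the setting of the localised braiding relation.

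The core of the argument is then to regroup each such mixed product into products of round-trip chains concentrated on a single interval. First I would apply the localised braiding relation (Corollary \ref{joker}) repeatedly to move all the $I_{1}$-localised factors to the left of all the $I_{2}$-localised factors; each interchange involves two factors on disjoint intervals and produces a finite sum over intermediate labels with nonzero coefficients and a phase $e_{\lambda}$, so after sorting the product becomes a finite sum of terms of the form (a chain on $I_{1}$ from $0$ to $\mu$)(a chain on $I_{2}$ from $\mu$ to $0$). Each such factor is still not yet an element of $\N_{00}^{\ell}(I_{s})$, since it runs between distinct representations; to close it up I would use the leading-term OPE proposition together with the generically-non-zero lemma (Proposition \ref{generically}) and the cancellation theorem (Reminder \ref{cancell}). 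Concretely, pushing the junction at $\mu$ to the common boundary point $\{1\}$ by the modular flow $\sigma_{t}$, the OPE limits $\sigma_{t}(a_{\nu\mu}b_{\mu\nu}) \to^{w} C\,Id$ on the vacuum channel and $\to^{w} 0$ off it force the two half-chains to fuse through the identity channel, so that in the weak limit the term is realised as a round-trip chain on $I_{1}$ times a round-trip chain on $I_{2}$, each lying in the respective local algebra. Summing over the finitely many terms and passing to weak closures then places the chain inside $\N_{00}^{\ell}(I_{1}) \vee \N_{00}^{\ell}(I_{2})$.

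I expect this last regrouping to be the main obstacle. The braiding sort and the $L^{2}$-boundedness of the fields are routine, but bookkeeping the intermediate labels so that the surviving terms really close up into genuine round-trip chains from and to the vacuum, and justifying that the weak limits produced by the OPE and the cancellation theorem assemble into an element of $\N_{00}^{\ell}(I_{1}) \vee \N_{00}^{\ell}(I_{2})$ rather than merely of the ambient $B(H_{0})$, is the delicate point. In particular one must ensure the nonvanishing of the relevant braiding and OPE coefficients, which at charge $\alpha$ is guaranteed by Theorem \ref{woker} and Corollary \ref{joker}, so that no needed channel is lost in the limit.
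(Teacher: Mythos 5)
Your proposal reproduces the paper's own strategy step for step: reduce to the vacuum by local equivalence, reduce to charge-$\alpha$ chains via Proposition \ref{chainvn}, split each $f_{k}$ by linearity, sort the $I_{1}$- and $I_{2}$-localised factors with the braiding relations, and then treat the sorted chains according to whether the middle index $j_{s}$ is the vacuum or not. Up to that last step everything you write is correct and coincides with the paper's proof.

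The gap is in the final regrouping, and it is a gap of direction. As you describe it, you apply the modular flow $\sigma_{t}$ to the junction of the \emph{given} term and invoke the OPE limit $\sigma_{t}(a_{\nu\mu}b_{\mu\nu})\to^{w}C\,Id$. But flowing the existing junction fields changes the operator: what converges weakly is the modified term, and its limit is a multiple of a \emph{shorter} sorted chain, not the term you started from; knowing that this limit lies in $\N_{00}^{\ell}(I_{1})\vee\N_{00}^{\ell}(I_{2})$ says nothing about the original chain. The correct move --- this is what the paper's phrase ``using the previous proposition step by step'' encodes --- is to \emph{insert} an auxiliary product rather than flow the existing one: choose new non-zero charge-$\alpha$ fields $c_{\mu\nu}$ localised in $I_{1}$ near the deleted point and $d_{\nu\mu}$ localised in $I_{2}$, so that by the leading-term OPE proposition and Proposition \ref{generically} one has $\sigma_{t}(c_{\mu\nu}d_{\nu\mu})\to^{w}\lambda\,Id_{H_{\mu}}$ with $\lambda\neq 0$; then the original term $A_{0\mu}B_{\mu 0}$ equals $\lambda^{-1}\,\mathrm{w\!-\!lim}_{t}\,\bigl(A_{0\mu}\sigma_{t}(c_{\mu\nu})\bigr)\bigl(\sigma_{t}(d_{\nu\mu})B_{\mu 0}\bigr)$, a weak limit of sorted chains whose middle index $\nu$ is one edge closer to $0$ in the connected graph $\G_{\alpha}$ (one cannot jump to $0$ in a single stroke, since a single charge-$\alpha$ field into the vacuum exists only from $H_{\alpha}$; hence the induction on graph distance). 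Once the base case $j_{s}=0$ yields genuine products of an $I_{1}$-round-trip chain and an $I_{2}$-round-trip chain, the delicate point you raise at the end --- whether the limits land in the algebra rather than merely in $B(H_{0})$ --- disappears: a von Neumann algebra is weakly closed, so weak limits of such products, and weak limits of those in turn, automatically belong to $\N_{00}^{\ell}(I_{1})\vee\N_{00}^{\ell}(I_{2})$.
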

\begin{proof} By the local equivalence for $\Vir_{1/2}$ (see section \ref{richarder}), we only need to prove the result on the vacuum. By proposition \ref{chainvn} we only need to work with chains.  Consider the chain $\phi_{0i_{1}}^{\alpha}(f_{1})\phi_{i_{1}i_{2}}^{\alpha}(f_{2}) ... \phi_{i_{r-1}i_{r}}^{\alpha}(f_{r})\phi_{i_{r}0}^{\alpha}(f_{r+1}) \in  \N_{00}^{\ell}(I)$, with $f_{k} \in  L^{2}_{I}(\SSS^{1})$. Now,  $f_{k} = f_{k}^{(1)} + f_{k}^{(2)}$, with $ f_{k}^{(i)} $ concentrated on $I_{i} $. Now, a primary field $\phi_{ij}^{k}(f)$ is linear in $f$, so, we can develop the chain into a sum of chains of primary filed localized exclusively on $I_{1}$ or $I_{2}$. Next, applying the braiding relations, we can obtain a linear combination of chains, on which the  primary field localized on $I_{1}$ and $I_{2}$ are separated; generically of the form:  
\begin{center} $\phi_{0j_{1}}^{\alpha}(g_{1})\phi_{j_{1}j_{2}}^{\alpha}(g_{2}) ...  \phi^{\alpha}_{j_{s-1} j_{s} } (g_{s-1})  \phi^{\alpha}_{j_{s} j_{s+1} } (h_{s+1})   ...\phi_{j_{r-1}j_{r}}^{\alpha}(h_{r})\phi_{j_{r}0}^{\alpha}(h_{r+1}) $  \end{center}
with $g_{k}$ and $ h_{k}$  concentrate on  $I_{1}$ and $I_{2}$ respectively.  Now, if $j_{s} = 0$, then, the previous chain is a product $a.b$ with $a \in \N_{11}^{m} (I_{1}) $ and $b \in \N_{11}^{m} (I_{2})$.  \\ 
Else, if $j_{s} \ne 0$, using the previous proposition step by step, we see that the chain is the weak limit of chains with $0$ on the middle, the result follows.
  \end{proof}
  \begin{lemma}(Covering lemma) Let $(I_{n})$ be a covering of $\SSS^{1}$ by open proper intervals. Then $\Vir_{1/2}(\SSS^{1})$ is the linear span of the $ \Vir_{1/2}(I_{n})$.  And so $\bigvee \pi(  \Vir_{1/2}(I_{n}))'' = \pi(  \Vir_{1/2}(\SSS^{1}))'' = B(H)  $.  \end{lemma}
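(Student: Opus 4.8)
The plan is to establish the algebraic decomposition first and then transport it to the von Neumann algebras through affiliation, closing with irreducibility.

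First I would prove the linear span statement by a partition of unity argument. Since $\SSS^{1}$ is compact and $(I_{n})$ is an open cover, I extract a finite subcover and choose a smooth partition of unity $(\chi_{n})$ subordinate to it, so that $\chi_{n} \in C^{\infty}_{I_{n}}(\SSS^{1})$ and $\sum_{n} \chi_{n} = 1$ on $\SSS^{1}$. For any $f \in C^{\infty}(\SSS^{1})$ the functions $f_{n} := \chi_{n} f$ satisfy $f_{n} \in C^{\infty}_{I_{n}}(\SSS^{1})$ and $f = \sum_{n} f_{n}$, a finite sum. Since the smearing maps $f \mapsto L_{f}$ and $f \mapsto G_{f}$ are linear (lemma \ref{lobra}), we get $L_{f} = \sum_{n} L_{f_{n}}$ and $G_{f} = \sum_{n} G_{f_{n}}$, each summand lying in $\Vir_{1/2}(I_{n})$; the central element $C$ lies in every $\Vir_{1/2}(I_{n})$. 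Hence $\Vir_{1/2}(\SSS^{1})$ is the linear span of the $\Vir_{1/2}(I_{n})$.

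Next I pass to the von Neumann algebras. Write $\M = \bigvee_{n} \pi(\Vir_{1/2}(I_{n}))''$. The inclusion $\M \subseteq \pi(\Vir_{1/2}(\SSS^{1}))''$ is immediate since each $\Vir_{1/2}(I_{n}) \subset \Vir_{1/2}(\SSS^{1})$. For the reverse inclusion I must show every self-adjoint generator $L_{f}$, $G_{f}$ (with $f$ real smooth on $\SSS^{1}$) is affiliated to $\M$. Taking the $f_{n}$ real, each $L_{f_{n}}$ is essentially self-adjoint (lemma \ref{gjncons}), acts continuously on the smooth domain (corollary \ref{smoothi}), and is affiliated to $\pi(\Vir_{1/2}(I_{n}))'' \subseteq \M$. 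I verify affiliation of the sum by the commutant criterion (c) of definition \ref{dixmier}: if $u \in \M'$ is unitary, then $u$ commutes with each closure $\overline{L_{f_{n}}}$, so for $\xi$ in the common smooth core, $u\xi \in \bigcap_{n} D(\overline{L_{f_{n}}})$ and $\sum_{n} \overline{L_{f_{n}}}(u\xi) = u\sum_{n} L_{f_{n}}\xi = u L_{f}\xi$. The operator $\sum_{n} \overline{L_{f_{n}}}$ is a symmetric extension of the essentially self-adjoint $L_{f}$, hence is contained in $\overline{L_{f}}$; therefore $u\xi \in D(\overline{L_{f}})$ with $\overline{L_{f}}(u\xi) = u L_{f}\xi$, and since the smooth core is a core for $\overline{L_{f}}$, the unitary $u$ commutes with $\overline{L_{f}}$ and hence with all its bounded functions. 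The same runs for $G_{f}$. Thus $L_{f}, G_{f} \afi \M$, giving $\pi(\Vir_{1/2}(\SSS^{1}))'' \subseteq \M$ and equality.

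Finally I invoke irreducibility. The global smeared algebra $\Vir_{1/2}(\SSS^{1})$ contains every mode $L_{n}, G_{r}$ (take $f = e^{in\theta}$), so it is the full Neveu-Schwarz algebra acting irreducibly on the positive energy representation $H$; by Schur's lemma the commutant of the family of bounded functions of the $L_{f}, G_{f}$ is $\CCC$, whence $\pi(\Vir_{1/2}(\SSS^{1}))'' = \CCC' = B(H)$. The main obstacle is precisely the affiliation-of-a-sum step: one cannot simply add affiliated unbounded operators, and the argument must route through the commutant criterion together with the fact that a symmetric extension of the essentially self-adjoint $L_{f}$ (and $G_{f}$) is contained in its closure, all manipulations being legitimate on the common smooth core of corollary \ref{smoothi}.
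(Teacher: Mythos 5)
Your proposal is correct and follows the paper's own route: the paper's entire proof is the one-line ``With a partition of the unity,'' which is exactly your first paragraph, and the remaining claims are treated there as immediate. Your additional work---passing the decomposition through affiliation via the commutant criterion (using that a symmetric extension of the essentially self-adjoint $L_{f}$, $G_{f}$ is contained in its closure) and invoking irreducibility to get $B(H)$---is a sound and welcome filling-in of the steps the paper leaves implicit in the ``And so'' of the statement.
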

  \begin{proof} With a partition of the unity.     \end{proof}
  
  \begin{theorem} Let $I$ be a proper interval of $\SSS^{1}$, then, the Jones-Wassermann subfactorÊ $\N_{ij}^{\ell}(I) \subset \N_{ij}^{\ell}(I)^{\natural}$ is irreducible, i.e. $\N_{ij}^{\ell}(I)^{\natural} \cap \N_{ij}^{\ell}(I^{c})^{\natural}  = \CCC$.   \end{theorem}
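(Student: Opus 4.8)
The plan is to reduce the statement to the additivity identity $\N_{ij}^{\ell}(I) \vee \N_{ij}^{\ell}(I^{c}) = B(H_{ij}^{\ell})$ and then to use that the supercommutant of $B(H)$ is trivial. Since $\N_{ij}^{\ell}(I)$ and $\N_{ij}^{\ell}(I^{c})$ are both $\ZZZ_{2}$-graded for $\tau$, lemma \ref{naturals} gives $X^{\natural} = \kappa X' \kappa^{\star}$ for each of them, and as $\kappa$ is unitary while the commutant of a join is the intersection of commutants,
\[ \N_{ij}^{\ell}(I)^{\natural} \cap \N_{ij}^{\ell}(I^{c})^{\natural} = \kappa\big( \N_{ij}^{\ell}(I)' \cap \N_{ij}^{\ell}(I^{c})' \big)\kappa^{\star} = \kappa\big( \N_{ij}^{\ell}(I) \vee \N_{ij}^{\ell}(I^{c})\big)'\kappa^{\star} = \big(\N_{ij}^{\ell}(I) \vee \N_{ij}^{\ell}(I^{c})\big)^{\natural}. \]
Hence it suffices to show $\N_{ij}^{\ell}(I) \vee \N_{ij}^{\ell}(I^{c}) = B(H_{ij}^{\ell})$, because then the intersection equals $B(H_{ij}^{\ell})^{\natural} = \kappa\, B(H_{ij}^{\ell})' \kappa^{\star} = \kappa\, \CCC\, \kappa^{\star} = \CCC$.

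To obtain this additivity I fill in the two boundary points of $I$. Write $\partial I = \{a, b\}$ and choose small open proper intervals $K_{a} \ni a$ and $K_{b} \ni b$. For $K_{a}$, set $K_{a}^{-} = K_{a} \cap I$ and $K_{a}^{+} = K_{a} \cap I^{c}$; these are the two connected components of $K_{a} \setminus \{a\}$, with $K_{a}^{-} \subset I$ and $K_{a}^{+} \subset I^{c}$. Applying von Neumann density to $K_{a}$ with the point $a$ removed yields $\N_{ij}^{\ell}(K_{a}) = \N_{ij}^{\ell}(K_{a}^{-}) \vee \N_{ij}^{\ell}(K_{a}^{+}) \subset \N_{ij}^{\ell}(I) \vee \N_{ij}^{\ell}(I^{c})$, and likewise $\N_{ij}^{\ell}(K_{b}) \subset \N_{ij}^{\ell}(I) \vee \N_{ij}^{\ell}(I^{c})$. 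This is the only point at which the deep input is invoked; the genuine difficulty — that removing a boundary point does not shrink the generated algebra — has already been settled by the von Neumann density proposition, itself resting on the leading-term OPE and braiding analysis.

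Finally, the family $\{ I, I^{c}, K_{a}, K_{b} \}$ is a covering of $\SSS^{1}$ by open proper intervals, since $I \cup I^{c} = \SSS^{1} \setminus \{a,b\}$ while $a \in K_{a}$ and $b \in K_{b}$. By the covering lemma, $\N_{ij}^{\ell}(I) \vee \N_{ij}^{\ell}(I^{c}) \vee \N_{ij}^{\ell}(K_{a}) \vee \N_{ij}^{\ell}(K_{b}) = B(H_{ij}^{\ell})$. As the last two summands are already contained in $\N_{ij}^{\ell}(I) \vee \N_{ij}^{\ell}(I^{c})$ by the previous step, we conclude $\N_{ij}^{\ell}(I) \vee \N_{ij}^{\ell}(I^{c}) = B(H_{ij}^{\ell})$, and the first paragraph then gives $\N_{ij}^{\ell}(I)^{\natural} \cap \N_{ij}^{\ell}(I^{c})^{\natural} = \CCC$, as desired. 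The substantive obstacle is thus entirely upstream in von Neumann density; the present argument is the geometric covering and commutant bookkeeping that converts it into irreducibility of the subfactor.
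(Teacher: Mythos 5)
Your proof is correct and takes essentially the same approach as the paper's: both reduce irreducibility to the additivity $\N_{ij}^{\ell}(I) \vee \N_{ij}^{\ell}(I^{c}) = B(H_{ij}^{\ell})$, obtained by combining von Neumann density (to absorb the boundary points of $I$) with the covering lemma, and then conclude by the supercommutant bookkeeping $\CCC = B(H_{ij}^{\ell})^{\natural} = \N_{ij}^{\ell}(I)^{\natural} \cap \N_{ij}^{\ell}(I^{c})^{\natural}$. The only difference is cosmetic: you cover $\SSS^{1}$ by the four intervals $I$, $I^{c}$, $K_{a}$, $K_{b}$ with small intervals straddling each boundary point, whereas the paper splits $I$ at an interior point and covers by open subintervals of the three intervals $I$, $\overline{I_{1}\cup I^{c}}$, $\overline{I^{c}\cup I_{2}}$.
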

 \begin{proof}  Let $I_{1}$, $I_{2}$ be  two proper subintervals of $I$ obtained by removing a point. Let $J_{1} = I$, $J_{2} = \overline{I_{1} \cup I^{c}}$ and $J_{3} = \overline{ I^{c} \cup I_{2}}$. 
 Let $\M = \N_{ij}^{\ell}(I) \vee\N_{ij}^{\ell}(I^{c}) $, then $ \N_{ij}^{\ell}(I), \N_{ij}^{\ell}(I^{c}), \N_{ij}^{\ell}(I_{1})$ and $ \N_{ij}^{\ell}(I_{2})   \subset \M$. By von Neumann density, 
 $\N_{ij}^{\ell}(J_{2}) = \N_{ij}^{\ell}(I_{1}) \vee\N_{ij}^{\ell}(I^{c}) \subset \M $, and idem $\N_{ij}^{\ell}(J_{3}) \subset \M$. Let $K_{1}$, $K_{2}$, $K_{3}$ be open subintervals of $J_{1}$, $J_{2}$ and  $J_{3}$ such that $K_{1} \cup K_{2} \cup K_{3} = \SSS^{1}$. Now, $N_{ij}^{\ell}(K_{1}) \vee N_{ij}^{\ell}(K_{2}) \vee N_{ij}^{\ell}(K_{3}) \subset \M$, but $N_{ij}^{\ell}(K_{1}) \vee N_{ij}^{\ell}(K_{2}) \vee N_{ij}^{\ell}(K_{3}) = B(H_{ij}^{\ell})$ by covering lemma. So $\M = B(H_{ij}^{\ell})$ and $\CCC = \M^{\natural} = \N_{ij}^{\ell}(I)^{\natural} \cap \N_{ij}^{\ell}(I^{c})^{\natural}$.  \end{proof}

   \newpage
\section{Connes fusion and subfactors}
\subsection{Recall on subfactors} See the book \cite{js} for a complete introduction to subfactors.  
\begin{definition} Let $\M$ and $\N$ be von Neumann algebra, then, an inclusion $\N \subset \M$ is called a subfactor.   \end{definition}
\begin{reminder} A factor $\M$ of type II admits a canonical trace $tr$. The image of $tr$ on the subset of projection of $\M$ is $[0,1]$ or $[0, \infty]$. \\Then, $\M$ is said to be a factor of type II$_{1}$  or  II$_{\infty}$.  \end{reminder}
\begin{reminder} (Basic construction)   Let the subfactor $\N \subset \M$, with $\M$ and $\N$ II$_{1}$ factors. Let $tr$ be the trace on $\M$, then, it admit the following inner product: $(x,y) := tr(xy^{\star})$. Let $H = L^{2}(\M , tr)$ and $ L^{2}(\N , tr )$ be the $L^{2}$-completions of $\M$ and $\N$. Let $e_{\N}$ be the orthogonal projection of $L^{2}(\M , tr)$ onto $L^{2}(\N , tr)$. \\ Let $\langle \M , e_{\N} \rangle = ( \M \cup \{ e_{\N} \})''   \subset B(H)$. It admit a trace called $tr_{\langle \M , e_{\N} \rangle}$. \\ The tower $\N \subset \M \subset \langle \M , e_{\N} \rangle$ is called the basic construction.   \end{reminder}
\begin{reminder} (Index of subfactors)  \  Let the previous subfactor $\N \subset \M$.  \\ Then we can define its index $[\M : \N] = (tr_{\langle \M , e_{\N} \rangle}(e_{\N}))^{-1} \in [1 , \infty]$. \\ÊThe index admits another definition as the von Neumann dimension (see \cite{js}) of the $\N$-module $H = L^{2}(\M , tr)$, ie $[\M : \N]= dim_{\N}(H)$.    \end{reminder}
\begin{reminder}   (Jones' theorem, see \cite{23}) Every possible index of II$_{1}$-subfactors: 
  \begin{center}  $\{4cos^{2}(\frac{\pi}{m}) \vert m = 3, 4, ... \} \cup [ 4 , \infty ] $   \end{center} 
  In the continuation of the basic construction, we can build a graph from a subfactor, called its principal graph. If the subfactor admits a finite index then the square of the norm of the matrix of its principal graph is exactly the index. Now, this matrix admits only integers values, and a theorem of Kronecker said that the norm of an integer valued matrix is in $\{2cos(\frac{\pi}{m}) \vert m = 3, 4, ... \} \cup [ 2 , \infty ] $. Finally, it's proved that every possible such norms are realized from subfactors.    \end{reminder}
  \begin{definition} A subfactor of finite index $\M \subset \N$ is said to be irreducible if either of the following equivalent conditions are satisfied:
  \begin{enumerate}
  \item[(a)] $L^{2}(\M)$ is irreducible as an $\N$-$\M$-bimodule.
  \item[(b)] The relative commutant $\N' \cap \M$ is $\CCC$.
  \end{enumerate}
  
  \end{definition}

\subsection{Bimodules and Connes fusion}
 \begin{definition}If $\M$, $\N$ are $\ZZZ_{2}$-graded von Neumann algebra, a $\ZZZ_{2}$-graded Hilbert space $H$ is said to be a $\M$-$\N$-bimodule if: 
    \begin{enumerate}
    \item[(a)] $H$ is a left $\M$-module.
        \item[(b)] $H$ is a right $\N$-module.
        \item[(c)] the action of $\M$ and $\N$ supercommute; i.e., \\ $\forall m \in \M$, $ n \in \N$, $\xi \in H$, $(m.\xi).n =(-1)^{\partial m \partial n} m.(\xi.n)$.
    \end{enumerate}   \end{definition}
    \begin{definition} Let $\Omega \in H_{0}$ be a vacuum vector, then $H_{0}$ is a $\M$-$\M$ bimodule,
  because by Tomita-Takesaki theory, $J \M J = \M'$, by lemma \ref{naturals}, $\M^{\natural} = \kappa \M' \kappa^{\star} \simeq \M'  \simeq \M^{opp}$. Now, $y^{\star} x^{\star} = (xy)^{\star}$ and  $\M^{opp}$ is the opposite algebra: $a \times b = b.a$.  Then $x.(\xi .y) := x  (\kappa J y^{\star} J  \kappa^{\star})  \xi$ gives the bimodule action.\end{definition}

  \begin{definition} (Intertwinning operators) \ Let   $X$, $Y$ be $\ZZZ_{2}$-graded $\M$-$\M$ bimodules,  $\X = Hom_{-\M}(H_{0} , X )$ and $\Y = Hom_{\M-}(H_{0} , Y )$ be the space of  bounded operators that superintertwin the left (resp. the right) action of $\M$.     \end{definition}
\begin{lemma}Consider the algebraic  tensor product  $\X \otimes \Y$, we define a pre-inner product  by: 
\begin{center} $(x_{1} \otimes y_{1} , x_{2} \otimes y_{2}) =(-1)^{(\partial x_{1} + \partial x_{2}) \partial y_{2} } ( x_{2}^{\star}x_{1}y_{2}^{\star}y_{1} \Omega , \Omega ) $  \end{center}
\end{lemma}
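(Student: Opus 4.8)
The plan is to check the three axioms of a pre-inner product, the real content being positive semi-definiteness. The organising remark is that for homogeneous $x_1,x_2\in\X$ and $y_1,y_2\in\Y$ the operators $a:=x_2^\star x_1$ and $b:=y_2^\star y_1$ act on $H_0$, and their intertwining properties place $a$ in $\M$ and $b$ in the supercommutant $\M^\natural$ (since the $x_i$ and their adjoints superintertwine one of the $\M$-actions, $a$ supercommutes with it and so lies in $\M$; dually $b\in\M^\natural$). By Lemma \ref{naturals}, $\M^\natural=\kappa\M'\kappa^\star$, so $a$ and $b$ supercommute, with degrees $\partial a=\partial x_1+\partial x_2$ and $\partial b=\partial y_1+\partial y_2$. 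Sesquilinearity—linearity in $x_1\otimes y_1$ and antilinearity in $x_2\otimes y_2$—is then immediate once the form is extended sesquilinearly from homogeneous vectors, the sign being absorbed into the homogeneous components.

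For Hermitian symmetry I would conjugate directly: with $T=x_2^\star x_1y_2^\star y_1$ one has $\overline{(T\Omega,\Omega)}=(T^\star\Omega,\Omega)=(y_1^\star y_2\,x_1^\star x_2\,\Omega,\Omega)$. Supercommuting $y_1^\star y_2\in\M^\natural$ past $x_1^\star x_2\in\M$ produces the factor $(-1)^{(\partial y_1+\partial y_2)(\partial x_1+\partial x_2)}$; combining it with the prescribed sign $(-1)^{(\partial x_1+\partial x_2)\partial y_2}$ and reducing the exponent modulo $2$ (the $\partial y_2$ terms cancel, leaving $(\partial x_1+\partial x_2)\partial y_1$) reproduces exactly the sign attached to $(x_2\otimes y_2,x_1\otimes y_1)$. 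This short bookkeeping both establishes symmetry and shows that the sign factor in the definition is forced.

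The main obstacle is positivity. For a general $z=\sum_i x_i\otimes y_i$, reduced to homogeneous summands, I must show $\sum_{i,j}(-1)^{(\partial x_i+\partial x_j)\partial y_j}(x_j^\star x_i\,y_j^\star y_i\,\Omega,\Omega)\ge 0$. The clean route is to pass everything through the Klein transformation $\kappa$: since $\kappa$ is unitary, fixes the vacuum ($\kappa\Omega=\Omega$), and conjugates supercommutation into genuine commutation with $\kappa\M\kappa^\star$ and $\kappa\M^\natural\kappa^\star=\M'$ commuting, the graded sum collapses to the ungraded Connes-fusion expression. There the matrix $[x_i^\star x_j]$ is positive in $M_n(\M)$ and $[y_i^\star y_j]$ positive in $M_n(\M^\natural)$; writing the matrix square roots $x_i^\star x_j=\sum_k c_{ki}^\star c_{kj}$ with $c_{ki}\in\M$ and $y_i^\star y_j=\sum_l d_{li}^\star d_{lj}$ with $d_{li}\in\M^\natural$, and using the commutation of the $\kappa$-conjugated $c$'s and $d$'s, the sum rearranges into $\sum_{k,l}\bigl\|\sum_i c_{ki}d_{li}\Omega\bigr\|^2\ge 0$.

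The delicate point, which I would treat most carefully, is tracking the $\ZZZ_2$-signs through the square-root factorization and the reordering: the square root of the (non-homogeneous) Gram matrix need not be homogeneous, so the signs cannot be grouped naively. Performing the whole computation after conjugating by $\kappa$—so that one works with genuinely commuting algebras and an unmoved vacuum—is what reduces the graded statement to Wassermann's ungraded positivity argument and keeps the sign bookkeeping honest.
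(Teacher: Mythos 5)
Your sesquilinearity and Hermitian-symmetry computations are correct (the cancellation leaving $(\partial x_{1}+\partial x_{2})\partial y_{1}$ is exactly right), and your placements $x_{2}^{\star}x_{1}\in\M$, $y_{2}^{\star}y_{1}\in\M^{\natural}$ agree with the paper's conventions; note the paper's own proof is only the citation to \cite{2} p 525--526, i.e.\ Wassermann's ungraded square-root argument, so the graded bookkeeping you attempt is precisely the content to be supplied. The gap is in your reduction step. Since $\kappa$ is unitary and fixes $\Omega$, conjugating operators by $\kappa$ preserves every algebraic relation and every vacuum expectation value: it can neither turn supercommutation into commutation (if you conjugate both sides, $\kappa\M\kappa^{\star}$ and $\kappa\M^{\natural}\kappa^{\star}$ still supercommute, and the whole sum collapses to the \emph{unsigned} expression $\sum_{ij}(x_{j}^{\star}x_{i}y_{j}^{\star}y_{i}\Omega,\Omega)$, not the signed one), nor produce the signs of the lemma (if you conjugate only the $y$-side: indeed $\kappa\M^{\natural}\kappa^{\star}=\M'$ commutes with $\M$, but for odd $b=y_{j}^{\star}y_{i}$ one has $\kappa b\kappa^{\star}=-iub$, hence $\kappa b\kappa^{\star}\Omega=ib\Omega$, a phase $i$ rather than a sign). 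Concretely, with $x_{1},y_{1}$ even and $x_{2},y_{2}$ odd, the lemma's two cross terms sum to $-2\,\textrm{Re}\,(x_{2}^{\star}x_{1}y_{2}^{\star}y_{1}\Omega,\Omega)$, whereas your $\kappa$-conjugated sum gives $-2\,\textrm{Im}\,(x_{2}^{\star}x_{1}y_{2}^{\star}y_{1}\Omega,\Omega)$; these disagree on generic elements (odd--odd terms such as compressed fermion two-point functions are nonzero). So your square-root argument establishes positivity of a different sesquilinear form, not the one defined in the lemma. Also note that $\kappa\M\kappa^{\star}$ does \emph{not} commute with $\M'$: for odd $a\in\M$, $\kappa a\kappa^{\star}=-iua$ and $ua$ anticommutes with the odd part of $\M'$.

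The correct reduction is a twist, not a conjugation. For homogeneous $y\in\Y$ put $\tilde y:=y\,u^{\partial y}$, where $u=\kappa^{2}$ is the grading unitary of $H_{0}$; a one-line check shows $\tilde y$ intertwines the left $\M$-action honestly, so the Gram matrix $[\tilde y_{j}^{\star}\tilde y_{i}]$ is positive with entries in the true commutant $\M'$, while $[x_{j}^{\star}x_{i}]$ is positive with entries in $\M$. Since $u\Omega=\Omega$, one has $\tilde y_{j}^{\star}\tilde y_{i}\Omega=(-1)^{\partial y_{i}\partial y_{j}+\partial y_{j}}\,y_{j}^{\star}y_{i}\Omega$, and because $(ab\Omega,\Omega)=0$ unless $\partial a=\partial b$, this sign coincides with $(-1)^{(\partial x_{i}+\partial x_{j})\partial y_{j}}$ on every term that can contribute; hence $(z,z)=\sum_{ij}(x_{j}^{\star}x_{i}\,\tilde y_{j}^{\star}\tilde y_{i}\,\Omega,\Omega)$ exactly. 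Now Wassermann's factorization $[x_{j}^{\star}x_{i}]=C^{\star}C$ over $\M$ and $[\tilde y_{j}^{\star}\tilde y_{i}]=D^{\star}D$ over $\M'$ rearranges the sum into $\sum_{k,l}\Vert\sum_{i}c_{ki}d_{li}\Omega\Vert^{2}\ge 0$, and the homogeneity worry you raise evaporates, because $\M$ and $\M'$ commute regardless of degree. This is the precise sense in which the lemma's sign is forced: it is exactly the sign removed by the twist $y\mapsto yu^{\partial y}$.
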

\begin{proof} As for \cite{2} p 525-526.   \end{proof}
\begin{definition}
The $L^{2}$-completion is  called the Connes fusion between $X$ and $Y$, and noted $X  \boxtimes Y$, naturally a  $\ZZZ_{2}$-graded  $\M$-$\M$ bimodule. \end{definition}

\begin{lemma} There are canonical unitary isomorphism \begin{center}$H_{0} \boxtimes X \simeq X \simeq X \boxtimes H_{0}$.  \end{center}   \end{lemma}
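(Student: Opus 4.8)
The plan is to construct explicit unitaries realising $H_0$ as a two-sided unit for $\boxtimes$ and to verify, in turn, that each is isometric, has dense range, and intertwines the bimodule structures. By the evident left/right symmetry I would treat the right unit $X\boxtimes H_0\simeq X$ in detail, the left one being identical after exchanging the roles of $\X$ and $\Y$. Taking $Y=H_0$, the first point is that $\Y=Hom_{\M-}(H_0,H_0)$ is exactly the algebra of right actions $\M^{\natural}=\kappa J\M J\kappa^{\star}$: the left action on $H_0$ is $\M$, and its supercommutant is $\M^{\natural}$ by Lemma \ref{naturals}. I then define $U:\X\otimes\Y\to X$ on the algebraic tensor product by $U(x\otimes y)=x(y\Omega)$, where $y\Omega\in H_0$ and $x:H_0\to X$.

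The second step is the isometry check, which is where the sign convention in the pre-inner product gets matched. Computing directly,
\[
(U(x_1\otimes y_1),U(x_2\otimes y_2))_X=(x_2^{\star}x_1\,(y_1\Omega),y_2\Omega)_{H_0}.
\]
Since $x_1,x_2\in Hom_{-\M}(H_0,X)$, the composite $x_2^{\star}x_1$ lies in $End_{-\M}(H_0)=(\M^{\natural})^{\natural}=\M$ by Lemma \ref{superco}, hence it supercommutes with $y_2^{\star}\in\M^{\natural}$. Pulling $y_2^{\star}$ across $x_2^{\star}x_1$ produces precisely the factor $(-1)^{(\partial x_1+\partial x_2)\partial y_2}$ and turns the expression into $(-1)^{(\partial x_1+\partial x_2)\partial y_2}(x_2^{\star}x_1 y_2^{\star}y_1\Omega,\Omega)$, which is the defining fusion pre-inner product. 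Thus $U$ preserves the pre-inner product; in particular it annihilates the radical (so it automatically respects the $\M$-balancing $xm\otimes y-x\otimes my$) and extends to an isometry $X\boxtimes H_0\to X$.

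Third comes density of the range, which is the one genuinely non-formal point. Because $\Omega$ is cyclic and separating for $\M$ it is cyclic for $\M'$, and as $\Omega$ is even we have $\kappa^{\star}\Omega=\Omega$, so $\M^{\natural}\Omega=\kappa\M'\Omega$ is dense in $H_0$; hence the range of $U$ is $\overline{\X\cdot H_0}$. This is a sub-bimodule of $X$, and to see it exhausts $X$ I would invoke that $H_0=L^2(\M)$ is the standard module over the factor $\M$: since $\M$ is the hyperfinite III$_{1}$ factor, the right $\M$-module $X$ is a multiple $H_0\otimes K$ of the standard module, so the inclusions of the summands already fill up $Hom_{-\M}(H_0,X)$ and $\X\cdot\Omega$ is dense in $X$. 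This is exactly where factoriality of $\M$ is used, and it is the main obstacle. The bimodule-map property is then routine: the left $\M$-action passes through $\X$, giving $U((m\cdot x)\otimes y)=m(x(y\Omega))=m\cdot U(x\otimes y)$, while the right action passes through $\Y=\M^{\natural}$ and matches the right action on $X$ by a direct check, so $U$ is a unitary isomorphism of $\ZZZ_2$-graded $\M$-$\M$-bimodules.

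Finally, the left unit $H_0\boxtimes X\simeq X$ is proved symmetrically: now $\X=Hom_{-\M}(H_0,H_0)=\M$ (being $(\M^{\natural})^{\natural}$, the left action) and $\Y=Hom_{\M-}(H_0,X)$, and one sets $W(x\otimes y)=y(x\Omega)$, running the same three checks with $\M\Omega$ dense in $H_0$ in place of $\M^{\natural}\Omega$. Throughout, the only step that is not formal bookkeeping is the surjectivity, for which standardness of $H_0$ and factoriality of $\M$ are essential.
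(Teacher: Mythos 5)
Your proposal is correct, and it rests on the same canonical maps as the paper: the paper's entire proof consists of exhibiting $x\otimes y\mapsto xy\Omega$ for $X\boxtimes H_{0}\to X$ and $y\otimes x\mapsto(-1)^{\partial x\partial y}xy\Omega$ for $H_{0}\boxtimes X\to X$, with all verifications left implicit. Note that your left-unit map $W(x\otimes y)=y(x\Omega)$ is literally the same map as the paper's signed formula, since $y$ superintertwines the left action and $x\in Hom_{-\M}(H_{0},H_{0})=(\M^{\natural})^{\natural}=\M$, whence $y(x\Omega)=(-1)^{\partial x\partial y}x(y\Omega)$; and your isometry computation, pulling $y_{2}^{\star}\in\M^{\natural}$ across $x_{2}^{\star}x_{1}\in\M$ to produce the Koszul sign, is exactly the verification the four-point formula is designed to admit. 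The one place where you take a genuinely different route is surjectivity. You invoke the structure theory of normal modules over the $\sigma$-finite type III factor $\M$ (every nonzero separable module is isomorphic to the standard module $H_{0}$, or in general to $p(H_{0}\otimes\ell^{2})$ with $p$ commuting with the $\M$-action), which yields enough intertwiners for $\X\Omega$ to be dense in $X$. The paper instead isolates this density as a separate statement (Lemma \ref{density45}), proved from the Reeh-Schlieder theorem and von Neumann density, i.e. from the local-algebra structure of the concrete bimodules $X=\bigoplus H_{i}$. Your argument is more general --- it needs neither locality nor, in fact, factoriality --- while the paper's stays internal to the CFT data and delivers the density statement in the precise form reused later in the fusion computations. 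One small point you should make explicit: the module-theoretic isomorphisms you quote are ungraded intertwiners, so to land in $\X$ (super-intertwiners) one conjugates by the Klein transformations exactly as in Lemma \ref{naturals}; this is routine but deserves a line.
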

\begin{proof} If $Y = H_{0}$, the unitary $X \boxtimes H_{0} \to X$ is given by $x \otimes y \mapsto xy \Omega$, and the unitary $H_{0} \boxtimes X \to X$ is given by $y \otimes x \mapsto (-1)^{\partial x \partial y} xy\Omega$.    \end{proof}
\begin{lemma} \label{density45} $\X$ can be seen as a dense subspace of $X$  via $x \leftrightarrow x \Omega $. \end{lemma}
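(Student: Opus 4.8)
The plan is to realise the stated identification by showing that the map $x \mapsto x\Omega$ is \emph{injective} and has \emph{dense range}; together these exhibit $\X$ as a (non-closed) dense subspace of $X$. Throughout I use the Tomita--Takesaki data recalled above: $\Omega$ is cyclic and separating for $\M$ on $H_0$, so that $\M\Omega$ is dense in $H_0$. Injectivity is the easy half. Suppose $x \in \X$ satisfies $x\Omega = 0$. Since $x$ superintertwines the $\M$-action, for every homogeneous $a \in \M$ we have $x(a\Omega) = (-1)^{\partial a \, \partial x} a(x\Omega) = 0$, so $x$ vanishes on the dense subspace $\M\Omega \subset H_0$; being bounded, $x = 0$. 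Hence $x \mapsto x\Omega$ is injective and we may write $\X \cong \X\Omega \subset X$.

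For the density of $\X\Omega$ I first reinterpret it as the set of $\Omega$-bounded vectors of $X$. A vector $\xi \in X$ is called $\Omega$-bounded if the densely defined assignment $a\Omega \mapsto a\xi$ (with the graded sign) extends to a bounded operator $T_\xi : H_0 \to X$; such a $T_\xi$ lies in $\X$ and satisfies $T_\xi\Omega = \xi$, while conversely $x\Omega$ is $\Omega$-bounded with $x = T_{x\Omega}$ for every $x \in \X$. Thus $\X\Omega$ is exactly the space of $\Omega$-bounded vectors, and it suffices to prove these are dense in $X$. To do this, realise the normal $\M$-module $X$ as a submodule $X = p\,(H_0 \otimes \ell^2(\NNN))$, where $p$ is a projection commuting with the amplified $\M$-action. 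In $H_0 \otimes \ell^2(\NNN)$ the finitely supported vectors $\sum_k (a_k\Omega)\otimes e_k$ with $a_k \in \M$ form a dense set, and each is $\Omega$-bounded (its associated operator is built from the right multiplications $J a_k^{\star} J$, which are bounded). Since $p$ commutes with the $\M$-action it sends $\Omega$-bounded vectors to $\Omega$-bounded vectors and carries this dense set onto a dense subset of $pX = X$. Therefore the $\Omega$-bounded vectors, i.e. $\X\Omega$, are dense in $X$.

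The main obstacle is precisely this density step: injectivity is a one-line consequence of the cyclicity of $\Omega$, whereas density rests on the structural fact that the $\Omega$-bounded vectors of an arbitrary normal $\M$-module are dense, supplied here by the amplification-and-compression argument (this is the bimodule counterpart of the density used in \cite{2} p.~525--526). The only technical care needed is with the $\ZZZ_2$-graded signs, which enter the definition of $T_\xi$ but affect neither the cyclicity argument nor the density of the bounded vectors.
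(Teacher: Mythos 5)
Your density step contains a genuine error, and it sits exactly at the crux of the argument. You claim that the finitely supported vectors $\xi=\sum_{k}(a_{k}\Omega)\otimes e_{k}$ with $a_{k}\in\M$ are $\Omega$-bounded, the associated operator being ``built from the right multiplications $Ja_{k}^{\star}J$''. In the present setting $\M=\pi_{0}(\Vir_{1/2}(I))''$ is a type III$_{1}$ factor, so the vacuum state $(\,\cdot\,\Omega,\Omega)$ is not a trace (a type III factor admits no tracial state at all), and the assignment $a\Omega\mapsto a a_{k}\Omega$ is in general \emph{unbounded}: the vectors you name are not $\Omega$-bounded. The operator $Ja_{k}^{\star}J$ does not implement them either: since $J\Omega=\Omega$ and $S=J\triangle^{1/2}$, one has $Ja_{k}^{\star}J\,a\Omega=a\,Ja_{k}^{\star}J\Omega=a\,\triangle^{1/2}a_{k}\Omega$, so $\sum_{k}Ja_{k}^{\star}J(\cdot)\otimes e_{k}$ is the bounded operator attached to the \emph{different} vector $\sum_{k}(\triangle^{1/2}a_{k}\Omega)\otimes e_{k}$; the identity $Ja_{k}^{\star}J\Omega=a_{k}\Omega$ you are implicitly using holds only in the tracial case $\triangle=1$. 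The repair is standard and one line: take $a_{k}'\in\M'$ and the vectors $\sum_{k}(a_{k}'\Omega)\otimes e_{k}$. Then $a\,(a_{k}'\Omega)=a_{k}'\,a\Omega$, the associated operator $\sum_{k}a_{k}'(\cdot)\otimes e_{k}$ is bounded (and superintertwines, with the obvious signs), and these vectors are still dense because $\Omega$ is cyclic for $\M'$, being separating for $\M$. Indeed, in $H_{0}$ itself the $\Omega$-bounded vectors are exactly $\M'\Omega$, not $\M\Omega$. With this change, the remainder of your argument (compression by $p$, identification of the bounded vectors with $\X\Omega$, injectivity via cyclicity) is correct.

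Once repaired, your route is genuinely different from the paper's and worth comparing. The paper stays inside its own machinery: it writes $\X=\X\,\pi_{0}(\M(I^{c}))$ and applies the Reeh--Schlieder theorem to see that $\X\Omega$ is dense in $\X H_{0}$; it then uses the supercommutation of $\M(I)$ and $\M(I^{c})$ together with its von Neumann density and irreducibility results, which give that $\pi_{X}(\M(I^{c}).\M(I))''=\bigoplus B(H_{i})$, to conclude that $\X H_{0}$ is dense in $X=\bigoplus H_{i}$. Your argument replaces all of this by the general structure theory of normal modules: realization of $X$ inside the amplification $H_{0}\otimes\ell^{2}(\NNN)$ and density of bounded vectors. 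This buys generality and independence from the local-algebra results (it would work for any normal module over any factor with cyclic separating vector), at the cost of invoking the amplification-compression theorem; the paper's proof, by contrast, is elementary granted the CFT results it has already established, and it does not need to identify $\X\Omega$ with the bounded vectors at all, since it works directly with the intertwiner space $\X$ appearing in the fusion definition.
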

\begin{proof} $\X = \X. \pi_{0}(\M(I^{c}))$, so by Reeh-Schlieder $\X \Omega$ is dense in $\X H_{0}$. \\ Now, $\X H_{0} =[\pi_{X}(\M(I^{c})) \X]. [\pi_{0}(\M(I)). H_{0}] = \pi_{X}(\M(I^{c}).\M(I))  \X \H_{0} \\Ê= \pi_{X}(\langle \M(I^{c}).\M(I) \rangle_{lin})  \X \H_{0}$. But, because  $\M(I^{c})$ and  $\M(I)$ supercommute, the $\star$-algebra generated by $\M(I^{c}).\M(I)$ is exactly its linear span, then, $\pi_{X}(\langle \M(I^{c}).\M(I) \rangle_{lin})$ is weakly dense in $ \pi_{X}( \M(I^{c}).\M(I) )''$. So, by von Neumann density $\X H_{0}$ is dense in $ \bigoplus B(H_{i}) \X H_{0} = X$, with $X = \bigoplus H_{i}$. \end{proof}
\begin{lemma}  (Hilbert space continuity lemma)\\ÊThe natural map $\X \otimes \Y \to  X \boxtimes Y$ extends canonically to continuous maps $X \otimes \Y \to  X \boxtimes Y$ and $\X \otimes Y \to  X \boxtimes Y$. In fact $\Vert x_{i}\otimes y_{i}  \Vert^{2} \le \Vert x_{i}x_{i}^{\star}  \Vert \sum \Vert  y_{i}\Omega \Vert^{2}$ and $\Vert x_{i}\otimes y_{i}  \Vert^{2} \le \Vert y_{i}y_{i}^{\star}  \Vert \sum \Vert  x_{i}\Omega \Vert^{2}$  \end{lemma}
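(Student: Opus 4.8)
The plan is to reduce everything to the single operator estimate $\Vert\sum_{i} x_{i}\otimes y_{i}\Vert^{2}\le\Vert\sum_{i} x_{i}x_{i}^{\star}\Vert\cdot\sum_{i}\Vert y_{i}\Omega\Vert^{2}$ (together with its mirror obtained by exchanging the roles of $x$ and $y$), and then to bootstrap the continuity and canonical extension from it by a density argument. First I would record the two structural facts that drive the computation. Since $x_{i}\in Hom_{-\M}(H_{0},X)$ intertwines the right action, each product $x_{j}^{\star}x_{i}$ supercommutes with the right $\M$-action and hence lies in $(\M^{\natural})^{\natural}=\M$ by lemma \ref{superco}; dually $y_{j}^{\star}y_{i}\in\M^{\natural}$. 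Thus every $x_{j}^{\star}x_{i}$ supercommutes with every $y_{k}^{\star}y_{l}$. Reducing to homogeneous $x_{i},y_{i}$ (the general case following by bilinearity), the Koszul signs appearing in the pre-inner product are exactly those produced when one pushes an $\M^{\natural}$-factor past an $\M$-factor, so I expect them to cancel against the signs generated in the computation below.

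Next I would expand $\Vert\sum_{i} x_{i}\otimes y_{i}\Vert^{2}=\sum_{ij}(\pm)(x_{j}^{\star}x_{i}\,y_{j}^{\star}y_{i}\,\Omega,\Omega)$ and reorganise it as the value of a positive functional on a positive matrix. Form the Gram matrix $\mathbf{A}=(x_{i}^{\star}x_{j})_{ij}\in M_{n}(\M)_{+}$; writing $\mathbf{X}\colon H_{0}^{\oplus n}\to X$, $\mathbf{X}(\xi_{j})_{j}=\sum_{j} x_{j}\xi_{j}$, one has $\mathbf{A}=\mathbf{X}^{\star}\mathbf{X}$ and $\sum_{j} x_{j}x_{j}^{\star}=\mathbf{X}\mathbf{X}^{\star}$, so $\Vert\mathbf{A}\Vert=\Vert\mathbf{X}^{\star}\mathbf{X}\Vert=\Vert\mathbf{X}\mathbf{X}^{\star}\Vert=\Vert\sum_{j} x_{j}x_{j}^{\star}\Vert$, and in particular $\mathbf{A}\le\Vert\sum_{j} x_{j}x_{j}^{\star}\Vert\cdot 1_{n}$ in $M_{n}(\M)$. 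On the other hand define $\Psi\colon M_{n}(\M)\to\CCC$ by $\Psi((S_{ij}))=\sum_{ij}(S_{ij}\,y_{i}^{\star}y_{j}\,\Omega,\Omega)$, so that after relabelling $\Vert\sum_{i} x_{i}\otimes y_{i}\Vert^{2}=\Psi(\mathbf{A})$ while $\Psi(1_{n})=\sum_{i}\Vert y_{i}\Omega\Vert^{2}$.

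The heart of the matter is the positivity $\Psi\ge 0$. For $\mathbf{S}=\mathbf{T}^{\star}\mathbf{T}$ with $\mathbf{T}=(T_{ij})\in M_{n}(\M)$ I would compute $\Psi(\mathbf{S})=\sum_{ijk}(T_{ki}^{\star}T_{kj}\,y_{i}^{\star}y_{j}\,\Omega,\Omega)$; supercommuting $T_{kj}\in\M$ past $y_{i}^{\star}y_{j}\in\M^{\natural}$ and then carrying $T_{ki}^{\star}$ and $y_{i}^{\star}$ across the inner product turns the summand into $(y_{j}T_{kj}\Omega,\,y_{i}T_{ki}\Omega)_{Y}$, whence $\Psi(\mathbf{S})=\sum_{k}\Vert\sum_{i} y_{i}T_{ki}\Omega\Vert^{2}\ge 0$. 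Granting this, $\mathbf{A}\le\Vert\sum_{j} x_{j}x_{j}^{\star}\Vert\, 1_{n}$ and positivity of $\Psi$ give $\Psi(\mathbf{A})\le\Vert\sum_{j} x_{j}x_{j}^{\star}\Vert\,\Psi(1_{n})$, which is the first estimate; the second follows verbatim after interchanging the roles of the $x$'s and $y$'s, using the dense embeddings $\X\subset X$ and $\Y\subset Y$ symmetrically. The step I expect to demand the most care is keeping the grading signs coherent throughout this reduction, since it is precisely the sign convention built into the pre-inner product that must make $\Psi$ genuinely positive rather than merely self-adjoint.

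Finally I would deduce the continuity and the canonical extension. By lemma \ref{density45} the assignment $x\mapsto x\Omega$ embeds $\X$ densely in $X$, and $y\mapsto y\Omega$ embeds $\Y$ densely in $Y$. Applying the first estimate to a difference with the same $x_{i}$, $\Vert\sum_{i} x_{i}\otimes y_{i}-\sum_{i} x_{i}\otimes y_{i}'\Vert^{2}\le\Vert\sum_{i} x_{i}x_{i}^{\star}\Vert\sum_{i}\Vert(y_{i}-y_{i}')\Omega\Vert^{2}$ shows that, for fixed $x_{i}\in\X$, the value of $\sum_{i} x_{i}\otimes y_{i}$ in $X\boxtimes Y$ depends only on the vectors $y_{i}\Omega\in Y$; hence the natural map is well defined on $\X\otimes Y$ and, being bounded by the same estimate, extends uniquely to a continuous map $\X\otimes Y\to X\boxtimes Y$. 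The mirror estimate yields the continuous extension $X\otimes\Y\to X\boxtimes Y$ in exactly the same way, completing the proof.
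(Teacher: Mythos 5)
Your proof is correct and takes essentially the same route as the paper's, which for this lemma is simply the citation ``As for \cite{2} p 526'': Wassermann's argument there is exactly your reduction to the Gram-matrix estimate $\mathbf{A}=\mathbf{X}^{\star}\mathbf{X}\le\Vert\sum x_{j}x_{j}^{\star}\Vert\,1_{n}$ combined with positivity of the functional $\Psi$ built from the $y_{i}$'s, using the (super)commutation of left and right intertwiners. The only informal point is the Koszul-sign bookkeeping, which you flag but do not carry out; this is harmless here, since the sign convention in the pre-inner product is designed precisely to make $\Psi$ positive, and the paper in any case later restricts to even intertwiners (section \ref{weakalpha}).
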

\begin{proof} As for \cite{2} p 526.  \end{proof} 
\begin{lemma}  $ \boxtimes$ is associative.  \end{lemma}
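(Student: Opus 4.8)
The plan is to prove associativity by realizing both iterated fusions as canonically unitarily isomorphic to a single symmetric triple product $X \boxtimes Y \boxtimes Z$. I would define the latter as the $L^2$-completion of the algebraic tensor product $\X \otimes \Y \otimes \Z$, with $\X$, $\Z$ the one-sided intertwiner spaces $Hom_{-\M}(H_0,X)$, $Hom_{\M-}(H_0,Z)$ used above and $\Y$ the analogous intertwiner space for the middle factor, equipped with the six-point pre-inner product
\begin{center}
$(x_1 \otimes y_1 \otimes z_1 , x_2 \otimes y_2 \otimes z_2) = \varepsilon\,(x_2^{\star} x_1\, y_2^{\star} y_1\, z_2^{\star} z_1\, \Omega , \Omega)$,
\end{center}
where $\varepsilon$ is the grading sign generalizing the one appearing in the two-fold pre-inner product, and where $x_2^{\star} x_1 \in \M$ while $y_2^{\star} y_1$ and $z_2^{\star} z_1 \in \M^{\natural}$ are read off exactly as in the pairwise case. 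Positivity of this form, hence its status as a genuine inner product on the completion, would be checked as in \cite{2} p.\ 525--526.

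First I would treat the left bracketing. Writing $W = X \boxtimes Y$, I must describe $Hom_{-\M}(H_0, W)$. The density lemma (Lemma \ref{density45}) realizes $\X$ and $\Y$ as dense subspaces of $X$ and $Y$ via $x \leftrightarrow x\Omega$, and the Hilbert space continuity lemma bounds the assignment $x \otimes y \mapsto (\zeta \mapsto x \boxtimes y\zeta)$, so that the composites $x \boxtimes y$ (with $x \in \X$ and $y$ an intertwiner of the $Y$-factor) span a dense subspace of $Hom_{-\M}(H_0, W)$. I would then expand the defining inner product of $(X \boxtimes Y) \boxtimes Z$ on elements $(x_1 \boxtimes y_1) \otimes z_1$ and $(x_2 \boxtimes y_2) \otimes z_2$: the outer formula gives a sign times $((x_2 \boxtimes y_2)^{\star} (x_1 \boxtimes y_1)\, z_2^{\star} z_1\, \Omega , \Omega)$, and the relative-tensor-product identity $(x_2 \boxtimes y_2)^{\star}(x_1 \boxtimes y_1) = x_2^{\star} x_1\, y_2^{\star} y_1$ as operators on $H_0$ collapses this to the flat six-point function above. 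Moving the factor $x_2^{\star} x_1 \in \M$ past $y_2^{\star} y_1, z_2^{\star} z_1 \in \M^{\natural}$ is justified by the supercommutation encoded in Lemma \ref{naturals} and Tomita--Takesaki.

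The identical computation for the right bracketing $X \boxtimes (Y \boxtimes Z)$, now grouping $y \boxtimes z$, yields the same six-point function, so the identity map on $\X \otimes \Y \otimes \Z$ extends to unitaries $(X \boxtimes Y) \boxtimes Z \to X \boxtimes Y \boxtimes Z \leftarrow X \boxtimes (Y \boxtimes Z)$; composing gives the desired isomorphism, which is a bimodule map because on the dense subspaces it is the identity while the left and right $\M$-actions are implemented by the same outer operators on $X$ and $Z$. I expect the obstacle to be bookkeeping rather than any deep new input, concentrated in two places: (i) the density identification of $Hom_{-\M}(H_0, X \boxtimes Y)$ with the closure of the composites $x \boxtimes y$, which rests on the density lemma together with the von Neumann density established earlier; and (ii) the $\ZZZ_2$-graded sign $\varepsilon$, whose consistency under reassociation must be tracked carefully, since odd components are reordered when the middle operators, living in $\M^{\natural}$, are shifted across one another. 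Supercommutation via the Klein transformation $\kappa$ is what keeps these signs coherent.
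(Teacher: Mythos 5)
There is a genuine gap at the very centre of your argument: the typing of the middle slot. In the definition of $X \boxtimes Y$ the middle space is $\Y = Hom_{\M -}(H_{0},Y)$, the intertwiners of the \emph{left} action, and this is also what your six-point function needs (so that $y_{2}^{\star}y_{1} \in \M^{\natural}$). But then the composite $x \boxtimes y \colon \xi \mapsto x \otimes y\xi$ is \emph{not} an element of $Hom_{-\M}(H_{0}, X \boxtimes Y)$: an operator out of the vacuum intertwining one side's action does not intertwine the other side's. (Already for $Y = H_{0}$ one has $Hom_{\M -}(H_{0},H_{0}) = \M^{\natural}$, essentially the algebra of right multiplications, and a right multiplication supercommutes with all right multiplications only if it comes from the centre, i.e.\ is scalar, since $\M$ is a factor.) So these composites cannot legitimately be inserted into the defining inner product of $(X\boxtimes Y)\boxtimes Z$. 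If you repair the typing by drawing the middle intertwiners from $Hom_{-\M}(H_{0},Y)$, so that $x \boxtimes y'$ really lies in $Hom_{-\M}(H_{0}, X\boxtimes Y)$, then the identity you rely on, $(x_{2}\boxtimes y_{2}')^{\star}(x_{1}\boxtimes y_{1}') = x_{2}^{\star}x_{1}\, y_{2}'^{\star}y_{1}'$, is false: the correct computation gives $y_{2}'^{\star}\,\pi_{Y}(x_{2}^{\star}x_{1})\,y_{1}'$, and $\pi_{Y}(x_{2}^{\star}x_{1})$ cannot be pulled through $y_{1}'$, which intertwines only the right action; note also that $y_{2}'^{\star}y_{1}'$ then lies in $\M$, not $\M^{\natural}$, so your flat six-point function does not even typecheck. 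Finally, with left intertwiners in the middle, $y_{2}^{\star}y_{1}$ and $z_{2}^{\star}z_{1}$ both lie in $\M^{\natural}$ and need not commute with each other, so the positivity of the six-point form, which you propose to check ``as in \cite{2}'', is itself unsubstantiated.

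The proof the paper points to (the paper's own proof is nothing but the citation of \cite{2} p.~527) avoids all of this by putting \emph{vectors}, not intertwiners, in the middle slot. By the Hilbert space continuity lemma the image of $\X \otimes Y$ is dense in $X \boxtimes Y$, hence $(X\boxtimes Y)\boxtimes Z$ contains the image of $\X \otimes Y \otimes \Z$ densely, with inner product (up to the grading signs) $(\pi_{Y}(x_{2}^{\star}x_{1})\,\pi'_{Y}(z_{2}^{\star}z_{1})\,\eta_{1}, \eta_{2})_{Y}$, where $\pi_{Y}$, $\pi'_{Y}$ denote the left and right actions on the bimodule $Y$ and $\eta_{i} \in Y$; symmetrically, $X \boxtimes (Y\boxtimes Z)$ contains the same algebraic space densely with inner product $(\pi'_{Y}(z_{2}^{\star}z_{1})\,\pi_{Y}(x_{2}^{\star}x_{1})\,\eta_{1}, \eta_{2})_{Y}$. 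These agree precisely because the left and right actions on $Y$ (super)commute, and the identity map on $\X \otimes Y \otimes \Z$ then extends to a unitary bimodule isomorphism between the two bracketings. So your instinct that supercommutation is the engine of associativity is correct, but it must be applied as commutation of the two module actions on the middle bimodule acting on vectors; it cannot be implemented as a reshuffling of six vacuum intertwiners against $\Omega$.
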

\begin{proof} As for \cite{2} p 527.  \end{proof}

 \subsection{Connes fusion with $H_{\alpha}$ on $\Vir_{1/2}$} \label{weakalpha}
 \begin{remark}Note that the primary fields $\phi$ we consider are always the ordinary part and so even operators. In fact, we only need to consider even intertwiner operators because each odd intertwiner operator is the product of an even one and an odd operator on the vacuum local von Neumann algebra.\end{remark}
 \begin{definition} Let  $ \langle  i , j \rangle := \{  k  \hspace{0,2cm} \vert  \hspace{0,2cm} \phi_{ij}^{k} \ne 0$ \}.   \end{definition}
 Recall that the primary field of charge $\alpha = (1/2,1/2)$ are bounded.  Let the graph $\G_{\alpha}$ with vertices $\{ i \}$ and an edge between $i$ and $j$ if $j \in \langle  \alpha , i \rangle$ ; then,  $\alpha$ is a weak generator in the sense that the graph $\G_{\alpha}$ is connected.  Let $I$ be a non-trivial interval of $\SSS^{1}$, and let  $f$ and $g$  be $L^{2}$-functions localized in $I$ and $I^{c}$ respectively. Recall that every possible braiding at charge $\alpha$ admits non-null coefficients, ie; 
 $ \phi_{ij}^{\alpha}(z) \phi_{jk}^{\alpha}(w)   = \sum \lambda_{l}  \phi_{il}^{\alpha}(w) \phi_{lk}^{\alpha}(z)$   with  $\lambda_{l}  \ne 0$ iff  $l \in \langle  \alpha , i \rangle \cap \langle \alpha , k \rangle$. Then, by the standard convolution argument:  $ \phi_{ij}^{\alpha}(f) \phi_{jk}^{\alpha}(g)   = \sum \lambda_{l}  \phi_{il}^{\alpha}(e_{l}g) \phi_{lk}^{\alpha}(\bar{e_{l}}f)$   with  $e_{l}$ the phase correction.  
We note $a_{0 \alpha} =\phi_{0 \alpha }^{\alpha}(f)$, $b_{ \alpha 0} = \phi_{ \alpha 0}^{\alpha}(g) $ called the principal part. We define the non-principal parts $a_{ij}$ and $b_{ij}$ such that they incorporate the phase correction in the braiding relations. Next, if $a_{ij} = \phi_{ij}^{\alpha}(h)$ then $ a_{ij}^{\star} =  \phi_{ji}^{\alpha}(\bar{h})$, so we note $\bar{a}_{ji} = a_{ij}^{\star} $: 
\begin{corollary}  (Braiding relations) 
\begin{enumerate}
\item[] $ b_{ij} a_{jk}  = \sum \nu_{l}  a_{il} b_{lk}$ \quad with  $\nu_{l}  \ne 0$ iff  $l \in \langle  \alpha , i \rangle \cap \langle \alpha , k \rangle$      
\end{enumerate}  \end{corollary}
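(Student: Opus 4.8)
The plan is to read the statement directly off the localised braiding relation proved just above, once everything is translated into the $a$--$b$ notation. Recall that the $a_{ij}$ are charge-$\alpha$ primary fields $\phi_{ij}^{\alpha}(f)$ with $f$ localised in $I$, while the $b_{ij}$ are charge-$\alpha$ primary fields $\phi_{ij}^{\alpha}(g)$ with $g$ localised in $I^{c}$; both are constructible (compressions of complex fermions), hence bounded, and the non-principal parts were defined precisely so as to absorb the phase corrections $e_{\lambda}$ produced by braiding. Consequently the only content to verify is the algebraic braiding identity together with the exact support of its coefficients.

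First I would observe that $b_{ij}a_{jk}$ is the product of an $I^{c}$-localised charge-$\alpha$ field by an $I$-localised charge-$\alpha$ field, concentrated on the two disjoint proper intervals $I^{c}$ and $I$. I then apply the localised braiding relation (the convolution argument, cf.\ \cite{2}) with these two intervals playing the roles of the disjoint $I$, $J$. This rewrites $b_{ij}a_{jk}$ as a sum $\sum_{l}\nu_{l}\,\phi_{il}^{\alpha}(\,\cdot\,)\,\phi_{lk}^{\alpha}(\,\cdot\,)$ in which the first factor has now become $I$-localised and the second $I^{c}$-localised, their arguments differing from the originals only by the phases $e_{\lambda}$, $\bar{e}_{\lambda}$. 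By the convention that folds these corrections into the non-principal parts, the first factor is exactly $a_{il}$ and the second exactly $b_{lk}$, so the identity reads $b_{ij}a_{jk}=\sum_{l}\nu_{l}\,a_{il}b_{lk}$, which is the displayed formula.

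It then remains to pin down when $\nu_{l}\neq 0$. The braiding coefficients at charge $\alpha$ are all non-zero by Theorem \ref{woker} and Corollary \ref{joker}; hence a summand can vanish only because one of its two factors is itself a zero primary field. The factor $a_{il}$ is non-zero exactly when $\phi_{il}^{\alpha}\neq 0$, i.e.\ $l\in\langle\alpha,i\rangle$, and $b_{lk}$ is non-zero exactly when $\phi_{lk}^{\alpha}\neq 0$, i.e.\ $l\in\langle\alpha,k\rangle$. Therefore $\nu_{l}\neq 0$ iff $l\in\langle\alpha,i\rangle\cap\langle\alpha,k\rangle$, as claimed.

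Since all the ingredients are already in place, there is no genuine obstacle here: the proof is purely a translation of the localised braiding relation into the $a$--$b$ notation followed by reading off the support of the coefficients. The one point that demands care is the bookkeeping of the phase factors $e_{\lambda}$, $\bar{e}_{\lambda}$ — one must check that they coincide exactly with the corrections already absorbed into the definitions of $a_{il}$ and $b_{lk}$, so that no stray exponential survives on the right-hand side.
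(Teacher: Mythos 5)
Your proposal is correct and follows essentially the same route as the paper: the paper treats this corollary as an immediate consequence of the localised braiding relation (the convolution argument applied to the two disjoint intervals $I$ and $I^{c}$), with the phase corrections $e_{\lambda}$, $\bar{e}_{\lambda}$ absorbed into the non-principal parts by definition, and the support of the coefficients read off from corollary \ref{joker} together with the fact that $\phi^{\alpha}_{il}\ne 0$ iff $l\in\langle\alpha,i\rangle$ and $\phi^{\alpha}_{lk}\ne 0$ iff $l\in\langle\alpha,k\rangle$. The one point you flag as needing care --- that the phases produced by braiding coincide with those folded into $a_{il}$ and $b_{lk}$ --- is precisely what the paper's definition of the non-principal parts is designed to guarantee.
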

\begin{corollary} (Abelian braiding) \ If  $\#( \langle  \alpha , i \rangle \cap \langle \alpha , k \rangle )  = 1$ then: 
\begin{enumerate}
\item[] $ b_{ij} a_{jk}  =\nu  a_{ij} b_{jk}$ \quad with  $\nu  \ne 0$   
\end{enumerate}   \end{corollary}
\begin{lemma} \label{density} The set of vectors of the form $\eta  = (\eta_{i})$ with, $\eta_{i} = \pi_{i}(x)b_{ij} \xi$, $i \in  \langle  \alpha , j \rangle$,   $x \in \M(I^{c})$ and $\xi \in H_{j}$, spans a dense subspace of $\bigoplus H_{i}$.  \end{lemma}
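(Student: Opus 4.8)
The plan is to prove that the closed linear span $V$ of the vectors $\eta=(\eta_i)$, with $\eta_i=\pi_i(x)b_{ij}\xi$, exhausts $\bigoplus_i H_i$, by showing that $V$ is stable under two families of operators whose joint action reaches every summand, and that $V$ already contains $H_\alpha$; connectedness of $\G_\alpha$ then spreads this to all labels. Write $\pi(x)=\bigoplus_i\pi_i(x)$ for the diagonal action of $x\in\M(I^c)$, and $\hat a(h)=\sum_{i',i}a_{i'i}(h)$ for the total charge-$\alpha$ field smeared by a function $h$ localised in $I$, where $a_{i'i}(h)=\phi^\alpha_{i'i}(h):H_i\to H_{i'}$. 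The diagonal invariance is immediate: for $y\in\M(I^c)$ one has $\pi(y)\eta=(\pi_i(yx)b_{ij}\xi)_i$, again a generating vector, so $V$ is a $\pi(\M(I^c))$-submodule.

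The computational heart is the claim $\hat a(h)V\subseteq V$. Since $a_{i'i}(h)$ is localised in $I$ and $\pi_i(x)$ in $I^c$, they commute as even operators, so $(\hat a(h)\eta)_{i'}=\pi_{i'}(x)\sum_i a_{i'i}(h)b_{ij}\xi$. Now $a_{i'i}(h)$ and $b_{ij}$ sit on the disjoint intervals $I$ and $I^c$, so the localised braiding relations for charge $\alpha$ (Section \ref{weakalpha}) rewrite $a_{i'i}(h)b_{ij}=\sum_l\nu_l\,b_{i'l}a_{lj}$ with nonzero coefficients, the $b$-factors remaining localised in $I^c$ since the phase corrections $e_\lambda$ preserve localisation. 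Substituting and regrouping gives $(\hat a(h)\eta)_{i'}=\sum_l\pi_{i'}(x)b_{i'l}(g_l)\xi_l$ with $g_l$ supported in $I^c$ and $\xi_l\in H_l$. Hence $\hat a(h)\eta=\sum_l\eta^{(l)}$, a finite sum of generating vectors (each with the label $j$ replaced by $l$), and so $\hat a(h)\eta\in V$.

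Next I would run the propagation. Taking $j=0$ and $\xi=\Omega$ gives $\eta=(b_{\alpha 0}\Omega)$, supported on $H_\alpha$ and nonzero by Proposition \ref{generically}; applying $\pi_\alpha(\M(I^c))$ and the Reeh-Schlieder theorem fills $H_\alpha\subseteq V$. Since $H_{i_t}\subseteq V$ forces $Q_{i_t}V\subseteq V$ for the summand projection $Q_{i_t}$, I proceed by induction along a path in the connected graph $\G_\alpha$: from a summand $H_{i_s}\subseteq V$ already reached, I apply $\hat a(h)$ to a vector of $H_{i_s}$, subtract the components lying in previously reached summands (available by the $Q_{i_t}$ remark), and am left with a vector of $V$ supported on the new neighbours of $i_s$. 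Spreading this by $\pi(\M(I^c))$ through Reeh-Schlieder and isolating a single summand yields $H_{i^*}\subseteq V$, and transitivity of $\G_\alpha$ completes $V=\bigoplus_i H_i$.

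The main obstacle is precisely this last isolation: a node of $\G_\alpha$ has up to four charge-$\alpha$ neighbours, so $\hat a(h)$ applied to $H_{i_s}$ produces several orthogonal components simultaneously. I expect to separate them in two stages. Choosing $h$ integer- or half-integer-moded selects the value $\sigma=0$ or $\sigma=1$, hence (by Theorem \ref{woker}) the two neighbours with correlated, respectively anticorrelated, shifts of the two spins; this cuts four neighbours down to two. The remaining two-fold ambiguity is resolved by a lowest-energy argument, since the two candidate summands have distinct ground values $h_{i^*}$, so that a suitable single Fourier mode of $h$ reaches the ground state of only one of them. Ordering the induction by increasing ground energy then guarantees that at each step a single new summand is genuinely isolated, which is what makes the propagation close.
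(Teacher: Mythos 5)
Your first step (invariance of $V$ under the diagonal action of $\M(I^{c})$) is fine, but the computational heart of your argument breaks down, and this is a genuine gap. In the localised braiding relations of section \ref{weakalpha}, both the coefficients and the phase corrections depend on all four labels: $a_{i'i}(h)\,b_{ij}(g)=\sum_{l}\nu_{l}\,\phi^{\alpha}_{i'l}(e_{\lambda}g)\,\phi^{\alpha}_{lj}(\bar{e}_{\lambda}h)$ with $\lambda = h_{i'}+h_{j}-h_{i}-h_{l}$, so $\nu_{l}$ and $e_{\lambda}$ depend on the \emph{target} index $i'$. Consequently, when you group $(\hat{a}(h)\eta)_{i'}$ by the new source label $l$, the $i'$-component of the would-be generating vector $\eta^{(l)}$ is $\pi_{i'}(x)\,\phi^{\alpha}_{i'l}(e_{\lambda}g)\,\bigl(\phi^{\alpha}_{lj}(\bar{e}_{\lambda}h)\xi\bigr)$, in which both the smearing of the $b$-field and the vector it is applied to carry an $i'$-dependent phase $e^{\pm i h_{i'}\theta}$. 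There is no common $\xi_{l}\in H_{l}$ and no fixed family $b_{i'l}$ with $\eta^{(l)}_{i'}=\pi_{i'}(x)b_{i'l}\xi_{l}$; hence $\hat{a}(h)\eta$ is \emph{not} a finite sum of generating vectors, and $\hat{a}(h)V\subseteq V$ is not established. Two further symptoms of the same problem: the lemma is a statement for a fixed source label $j$ (density in $\bigoplus_{i\in\langle\alpha,j\rangle}H_{i}$, which is exactly what step $(4)$ of the transport formula needs), whereas your propagation along $\G_{\alpha}$ necessarily mixes source labels, so at best it would give the weaker union-over-$j$ statement; and your final separation device (single Fourier modes of $h$) is incompatible with the requirement that $h$ be supported in $I$, which you needed in order to commute $\hat{a}(h)$ past $\pi(x)$, $x\in\M(I^{c})$.

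The missing idea is that no braiding is needed at all: since $b_{ij}$ is localised in $I^{c}$, it intertwines the action of $\M(I)$ \emph{exactly}, $b_{ij}\pi_{j}(y)=\pi_{i}(y)b_{ij}$ for $y\in\M(I)$, with no coefficients and no phases. Hence, taking $\xi=\pi_{j}(y)v_{j}$ with $v_{j}$ a fixed non-zero finite energy vector, the generating set contains all vectors $\pi(x)\pi(y)w$, $x\in\M(I^{c})$, $y\in\M(I)$, where $w=(b_{ij}v_{j})_{i}$ has all components non-zero (if $b_{ij}v_{j}=0$ then $b_{ij}$ would vanish on the dense subspace $\pi_{j}(\M(I))v_{j}$ given by Reeh--Schlieder, so $b_{ij}=0$, a contradiction). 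In other words, $V$ is invariant under \emph{both} commuting diagonal actions $\pi(\M(I))$ and $\pi(\M(I^{c}))$ and contains a jointly cyclic vector: since the linear span of $\pi(\M(I^{c}).\M(I))$ is weakly dense in $\pi(\M(I^{c}).\M(I))''=\bigoplus B(H_{i})$ (irreducibility of each $H_{i}$ under $\M(I)\vee\M(I^{c})$, coming from von Neumann density, together with the mutual inequivalence of the $H_{i}$), one gets $V\supseteq \overline{(\bigoplus B(H_{i}))w}=\bigoplus H_{i}$. This is the paper's proof; your second family of operators should have been the diagonal $\M(I)$-action, transported through $b_{ij}$ by exact intertwining, rather than smeared charge-$\alpha$ fields transported by braiding.
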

\begin{proof} By Reeh-Schlieder, choosing a non-null vector $v_{j} \in F_{j}$,  $\pi_{j}(\M(I^{c}))v_{j}$ is dense in $H_{j}$. Now, by intertwining, $b_{ij}\pi_{j}(\M(I)) = \pi_{i}(\M(I))b_{ij}$. Then, if  $b_{ij}  v_{j} = 0 $, then, $b_{ij} $ vanishes on a dense subspace, and so by continuity, $b_{ij} =0$, contradiction. So, $b_{ij}  v_{j} \ne 0 $.  Now, clearly, the set of vector $\rho = (\rho_{i})$, with    $\rho_{i} = \pi_{i}(x)b_{ij} \pi_{j}(y) v_{j}$,  $x \in \M(I^{c})$ and $y \in \M(I)$, is a subset of the set of the lemma. Now, by intertwining  $\rho_{i} = \pi_{i}(x) \pi_{i}(y)b_{ij} v_{j}$. Let $\pi = \bigoplus \pi_{i}$ and $w = (w_{i})$, with $w_{i} = b_{ij}  v_{j} \ne 0$. Then, the set of $\rho$ is exactly $\pi(\M(I^{c}).\M(I)).w$. Next, because $\M(I^{c})$ and $\M(I)$ commute, the linear span of  $\pi(\M(I^{c}).\M(I))$ is weakly dense in $\pi(\M(I^{c}).\M(I))'' = \bigoplus B(H_{i})$ by von Neumann density. So, the set spans a dense subspace of $ ( \bigoplus B(H_{i}))w = \bigoplus H_{i} $ because $w_{i} \ne 0$.    \end{proof}
\begin{remark} $\bar{a}_{ij}.a_{ji} \in   Hom_{\M(I^{c})}(H_{i}, H_{i})  = \pi_{i}(\M(I^{c}))' $. \\
In particular,  $\bar{a}_{0 \alpha}.a_{\alpha 0} \in \pi_{0}(\M(I)) $  by Haag-Araki duality.   \end{remark}
\begin{definition} Let $\vert i \vert$ be the less number of edges from $i$ to $0$ in the connected graph $\G_{\alpha}$.  \end{definition}
\begin{theorem}  (Transport formula)    \begin{displaymath} \pi_{i} (\bar{a}_{0 \alpha}.a_{\alpha 0}) = \sum_{j \in \langle  \alpha , i \rangle} \lambda_{j} \bar{a}_{ij}.a_{ji} \quad   \textrm{with} \  \lambda_{j} > 0.  \end{displaymath}
\end{theorem}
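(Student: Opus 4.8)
The plan is to make the local equivalence $\pi_{i}$ explicit on the single positive vacuum operator $T_{0}:=\bar a_{0\alpha}a_{\alpha 0}=a_{\alpha 0}^{\star}a_{\alpha 0}$, which lies in $\pi_{0}(\M(I))$ by Haag-Araki duality, by transporting it with charge-$\alpha$ primary fields localised in $I^{c}$ and then rewriting the result through the charge-$\alpha$ braiding relations. It is natural to organise this as an induction on the graph distance $\vert i\vert$ in $\G_{\alpha}$ (presumably why that quantity was just defined): the base case $i=0$ is the tautology $\pi_{0}(T_{0})=T_{0}$ together with $\langle\alpha,0\rangle=\{\alpha\}$, giving $\lambda_{\alpha}=1$, and each inductive step transports across a single edge of $\G_{\alpha}$. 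Concretely I would fix a chain $c=c_{i0}$ of charge-$\alpha$ fields from $H_{0}$ to $H_{i}$ concentrated in $I^{c}$, which exists since $\G_{\alpha}$ is connected; being localised in $I^{c}$ it intertwines the action of $\Vir_{1/2}(I)$ with the local equivalence for $\Vir_{1/2}$ (section \ref{richarder}), so that $c\,x=\pi_{i}(x)\,c$ for every $x\in\pi_{0}(\M(I))$. Taking $x=T_{0}$ reduces the theorem to evaluating $\pi_{i}(T_{0})\,c=c\,\bar a_{0\alpha}a_{\alpha 0}$.

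Next I would push the $I^{c}$-localised chain $c$ to the right through the $I$-localised pair $\bar a_{0\alpha}a_{\alpha 0}$ using the localised charge-$\alpha$ braiding relations (Corollary \ref{joker} and its localised form), every coefficient of which is nonzero. Braiding only interchanges the positions of fields supported on disjoint intervals, up to the phase factors $e_{\lambda}$ and a sum over admissible intermediate sectors; carrying $c$ past both factors therefore lifts the $a$-pair from sector $0$ to sector $i$ and leaves a chain of the same type on the far right, producing an expression $\big(\sum_{j,k}\mu_{jk}\,\bar a_{ij}\,a_{ki}\big)\,c$ with $j,k$ ranging over $\langle\alpha,i\rangle$. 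The decisive structural point, which forces the answer to be diagonal, is the sector grading: a cross term $\bar a_{ij}a_{ki}$ with $j\ne k$ would compose $a_{ki}\colon H_{i}\to H_{k}$ with $a_{ji}^{\star}\colon H_{j}\to H_{i}$ and is zero unless $j=k$. Moreover, since $\bar a_{0\alpha}=a_{\alpha 0}^{\star}$, the two coefficients picked up when $c$ passes $a_{\alpha 0}^{\star}$ and $a_{\alpha 0}$ are related by the contragredient-braiding duality of Reminder \ref{braid2}, so the surviving diagonal coefficient is a genuine modulus square $\lambda_{j}:=\mu_{jj}=\vert\mu_{j}\vert^{2}$.

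It then remains to cancel $c$ and to read off positivity. After collecting the trailing chains back into a single $c$, I obtain $\pi_{i}(T_{0})\,c=\big(\sum_{j\in\langle\alpha,i\rangle}\lambda_{j}\,\bar a_{ij}a_{ji}\big)\,c$. Both $\pi_{i}(T_{0})$ and each $\bar a_{ij}a_{ji}$ are even and supercommute with $\pi_{i}(\M(I^{c}))$ (the Jones-Wassermann inclusion and the Remark preceding the statement), so their difference $D$ commutes with $\pi_{i}(\M(I^{c}))$; since $D\,c\,\xi=0$ for every $\xi\in H_{0}$ while the vectors $\pi_{i}(x)\,c\,\xi$ with $x\in\M(I^{c})$, $\xi\in H_{0}$ span a dense subspace of $H_{i}$ by Lemma \ref{density} (applied to the chain $c$), it follows that $D=0$. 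Strict positivity is then immediate: $\lambda_{j}=\vert\mu_{j}\vert^{2}$ with $\mu_{j}$ a nonzero braiding coefficient (Corollary \ref{joker}), and the fields $a_{ji}$ are themselves nonzero for $j\in\langle\alpha,i\rangle$ by Theorem \ref{woker}, so no term degenerates and each $\lambda_{j}>0$.

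The step I expect to be the genuine obstacle is the middle one: the braiding bookkeeping must be controlled carefully enough to be certain that commuting $c$ through $\bar a_{0\alpha}a_{\alpha 0}$ produces \emph{exactly} the diagonal family $\{\bar a_{ij}a_{ji}\}_{j\in\langle\alpha,i\rangle}$, with the trailing chains reassembling into one $c$, and that the conjugate pairing of coefficients from Reminder \ref{braid2} really makes every $\lambda_{j}$ a strictly positive real rather than permitting cancellation between the contributions of different intermediate sectors. The grading identity (vanishing of off-diagonal products) and the generic non-vanishing of Proposition \ref{generically} are what keep this in hand; once the diagonal form with $\lambda_{j}=\vert\mu_{j}\vert^{2}$ is in place, the cancellation via Lemma \ref{density} and the positivity are routine.
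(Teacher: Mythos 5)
Your opening reduction ($\pi_{i}(T_{0})\,c = c\,T_{0}$ for an $I^{c}$-localised chain $c$, since $T_{0}=\bar a_{0\alpha}a_{\alpha 0}\in\pi_{0}(\M(I))$ by Haag-Araki duality) and your final cancellation of $c$ via Lemma \ref{density} are both sound, and they do echo ingredients of the paper's argument. But the middle step, which you yourself flag as the obstacle, genuinely fails, and in two distinct places. First, the bookkeeping: pushing $c$ through $\bar a_{0\alpha}a_{\alpha 0}$ does \emph{not} produce $\bigl(\sum_{j}\lambda_{j}\bar a_{ij}a_{ji}\bigr)c$. Each elementary braid $b_{mj}a_{jk}=\sum_{l}\nu_{l}\,a_{ml}b_{lk}$ replaces the $b$-field by $b$-fields landing in \emph{every} admissible intermediate sector $l$, so once $c$ has length at least two the outcome is a sum over paths of terms $\bar a_{im}a_{mm'}\,c_{m'}$, where $c_{m'}$ is a chain from $H_{0}$ to $H_{m'}$ which is \emph{not} the original $c$ whenever $m'\neq i$. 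These cross terms are perfectly composable (the trailing chain ends exactly where $a_{mm'}$ starts) and are nonzero; your grading argument only excludes compositions that never arise in the first place, so it gives no diagonality. Consequently there is no operator identity of the form $D\,c=0$ to feed into the density lemma. This is precisely why the paper never argues at the operator level: it tests everything against the dense family of vectors $\eta_{s}=\pi_{s}(x)b_{si}\xi$ of Lemma \ref{density}, and the off-diagonal contributions $(a_{ls}\eta_{s},a_{lj}\eta_{j})$ with $s\neq j$ are eliminated because the resulting sesquilinear identity holds for \emph{all} such $\eta$ (step $(6)$ of the paper's proof), not because the corresponding operators vanish.

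Second, positivity. Reminder \ref{braid2} says that the transport matrix of the contragredient ODE is the transpose of the inverse of the original one; in the paper it serves to construct the $\Vir_{1/2}$ braiding through Tsuchiya-Nakanishi duality. It does \emph{not} say that the coefficients arising when a $b$-field crosses $\bar a_{0\alpha}=a_{\alpha 0}^{\star}$ are complex conjugates of those arising when it crosses $a_{\alpha 0}$; taking adjoints of the braiding relation yields a statement about $a_{\alpha 0}\bar b_{0\alpha}$, not about $b\,a_{\alpha 0}$. So your identity $\lambda_{j}=\vert\mu_{j}\vert^{2}$ is unproved, and it is exactly the hard part: strict positivity of the $\lambda_{j}$ is the real analytic content of the theorem. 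The paper obtains it by an induction on $\vert i\vert$ that carries \emph{both} an $I$-localised and an $I^{c}$-localised transport formula simultaneously, and extracts the sign information from positivity of operator-valued matrices (the matrices $(y_{rt})$ and $(z_{rt})$), polarization, and Lemma \ref{density} — concluding $\nu_{l}\mu_{j}>0$ before assembling the formula. Your proposal replaces this entire mechanism by an appeal to Reminder \ref{braid2} that it cannot support, so the proof as written does not go through.
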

\begin{proof}  We prove by induction on $\vert i \vert$.  We suppose that:
\begin{center} $\pi_{i} (\bar{a}_{0 \alpha}.a_{\alpha 0}) = \sum_{j \in \langle  \alpha , i \rangle} \lambda_{j} \bar{a}_{ij}.a_{ji}$ \quad  and \quadÊ $\pi_{i} (\bar{b}_{0 \alpha}.b_{\alpha 0}) = \sum_{j \in \langle  \alpha , i \rangle} \lambda'_{j} \bar{b}_{ij}.b_{ji}$ \end{center}
 $(1)$ \   Polarizing the second identity, we get:   
 \begin{center}  $\pi_{i} (\bar{b}_{0 \alpha}.b'_{\alpha 0}) = \sum_{j \in \langle  \alpha , i \rangle} \lambda'_{j} \bar{b}_{ij}.b'_{ji}$    \end{center}
 Now, with $x \in \M(I^{c})$ and $b'_{ij} = \pi_{i}(x) b_{ij}   \pi_{j}(x)^{\star}  $, we get: 
 \begin{center}  $\pi_{i} (\bar{b}_{0 \alpha}.\pi_{\alpha}(x) b_{\alpha 0} \pi_{0}(x)^{\star}) = \sum_{j \in \langle  \alpha , i \rangle} \lambda'_{j} \bar{b}_{ij}. \pi_{j}(x).b_{ji} \pi_{i}(x)^{\star}$    \end{center}
 Now, $\pi_{i}(\pi_{0}(x)^{\star}) = \pi_{i}(x)^{\star}$, so you can simplify by $ \pi_{i}(x)^{\star}$: 
 \begin{center}  $\pi_{i} (\bar{b}_{0 \alpha}.\pi_{\alpha}(x) b_{\alpha 0} ) = \sum_{j \in \langle  \alpha , i \rangle} \lambda'_{j} \bar{b}_{ij}. \pi_{j}(x).b_{ji} $    \end{center}
$(2)$ \ Next, by $(1)$ and the braiding relations, $\bar{a}_{ik} \pi_{k} (\bar{b}_{0 \alpha}\pi_{\alpha}(x)b_{\alpha 0}) a_{ki}= \\
 \pi_{i} (\bar{b}_{0 \alpha}\pi_{\alpha}(x)b_{\alpha 0}) \bar{a}_{ik} a_{ki}    =  \sum_{j} \sum_{l,s} \lambda'_{j} \nu_{l} \mu_{s} \bar{b}_{ij}\bar{a}_{jl}a_{ls}\pi_{s}(x)b_{si}$. \\
Let $y = \bar{a}_{ik} \pi_{k} (\bar{b}_{0 \alpha}\pi_{\alpha}(x^{\star }x)b_{\alpha 0}) a_{ki} =a^{\star}_{ki} \pi_{k} (b^{\star}_{ \alpha 0}\pi_{\alpha}(x^{\star }x)b_{\alpha 0}) a_{ki}$ clearly a positive operator, then, $\forall \xi \in H_{i}$, $(y \xi , \xi) \ge 0$. Then, with $\eta_{s} = \pi_{s}(x)b_{si}\xi$, we obtain:
\begin{center}  $\sum \lambda'_{j} \nu_{l} \mu_{s}  (a_{ls}\eta_{s} , a_{lj}\eta_{j} ) \ge 0 $    \end{center}
$(3)$ \ We now show that this inequality is linear in $\eta$: \\
Let $\tilde{\eta} = \sum \eta^{r}$ with $ \eta^{r} = ( \eta^{r}_{s})$,  $\eta^{r}_{s}=  \pi_{s}(x_{r})b_{si}\xi_{r}  $, $x_{r} \in  \M(I^{c})$ and $\xi_{r} \in H_{i}$. 
Idem, $Y = (y_{rt})$ with $y_{rt} = a^{\star}_{ik} \pi_{k} (b^{\star}_{ \alpha 0}\pi_{\alpha}(x_{r}^{\star }x_{t})b_{\alpha 0}) a_{ik}$,  is a positive operator-valued matrix, so that $\sum_{r,t}(y_{rt} \xi_{t} , \xi_{r}) \ge 0$, which is exactly the inequality $ \sum \lambda'_{j} \nu_{l} \mu_{s}  (a_{ls}\tilde{\eta}_{s} , a_{lj}\tilde{\eta}_{j} ) \ge 0 $, and the linearity follows.\\
$(4)$ \ Next, by lemma \ref{density}, the set of such $\eta$ span a dense subspace of $\bigoplus H_{s}$, then, by linearity and continuity, the inequality runs $\forall \eta \in \bigoplus H_{s}$. \\ In particular, taking all but one $\eta_{j}$ equal to zero, we obtain $\forall  \eta_{j} \in H_{j}$: \begin{center}   $\lambda'_{j} \mu_{j} \sum_{l} \nu_{l}  \Vert a_{lj}\eta_{j} \Vert^{2} \ge 0 $ \end{center}
$(5)$ \ Now, restarting from  $\tilde{Y} = (\pi_{k}(z_{u})^{\star} Y \pi_{k}( z_{v})) $ with $z_{u} \in  \M(I)$, we obtain: 
  \begin{center}   $\lambda'_{j} \mu_{j} \sum_{l} \nu_{l}  \Vert  \rho_{l} \Vert^{2} \ge 0 $ \quad $\forall (\rho_{l}) \in \bigoplus H_{l}$ \end{center}
Choosing all but one $\rho_{l}$ equal to zero, we have $\lambda'_{j}  \nu_{l} \mu_{j} > 0 $,  and so $ \nu_{l} \mu_{j} > 0 $. 
$(6)$ Let $Z = (z_{rt})$, with $z_{rt} = b^{\star}_{ji} \pi_{j} (a^{\star}_{ \alpha 0} a_{\alpha 0}) \pi_{j}(x_{r}^{\star }x_{t}) b_{ji}$, and $x_{r} \in  \M(I^{c})$. \\Ê$Z$ is a positive operator-valued matrix, so by the same process, induction and intertwining, we get: 
\begin{center} $\sum \lambda_{k} \nu_{l} \mu_{s}  (a_{ls}\eta_{s} , a_{lj}\eta_{j} )   =  (\pi_{j} (a^{\star}_{ \alpha 0} a_{\alpha 0}) \eta_{j} , \eta_{j} )$       \end{center}
Since it's true for all $\eta_{s} \in \bigoplus H_{s}$, all the term with $s \ne j$ are null: 
\begin{center} $ (\pi_{j} (a^{\star}_{ \alpha 0} a_{\alpha 0}) \eta_{j} , \eta_{j} )= \sum \lambda_{k} \nu_{l} \mu_{j}  (a_{lj}\eta_{j} , a_{lj}\eta_{j} ) $       \end{center}
But, we know that $\nu_{l} \mu_{j}  > 0$, then, by induction hypothesis; 
\begin{center} $\pi_{j} (\bar{a}_{0  \alpha} a_{\alpha 0})  = \sum \Lambda_{l}  \bar{a}_{jl} a_{lj}$,   with    $\Lambda_{l} > 0 $   \end{center}
The result follows because $\alpha$ is a weak generator and $j \in \langle  \alpha , i \rangle $.
  \end{proof}
  \begin{corollary} (Connes fusion for charge $\alpha$)  
   \begin{displaymath} H_{\alpha}  \boxtimes H_{i}  =  \bigoplus_{j \in \langle  \alpha , i \rangle} H_{j} \end{displaymath} \end{corollary}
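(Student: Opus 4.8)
The plan is to realise the prescribed isomorphism as the $L^{2}$-closure of an explicit isometry built from the principal primary field $a_{\alpha 0}=\phi^{\alpha}_{\alpha 0}(f)$ and the transport formula. First I would reduce everything to vectors of the special shape $a_{\alpha 0}\otimes y$. Indeed, by von Neumann density (proposition \ref{chainvn}) together with lemma \ref{density45}, the operators $a_{\alpha 0}\M$ form a dense subspace of $\X=Hom_{-\M}(H_{0},H_{\alpha})$; since the Connes fusion is balanced over $\M$ (so that $a_{\alpha 0}m\otimes y=a_{\alpha 0}\otimes my$), the vectors $a_{\alpha 0}\otimes y$ with $y\in\Y=Hom_{\M-}(H_{0},H_{i})$ already span a dense subspace of $H_{\alpha}\boxtimes H_{i}$.

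Next I would compute the norm of such a vector. Using the definition of the pre-inner product (all fields being the even ordinary parts, the grading sign is trivial), and the fact that $\bar a_{0\alpha}a_{\alpha 0}=a_{\alpha 0}^{\star}a_{\alpha 0}\in\pi_{0}(\M(I))$, one transports this operator across the complementary intertwiner $y$ into its local-equivalence image $\pi_{i}(a_{\alpha 0}^{\star}a_{\alpha 0})$ and then applies the transport formula:
\begin{center}
$\Vert a_{\alpha 0}\otimes y\Vert^{2}=(a_{\alpha 0}^{\star}a_{\alpha 0}\,y^{\star}y\,\Omega,\Omega)=(y^{\star}\pi_{i}(a_{\alpha 0}^{\star}a_{\alpha 0})\,y\,\Omega,\Omega)=\sum_{j\in\langle\alpha,i\rangle}\lambda_{j}\Vert a_{ji}y\Omega\Vert^{2}$, \quad $\lambda_{j}>0$.
\end{center}
This suggests defining $U(a_{\alpha 0}\otimes y)=(\sqrt{\lambda_{j}}\,a_{ji}y\Omega)_{j\in\langle\alpha,i\rangle}$ into $\bigoplus_{j\in\langle\alpha,i\rangle}H_{j}$; the displayed identity says that $U$ is isometric on the diagonal, and the polarised form of the transport formula (obtained exactly as in its proof, replacing the single positive operator by a positive operator-valued matrix) gives that the cross terms of the pre-inner product match, so that $U$ is a well-defined isometry.

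Finally I would read off surjectivity and the bimodule structure. Using that $\Y$ is invariant under the complementary local algebra and that the $I$-localised fields $a_{ji}$ intertwine $\M(I^{c})$, the components of $U$ range over the vectors $\pi_{j}(x)a_{ji}\zeta$ with $x\in\M(I^{c})$, $\zeta\in H_{i}$ and $j\in\langle\alpha,i\rangle$; these are precisely the vectors that lemma \ref{density} shows to be total in $\bigoplus_{j\in\langle\alpha,i\rangle}H_{j}$, so the range of $U$ is dense and $U$ is unitary. It then remains to check that $U$ intertwines the two $\M$-$\M$ bimodule actions, the left action through the first tensor factor matching the diagonal left action $\bigoplus_{j}\pi_{j}$ and the right action passing through $y$, which is routine from the intertwining relations. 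The main obstacle is the middle equality of the norm computation, namely justifying that the complementary intertwiner $y$ carries $a_{\alpha 0}^{\star}a_{\alpha 0}\in\pi_{0}(\M(I))$ to its local-equivalence image $\pi_{i}(a_{\alpha 0}^{\star}a_{\alpha 0})$; once this is in hand, the transport formula and lemma \ref{density} carry the real content, and the only other delicate point is the polarisation needed to upgrade the diagonal isometry to a genuine inner-product isomorphism, which is handled by the same positivity argument as in the transport formula.
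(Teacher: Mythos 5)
Your middle computation (transporting $a_{\alpha 0}^{\star}a_{\alpha 0}$ through $y$ and applying the transport formula) is exactly the paper's, and is fine. The genuine gap is in your first reduction, and it propagates fatally into your surjectivity argument. The balanced relation $a_{\alpha 0}m\otimes y=a_{\alpha 0}\otimes my$ can only absorb $\M$ acting by \emph{pre}-composition on $H_{0}$. What the paper's proof keeps is different: its $\X_{0}$ is the linear span of $\pi_{\alpha}(h)a_{\alpha 0}$ with $h\in\M(I)$, i.e.\ $\M$ acting by \emph{post}-composition on $H_{\alpha}$ --- the left bimodule action --- and that action does not move across the tensor sign. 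This matters because after transport these multipliers survive as $\pi_{j}(h)$ in front of $a_{ji}$, so the image of the paper's isometry is invariant under the left action $\bigoplus_{j}\pi_{j}(\M(I))$ as well as the right one, hence is a sub-bimodule of $\bigoplus_{j}H_{j}$ with non-zero component in each irreducible summand, hence everything. In your version, since $\Y\Omega$ is dense in $H_{i}$ and the fields are bounded, the image of $U$ is just the closed range of the single operator $T=\bigoplus_{j}\sqrt{\lambda_{j}}\,a_{ji}\colon H_{i}\to\bigoplus_{j}H_{j}$; this is invariant only under the right action (its range projection sits in a type III commutant), and nothing established in the paper forces it to be all of $\bigoplus_{j}H_{j}$. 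Already for $i=0$ your density claim amounts to asserting that the single smeared field $a_{\alpha 0}$ has dense range in $H_{\alpha}$, which is proved nowhere.

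Your attempted repair via lemma \ref{density} fails for a concrete reason: the multipliers you allow, $\pi_{j}(x)$ with $x\in\M(I^{c})$, are precisely the ones that the intertwining relation $\pi_{j}(x)a_{ji}=a_{ji}\pi_{i}(x)$ absorbs into the vector $\zeta$, so they enlarge nothing and your set of tuples is still just the range of $T$. In lemma \ref{density} the multipliers are drawn from the algebra that the fields do \emph{not} intertwine (there $x\in\M(I^{c})$, for fields localised in $I^{c}$); the mirrored statement you need, for the $a_{ji}$ localised in $I$, requires multipliers from $\M(I)$ --- exactly the operators your balanced reduction discarded. (Two further soft spots: the density of $a_{\alpha 0}\M$ in $\X$ is not given by proposition \ref{chainvn} or lemma \ref{density45}; the paper instead proves that $\X_{0}\Omega=\pi_{\alpha}(\M(I))a_{\alpha 0}\Omega$ is dense in $H_{\alpha}$, using crucially that this set is left-invariant. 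And the balanced relation itself is delicate in this framework, since the right action of $m$ on $H_{0}$ sends $\Omega$ to $\triangle^{1/2}m\Omega$ rather than $m\Omega$, a point the paper never addresses because it never needs balancedness.) The fix is to follow the paper: take $x=\sum\pi_{\alpha}(h^{(r)})a_{\alpha 0}$ with $h^{(r)}\in\M(I)$ as your left factors, apply the polarised transport formula to $\pi_{i}(x^{\star}x)$, and then run your isometry-plus-irreducibility argument on those vectors.
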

  \begin{proof}  Let $\X_{0} \subset Hom_{\M(I^{c})}(H_{0} , H_{\alpha})$, be the linear span of intertwiners $x = \pi_{\alpha}(h) a_{\alpha 0}$, with $h \in \M(I)$ and $a_{\alpha 0}$  a primary field localised in $I$. Since $x \Omega = (\pi_{\alpha}(h) a_{\alpha 0}\pi_{0}(h)^{\star})\pi_{0}(h) \Omega$ with $h$ unitary, and  $\pi_{\alpha}(h) a_{\alpha 0}\pi_{0}(h)^{\star}$ also a primary field, it follows by the Reeh-Schlieder theorem (and by the fact that the unitary operators generate the von Neumann algebra) that $\X_{0}\Omega$ is dense in $\X_{0}H_{0}$. Now, using the von Neumann density in the same way that for the lemma \ref{density45},  $\X_{0}\Omega$ is also dense in $H_{\alpha}$. Let $x = \sum \pi_{\alpha}(h^{(r)})a_{\alpha 0}  \in \X_{0}$,  $x_{ji} = \sum \pi_{j}(h^{(r)})a_{ji}^{(r)}$ and $y \in \Y := Hom_{\M(I)}(H_{0} , H_{i})$. By the transport formula:
 $(x^{\star}xy^{\star}y \Omega , \Omega ) = (y^{\star} \pi_{i}(x^{\star}x)y\Omega , \Omega) = \sum \lambda_{j} \Vert x_{ji}y\Omega \Vert^{2}$.  
  Now, polarising this identity, we get an isometry $U$ of the closure of $\X_{0} \otimes \Y$ in $H_{\alpha} \boxtimes H_{i}$ into $\bigoplus H_{j}$, sending $x\otimes y$ to $\bigoplus \lambda^{1/2}_{j} x_{ji}y\Omega$. By the Hilbert space continuity lemma, $\X_{0} \otimes \Y$ is dense in $H_{\alpha}  \boxtimes H_{i}$. Now, each $a_{ji}$ can be non-zero, so by the unicity of the decomposition into irreducible, $U$ is surjective and then a unitary operator.   \end{proof} 
  \begin{corollary} (Commutativity for charge $\alpha$)  
  \begin{center} $H_{\alpha}  \boxtimes H_{i}  =  H_{i} \boxtimes H_{\alpha}  $ \end{center} \end{corollary}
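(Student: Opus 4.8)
The plan is to realise commutativity as an explicit unitary built from a rotation of the circle that interchanges $I$ and $I^{c}$, composed with the flip of the two tensor legs; the symmetry of the fusion index set will then force the two decompositions to coincide. First recall that by the preceding corollary $H_{\alpha}\boxtimes H_{i}=\bigoplus_{j\in\langle\alpha,i\rangle}H_{j}$, and that by Theorem~\ref{woker} the relation $j\in\langle\alpha,i\rangle$ is symmetric in $i$ and $j$: the constructibility conditions (each coordinate moves by $\pm\frac{1}{2}$) are symmetric under exchanging source and target, and the adjoint of a non-zero primary field is non-zero, so $\phi^{\alpha}_{ij}\neq 0\iff\phi^{\alpha}_{ji}\neq 0$. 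Hence $i\in\langle\alpha,j\rangle\iff j\in\langle\alpha,i\rangle$, and a fusion with $\alpha$ does not see on which side $\alpha$ sits.

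Next I set up the unitary. In $H_{\alpha}\boxtimes H_{i}$ the left leg ranges over $\X=Hom_{-\M}(H_{0},H_{\alpha})$, which is generated by charge-$\alpha$ primary fields localised in $I$, while the right leg ranges over $\Y=Hom_{\M-}(H_{0},H_{i})$, generated by chains localised in $I^{c}$; for $H_{i}\boxtimes H_{\alpha}$ the two localisations are exchanged. I choose $r\in$Diff$(\SSS^{1})$ with $r(I)=I^{c}$, implemented unitarily on each $H_{k}$ by $\pi_{k}(r)$, with $\pi_{0}(r)\Omega=\Omega$ since the rotation is generated by $L_{0}$ and $L_{0}\Omega=0$. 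By the covariance relations, conjugation by $r$ carries an $\alpha$-field localised in $I$ to one localised in $I^{c}$ and an $i$-chain in $I^{c}$ to one in $I$. I therefore set, on the dense subspace $\X\otimes\Y$,
\begin{displaymath} U(x\otimes y)=(\pi_{i}(r)\,y\,\pi_{0}(r)^{\star})\otimes(\pi_{\alpha}(r)\,x\,\pi_{0}(r)^{\star}), \end{displaymath}
which lands in $Hom_{-\M}(H_{0},H_{i})\otimes Hom_{\M-}(H_{0},H_{\alpha})$, the defining dense subspace of $H_{i}\boxtimes H_{\alpha}$.

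The isometry check is where the structure does the work: expanding the inner product of the image, using $\pi_{0}(r)^{\star}\Omega=\Omega$ and $\pi_{k}(r)^{\star}\pi_{k}(r)=1$, the four $r$'s cancel and one is left with $(y_{2}^{\star}y_{1}\,x_{2}^{\star}x_{1}\Omega,\Omega)$ against the original $(x_{2}^{\star}x_{1}\,y_{2}^{\star}y_{1}\Omega,\Omega)$. Because the $x$'s are localised in $I$ and the $y$'s in $I^{c}$, the operators $x_{2}^{\star}x_{1}$ and $y_{2}^{\star}y_{1}$ supercommute, hence commute, so the two expressions agree, and the grading signs $(-1)^{(\partial x_{1}+\partial x_{2})\partial y_{2}}$ are trivial since the ordinary parts $\phi^{\alpha}$ are even. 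Thus $U$ extends to a unitary. The main obstacle, to be carried out with care, is confirming that $U$ intertwines the $\M$-$\M$ bimodule actions: conjugation by $r$ swaps $\M(I)$ and $\M(I^{c})$, so one must check that the left action on $H_{\alpha}\boxtimes H_{i}$ matches the right action on $H_{i}\boxtimes H_{\alpha}$ after the flip, tracking the Klein signs of Lemma~\ref{naturals}. Once this is in place $U$ is a bimodule unitary. Alternatively one avoids the explicit map by rerunning the transport-formula computation of the preceding corollary with $\alpha$ on the right: the symmetry of $\langle\alpha,i\rangle$ established above gives $H_{i}\boxtimes H_{\alpha}=\bigoplus_{j\in\langle\alpha,i\rangle}H_{j}$, and since the $H_{j}$ are distinct irreducible bimodules the two multiplicity-free sums are the same bimodule, whence $H_{\alpha}\boxtimes H_{i}=H_{i}\boxtimes H_{\alpha}$.
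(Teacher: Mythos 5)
Your main route --- the explicit flip unitary $U$ --- has a genuine gap, and it sits exactly at the step you deferred. The isometry part is essentially fine: with $r$ a M\"obius transformation exchanging $I$ and $I^{c}$ (a rigid rotation only exists when $I$ is a half-circle; for general $I$ you need an elliptic element of $SU(1,1)$, which also fixes $\Omega$ since its Lie algebra is spanned by $L_{-1},L_{0},L_{1}$), the four $r$'s cancel against $\pi_{0}(r)\Omega=\Omega$, and supercommutation of $x_{2}^{\star}x_{1}$ with $y_{2}^{\star}y_{1}$ does the rest. But $U$ is not an $\M$-$\M$ bimodule map, and this cannot be repaired by tracking Klein signs: it is structural. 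The left action of $m\in\M$ on $H_{\alpha}\boxtimes H_{i}$ is $(x\otimes y)\mapsto(\pi_{\alpha}(m)x)\otimes y$; applying $U$, the operator $\pi_{\alpha}(m)$ becomes $\pi_{\alpha}(r)\pi_{\alpha}(m)\pi_{\alpha}(r)^{\star}$, an element of the $\M(I^{c})$-action, now multiplying the \emph{right-hand} leg of $H_{i}\boxtimes H_{\alpha}$. By Haag--Araki duality and the paper's formula $\xi\cdot y=\kappa J y^{\star}J\kappa^{\star}\xi$ for the right action, left multiplication by an element of $\M(I^{c})$ on that leg \emph{is} a right $\M$-action: $U$ intertwines the left action of $m$ with the right action of an anti-isomorphic image of $rmr^{-1}$. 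So $U$ identifies $H_{\alpha}\boxtimes H_{i}$ with $H_{i}\boxtimes H_{\alpha}$ only after interchanging its left and right module structures, which is not the assertion. No localisation flip can work unaided: Connes fusion of bimodules over a factor is composition of sectors and is noncommutative in general, so commutativity here must come from the fusion computation itself (equivalently, from the braiding), not from geometry alone.

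That computation is precisely your closing alternative, and it coincides with the paper's proof, which consists of the single line that one proves, ``in the same way'' as the preceding corollary, that $H_{i}\boxtimes H_{\alpha}=\bigoplus_{j\in\langle\alpha,i\rangle}H_{j}$, and then compares with $H_{\alpha}\boxtimes H_{i}=\bigoplus_{j\in\langle\alpha,i\rangle}H_{j}$. The rerun is legitimate because the transport formula is available on both sides: the paper's induction establishes simultaneously $\pi_{i}(\bar{a}_{0\alpha}a_{\alpha 0})=\sum\lambda_{j}\bar{a}_{ij}a_{ji}$ for fields localised in $I$ and $\pi_{i}(\bar{b}_{0\alpha}b_{\alpha 0})=\sum\lambda'_{j}\bar{b}_{ij}b_{ji}$ for fields localised in $I^{c}$; and because of the symmetry $j\in\langle\alpha,i\rangle\iff i\in\langle\alpha,j\rangle$, which you prove correctly via adjoints of charge-$\alpha$ fields (the paper uses it implicitly by writing the same index set on both sides). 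Since the summands are pairwise inequivalent irreducible bimodules, the two multiplicity-free decompositions coincide as bimodules. So your proposal does contain a complete and correct proof, but only in its last sentence; the unitary occupying the bulk of it should be discarded.
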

  \begin{proof} We prove in the same way that $H_{i} \boxtimes H_{\alpha}  =  \bigoplus_{j \in \langle  \alpha , i \rangle} H_{j}$.   \end{proof}
  \subsection{Connes fusion with $H_{\beta}$ }  \label{nearweak}
 Recall that $\beta = (0,1)$ and  $\phi_{\alpha, \beta}^{\alpha} $ is non-zero.
  \begin{center}  $ \phi_{ij}^{\alpha}(z) \phi_{jk}^{\beta}(w)   = \sum \lambda_{l}  \phi_{il}^{\beta}(w) \phi_{lk}^{\alpha}(z)$   with  $\lambda_{l}  \ne 0$ iff  $l \in \langle  \beta , i \rangle \cap \langle \alpha , k \rangle$    \end{center}
  \begin{remark} We proceed  as previously: this braiding pass to the local primary field, we make principal and non-principal part  incorporating the phase correction. Now, $\beta$ is not  a weak generator, so, to prove a transport formula, we prove by induction on $\vert i \vert$ that $ a_{i0} c_{\beta 0 }^{\star} c_{\beta 0 } = [\sum \lambda_{l} c_{li }^{\star} c_{l i }]a_{i0}$, with $a_{i0}$ a chain of even primary field of charge $\alpha$ localised on $I$, $(c_{ij})$ even primary fields of charge $\beta$ localised on $I^{c}$, and    $\lambda_{l} \ge 0$ iff $l \in  \langle  \beta , i \rangle$. The proof uses the same arguments with positive operators... then by intertwining we obtain the following partial transport formula, and next, a partial  fusion rules:  \end{remark}
 \begin{corollary}  (Transport formula)  \begin{displaymath} \pi_{i} (\bar{c}_{0 \beta}.a_{\alpha 0}) = \sum_{j \in \langle  \beta , i \rangle} \lambda_{j} \bar{c}_{ij}.c_{ji} \quad   \textrm{with} \  \lambda_{j} \ge 0.  \end{displaymath} \end{corollary}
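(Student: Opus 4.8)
The plan is to run, almost line for line, the proof of the charge $\alpha$ transport formula of section \ref{weakalpha}, with the charge $\alpha$-$\alpha$ braiding replaced by the charge $\alpha$-$\beta$ braiding of corollary \ref{joker}. As there, the chains $a_{i0}$ of even charge $\alpha$ primary fields localised in $I$ serve as the intertwiners realising the local equivalence; since $\alpha$ is a weak generator ($\G_\alpha$ connected) such chains exist for every $i$, and each satisfies $a_{i0}\,\pi_0(X)=\pi_i(X)\,a_{i0}$ for every $X\in\pi_0(\M(I^c))$. The operator $\bar c_{0\beta}c_{\beta 0}=c_{\beta 0}^\star c_{\beta 0}$ is of this kind: the $c$'s are localised in $I^c$, so $c_{\beta 0}^\star c_{\beta 0}$ supercommutes with $\M(I)$ and hence lies in $\pi_0(\M(I))^\natural=\pi_0(\M(I^c))$ by Haag-Araki duality. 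Thus the intertwining itself is automatic, and the whole content is to identify $\pi_i(c_{\beta 0}^\star c_{\beta 0})$ explicitly; I would prove this in the equivalent operator form $a_{i0}\,c_{\beta 0}^\star c_{\beta 0}=\bigl[\sum_{l\in\langle\beta,i\rangle}\lambda_l\,c_{li}^\star c_{li}\bigr]\,a_{i0}$ of the preceding remark, by induction on $|i|$.

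For the induction the base case $i=0$ is trivial ($\pi_0=Id$, empty chain). For the step I would use connectivity of $\G_\alpha$ to factor $a_{i0}=a_{ij}a_{j0}$ with $j\in\langle\alpha,i\rangle$ one step closer to $0$, so that passing from $j$ to $i$ amounts to commuting the single $\alpha$-field $a_{ij}$ across the $\beta$-field factors of the inductive expression $\pi_j(c_{\beta 0}^\star c_{\beta 0})=\sum_m\lambda_m\,c_{mj}^\star c_{mj}$. This is carried out by the localised $\alpha$-$\beta$ braiding relations, with the phase corrections absorbed into the principal and non-principal parts exactly as in section \ref{weakalpha}; each braiding step has nonzero coefficients by corollary \ref{joker}, and the outcome is an expression of $a_{i0}\,c_{\beta 0}^\star c_{\beta 0}$ as a sum over $l\in\langle\beta,i\rangle$ of the desired shape.

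As in the charge $\alpha$ case, the signs of the coefficients are forced by positivity. I would form the analogues of the positive operator-valued expressions $y=a^\star_{ki}\pi_k(b^\star_{\alpha 0}\pi_\alpha(x^\star x)b_{\alpha 0})a_{ki}$ and of the matrix $Z=(z_{rt})$ used there, exploit that conjugates of positive operators remain positive to obtain inequalities of the form $\sum(\cdots)(c_{l\bullet}\eta_\bullet,\,c_{l\bullet}\eta_\bullet)\ge 0$, verify linearity in $\eta$ via the operator-valued matrix trick, and extend these inequalities to all $\eta\in\bigoplus_s H_s$ using the density lemma \ref{density} together with continuity. Specialising to vectors supported on a single index isolates each coefficient and yields $\lambda_j\ge 0$; intertwining the identity through $a_{i0}$ then gives the stated transport formula.

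The hard part, and the genuine difference from the charge $\alpha$ situation, is that one can only conclude $\lambda_j\ge 0$, not $\lambda_j>0$. In the charge $\alpha$ transport formula the strict positivity came from a bootstrap that propagates positive coefficients along the connected graph $\G_\alpha$, i.e. it used that $\alpha$ is a weak generator. Since $\beta$ is not a weak generator that bootstrap is unavailable and some coefficients $\lambda_j$ may genuinely vanish; this is precisely why one obtains only a partial transport formula, and correspondingly only the sub-object relation $H_\beta\boxtimes H_i\le\bigoplus_{j\in\langle\beta,i\rangle}H_j$ rather than an equality. The strict positivity of all the coefficients, and hence the exact charge $\beta$ fusion rule, is recovered only afterwards through the quantum dimension computation.
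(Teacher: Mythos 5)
Your proposal follows essentially the same route as the paper: the paper proves this corollary precisely by induction on $\vert i \vert$ over the operator identity $a_{i0}\,c_{\beta 0}^{\star}c_{\beta 0}=[\sum_{l}\lambda_{l}\,c_{li}^{\star}c_{li}]\,a_{i0}$, transporting with chains of even charge $\alpha$ primary fields localised in $I$ (since $\beta$ is not a weak generator), invoking the $\alpha$-$\beta$ braiding of corollary \ref{joker} and ``the same arguments with positive operators'' as in the charge $\alpha$ transport formula, and then deducing the stated formula by intertwining. Your closing observation -- that this method only yields $\lambda_{j}\ge 0$, hence a partial fusion rule $H_{\beta}\boxtimes H_{i}\le\bigoplus_{j\in\langle\beta,i\rangle}H_{j}$, with exactness recovered afterwards via the quantum dimension computation -- is exactly the paper's own logic.
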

 \begin{corollary} (partial Connes fusion for $\beta$) 
  \begin{displaymath} H_{\beta}  \boxtimes H_{i}  \le  \bigoplus_{j \in \langle  \beta , i \rangle} H_{j} \end{displaymath}  \end{corollary}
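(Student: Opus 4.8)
The plan is to mirror the proof of the charge-$\alpha$ fusion corollary, now using the partial transport formula for $\beta$ in place of the full one. First I would introduce the intertwiner space $\X_{0} \subset Hom_{-\M}(H_{0} , H_{\beta})$ defined as the linear span of operators $x = \pi_{\beta}(h) c_{\beta 0}$, with $h \in \M(I)$ and $c_{\beta 0}$ a charge-$\beta$ primary field localised in $I$. Exactly as in lemma \ref{density45} and the charge-$\alpha$ argument, $x\Omega = (\pi_{\beta}(h) c_{\beta 0} \pi_{0}(h)^{\star})\pi_{0}(h)\Omega$ for $h$ unitary, so by the Reeh-Schlieder theorem $\X_{0}\Omega$ is dense in $\X_{0} H_{0}$, and then by von Neumann density $\X_{0}\Omega$ is dense in $H_{\beta}$.

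Next, for $x = \sum_{r} \pi_{\beta}(h^{(r)}) c_{\beta 0}^{(r)} \in \X_{0}$ I set $x_{ji} = \sum_{r} \pi_{j}(h^{(r)}) c_{ji}^{(r)}$, and take $y \in \Y := Hom_{\M-}(H_{0} , H_{i})$. Since all the primary fields in play are the even ordinary parts, the sign $(-1)^{(\partial x_{1} + \partial x_{2})\partial y_{2}}$ in the pre-inner product is trivial, and applying the partial transport formula $\pi_{i}(\bar{c}_{0\beta} c_{\beta 0}) = \sum_{j \in \langle \beta , i \rangle}\lambda_{j} \bar{c}_{ij} c_{ji}$ with $\lambda_{j} \ge 0$ I obtain
\begin{displaymath} (x^{\star} x y^{\star} y \Omega, \Omega) = (y^{\star} \pi_{i}(x^{\star} x) y \Omega, \Omega) = \sum_{j \in \langle \beta , i \rangle} \lambda_{j} \Vert x_{ji} y \Omega \Vert^{2}. \end{displaymath}
Polarising this identity and invoking the Hilbert space continuity lemma (so that $\X_{0} \otimes \Y$ is dense in $H_{\beta} \boxtimes H_{i}$), I get a well-defined isometry $U$ from $H_{\beta} \boxtimes H_{i}$ into $\bigoplus_{j \in \langle \beta , i \rangle} H_{j}$, sending $x \otimes y$ to $\bigoplus_{j} \lambda_{j}^{1/2} x_{ji} y \Omega$; it is an $\M$-$\M$ bimodule map by the intertwining relations.

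The crucial difference from the $\alpha$ case, and the source of the inequality, is that $\beta$ is not a weak generator, so the transport coefficients only satisfy $\lambda_{j} \ge 0$ rather than $\lambda_{j} > 0$. Consequently $U$ need not be surjective: whenever some $\lambda_{j}$ vanishes the corresponding summand $H_{j}$ is not reached, and the argument yields only the isometric embedding $H_{\beta} \boxtimes H_{i} \le \bigoplus_{j \in \langle \beta , i \rangle} H_{j}$ as $\ZZZ_{2}$-graded $\M$-$\M$ bimodules. The main obstacle I anticipate lies in the partial transport formula itself rather than in this corollary: one must verify that the right-hand side genuinely decouples into the orthogonal sum over $j$, so that $U$ is isometric and not merely contractive. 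As the guiding remark indicates, this rests on the positivity of the operator-valued matrices and the induction on $\vert i \vert$ along the connected graph $\G_{\alpha}$, the $\alpha$-chains $a_{i0}$ serving to bridge to $H_{i}$ precisely because $\beta$ alone cannot, exactly as in steps (2)--(6) of the charge-$\alpha$ transport proof.
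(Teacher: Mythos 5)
Your proof is correct and is essentially the paper's own (largely implicit) argument: the paper deduces this corollary from the partial transport formula by repeating the charge-$\alpha$ fusion proof verbatim, which is exactly what you do — density of $\X_{0}\Omega$ via Reeh-Schlieder and von Neumann density, the isometry $U$ obtained by polarising the transport identity, and the Hilbert space continuity lemma. The only difference is that you spell out why surjectivity of $U$ can fail (some $\lambda_{j}$ may vanish since $\beta$ is not a weak generator), a point the paper leaves tacit.
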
  
  \subsection{The fusion ring} 
    We define the fusion ring $(\T_{m} , \oplus , \boxtimes)$ generated  as the $\ZZZ$-module, by the discrete series of $\Vir_{1/2}$ at fixed charge $c_{m}$, with $m = \ell + 2$
  \begin{lemma}(closure under fusion)
  \begin{enumerate}  
  \item[(a)]  Each $H_{i}$ is contains in some $H_{\alpha}^{\boxtimes n}$. 
  \item[(b)]  The $H_{i}$'s are closed under Connes fusion.
   \item[(c)]  $H_{i} \boxtimes H_{j} = \bigoplus m_{ij}^{k} H_{k}$ with $m_{ij}^{k} \in \NNN$
  \end{enumerate}    \end{lemma}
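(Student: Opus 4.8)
The plan is to bootstrap the whole statement from the charge-$\alpha$ fusion rule $H_{\alpha} \boxtimes H_{i} = \bigoplus_{j \in \langle \alpha , i \rangle} H_{j}$, the associativity of $\boxtimes$, and the fact that $\alpha$ is a weak generator, i.e.\ the graph $\G_{\alpha}$ is connected. Throughout I would use that Connes fusion distributes over direct sums (immediate from its definition, since the intertwiner spaces $Hom_{-\M}(H_{0} , \cdot)$ and $Hom_{\M-}(H_{0} , \cdot)$ are additive and the pre-inner product block-diagonalizes along a decomposition of the bimodule) and that each $H_{k}$ is an irreducible $\M$-$\M$ bimodule, the $H_{k}$ being moreover pairwise inequivalent.

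For (a), I would argue by induction on the graph distance $\vert i \vert$ from $i$ to $0$ in $\G_{\alpha}$, with the convention $H_{\alpha}^{\boxtimes 0} = H_{0}$, so that the case $\vert i \vert = 0$ is trivial. Since $\G_{\alpha}$ is connected, choose a geodesic $0 = v_{0}, v_{1}, \dots , v_{n} = i$ with $v_{k} \in \langle \alpha , v_{k-1} \rangle$. Writing $H_{\alpha}^{\boxtimes n} = H_{\alpha} \boxtimes H_{\alpha}^{\boxtimes (n-1)}$ and using the inductive hypothesis that $H_{v_{n-1}}$ is a summand of $H_{\alpha}^{\boxtimes (n-1)}$, distributivity together with the charge-$\alpha$ rule exhibits $H_{\alpha} \boxtimes H_{v_{n-1}} = \bigoplus_{k \in \langle \alpha , v_{n-1} \rangle} H_{k}$ as a direct summand of $H_{\alpha}^{\boxtimes n}$. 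As $i = v_{n} \in \langle \alpha , v_{n-1} \rangle$, the summand $H_{i}$ occurs, which proves (a).

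For (b) and (c), fix $i$ and $j$. By (a) there is an $n$ with $H_{\alpha}^{\boxtimes n} = H_{i} \oplus K$ as $\M$-$\M$ bimodules. Fusing on the right with $H_{j}$ and using distributivity, $H_{i} \boxtimes H_{j}$ becomes a direct summand of $H_{\alpha}^{\boxtimes n} \boxtimes H_{j}$. Regrouping by associativity as $H_{\alpha} \boxtimes ( \cdots \boxtimes ( H_{\alpha} \boxtimes H_{j} ))$ and iterating the charge-$\alpha$ rule (each step producing a direct sum of the $H_{k}$, onto which the next $H_{\alpha}$ fuses distributively) expresses $H_{\alpha}^{\boxtimes n} \boxtimes H_{j}$ as a \emph{finite} direct sum of the $H_{k}$, the finiteness being guaranteed because the discrete series at the fixed charge $c_{m}$ is a finite set. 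A direct summand of a finite direct sum of pairwise inequivalent irreducible bimodules is again such a direct sum, so $H_{i} \boxtimes H_{j} = \bigoplus_{k} m_{ij}^{k} H_{k}$ with each $m_{ij}^{k} \in \NNN$, which yields (b) and (c) at once.

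The delicate point is this last step: recognising $H_{i} \boxtimes H_{j}$ as a sum of the $H_{k}$ with \emph{integer} multiplicities. This is a Krull--Schmidt / isotypic-decomposition statement for $\M$-$\M$ bimodules, and it relies on the $H_{k}$ being irreducible and pairwise non-isomorphic, so that the isotypic decomposition of a finite direct sum is canonical and its multiplicities are well-defined natural numbers. I would invoke the irreducibility of the bimodules established in the treatment of the Jones--Wassermann subfactor, together with the finite semisimplicity forced by the finiteness of the discrete series; one must also be a little careful to check that the direct-sum decompositions above are genuine bimodule decompositions compatible with both the left and the right $\M$-actions, which is what makes the additivity of $\boxtimes$ applicable.
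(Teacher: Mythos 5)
Your proof is correct and follows essentially the same route as the paper: part (a) from the weak-generator (connectedness) property of $\alpha$, then embedding $H_{i} \boxtimes H_{j}$ as a sub-bimodule of a fusion power of $H_{\alpha}$, reducing that power to a finite direct sum of the $H_{k}$ by iterating the charge-$\alpha$ rule with distributivity and associativity, and concluding with Schur's lemma that the sub-bimodule decomposes with finite integer multiplicities. The only cosmetic difference is that you fuse $H_{\alpha}^{\boxtimes n} \boxtimes H_{j}$ directly (so only $H_{i}$ needs to be embedded in a power of $H_{\alpha}$), whereas the paper embeds both factors to get $H_{i} \boxtimes H_{j} \subset H_{\alpha}^{\boxtimes (m+n)}$; both reductions are equivalent in substance.
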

  \begin{proof} (a)   Direct because $\alpha$ is a weak generator.\\ 
  (b)   Since $H_{i} \subset H_{\alpha}^{\boxtimes m}$  and  $H_{j} \subset H_{\alpha}^{\boxtimes n}$ for some $m, n$, we have $H_{i} \boxtimes H_{j} \subset H_{\alpha}^{\boxtimes m+n}$, which is, by induction, a direct sum of some $H_{i}$. Now, by Schur's lemma any subrepresentations of a direct sum of irreducibles, is a direct sum of irreducibles; then, so is for  $H_{i} \boxtimes H_{j} $. \\ 
  (c)  By induction,  $H_{\alpha}^{\boxtimes m+n}$ admits only finite multiplicities. \end{proof}
     \begin{definition} (Quantum dimension) A quantum dimension is an application $d : \T_{m} \to \RRR \cup \{ \infty\}$, which is additive and multiplicative for $\oplus$ and $\boxtimes$, and positive (possibly infinite) on the base $(H_{i})$.   \end{definition}
     \begin{reminder} On a fusion ring, finite as $\ZZZ$-module, the quantum dimension $d$ is finite if $\forall A \in \T_{m}$, $\exists B \in \T_{m}$ such that $H_{0} \le A \boxtimes B $. If so, $B$ is unique and called the dual of $A$, noted $A^{\star}$.   \end{reminder}
     \begin{remark} $H_{0}  \le H_{\alpha} \boxtimes H_{\alpha}$. Then,  $H_{\alpha}^{\ell} $ is self-dual and $d(H_{\alpha})$ finite. \end{remark}
          \begin{corollary} The quantum dimension is finite on the fusion ring.    \end{corollary}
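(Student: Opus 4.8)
The plan is to reduce the statement to a finiteness bound on each basis element and to extract that bound from the single finite value $d(H_{\alpha})$ supplied by the remark. Since $\T_{m}$ is finitely generated as a $\ZZZ$-module by the $H_{i}$ and $d$ is additive for $\oplus$, it suffices to show $d(H_{i}) < \infty$ for every irreducible $H_{i}$; the value of $d$ on a general element is then a finite $\ZZZ$-combination of these finite numbers.

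Fix an irreducible $H_{i}$. By part (a) of the closure lemma (which uses that $\alpha$ is a weak generator, i.e. that $\G_{\alpha}$ is connected) there is an $n$ with $H_{i} \le H_{\alpha}^{\boxtimes n}$. The remark gives $H_{0} \le H_{\alpha} \boxtimes H_{\alpha}$, so $H_{\alpha}$ is self-dual and $d(H_{\alpha})$ is finite; by multiplicativity of $d$ for $\boxtimes$ this yields $d(H_{\alpha}^{\boxtimes n}) = d(H_{\alpha})^{n} < \infty$.

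Now decompose $H_{\alpha}^{\boxtimes n} = \bigoplus_{k} m_{k} H_{k}$, a finite sum with $m_{k} \in \NNN$ by parts (b) and (c) of the closure lemma (applied inductively, together with the finiteness of the basis). Additivity of $d$ gives $d(H_{\alpha}^{\boxtimes n}) = \sum_{k} m_{k} d(H_{k})$, and since $d$ is positive on the basis every summand is nonnegative. As $H_{i}$ occurs with $m_{i} \ge 1$, we obtain $d(H_{i}) \le \sum_{k} m_{k} d(H_{k}) = d(H_{\alpha})^{n} < \infty$. Since every $i$ occurs in some such power, $d$ is finite on the whole ring.

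The genuine content sits entirely in the remark: the finiteness of $d(H_{\alpha})$, which itself rests on $H_{0} \le H_{\alpha} \boxtimes H_{\alpha}$, equivalently on the exact $\alpha$-fusion rule of the previous subsection. Once that is in hand the corollary is a soft consequence of additivity, multiplicativity and positivity, and I anticipate no obstacle beyond this bookkeeping. One could instead invoke the criterion of the reminder and exhibit a dual for each $H_{i}$ directly, but that route would require producing a conjugate bimodule inside $\T_{m}$, whereas the estimate above needs only the data already established for charge $\alpha$.
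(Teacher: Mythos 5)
Your proposal is correct and follows essentially the same route as the paper: the paper's proof is precisely that $H_{\alpha}$ being a weak generator gives $H_{i} \le H_{\alpha}^{\boxtimes n}$, whence $d(H_{i}) \le d(H_{\alpha})^{n} < \infty$ using the remark's finiteness of $d(H_{\alpha})$. You have merely made explicit the bookkeeping (decomposition into irreducibles, additivity, positivity, and reduction from general ring elements to basis elements) that the paper leaves implicit.
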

     \begin{proof}  Because $H_{\alpha}$ is a weak generator, $\forall i$, $H_{i} \le H_{\alpha}^{\boxtimes n}$ for some $n$, then $d(H_{i}) \le d(H_{\alpha})^{n}$ finite.   \end{proof}
       \begin{reminder} (Frobenius reciprocity) If $nA \le B \boxtimes C$ then $nC \le B^{\star} \boxtimes A$.   \end{reminder}
   \begin{reminder} (Perron-Frobenius theorem) An irreducible matrix with positive entries admits one and only one positive eigenvalues. The corresponding eigenspace is generated by a single vector $v = (v_{i})$, with $v_{i} > 0$.     \end{reminder}
       \begin{corollary} A quantum dimension on $\T_{m}$ with $d(H_{0}) = 1$ is uniquely determined, and given by the fusion matrix of $H_{\alpha} = H^{\star}_{\alpha}$.  \end{corollary}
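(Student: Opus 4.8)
The plan is to read off the values of $d$ on the basis $(H_{i})$ as the entries of a Perron--Frobenius eigenvector of the fusion matrix of $H_{\alpha}$, and then to use the uniqueness clause of the Perron--Frobenius theorem to pin $d$ down up to the normalization $d(H_{0})=1$.

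First I would introduce the fusion matrix $N_{\alpha} = (m_{\alpha i}^{j})$ of $H_{\alpha}$, defined by $H_{\alpha} \boxtimes H_{i} = \bigoplus_{j} m_{\alpha i}^{j} H_{j}$ with $m_{\alpha i}^{j} \in \NNN$ (closure under fusion). Its off-diagonal support is exactly the adjacency of the graph $\G_{\alpha}$: one has $m_{\alpha i}^{j} \neq 0$ iff $j \in \langle \alpha , i \rangle$, i.e. iff $i$ and $j$ are joined by an edge of $\G_{\alpha}$. Since $H_{\alpha}$ is a weak generator, $\G_{\alpha}$ is connected, so $N_{\alpha}$ is an irreducible nonnegative integer matrix; and since $H_{\alpha} = H_{\alpha}^{\star}$, Frobenius reciprocity gives $m_{\alpha i}^{j} = m_{\alpha j}^{i}$, so $N_{\alpha}$ is symmetric.

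Next I would exploit that $d$ is additive and multiplicative. Applying $d$ to the identity above yields
\begin{displaymath}
  d(H_{\alpha})\, d(H_{i}) = \sum_{j} m_{\alpha i}^{j}\, d(H_{j}),
\end{displaymath}
so the vector $v = (d(H_{i}))_{i}$ satisfies $N_{\alpha} v = d(H_{\alpha}) v$. Because $d$ is positive on the basis, $v$ has strictly positive entries; thus $v$ is a positive eigenvector of the irreducible nonnegative matrix $N_{\alpha}$, with eigenvalue $d(H_{\alpha})$. Invoking Perron--Frobenius, $N_{\alpha}$ has exactly one eigenvalue admitting a positive eigenvector, and the corresponding eigenspace is one-dimensional, spanned by a strictly positive vector. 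Hence $d(H_{\alpha})$ must be that Perron eigenvalue — already forcing the value of $d$ on $H_{\alpha}$ — and $v$ is the Perron eigenvector, determined up to a positive scalar; the normalization $d(H_{0})=1$ fixes the scalar, so every $d(H_{i})$ is uniquely determined. Since $H_{\alpha}$ weakly generates and $d$ is additive, its values on all of $\T_{m}$ follow, all read off from $N_{\alpha}$.

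The main obstacle is the reduction establishing that $N_{\alpha}$ is irreducible and nonnegative so that Perron--Frobenius genuinely applies — this rests on the connectedness of $\G_{\alpha}$ (weak generation of $H_{\alpha}$) together with closure under fusion with $\NNN$-multiplicities — and on the observation that the mere existence of a positive eigenvector forces the eigenvalue to be the Perron one; it is this rigidity, rather than any computation, that makes $d(H_{\alpha})$ and hence the whole of $d$ unique instead of merely one eigenvalue choice among several.
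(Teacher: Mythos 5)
Your proof is correct and takes essentially the same route as the paper: the vector $(d(H_{i}))$ is a strictly positive eigenvector of the irreducible nonnegative fusion matrix of $H_{\alpha}$ (irreducible because $\alpha$ is a weak generator), so the Perron--Frobenius theorem forces both the eigenvalue $d(H_{\alpha})$ and the eigenvector up to a positive scalar, which the normalization $d(H_{0})=1$ then fixes. The only cosmetic difference is that you obtain the eigenvector equation by applying $d$ directly to the fusion rule $H_{\alpha}\boxtimes H_{i}=\bigoplus_{j} m_{\alpha i}^{j}H_{j}$, whereas the paper computes $H_{\alpha}\boxtimes\bigl(\sum_{j} d(H_{j})H_{j}\bigr)$ formally and for that needs the symmetry $n_{\alpha j}^{k}=n_{\alpha k}^{j}$ coming from Frobenius reciprocity and self-duality of $H_{\alpha}$.
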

     \begin{proof} $H_{\alpha} \boxtimes (\sum d(H_{j})H_{j}) = \sum n_{\alpha j}^{k} d(H_{j}) H_{k} = \sum d(\sum n_{\alpha j}^{k} H_{j}) H_{k} \\Ê=  \sum d(\sum n_{\alpha k}^{j} H_{j}) H_{k} =  \sum d(H_{\alpha} H_{k}) H_{k} = d(H_{\alpha}) (\sum d(H_{k})H_{k})$.  \\Note that $n_{\alpha j }^{k} = n_{\alpha k}^{j}$ is  immediate from Frobenius reciprocity and $H_{\alpha}$ self-dual.
     Next, $\alpha$ is a weak generator, so the fusion matrix $M_{\alpha}$, is irreducible. The result follows with  the Perron-Frobenius theorem,
    with $v_{i} = d(H_{i})$. \end{proof}
      \subsection{The fusion ring and index of subfactor.} 
    \begin{definition} Let $\langle a , b \rangle_{n} = \{  c = \vert a-b \vert, \vert a-b \vert + 1, ..., a+b  \ \vert \ a+b+c \le n  \}$. \end{definition} 
   \begin{corollary} (Connes fusion rules for $\alpha$ and $\beta$)   \label{immediat}
\begin{displaymath} 
 (a) \quad H_{\alpha}^{\ell}   \boxtimes   H_{i'j'}^{\ell} = \bigoplus_{ (i'', \hspace{0,1cm}  j'') \in \langle \1/2 , i'  \rangle_{\ell}  \times \langle \1/2 , j'  \rangle_{\ell + 2}  } H_{i''j''}^{\ell}   
 \end{displaymath}   
 \begin{displaymath} 
(b) \quad H_{\beta}^{\ell}   \boxtimes   H_{i'j'}^{\ell} \le  \bigoplus_{ (i'', \hspace{0,1cm}  j'') \in \langle 0 , i'  \rangle_{\ell}  \times \langle 1 , j'  \rangle_{\ell + 2}  } H_{i''j''}^{\ell}   
 \end{displaymath}   

  \end{corollary}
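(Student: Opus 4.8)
The plan is to obtain both statements combinatorially from the two fusion rules already in hand, reading the index sets $\langle \alpha , (i',j') \rangle$ and $\langle \beta , (i',j') \rangle$ off of Theorem \ref{woker}. Concretely, I would start from the exact rule $H_{\alpha} \boxtimes H_{i'j'}^{\ell} = \bigoplus_{k \in \langle \alpha , (i',j') \rangle} H_{k}^{\ell}$ proved in the charge-$\alpha$ subsection, and from the partial rule $H_{\beta} \boxtimes H_{i'j'}^{\ell} \le \bigoplus_{k \in \langle \beta , (i',j') \rangle} H_{k}^{\ell}$ of the charge-$\beta$ subsection. Here $\langle \gamma , (i',j') \rangle$ is the set of discrete-series labels $(i'',j'')$ for which the charge-$\gamma$ primary field between $H_{i'j'}^{\ell}$ and $H_{i''j''}^{\ell}$ is non-zero, and by the corollary to Theorem \ref{woker} such a field is non-zero exactly when it is constructible. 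So the entire task is to recognise the constructibility conditions of Theorem \ref{woker} as the truncated Clebsch--Gordan sets $\langle a , b \rangle_{n}$.

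For part (a), recall that the GKO factorisation used to prove Theorem \ref{woker} exhibits a charge-$\alpha$ field, $\alpha = (\1/2 , \1/2)$, as a product of a level-$\ell$ spin-$\1/2$ field acting on the first label and a level-$(\ell+2)$ spin-$\1/2$ field acting on the second. Hence $(i'',j'') \in \langle \alpha , (i',j') \rangle$ iff $i''$ is reached from $i'$ by a non-zero level-$\ell$ spin-$\1/2$ fusion and $j''$ from $j'$ by a non-zero level-$(\ell+2)$ spin-$\1/2$ fusion; aggregating the $\sigma = 0$ and $\sigma = 1$ cases of Theorem \ref{woker}(1) gives precisely the four sign choices $i'' = i' \pm \1/2$, $j'' = j' \pm \1/2$, cut off by $i' + i'' + \1/2 \le \ell$ and $j' + j'' + \1/2 \le \ell + 2$. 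Unwinding the definition, $\langle \1/2 , i' \rangle_{\ell}$ consists of $c \in \{ |\1/2 - i'| , \dots , \1/2 + i' \}$ with $\1/2 + i' + c \le \ell$, which is exactly $\{ i' - \1/2 , i' + \1/2 \}$ (collapsing to $\{ \1/2 \}$ at $i' = 0$) subject to that cutoff, i.e. the level-$\ell$ spin-$\1/2$ fusion; the same holds for $\langle \1/2 , j' \rangle_{\ell+2}$ at level $\ell + 2$. Therefore $\langle \alpha , (i',j') \rangle = \langle \1/2 , i' \rangle_{\ell} \times \langle \1/2 , j' \rangle_{\ell+2}$, and substitution into the exact $\alpha$-rule yields (a).

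Part (b) is identical in spirit but based on the partial rule, so only $\le$ is asserted. The charge $\beta = (0,1)$ factorises as a trivial (spin-$0$) fusion on the first label, forcing $i'' = i'$, and a level-$(\ell+2)$ spin-$1$ fusion on the second; by Theorem \ref{woker}(2) this yields $i'' = i'$ together with $j'' \in \{ j' - 1 , j' , j' + 1 \}$ (two values from $\sigma = 0$, the middle one from $\sigma = 1$), under $2i' \le \ell$ and $j' + j'' + 1 \le \ell + 2$. Unwinding the definitions, $\langle 0 , i' \rangle_{\ell} = \{ i' \}$, since $|0 - i'| = 0 + i' = i'$ and $2i' \le \ell$ automatically on the range $0 \le i' \le \ell/2$, while $\langle 1 , j' \rangle_{\ell+2}$ is the level-$(\ell+2)$ truncated spin-$1$ fusion $\{ j' - 1 , j' , j' + 1 \}$ cut off by $1 + j' + c \le \ell + 2$. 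This matches $\langle \beta , (i',j') \rangle$ and gives (b).

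The calculations themselves are routine; the points that need care, and the only genuine obstacle, are to check that merging the two parities $\sigma = 0$ and $\sigma = 1$ in Theorem \ref{woker} produces the full Cartesian product $\langle \cdot \rangle_{\ell} \times \langle \cdot \rangle_{\ell+2}$ with nothing missing or double-counted, and that the separate boundary inequalities of Theorem \ref{woker} coincide with the single cutoff $a + b + c \le n$ built into $\langle a , b \rangle_{n}$, including the degenerate endpoints ($i' = 0$, $j' = 0$) and the top spins $\ell/2$ and $(\ell+2)/2$. For (b) one must also stress that the inequality is not yet an equality: it cannot be improved by these methods because the $(\beta , \beta)$ braiding coefficients are not known to be non-zero (cf. Remark \ref{dotfat}), and equality will only be recovered later through the quantum-dimension computation.
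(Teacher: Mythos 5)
Your proposal is correct and is essentially the paper's own proof: the paper disposes of this corollary with the single line ``Immediate from theorem \ref{woker} and sections \ref{weakalpha}, \ref{nearweak}'', i.e.\ exactly your combination of the exact charge-$\alpha$ rule, the partial charge-$\beta$ rule, and the constructibility theorem. All you have done beyond the paper is spell out the combinatorial identification of $\langle \alpha , (i',j') \rangle$ and $\langle \beta , (i',j') \rangle$ with the truncated Clebsch--Gordan sets $\langle \1/2 , i' \rangle_{\ell} \times \langle \1/2 , j' \rangle_{\ell+2}$ and $\langle 0 , i' \rangle_{\ell} \times \langle 1 , j' \rangle_{\ell+2}$ (merging the $\sigma = 0,1$ parities and matching the boundary cutoffs), together with the correct observation, consistent with Remark \ref{dotfat} and the later quantum-dimension argument, that (b) cannot be upgraded to an equality at this stage.
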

  \begin{proof}  Immediate from theorem \ref{woker} and sections \ref{weakalpha}, \ref{nearweak}. \end{proof}
  \begin{reminder} (Connes fusion rules for $L\gg$ at level $\ell$ \cite{2})
\begin{displaymath}   H_{i}^{\ell}   \boxtimes   H_{j}^{\ell} = \bigoplus_{ k \in \langle i , j  \rangle_{\ell}   } H_{k}^{\ell}          \end{displaymath}   
  \end{reminder}
  \begin{reminder}(Quantum dimension \cite{2})
\begin{center}  $d( H_{i}^{\ell}  )  = \frac{sin(p \pi /  m)}{sin(\pi / m)}$  \end{center}
  with $m = \ell + 2$ and $p = dim(V_{i}) = 2i+1$.
     \end{reminder}
  \begin{definition} Let $(\R_{\ell} , \oplus , \boxtimes)$ be the fusion ring generated as $\ZZZ$-module by  discrete series of $LSU(2)$ at level $\ell$.  \end{definition}
  \begin{remark}  $H_{pq}^{m}$ and $H_{m-p , m+2-q}^{m}$ are the same representation of $\Vir_{1/2}$ because $h_{pq}^{m}$ and $h_{m-p , m+2-q}^{m}$.   \end{remark}
\begin{definition}  Let  $\tilde{\T}_{m} $ be a formal associative fusion ring, generated by $(\tilde{H}_{pq}^{m})$ (or $(\tilde{H}_{ij}^{\ell})$ with the other notation), with every $\tilde{H}_{pq}^{m} $ distinct (in particular  $\tilde{H}_{pq}^{m} \ne \tilde{H}_{m-p, m+2-q}^{m}$), using the fusion rules of corollary \ref{immediat}.   \end{definition}
\begin{proposition} The ring  $\tilde{\T}_{m} $ is isomorphic to $\R_{\ell} \otimes_{\ZZZ} \R_{\ell +2}$.    \end{proposition}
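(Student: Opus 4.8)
The plan is to realise the isomorphism concretely through the map $\Psi$ that sends each formal generator $\tilde{\H}_{ij}^{\ell}$, i.e. $\tilde{H}_{ij}^{\ell}$, to the simple tensor $H_{i}^{\ell}\otimes H_{j}^{\ell+2}$, with $i$ a level-$\ell$ spin and $j$ a level-$(\ell+2)$ spin. First I would check that $\Psi$ is a bijection of the distinguished $\ZZZ$-bases. Since $\tilde{\T}_{m}$ is the \emph{formal} ring in which all the $\tilde{H}_{pq}^{m}$ are kept distinct --- in particular no field identification $\tilde{H}_{pq}^{m}=\tilde{H}_{m-p,m+2-q}^{m}$ is imposed --- its generators are indexed by the pairs $(i,j)$, and their number is the rank $(\ell+1)(\ell+3)$ of $\R_{\ell}\otimes_{\ZZZ}\R_{\ell+2}$, whose tensor basis $(H_{i}^{\ell}\otimes H_{j}^{\ell+2})$ is matched one-to-one by $\Psi$. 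The role of the un-identification is precisely to make this correspondence bijective.

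The heart of the argument is to transport the fusion data of Corollary \ref{immediat} through $\Psi$ onto the product fusion of $\R_{\ell}\otimes_{\ZZZ}\R_{\ell+2}$. For charge $\alpha$ we have $\Psi(H_{\alpha}^{\ell})=H_{1/2}^{\ell}\otimes H_{1/2}^{\ell+2}$, and applying the $LSU(2)$ rule $H_{a}^{\ell}\boxtimes H_{b}^{\ell}=\bigoplus_{c\in\langle a,b\rangle_{\ell}}H_{c}^{\ell}$ in each tensor slot gives
\[
(H_{1/2}^{\ell}\otimes H_{1/2}^{\ell+2})\boxtimes(H_{i'}^{\ell}\otimes H_{j'}^{\ell+2})=\bigoplus_{i''\in\langle 1/2,i'\rangle_{\ell}}\ \bigoplus_{j''\in\langle 1/2,j'\rangle_{\ell+2}}H_{i''}^{\ell}\otimes H_{j''}^{\ell+2},
\]
which is exactly $\Psi$ applied to rule (a). For charge $\beta$ we have $\Psi(H_{\beta}^{\ell})=H_{0}^{\ell}\otimes H_{1}^{\ell+2}$; since $\langle 0,i'\rangle_{\ell}=\{i'\}$ the first slot is left fixed and the tensored rule collapses to $H_{i'}^{\ell}\otimes\bigoplus_{j''\in\langle 1,j'\rangle_{\ell+2}}H_{j''}^{\ell+2}$, matching rule (b), the inequality there being replaced by the honest product decomposition valid in the formal ring. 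Thus $\Psi$ intertwines fusion by $\alpha$ and by $\beta$ on the two sides.

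It then remains to see that this forces $\Psi$ to be a ring isomorphism for $(\oplus,\boxtimes)$. The structural point I would exploit is that $\R_{\ell}\otimes_{\ZZZ}\R_{\ell+2}$ carries two mutually commuting copies of the $LSU(2)$ fusion, one acting on each index, so its multiplication is manifestly commutative and associative and, by the computation above, is carried by $\Psi$ onto the index-wise rule $\tilde{H}_{ij}^{\ell}\boxtimes\tilde{H}_{i'j'}^{\ell}=\bigoplus_{(i'',j'')\in\langle i,i'\rangle_{\ell}\times\langle j,j'\rangle_{\ell+2}}\tilde{H}_{i''j''}^{\ell}$, of which the $\alpha$- and $\beta$-rules are the defining instances. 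I would therefore reduce the whole statement, via the coset dictionary relating $\Vir_{1/2}$ charges to pairs of $LSU(2)$ spins (Lemma \ref{trianglu} and Theorem \ref{woker}), to the assertion that the $\Vir_{1/2}$ fusion \emph{factorises} as the product of the two $LSU(2)$ fusions at levels $\ell$ and $\ell+2$, so that $\Psi$ is a bijection of bases respecting a product multiplication and hence a ring isomorphism.

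I expect the genuine obstacle to be exactly this factorisation/consistency step: showing that the partial data of Corollary \ref{immediat} extends to a single associative product on $\tilde{\T}_{m}$ with no latitude left in the structure constants. The clean way to dispatch it is that the level-$\ell$ and level-$(\ell+2)$ Clebsch--Gordan ranges $\langle\,\cdot\,,\cdot\,\rangle_{\ell}$ and $\langle\,\cdot\,,\cdot\,\rangle_{\ell+2}$ close independently in the two indices, so associativity of each $LSU(2)$ ring separately gives associativity of $\tilde{\T}_{m}$ and, simultaneously, its identification with $\R_{\ell}\otimes_{\ZZZ}\R_{\ell+2}$ through $\Psi$. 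Writing out this index-by-index decoupling, and checking that the connectivity of the graph $\G_{\alpha}$ used earlier is consistent with it, is the part I would carry out in full detail; everything else is bookkeeping on the already established $LSU(2)$ fusion rules.
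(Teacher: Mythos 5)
Your map $\Psi$ is exactly the paper's bijection $\varphi$, and your verification at charge $\alpha$ coincides with the paper's first step. But there is a genuine gap at the charge $\beta$ step, and it is precisely there that the mathematical content of the proposition lies. Corollary \ref{immediat}(b) is only an \emph{inequality}: all that is known at this stage (the partial transport formula of section \ref{nearweak} only gives coefficients $\lambda_{j} \ge 0$, possibly zero) is that $H_{\beta} \boxtimes H_{i'j'}^{\ell}$ is contained in $\bigoplus_{(i'',j'')} H_{i''j''}^{\ell}$; some summands could a priori be absent from the actual Connes fusion. You dispose of this by asserting that the inequality is ``replaced by the honest product decomposition valid in the formal ring'', and later you propose to close the residual ``latitude in the structure constants'' by an index-by-index decoupling and associativity argument. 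Neither addresses the real issue: the question is not whether the two Clebsch--Gordan ranges close independently (they do, trivially), but whether the genuine fusion by $H_{\beta}$ saturates its upper bound, and no formal manipulation of a ring presentation can decide that. (If instead you read the definition of $\tilde{\T}_{m}$ as imposing the full $\beta$-rule by fiat, the proposition becomes vacuous and useless: the subsequent identification of the true ring $\T_{m}$, and hence the Connes fusion theorem for $\Vir_{1/2}$, would no longer follow.)

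The paper closes this gap with a positivity argument that you never invoke. Since the $\alpha$-rules are exact and $\alpha$ is a weak generator, the fusion matrix of $\tilde{H}_{\alpha}^{\ell}$ is irreducible and equals that of $(H_{1/2}^{\ell}, H_{1/2}^{\ell+2})$; by the Perron--Frobenius theorem the quantum dimension is the unique normalized positive eigenvector, so $d(\tilde{H}_{ij}^{\ell}) = d(H_{i}^{\ell})\,d(H_{j}^{\ell+2}) > 0$ for every $(i,j)$. Applying the additive and multiplicative character $d$ to the inclusion of Corollary \ref{immediat}(b) gives
\begin{displaymath}
d(\tilde{H}_{\beta}^{\ell})\, d(\tilde{H}_{i'j'}^{\ell}) \ \le \ \sum_{(i'',j'')} d(\tilde{H}_{i''j''}^{\ell}),
\end{displaymath}
while the exact $LSU(2)$ fusion rules at levels $\ell$ and $\ell+2$ show that the two sides are in fact equal numbers. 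Since every potentially missing summand would have strictly positive quantum dimension, the inclusion must be an equality of representations; only after this upgrade do the (now both exact) $\alpha$- and $\beta$-rules determine all structure constants by associativity, making $\Psi$ a ring isomorphism. Without this step your argument shows only that the product ring $\R_{\ell} \otimes_{\ZZZ} \R_{\ell+2}$ is \emph{compatible} with the partial data, not that $\tilde{\T}_{m}$ is isomorphic to it.
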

\begin{proof} Let the bijection $\varphi: \tilde{\T}_{m} \to \R_{\ell} \otimes_{\ZZZ} \R_{\ell +2}$ with $\varphi (\tilde{H}_{ij}^{\ell} ) = ( H_{i}^{\ell}  , H_{j}^{\ell})$.
The fusion matrix of  $\tilde{H}_{\alpha}^{\ell} $ is clearly equal to the fusion matrix of $(H_{1/2}^{\ell} , H_{1/2}^{\ell +2} )$. Then, by Perron-Frobenius theorem, 
$\tilde{H}_{ij}^{\ell} $ and $(H_{i}^{\ell}  , H_{j}^{\ell} )$ has the same quantum dimension. Now, $d(\tilde{H}_{\beta}^{\ell}  ).d(\tilde{H}_{i'j'}^{\ell} ) \le \sum d(\tilde{H}_{i''j''}^{\ell})$, and $d(H_{0}^{\ell}  , H_{1}^{\ell} ).d(H_{i'}^{\ell}  , H_{j'}^{\ell} ) = \sum d(H_{i''}^{\ell}  , H_{j''}^{\ell} ) $. So, by positivity, the previous inequality is an equality and: 
\begin{center}  $\tilde{H}_{\beta}^{\ell}   \boxtimes   \tilde{H}_{i'j'}^{\ell} =  \bigoplus_{ (i'', \hspace{0,1cm}  j'') \in \langle 0 , i'  \rangle_{\ell}  \times \langle 1 , j'  \rangle_{\ell + 2}  } \tilde{H}_{i''j''}^{\ell}   $  \end{center}
So, the fusion rules for $\tilde{H}_{\beta}^{\ell}$ is also the same that for $(H_{0}^{\ell}  , H_{1}^{\ell} )$.   Now, by associativity, the fusion rules for $\tilde{H}_{\alpha}^{\ell}$ and $\tilde{H}_{\beta}^{\ell}$ give all the fusion rules. \\ The result follows.
  \end{proof}
  \begin{corollary}   $\T_{m}$  is isomorphic to the subring of  $(\R_{\ell} \otimes_{\ZZZ} \R_{\ell +2})\otimes \QQQ $  generated by $\1/2[(H_{i}^{\ell}  , H_{j}^{\ell} ) + (H_{\frac{\ell}{2} - i}^{\ell}  , H_{\frac{\ell +2}{2} - j}^{\ell} ) ]$; or to  $(\R_{\ell} \otimes_{\ZZZ} \R_{\ell +2}) / (  (H_{i}^{\ell}  , H_{j}^{\ell} )- (H_{\frac{\ell}{2} - i}^{\ell}  , H_{\frac{\ell +2}{2} - j}^{\ell} ) )$. In particular, the fusion is commutative. \end{corollary}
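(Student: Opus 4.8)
The plan is to realise $\T_m$ as a quotient of $\tilde\T_m \cong \R_\ell \otimes_\ZZZ \R_{\ell+2}$ by the involution coming from the identification $H_{pq}^m = H_{m-p,m+2-q}^m$, and to recognise that involution as fusion by an order-two simple current. First I would transport the identification across the isomorphism $\varphi$ of the preceding proposition. In the coordinates $p = 2i+1$, $q = 2j+1$, $m = \ell+2$, the reflection $(p,q) \mapsto (m-p, m+2-q)$ becomes $(i,j) \mapsto (\frac{\ell}{2}-i, \frac{\ell+2}{2}-j)$, so the Remark that $H_{pq}^m$ and $H_{m-p,m+2-q}^m$ coincide says precisely that $\T_m$ is obtained from $\tilde\T_m$ by identifying $\tilde H_{ij}^\ell$ with $\tilde H_{\frac\ell2 - i,\, \frac{\ell+2}2 - j}^\ell$; under $\varphi$ this reads $(H_i^\ell, H_j^{\ell+2}) \sim (H_{\frac\ell2 - i}^\ell, H_{\frac{\ell+2}2 - j}^{\ell+2})$ in the commutative ring $S := \R_\ell \otimes_\ZZZ \R_{\ell+2}$ (the second factor being at level $\ell+2$).

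Next I would exhibit this reflection as multiplication by a simple current. Put $J = (H_{\ell/2}^\ell, H_{(\ell+2)/2}^{\ell+2}) \in S$. The $LSU(2)$ fusion rules $\langle a,b\rangle_n$ give $H_i^\ell \boxtimes H_{\ell/2}^\ell = H_{\frac\ell2 - i}^\ell$ and $H_{\ell/2}^\ell \boxtimes H_{\ell/2}^\ell = H_0^\ell$ (and likewise at level $\ell+2$), so $J$ is invertible with $J \boxtimes J = \mathbf 1$, and $x \mapsto J \boxtimes x$ is exactly the transported reflection $\nu$. Because $J$ is an invertible element of the commutative ring $S$, one has $\nu(x \boxtimes y) = (J \boxtimes x)\boxtimes y = \nu(x)\boxtimes y$, so the relation $x \sim J \boxtimes x$ is a ring congruence whose associated ideal is the principal ideal $(\mathbf 1 - J)\boxtimes S$. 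This already yields the quotient description $\T_m \cong S/(\mathbf 1 - J)S$, and commutativity of $\T_m$ follows at once, since $\R_\ell$ and $\R_{\ell+2}$ are commutative and hence so is every quotient of $S$.

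To match the integral subring description I would check that $\nu$ acts freely on the distinguished basis of $S$. A fixed point would force $p = m-p$ and $q = m+2-q$, i.e. $p = m/2$ and $q = m/2 + 1$; these are consecutive integers, so one of them is even, contradicting that $p = 2i+1$ and $q = 2j+1$ are both odd. Hence $\nu$ is a fixed-point-free involution permuting the basis, so $S$ is a free $\ZZZ[\ZZZ_2]$-module and on each free orbit summand one computes $\ker(\mathbf 1 + J) = (\mathbf 1 - J)S$. Introducing the idempotent $e = \frac{1}{2}(\mathbf 1 + J) \in S \otimes \QQQ$ (idempotent because $J^2 = \mathbf 1$), the symmetrised generators in the statement are exactly $e\cdot(H_i^\ell, H_j^{\ell+2})$, so the subring they generate is $eS$; the surjective ring map $S \to eS$, $x \mapsto ex$, has kernel $\ker(\mathbf 1 + J) = (\mathbf 1 - J)S$, giving $eS \cong S/(\mathbf 1 - J)S \cong \T_m$ and identifying the two descriptions.

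The main obstacle is the step that makes the two presentations agree over $\ZZZ$ rather than merely over $\QQQ$: one must know that passing to the symmetrised subring loses no integral information, and this rests entirely on the fixed-point-free parity argument above, since for a free $\ZZZ_2$-action the Tate-type obstruction $\ker(\mathbf 1 + J)/(\mathbf 1 - J)S$ vanishes, whereas a fixed point would introduce $2$-torsion and break the isomorphism $S/(\mathbf 1 - J)S \cong eS$. Verifying that the $\Vir_{1/2}$ identification is literally fusion by $J$, and not merely an abstract order-two symmetry, is the other point requiring care, and it follows from the explicit simple-current fusion rules recalled above.
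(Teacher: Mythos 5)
Your proof is correct and is essentially the paper's own (unwritten) argument: the paper's ``Immediate'' means exactly the folding of $\tilde{\T}_{m} \cong \R_{\ell} \otimes_{\ZZZ} \R_{\ell+2}$ along the identification $H_{pq}^{m} = H_{m-p,m+2-q}^{m}$ that you carry out, and your simple-current $J$ and idempotent $e = \1/2(\mathbf{1}+J)$ just make that folding precise. One small quibble: your closing claim that a fixed point would ``introduce $2$-torsion and break $S/(\mathbf{1}-J)S \cong eS$'' is not quite right --- for a permutation action a fixed basis vector contributes zero to both $\ker(\mathbf{1}+J)$ and $(\mathbf{1}-J)S$, so the obstruction $\ker(\mathbf{1}+J)/(\mathbf{1}-J)S$ still vanishes (the torsion would appear in $S^{J}/(\mathbf{1}+J)S$ instead) --- but this is harmless, since your parity argument correctly shows the involution acts freely on the basis here.
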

  \begin{proof}Immediate.  \end{proof}
    
\begin{theorem} (Connes fusion for $\Vir_{1/2}$) 
\begin{displaymath}   H_{ij}^{\ell}   \boxtimes   H_{i'j'}^{\ell} = \bigoplus_{ (i'', \hspace{0,1cm}  j'') \in \langle i , i'  \rangle_{\ell}  \times \langle j , j'  \rangle_{\ell + 2}  } H_{i''j''}^{\ell}          \end{displaymath}   
  \end{theorem}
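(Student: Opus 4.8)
The plan is to reduce the Connes fusion on $\T_m$ to a componentwise computation inside the tensor product $\R_\ell \otimes_\ZZZ \R_{\ell+2}$ of the two $LSU(2)$ fusion rings, and then to read off the answer from the known loop-group fusion rules. First I would invoke the preceding proposition, which identifies the formal ring $\tilde\T_m$ (generated by the distinct symbols $\tilde H_{ij}^\ell$ with the rules of corollary \ref{immediat}) with $\R_\ell \otimes_\ZZZ \R_{\ell+2}$ through $\varphi(\tilde H_{ij}^\ell) = (H_i^\ell , H_j^{\ell+2})$, together with the preceding corollary identifying the genuine Connes fusion ring $\T_m$ with the subring, respectively quotient, of $\R_\ell \otimes_\ZZZ \R_{\ell+2}$ cut out by the period-two automorphism $(H_i^\ell , H_j^{\ell+2}) \leftrightarrow (H_{\ell/2-i}^\ell , H_{(\ell+2)/2-j}^{\ell+2})$. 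Here I use the notation convention $p = 2i+1$, $q = 2j+1$, $m = \ell+2$.

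Once this identification is in place, the fusion product on $\R_\ell \otimes_\ZZZ \R_{\ell+2}$ is by construction the product of the two $LSU(2)$ fusions, so I would apply the reminder giving the Connes fusion rules for $L\gg$ at levels $\ell$ and $\ell+2$ to each factor separately:
\begin{displaymath}
H_i^\ell \boxtimes H_{i'}^\ell = \bigoplus_{i'' \in \langle i , i' \rangle_\ell} H_{i''}^\ell , \qquad
H_j^{\ell+2} \boxtimes H_{j'}^{\ell+2} = \bigoplus_{j'' \in \langle j , j' \rangle_{\ell+2}} H_{j''}^{\ell+2} .
\end{displaymath}
Tensoring these shows that $(H_i^\ell , H_j^{\ell+2}) \boxtimes (H_{i'}^\ell , H_{j'}^{\ell+2})$ is the direct sum over $(i'' , j'') \in \langle i , i' \rangle_\ell \times \langle j , j' \rangle_{\ell+2}$ of the pairs $(H_{i''}^\ell , H_{j''}^{\ell+2})$. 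Transporting back through $\varphi^{-1}$ and projecting to $\T_m$ then yields exactly the stated formula, since the index set $\langle i , i' \rangle_\ell \times \langle j , j' \rangle_{\ell+2}$ is carried equivariantly by the period-two identification of the preceding corollary.

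The only genuine content beyond this bookkeeping is the upgrade of the partial fusion rule with $\beta$ (the inequality of the corollary in section \ref{nearweak}) to an exact equality. This is precisely what the quantum-dimension positivity argument inside the preceding proposition accomplishes: since $d$ is multiplicative and both sides of the partial rule carry the same quantum dimension, positivity forces the inclusion to be an equality. With the exact $\alpha$-rule (established as genuine Connes fusion in section \ref{weakalpha}) and the now-exact $\beta$-rule both in hand, associativity propagates them to all pairs, so the honest Connes fusion in $\T_m$ coincides with the formal product in $\tilde\T_m$. I expect this $\beta$-exactness step --- already carried out upstream --- to be the main obstacle; granting it, the theorem is immediate from the ring isomorphism and the two loop-group fusion rules.
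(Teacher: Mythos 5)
Your proposal is correct and takes essentially the same route as the paper: the paper's own proof is the single word ``Immediate,'' meaning precisely the deduction you spell out --- the identification of $\T_{m}$ with the quotient of $\R_{\ell} \otimes_{\ZZZ} \R_{\ell+2}$ by the period-two automorphism, the componentwise $LSU(2)$ fusion rules at levels $\ell$ and $\ell+2$, and the quantum-dimension/Perron--Frobenius positivity argument (carried out inside the preceding proposition) that upgrades the partial $\beta$-rule to an equality, with associativity then propagating the $\alpha$- and $\beta$-rules to all pairs. You have correctly located all the load-bearing steps, including the fact that the only genuine content beyond bookkeeping is the $\beta$-exactness established upstream.
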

\begin{proof}Immediate.   \end{proof}
\begin{remark}  $H_{00}^{\ell} \le (H_{ij}^{\ell}) ^{\boxtimes 2}$, so that  $H_{ij}^{\ell}$ is self-dual.    \end{remark}
\begin{theorem} (Quantum dimension for $\Vir_{1/2}$) 
\begin{center}  $d( H_{ij}^{\ell}  )  =  d(H_{i}^{\ell}).d(H_{j}^{\ell +2} )= \frac{sin(p \pi /  m)}{sin(\pi / m)} . \frac{sin(q \pi /  (m+2))}{sin(\pi / (m+2))}$  \end{center}
with $m = \ell + 2$, $p =  2i+1$ and $q =  2j+1$.  \end{theorem}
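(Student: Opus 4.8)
The plan is to combine the three earlier ingredients: the characterization of the quantum dimension as the Perron--Frobenius eigenvector of the fusion matrix of the weak generator $H_\alpha$, the ring isomorphism identifying (a quotient of) $\R_\ell \otimes_\ZZZ \R_{\ell+2}$ with $\T_m$, and the known $LSU(2)$ formula $d(H_i^\ell)=\sin(p\pi/m)/\sin(\pi/m)$. Since the quantum dimension is the unique positive additive--multiplicative character with $d(H_0)=1$, it suffices to compute it on the generators and then read off its value on each $H_{ij}^\ell$.

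First I would record that, under the identification $H_{ij}^\ell \leftrightarrow (H_i^\ell, H_j^{\ell+2})$ of the previous proposition, the fusion matrix $M_\alpha$ of $H_\alpha$ becomes the tensor product $M_{1/2}^{(\ell)} \otimes M_{1/2}^{(\ell+2)}$ of the spin-$(1/2)$ fusion matrices of $LSU(2)$ at levels $\ell$ and $\ell+2$; this is immediate from the fusion rule $H_\alpha \boxtimes H_{i'j'}^\ell = \bigoplus_{(i'',j'')\in \langle 1/2, i'\rangle_\ell \times \langle 1/2, j'\rangle_{\ell+2}} H_{i''j''}^\ell$, whose index set is a product. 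Next I would invoke the tensor-product behaviour of Perron--Frobenius: if $M_{1/2}^{(\ell)} v = \lambda v$ and $M_{1/2}^{(\ell+2)} w = \mu w$ with $v, w$ strictly positive, then $v \otimes w$ is a strictly positive eigenvector of $M_{1/2}^{(\ell)} \otimes M_{1/2}^{(\ell+2)}$ for $\lambda\mu$, hence is \emph{the} Perron eigenvector by uniqueness. Because the $LSU(2)$ quantum dimensions $(d(H_i^\ell))_i$ and $(d(H_j^{\ell+2}))_j$ are precisely the Perron eigenvectors of $M_{1/2}^{(\ell)}$ and $M_{1/2}^{(\ell+2)}$ normalized at the vacuum, the corollary characterizing the quantum dimension of $\T_m$ forces
\begin{displaymath}
d(H_{ij}^\ell) = d(H_i^\ell)\,d(H_j^{\ell+2}).
\end{displaymath}
The stated closed form then follows by substituting the $LSU(2)$ formula, taking care that the second factor lives at level $\ell+2$, so its shifted level is $(\ell+2)+2 = m+2$ and $q = 2j+1$.

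The main point to be careful about is compatibility with the period-two automorphism: $\T_m$ is not $\R_\ell \otimes_\ZZZ \R_{\ell+2}$ itself but the subring (equivalently, quotient) identifying $(H_i^\ell, H_j^{\ell+2})$ with $(H_{\ell/2 - i}^\ell, H_{(\ell+2)/2 - j}^{\ell+2})$. One must check that the quantum dimension descends to this quotient, i.e. that the two identified objects carry equal quantum dimension; this holds because the value is read off the Perron eigenvector of $M_\alpha$, which is intrinsic to the (invariant) fusion matrix, so the normalization at the vacuum is unambiguous. Granting this --- essentially the content already extracted in the proof of the preceding proposition --- the theorem is immediate, the only genuinely structural input being the factorization of the Perron--Frobenius data over the tensor product.
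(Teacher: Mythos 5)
Your proposal is correct and follows essentially the same route as the paper: the paper's proof is declared ``immediate'' precisely because the preceding proposition identifies the fusion matrix of $H_{\alpha}$ with that of $(H_{1/2}^{\ell}, H_{1/2}^{\ell+2})$ and invokes Perron--Frobenius uniqueness to equate the quantum dimensions, which is exactly your factorization argument. Your explicit treatment of the tensor-product Perron eigenvector and of the descent through the period-two identification (which works since $\sin((\ell-2i+1)\pi/(\ell+2)) = \sin((2i+1)\pi/(\ell+2))$ makes the product flip-invariant) merely spells out details the paper leaves implicit.
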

\begin{proof}Immediate.   \end{proof}
\begin{theorem} (Jones-Wassermann subfactor) 
\begin{displaymath} \pi_{ij}^{\ell}(\Vir_{1/2}(I))'' \subset  \pi_{ij}^{\ell}(\Vir_{1/2}(I^{c}))^{\natural}       \end{displaymath}    
It's a finite depth, irreducible,  hyperfinite III$_{1}$-subfactor, isomorphic to  the hyperfinite III$_{1}$-factor $\R_{\infty}$ tensor  the II$_{1}$-subfactor : 
\begin{displaymath}  (\bigcup \CCC \otimes End_{\Vir_{1/2}} (H_{ij}^{\ell})^{\boxtimes n})'' \subset (\bigcup End_{\Vir_{1/2}} (H_{ij}^{\ell})^{\boxtimes n+1})''  \  \textrm{of index $d( H_{ij}^{\ell} )^{2}$.}     \end{displaymath}  
 \end{theorem}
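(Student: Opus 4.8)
The plan is to collect the structural results already proved and combine them with the general splitting theorem for amenable subfactors. First I would observe that the inclusion is a genuine subfactor of the stated type. The containment $\pi_{ij}^{\ell}(\Vir_{1/2}(I))'' \subset \pi_{ij}^{\ell}(\Vir_{1/2}(I^{c}))^{\natural}$ is the Jones-Wassermann inclusion obtained from locality (lemma \ref{supco2}) together with Nelson's theorem; both algebras are the hyperfinite III$_1$ factor by the Takesaki devissage carried out in section \ref{richarder}; and irreducibility, namely $\pi_{ij}^{\ell}(\Vir_{1/2}(I))^{\natural} \cap \pi_{ij}^{\ell}(\Vir_{1/2}(I^{c}))^{\natural} = \CCC$, is exactly the theorem deduced from von Neumann density. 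Thus we already have an irreducible hyperfinite III$_1$ subfactor, which by the Reeh-Schlieder theorem admits a cyclic and separating vector and is therefore in standard form.

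Next I would split off the type III$_1$ part. Since the subfactor is hyperfinite and arises from an amenable net of local algebras, the classification results of Popa \cite{popa}, supplemented by Wenzl \cite{wenzl}, let one write it as $\R_{\infty} \otimes (P_0 \subset P_1)$, where $\R_{\infty}$ is the hyperfinite III$_1$ factor and $P_0 \subset P_1$ is a II$_1$ subfactor carrying the same standard invariant. The standard invariant is intrinsic and is read off from the $\M$-$\M$ bimodule structure of $H_{ij}^{\ell}$; I would identify its tower of higher relative commutants with the Connes fusion intertwiner algebras. Because $H_{ij}^{\ell}$ is self-dual (the remark $H_{00}^{\ell} \le (H_{ij}^{\ell})^{\boxtimes 2}$), fusing repeatedly with $H_{ij}^{\ell}$ generates the even part of the graph, and the $n$-th relative commutant becomes canonically $End_{\Vir_{1/2}}((H_{ij}^{\ell})^{\boxtimes n})$, yielding precisely the displayed II$_1$ inclusion.

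For the index I would invoke the standard relation that the Jones index of such a subfactor is the square of the statistical (quantum) dimension of the defining sector, so that $[\,\pi_{ij}^{\ell}(\Vir_{1/2}(I^{c}))^{\natural} : \pi_{ij}^{\ell}(\Vir_{1/2}(I))''\,] = d(H_{ij}^{\ell})^2$; this follows from the already-established fusion rules and the quantum dimension formula, with $d(H_{ij}^{\ell})$ appearing as the Perron-Frobenius eigenvalue of the fusion matrix. Finite depth is then immediate, since the fusion ring $\T_m$ is finite as a $\ZZZ$-module, so only finitely many sectors occur among the powers $(H_{ij}^{\ell})^{\boxtimes n}$ and the principal graph is finite.

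The step I expect to be the main obstacle is the identification of the II$_1$ standard invariant together with the index computation: one must check that the abstract higher relative commutants of the III$_1$ subfactor coincide with the concrete Connes fusion endomorphism algebras $End_{\Vir_{1/2}}((H_{ij}^{\ell})^{\boxtimes n})$, and that the categorical dimension $d(H_{ij}^{\ell})$ is genuinely the square root of the Jones index and not merely the Perron-Frobenius eigenvalue used to compute it. This matching, where Popa's reconstruction of amenable subfactors and the rigidity of the finite fusion category do the real work, is the heart of the argument.
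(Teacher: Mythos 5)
Your proposal follows essentially the same route as the paper's own proof: the inclusion, hyperfiniteness and irreducibility are quoted from the earlier sections, finite depth comes from the finiteness of the set of sectors of charge $c_{m}$, the higher relative commutants are identified with the Connes fusion endomorphism algebras via Wenzl's method, and the splitting into $\R_{\infty}$ tensor a II$_{1}$-subfactor of index $d(H_{ij}^{\ell})^{2}$ is obtained from Popa's classification. The paper's proof is simply a terser version of exactly this argument, including your (correct) observation that the real work is delegated to the Wenzl and Popa citations.
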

 \begin{proof} It's finite depth because there is only finitely many irreducible positive energy representations of charge $c_{m}$. Next, the hyperfinite III$_{1}$-subfactor and the irreducibility has already been proven before. The higher relative commutants can be calculated using the method of H. Wenzl \cite{wenzl}. The rest follows from the work of S. Popa \cite{popa}.    \end{proof}

 \newpage


\begin{thebibliography}{99}    
                                    \bibitem{connes} A. Connes, \emph{Noncommutative geometry}, Academic Press, Inc., San Diego, CA, 1994.   
                                \bibitem{frre}   I. B. Frenkel, N. Yu. Reshetikhin,  \emph{Quantum affine algebras and holonomic difference equations.}  Comm. Math. Phys.  146  (1992),  no. 1, 1--60.
                               \bibitem{3a} P. Goddard, A. Kent, D. Olive, \emph{Unitary representations of the Virasoro and super-Virasoro algebras.}  Comm. Math. Phys.  103  (1986),  no. 1, 105--119.
                                                          \bibitem{gw}R. Goodman, N. R. Wallach, \emph{Projective unitary positive-energy representations of ${\rm Diff}(S\sp 1)$.}  J. Funct. Anal.  63  (1985),  no. 3, 299--321.

                            \bibitem{ik} K. Iohara, Y. Koga, \emph{Fusion algebras for $N=1$ superconformal field theories through coinvariants. II. $N=1$ super-Virasoro-symmetry.}  J. Lie Theory  11  (2001),  no. 2, 305--337
                      \bibitem{23} V.F.R. Jones, \emph{Index for subfactors.}  Invent. Math.  72  (1983), no. 1, 1--25. 
                     \bibitem{2c} V.F.R. Jones, \emph{Fusion en alg\`ebres de von Neumann et groupes de lacets (d'apr\`es A. Wassermann).},  SŽminaire Bourbaki, Vol. 1994/95.  Ast\'erisque  No. 237  (1996), Exp. No. 800, 5, 251--273.
                          \bibitem{js} V.F.R. Jones, V. S. Sunder, \emph{Introduction to subfactors.} London Mathematical Society Lecture Note Series, 234. Cambridge University Press, 1997.
                            \bibitem{8b} V. G. Kac, I. T. Todorov, \emph{Superconformal current algebras and their unitary representations.}  Comm. Math. Phys.  102  (1985),  no. 2, 337--347.
                          \bibitem{8} V. G. Kac, A. K. Raina, \emph{Bombay lectures on highest weight representations of infinite-dimensional Lie algebras. }Advanced Series in Mathematical Physics, 2. World Scientific Publishing Co., Inc., Teaneck, NJ, 1987.
                                    \bibitem{loke}  T. Loke, \emph{Operator algebras and conformal field theory for the discrete series representations of $\textrm{Diff}(\SSS^{1})$}, thesis, Cambridge 1994. 
                               \bibitem{nel}E. Nelson, \emph{Analytic vectors.} Ann. of Math. (2) 70 1959 572--615. 
                                  \bibitem{NSOAI} S. Palcoux, \emph{Neveu-Schwarz and operators algebras I : \textit{Vertex operators superalgebras}}, arXiv:1010.0078 (2010)
                               \bibitem{NSOAII} S. Palcoux, \emph{Neveu-Schwarz and operators algebras II : \textit{Unitary series and characters}}, arXiv:1010.0077 (2010)
                                              \bibitem{pasa} V. Pasquier, H. Saleur,  \emph{Common structures between finite systems and conformal field theories through quantum groups.}  Nuclear Phys. B  330  (1990),  no. 2-3, 523--556. 
                              \bibitem{popa} S. Popa, \emph{Classification of subfactors and their endomorphisms}. CBMS Regional Conference Series in Mathematics, 86 , 1995.
                              \bibitem{puka}L. Pukanszky, \emph{The Plancherel formula for the universal covering group of ${\rm SL}(R,\,2)$}.  Math. Ann.  156  1964 96--143.
                               \bibitem{resi} M. Reed, B. Simon, \emph{Methods of modern mathematical physics. II. Fourier analysis, self-adjointness.} Academic Press [Harcourt Brace Jovanovich, Publishers], New York-London, 1975. xv+361 pp
                          \bibitem{vtl} V. Toledano Laredo, \emph{Fusion of Positive Energy Representations of LSpin(2n) }, thesis, Cambridge 1997, arXiv:math/0409044  (2004)
                           \bibitem{takesaki2}M. Takesaki, \emph{Conditional expectations in von Neumann algebras. } J. Functional Analysis  9  (1972), 306--321.
                          \bibitem{tsukani}A. Tsuchiya, Y. Kanie, \emph{Vertex operators in conformal field theory on $P\sp 1$ and monodromy representations of braid group.}  Conformal field theory and solvable lattice models (Kyoto, 1986), 297--372, Adv. Stud. Pure Math., 16, Academic Press, Boston, MA, 1988.
                          \bibitem{tsunaka} A. Tsuchiya, T. Nakanishi, \emph{Level-rank duality of WZW models in conformal field theory.}  Comm. Math. Phys.  144  (1992),  no. 2, 351--372.
                       \bibitem{verrill}R. W. Verrill, \emph{Positive energy representations of $L^{\sigma}SU(2r)$ and orbifold fusion.}  thesis, Cambridge 2001.
                       \bibitem{2b}A. J. Wassermann, \emph{Operator algebras and conformal field theory.}  Proceedings of the International Congress of Mathematicians, Vol. 1, 2 (ZŸrich, 1994),  966--979, BirkhŠuser, Basel, 1995.
                       \bibitem{2} A. J. Wassermann, \emph{Operator algebras and conformal field theory. III. Fusion of positive energy representations of ${\rm LSU}(N)$ using bounded operators.}  
Invent. Math.  133  (1998),  no. 3, 467--538.  
                         \bibitem{1} A. J. Wassermann, \emph{Kac-Moody and Virasoro algebras}, 1998,  arXiv:1004.1287 (2010)
               \bibitem{wass2} A. J. Wassermann, \emph{Subfactors and Connes fusion for twisted loop groups}, arXiv:1003.2292 (2010)
                          \bibitem{wass3} A. J. Wassermann, private notes.
                           \bibitem{wenzl}H. Wenzl, \emph{Hecke algebras of type $A\sb n$ and subfactors.}  Invent. Math.  92  (1988),  no. 2, 349--383.
                            \bibitem{zimm}R. J. Zimmer,  \emph{Ergodic theory and semisimple groups.} Monographs in Mathematics, 81. BirkhŠuser Verlag, Basel, 1984.

\end{thebibliography}
  \end{document}